\theoremstyle{definition}
\newtheorem{thm}{Theorem}[section]
\newtheorem{nota}[thm]{Notation}
\newtheorem{lem}[thm]{Lemma}
\newtheorem{prop}[thm]{Proposition}
\newtheorem*{rmk}{Remark}
\newcommand{\R}{\mathbb{R}}
\newcommand{\N}{\mathbb{N}}
\newcommand{\T}{\mathbb{T}}
\newcommand{\TT}{\mathcal{T}}
\newcommand{\TP}{\overline{\partial}{}}
\newcommand{\TL}{\overline{\Delta}{}}
\newcommand{\curl}{\text{curl }}
\newcommand{\dive}{\text{div }}
\newcommand{\la}{\lambda}
\newcommand{\q}{\quad}
\newcommand{\p}{\partial}
\newcommand{\bp}{(b_0\cdot\partial)}
\newcommand{\dd}{\mathfrak{D}}
\newcommand{\DD}{\mathcal{D}}
\newcommand{\grk}{\mathring{\tilde{g}}}
\newcommand{\nab}{\nabla}
\newcommand{\ak}{\tilde{a}}
\newcommand{\vk}{\tilde{v}}
\newcommand{\Ak}{\tilde{A}}
\newcommand{\ek}{\tilde{\eta}}
\newcommand{\Jk}{\tilde{J}}
\newcommand{\ar}{\mathring{a}}
\newcommand{\Ar}{\mathring{A}}
\newcommand{\ark}{\mathring{\tilde{{a}}}}
\newcommand{\Ark}{\mathring{\tilde{{A}}}}
\newcommand{\Jrk}{\mathring{\tilde{{J}}}}
\newcommand{\er}{\mathring{\eta}}
\newcommand{\erk}{\mathring{\tilde{{\eta}}}}
\newcommand{\vr}{\mathring{v}}
\newcommand{\Jr}{\mathring{J}}
\newcommand{\pArk}{\nabla_{\mathring{\tilde{A}}}}
\newcommand{\lapArk}{\Delta_{\mathring{\tilde{A}}}}
\newcommand{\lapAk}{\Delta_{\tilde{A}}}
\newcommand{\divAr}{\text{div}_{\mathring{\tilde{A}}}}
\newcommand{\curlAr}{\text{curl}_{\mathring{\tilde{A}}}}
\newcommand{\pa}{\nabla_a}
\newcommand{\pAk}{\nabla_{\tilde{A}}}
\newcommand{\diva}{\text{div}_{\tilde{a}}}
\newcommand{\divA}{\text{div}_{\tilde{A}}}
\newcommand{\curla}{\text{curl}_{\tilde{a}}}
\newcommand{\curlA}{\text{curl}_{\tilde{A}}}
\newcommand{\lkk}{\Lambda_{\kk}}
\newcommand{\Er}{\mathcal{E}}
\newcommand{\Ir}{\mathring{I}}
\newcommand{\lap}{\Delta}
\newcommand{\di}{\text{div}\,}
\newcommand{\cp}{\overline{\partial}{}}
\newcommand{\dx}{\,dx}
\newcommand{\dy}{\,dy}
\newcommand{\dz}{\,dz}
\newcommand{\dt}{\,dt}
\newcommand{\dS}{\,dS}
\newcommand{\vol}{\text{vol}\,}
\newcommand{\kk}{\kappa}
\newcommand{\eps}{\varepsilon}
\newcommand{\FF}{\mathbf{F}}
\newcommand{\NN}{\mathcal{N}}
\newcommand{\BB}{\mathfrak{B}}
\newcommand{\XX}{\mathbf{X}}
\newcommand{\PP}{\mathcal{P}}
\newcommand{\MM}{\mathcal{M}}
\newcommand{\vvv}{\textbf{v}_0}
\newcommand{\bbb}{\textbf{b}_0}
\newcommand{\qqq}{\textbf{q}_0}
\newcommand{\gr}{\mathring{g}}
\newcommand{\io}{\int_{\Omega}}
\newcommand{\ig}{\int_{\Gamma}}
\newcommand{\nn}{\hat{n}}
\newcommand{\nnk}{\tilde{n}}
\newcommand{\mm}{{(m)}}
\newcommand{\mmn}{{(m+1)}}
\newcommand{\mml}{{(m-1)}}
\newcommand{\nnr}{\mathring{\tilde{n}}}
\newcommand{\vrk}{\mathring{\tilde{v}}}
\newcommand{\RR}{\mathcal{R}}
\newcommand{\LL}{\overset{L}{=}}
\newcommand{\gk}{\tilde{g}}
\newcommand{\itt}{\int_0^T\int_0^t}
\numberwithin{equation}{section}
\newcommand{\Nf}{\mathcal{N}_0}
\begin{document}
\bibliographystyle{plain}
\title{\textbf{Local Well-Posedness of the Free-Boundary Incompressible Magnetohydrodynamics with Surface Tension}}
\author{Xumin Gu\thanks{School of Mathematics, Shanghai University of Finance and Economics, Shanghai 200433, China. Email: \texttt{gu.xumin@shufe.edu.cn}.},\,\, Chenyun Luo\thanks{Department of Mathematics, The Chinese University of Hong Kong, Shatin, NT, Hong Kong.
Email: \texttt{cluo@math.cuhk.edu.hk}.\,\,\,\,\,\,\,
},\,\, Junyan Zhang \thanks{Department of Mathematics, National University of Singapore, Singapore 119076.
Email: \texttt{zhangjy@nus.edu.sg}}}
\date{\today}
\maketitle

\begin{abstract}
We prove the local well-posedness of the 3D free-boundary incompressible ideal magnetohydrodynamics (MHD) equations with surface tension, which describe the motion of a perfect conducting fluid in an electromagnetic field. We adapt the ideas developed in the remarkable paper \cite{coutand2007LWP} by Coutand and Shkoller to generate an approximate problem with artificial viscosity indexed by $\kappa>0$ whose solution converges to that of the MHD equations as $\kappa\to 0$. However, the local well-posedness of the MHD equations is no easy consequence of Euler equations thanks to the strong coupling between the velocity and magnetic fields.  This paper is the continuation of the second and third authors' previous work \cite{luozhangMHDST3.5} in which the a priori energy estimate for incompressible free-boundary MHD with surface tension is established.  But the existence is not a trivial consequence of the a priori estimate as it cannot be adapted directly to the approximate problem due to the loss of the symmetric structure.
\end{abstract}

\noindent \textbf{Keywords}: free-boundary problems, magnetohydrodynamics, capillary inviscid fluids, well-posedness.
		
		\noindent \textbf{2020 MSC codes}: 35Q35, 76B45, 76W05.

\setcounter{tocdepth}{1}

\tableofcontents

\section{Introduction}

We consider the following 3D incompressible ideal MHD system which describes the motion of a conducting fluid with free surface boundary in an electro-magnetic field under the influence of surface tension
\begin{equation}\label{MHD}
\begin{cases}
(\partial_t +u\cdot\nabla) u-B\cdot\nabla B+\nabla P=0,~~P:=p+\frac{1}{2}|B|^2~~~& \text{in}~\DD; \\
(\partial_t +u\cdot\nabla) B-B\cdot\nabla u=0,~~~&\text{in}~\DD; \\
\dive u=0,~~\dive B=0,~~~&\text{in}~\DD,
\end{cases}
\end{equation} with boundary conditions
\begin{equation}\label{MHDB}
\begin{cases}
(\partial_t +u\cdot\nabla)|_{\p\DD}\in\mathcal{T}(\p\DD), \\
P=\sigma\mathcal{H}~~~& \text{on}~\p\DD, \\
B\cdot n=0~~~& \text{on}~\p\DD.
\end{cases}
\end{equation} Here $\DD:=\cup_{0\leq t\leq T}\{t\}\times\DD_t$ and $\DD_t\subseteq\R^3$ is the bounded domain occupied by the conducting fluid (plasma) whose boundary $\p\DD_t$ moves with the velocity of the fluid. Here $u=(u_1,u_2,u_3)$ is the fluid velocity, $B=(B_1,B_2,B_3)$ is the magnetic field, $p$ is the fluid pressure and $P:=p+\frac12|B|^2$ is the total pressure. The quantity $\mathcal{H}$ is the mean curvature of the free surface $\p\DD_t$, $\sigma>0$ is a given constant, called surface tension coefficient and $n$ denotes the exterior unit normal to $\p\DD_t$. Throughout the manuscript, we will use the notation $D_t:=\p_t+u\cdot\nabla$ to denote the material derivative.

The first boundary condition shows that the boundary of the plasma moves with the velocity of the fluid. It can be equivalently expressed as the velocity of $(\p\DD_t)$ is equal to $u\cdot n$.  The second boundary condition shows that the normal stress tensor is balanced by the surface tension. Here we note that $\mathcal{H}$ is determined by the unknown moving domain and thus not known a priori. The third boundary condition implies that the plasma liquid is a perfect conductor. We also note that $\dive B=0$ and $B\cdot n|_{\p\DD_t}=0$ are both required only for initial data and they automatically propagate to any positive time. Therefore, the system \eqref{MHD}-\eqref{MHDB} is not over-determined. 

Under the conditions above, we have the following conservation of physical energy \cite[Section 1]{luozhangMHDST3.5}.
\begin{equation}
\frac{d}{dt}\left(\frac{1}{2}\int_{\DD_t} |u|^2 +  |B|^2\dx+\sigma\int_{\p\DD(t)} dS(\p\DD(t))\right)=0
\end{equation}

Given a simply connected domain $\DD_0\subseteq\R^3$ and initial data $u_0$ and $B_0$ satisfying $\dive u_0=0$ and $\dive B_0=0, B_0\cdot n|_{\{t=0\}\times \p\DD_0}=0$, we want to find a set $\DD$ and vector fields $u$ and $B$ solving \eqref{MHD}-\eqref{MHDB} with initial data
\begin{equation}
\DD_0=\{x: (0,x)\in \DD\},\q (u,B)=(u_0, B_0),\q \text{in}\,\,\{t=0\}\times \Omega_0.
\end{equation}

\begin{rmk}
When the surface tension is neglected, the classical Rayleigh-Taylor sign condition $-\nabla_n P\geq c_0>0$ is necessary for the well-posedness. Ebin \cite{ebin1987} and Hao-Luo \cite{haoluo2018} constructed the counterexamples for Euler equations and MHD equations respectively to show that the free-boundary problems can be ill-posed when the Rayleigh-Taylor sign condition is violated.
\end{rmk}

\subsection{History and Background}

\subsubsection{Physical background: Plasma-Vacuum model}

The free-boundary problem \eqref{MHD}-\eqref{MHDB} originates from the plasma-vacuum free-interface model, which is an important theoretic model both in the laboratory and in astrophysical magnetohydrodynamics. The plasma is confined in a vacuum with another magnetic field $\hat{B}$, and there is a free interface $\Gamma(t)$, moving with the motion of plasma, between the plasma region $\Omega_+(t)$ and the vacuum region $\Omega_{-}(t)$. Such a model requires that \eqref{MHD} holds in the plasma region $\Omega_+(t)$ and the pre-Maxwell system holds in vacuum $\Omega_{-}(t)$:
\begin{equation}\label{vacuum}
\curl\hat{B}=\mathbf{0},~~~\dive\hat{B}=0.
\end{equation}  
On the interface $\Gamma(t)$, the perfect conducting condition is required that there is no jump in the \textit{the normal component}:
\begin{equation}\label{interface}
B\cdot n=\hat{B}\cdot n=0,~~~[P]:=p+\frac12|B|^2-\frac12|\hat{B}|^2=\sigma\mathcal{H}
\end{equation}  where $n$ is the exterior unit normal to $\Gamma(t)$.  Finally, there is a rigid wall $W$ wrapping the vacuum region on which the following boundary condition holds
\[
\hat{B}\times n_w=\mathcal{J}
\] where $\mathcal{J}$ is the given outer surface current density (as an external input of energy) and $n_w$ is the exterior unit normal of $W$. Note that for the ideal MHD equations, $B\cdot n=0$ on $\p\DD_t$ and $\di B=0$ in $\DD_t$ should be the constraints on initial data and these constraints propagate. See more details in \cite[Chapter 4, 6]{MHDphy}. 

\begin{rmk} We also note that, in the study of MHD surface waves in the free-interface models, the effect of surface tension is crucially important for modeling MHD flows in liquid metals, e.g., the liquid-metal film flows, jets and droplets, etc. See also \cite{MHDSTphy1,MHDSTphy2,MHDSTphy3} and references therein. Even if we consider the MHD flows in astrophysical plasmas, where the surface tension effect and magnetic diffusion are usually neglected, it is still useful to keep surface tension as a stabilization effect in numerical simulations of the magnetic Rayleigh-Taylor instability \cite{MHDSTphy4, MHDSTphy5}.
\end{rmk}
In this manuscript, we consider the case that $\hat{B}=\mathbf{0}$, i.e., we neglect the magnetic field in the vacuum. It characterizes the free-surface motion of an isolated liquid plasma under the influence of surface tension.

\subsubsection{Review of previous results}

In the absence of magnetic field $B$, the system \eqref{MHD}-\eqref{MHDB} is reduced to the free-boundary incompressible Euler equations. The study of free-surface incompressible Euler equations has blossomed in the past several decades. In the case of no surface tension ($\sigma=0$), the first breakthrough is Wu \cite{wu1997LWPww,wu1999LWPww} in which she proved the local well-posedness (LWP) for the irrotational case without surface tension. See also \cite{ambrose05,lannes2005ww,mz2009,alazard2011st,alazard2014cauchy} for the LWP with or without surface tension.  In the case of nonzero vorticity, Christodoulou-Lindblad \cite{christodoulou2000motion} first proved the a priori estimates and then Lindblad \cite{lindblad2003,lindblad2005well} proved the LWP by using Nash-Moser iteration. Later Coutand-Shkoller \cite{coutand2007LWP,coutand2010LWP} proved the LWP by using tangential smoothing and the energy estimates without loss of regularity in the case of both $\sigma=0$ and $\sigma>0$. See also Zhang-Zhang \cite{zhangzhang08Euler} for the study of incompressible water wave. In the case of nonzero surface tension, we refer to \cite{Schweizer05,coutand2007LWP,coutand2008VX,shatah2008a,shatah2008b,shatah2011} for LWP, and \cite{ignatova2016,DK2017,DKT} for low regularity estimates. 

However, the study of free-boundary MHD equations is far less developed as opposed to Euler equations. \textit{The strong coupling between the magnetic field and the motion of fluid destroys some good properties of Euler equations such as the propagation of the irrotational assumption.} Most of the known results focus on the case of zero surface tension. When the surface tension is neglected, extra stabilization such as the Rayleigh-Taylor sign condition is required. Lee \cite{leeMHD1,leeMHD2} proved the LWP for viscous-resistive MHD and the vanishing viscosity-resistivity limit. For the free-boundary problem of ideal incompressible MHD under the Rayleigh-Taylor sign condition, Hao-Luo \cite{haoluo2014} proved the a priori estimates and \cite{haoluo2019} proved the linearized LWP. Then the first author and Wang \cite{gu2016construction} proved the LWP. The second and the third authors \cite{luozhangMHD2.5} proved the minimal regularity $H^{\frac52+\eps}$ estimates for a small fluid domain. For the plasma-vacuum model under the Rayleigh-Taylor sign condition, Hao \cite{hao2017} proved the a priori estimates when $\mathcal{J}=\mathbf{0}$ and the first author \cite{guaxi1,guaxi2} proved the LWP for the axisymmetric case with a non-zero vacuum magnetic field in a non-simply connected domain. We note that there is another non-collinearity condition\footnote{Such condition comes from the study of the stability of current-vortex sheet which is a two-fluid (plasma-plasma) model in free-boundary MHD.} $|B\times\hat{B}|\geq c_0>0$ which gives extra $1/2$-order regularity of the free interface than the Rayleigh-Taylor sign condition for the plasma-vacuum model. Under this condition, Morando-Trebeschi-Trakhinin \cite{iMHDlinear} proved the LWP for the linearized plasma-vacuum system and Sun-Wang-Zhang \cite{sun2017well} proved the nonlinear LWP. Coulombo-Morando-Secchi-Trebeschi \cite{iMHDVX} proved the a priori estimates for 3D incompressible current-vortex sheets and Sun-Wang-Zhang \cite{sun2015well} proved the LWP. So far, the energy estimates and well-posedness of the plasma-vacuum model in general cases (i.e., $\mathcal{J}\neq 0$ and without axisymmetric assumption) under the Rayleigh-Taylor sign condition are still open problems. 

In the case of nonzero surface tension, there are very few results for the free-boundary MHD system and most previous works focus on the resistive or viscous MHD. To the best of our knowledge, The second and the third authors' previous work \cite{luozhangMHDST3.5} which proved the $H^{7/2}$ a priori estimate is the only available result for incompressible ideal MHD with surface tension. We also refer to Chen-Ding \cite{chendingMHDlimit} for the inviscid-non-resistive limit under the condition $B|_{\p\DD_t}=\mathbf{0}$, Wang-Xin \cite{wangxinMHD} for GWP of incompressible resistive MHD around a transversal uniform magnetic field, and Padula-Solonnikov \cite{Solonnikov}, Guo-Zeng-Ni \cite{GuoMHDSTviscous} for incompressible viscous-resistive MHD. 

Finally, for compressible MHD, we refer to Secchi-Trakhinin \cite{secchi2013well} for the LWP of plasma-vacuum model under non-collinearity condition, and Chen-Wang \cite{chengqMHD}, Trakhinin \cite{trakhininMHD2009} and Wang-Yu \cite{wangyuMHD2d} for compressible current-vortex sheets in 3D and 2D. Very recently, Trakhinin-Wang proved the LWP of free-boundary compressible ideal MHD under Rayleigh-Taylor sign condition \cite{trakhininMHD2020} or with surface tension \cite{trakhininMHD2021}. All these results are proved by Nash-Moser iteration and thus there is no energy estimate without regularity loss. The third author proved the LWP \cite{ZhangCRMHD2} and the incompressible limit \cite{ZhangCRMHD1} of compressible resistive MHD under the Rayleigh-Taylor sign condition with energy estimates of no regularity loss. Finding suitable energy estimates without regularity loss for compressible ideal MHD with or without surface tension is also a wide-open problem. The plasma-vacuum model in compressible MHD under the Rayleigh-Taylor sign condition is also unsolved. See Trakhinin \cite{trakhininMHD2016} for detailed discussion.

In this manuscript, we prove the local well-posedness with energy estimates of no regularity loss for the free-boundary problem in incompressible ideal MHD with surface tension. Our result is a necessary step to study the plasma-vacuum model under the influence of surface tension, which is an original theoretical model in the study of confined plasma in both laboratory and astrophysical MHD.

\subsection{Reformulation in Lagrangian coordinates}

We reformulate the MHD equations in Lagrangian coordinates and thus the free-surface domain becomes fixed. Let $\Omega\subseteq\R^3$ be a bounded domain. Denoting coordinates on $\Omega$ by $y=(y_1,y_2,y_3)$, we define $\eta:[0,T]\times \Omega\to\DD$ to be the flow map of the velocity $u$, i.e., 
\begin{equation}
\p_t \eta (t,y)=u(t,\eta(t,y)),\q
\eta(0,y)=y.
\end{equation}
We introduce the Lagrangian velocity, magnetic field and pressure respectively by
\begin{equation}
v(t,y)=u(t,\eta(t,y)),\q
b(t,y)=B(t,\eta(t,y)),\q
q(t,y)=P(t,\eta(t,y)).
\end{equation}
Let $\p$ be the spatial derivative with respect to $y$ variable. We introduce the cofactor matrix $a=[\p\eta]^{-1}$. Specifically, $a^{\mu\alpha}=\frac{\p y^{\mu}}{\p x^\alpha}$, where $x^\alpha(t,y)=\eta^\alpha(t,y)$. Also, we define $J:=\det[\p\eta]$, which is well-defined since $\eta(t,\cdot)$ is almost the identity map when $t$ is sufficiently small. It's worth noting that $a$ verifies the Piola's identity and $J=1$ in the incompressible case, i.e., 
\begin{equation}\label{piola}
\p_\mu a^{\mu\alpha} =\p_\mu (Ja^{\mu\alpha}) = 0\text{ and }J=1.
\end{equation}
Here, the Einstein summation convention is used for repeated upper and lower indices. Above and throughout, all Greek indices range over 1, 2, 3, and the Latin indices range over 1, 2. 

For the sake of simplicity and clean notation, here we consider the model case when 
\begin{equation}
\Omega=\T^2\times (0,1),
\label{Omega}
\end{equation}
 where $ \partial\Omega=\Gamma_0\cup\Gamma$ and $\Gamma=\T^2\times \{1\}$ is the top (moving) boundary, $\Gamma_0=\T^2\times\{0\}$ is the fixed bottom. We mention here that $\Omega$ is known as the reference domain, which allows us to work in one coordinate patch. We refer the interested readers to \cite{coutand2007LWP} for detailed discussions. Using a partition of unity, e.g., \cite{DKT}, a general domain can also be treated with the same tools we shall present. However, choosing $\Omega$ as above allows us to focus on the real issues of the problem without being distracted by the cumbersomeness of the partition of unity. Let $N$ stand for the outward unit normal of $\p\Omega$. In particular, we have $N=(0,0,-1)$ on $\Gamma_0$ and $N=(0,0,1)$ on $\Gamma$. 

Under this setting, the system \eqref{MHD}-\eqref{MHDB} can be reformulated as:
\begin{equation}\label{MHDL1}
\begin{cases}
\partial_tv_{\alpha}-b_{\beta}a^{\mu\beta}\partial_{\mu}b_{\alpha}+a^{\mu}_{\alpha}\partial_{\mu}q=0~~~& \text{in}~[0,T]\times\Omega;\\
\partial_t b_{\alpha}-b_{\beta}a^{\mu\beta}\partial_{\mu}v_{\alpha}=0~~~&\text{in}~[0,T]\times \Omega ;\\
a^{\mu\alpha}\partial_{\mu}v_{\alpha}=0,~~a^{\mu\alpha}\partial_{\mu}b_{\alpha}=0~~~&\text{in}~[0,T]\times\Omega;\\
v^3=b^3=0~~~&\text{on}~\Gamma_0;\\
a^{\mu\alpha}N_{\mu}q+\sigma(\sqrt{g}\Delta_g \eta^{\alpha})=0 ~~~&\text{on}~\Gamma;\\
a^{\mu\nu}b_{\nu}N_{\mu}=0 ~~~&\text{on}~\Gamma,\\
\end{cases}
\end{equation}
where $N$ is the unit outer normal vector to $\p\Omega$,  and $\Delta_g$ is the Laplacian of the metric $g_{ij}$ induced on $\Gamma(t) =\eta(t, \Gamma)$ by the embedding $\eta$. Specifically,  we have:
\begin{equation}\label{gij}
g_{ij}=\TP_i\eta^{\mu}\TP_j\eta_{\mu},~\Delta_g(\cdot)=\frac{1}{\sqrt{g}}\TP_i(\sqrt{g}g^{ij}\TP_j(\cdot)),\text{ where } g:=\det (g_{ij}).
\end{equation} 
Here, we use $\TP$ to emphasis that the derivative is tangential to $\Gamma$. In particular, $\TP=(\TP_1,\TP_2)=(\p_1, \p_2)$.

By the second equation of \eqref{MHDL1} and the divergence-free condition on $b$, we get $\p_t(a^{\mu\alpha}b_{\mu})=0$ which implies $a^{\mu\alpha}b_{\mu}=b_0^{\alpha}$ and thus $b^{\alpha}=b_0^{\mu}\p_\mu \eta^{\alpha}=(b_0\cdot\p)\eta^{\alpha}$. See Gu-Wang \cite[(1.13)-(1.15)]{gu2016construction} for the proof. Therefore, the system \eqref{MHDL1} can be equivalently written as the following system of $(\eta,v,q)$
\begin{equation}\label{MHDL}
\begin{cases}
\p_t\eta=v~~~& \text{in}~[0,T]\times\Omega;\\
\p_tv-\bp^2\eta+\pa q=0~~~& \text{in}~[0,T]\times\Omega;\\
\dive_a v=0,&\text{in}~[0,T]\times\Omega;\\
\dive b_0=0~~~&\text{in}~\{t=0\}\times \Omega ;\\
v^3=b_0^3=0~~~&\text{on}~\Gamma_0;\\
a^{3\alpha}q+\sigma(\sqrt{g}\Delta_g \eta^{\alpha})=0 ~~~&\text{on}~\Gamma;\\
b_0^3=0 ~~~&\text{on}~\Gamma,\\
(\eta,v)=(Id,v_0)~~~&\text{on}~\{t=0\}{\times}\overline{\Omega}.
\end{cases}
\end{equation}
\begin{nota}  The differential operator $\pa:=(\pa^1, \pa^2, \pa^3)$ with $\pa^{\alpha}:=a^{\mu\alpha}\p_\mu$, and for a smooth vector field $X$, we denote by $\dive_a X:= \nab_a \cdot X = a^{\mu\alpha} \p_\mu X_\alpha$ the Eulerian divergence of $X$ and by $\di X :=\p\cdot X=\delta^{\mu\alpha} \p_\mu X_\alpha$ the flat divergence. 
\end{nota}
\begin{rmk}
The initial data of $q$ is determined by $v_0$ and $b_0$. In particular, $q_0$ satisfies an elliptic equation 
\begin{align}\label{qdata}
\begin{cases}
-\Delta q_0=(\p v_0)(\p v_0)-(\p b_0)(\p b_0),~~\text{in}\,\,\Omega,\\
q_0 = 0, ~~\text{on}\,\,\Gamma, \\
\frac{\p q_0}{\p N} = 0, ~~\text{on}\,\,\Gamma_0.
\end{cases}
\end{align}
The boundary condition on $\Gamma$ is derived by restricting the boundary condition $a^{3\alpha} q + \sigma (\sqrt{g}\Delta_g \eta^\alpha)=0$ at $t=0$. Then it becomes
$
 \delta^{3\alpha} q_0+\sigma \TL \eta_0^\alpha =0, 
$
where $\TL := \TP_1^2+\TP_2^2$, and this yields $q_0=0$ on $\Gamma$ since $\TP^2 \eta_0^3 =0$.  On the other hand, the boundary condition on $\Gamma_0$ is derived from by restricting $\p_tv-\bp^2\eta+\pa q=0$ on $\{t=0\}\times \Gamma_0$  and then taking the normal component, where we have used the fact that $a^{13}=a^{23}=0$ and $a^{33}=1$ on $\Gamma_0$. 
\end{rmk}

\subsection{Main result}
We prove the local well-posedness of \eqref{MHDL} in the presenting manuscript. We denote $\|f\|_{s}: = \|f(t,\cdot)\|_{H^s(\Omega)}$ for any function $f(t,y)\text{ on }[0,T]\times\Omega$ and $|f|_{s}: = |f(t,\cdot)|_{H^s(\Gamma)}$ for any function $f(t,y)\text{ on }[0,T]\times\Gamma$. Let $\Pi$ be the canonical normal projection defined on the tangent bundle of the moving interface. Our main result is:

\begin{thm}\label{lwp}
Let $v_0\in H^{4.5}(\Omega)\cap H^5(\Gamma)$ and $b_0\in H^{4.5}(\Omega)$ be divergence-free vector fields with $(b_0\cdot N)|_{\Gamma}=0$, and define $q_0$ as in \eqref{qdata}. Then there exists some $T>0$, only depending on $\sigma,v_0,b_0$, such that the system \eqref{MHDL} with initial data $(v_0,b_0,q_0)$ has a unique strong solution $(\eta,v,q)$ with the energy estimates
\begin{equation}\label{lwpenergy0}
\sup_{0\leq t\leq T}E(t)\leq \mathcal{C},
\end{equation}where $\mathcal{C}$ is a constant depends on $\|v_0\|_{4.5}, \|b_0\|_{4.5}$, $|v_0|_5$, and
\begin{equation}\label{lwpenergy}
\begin{aligned}
E(t)&:=\left\|\eta(t)\right\|_{4.5}^2+\sum_{j=0}^3\left\|\left(\p_t^jv(t),\p_t^j\bp\eta(t)\right)\right\|_{4.5-j}^2+\left\|\left(\p_t^4 v(t), \p_t^4 \bp\eta(t)\right)\right\|_{0}^2\\
&+\sum_{j=0}^3\left|\TP\left(\Pi\TP^{3-j}\p_t^j v(t)\right)\right|_0^2+\left|\TP\big(\Pi\TP^3 \bp\eta(t)\big)\right|_0^2.
\end{aligned}
\end{equation}
Moreover, the $H^5(\Gamma)$-regularity of $v$ on the free-surface can also be recovered, in the sense that there exists some $0<T_1<T$, depending only on $\sigma^{-1},v_0,b_0$, such that
\begin{equation}\label{vhigh}
\sup_{0\leq t\leq T_1}|\eta(t)|_{5}^2+|v(t)|_{5}^2\leq C(\sigma^{-1}, \|v_0\|_{4.5}, \|b_0\|_{4.5}).
\end{equation}
\end{thm}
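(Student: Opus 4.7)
My approach follows the artificial viscosity method of Coutand-Shkoller \cite{coutand2007LWP}: construct a $\kk$-parameter regularized problem, solve it for each fixed $\kk>0$ on a possibly $\kk$-dependent time interval, derive $\kk$-uniform estimates that mirror the a priori bounds of \cite{luozhangMHDST3.5}, and finally pass to the limit $\kk\to 0$. Concretely, let $\rr$ denote a horizontal convolution mollifier on $\T^2$ and set $\ek:=\rr^2\eta$, $\ak:=[\p\ek]^{-1}$, $\gk_{ij}:=\TP_i\ek^\mu\TP_j\ek_\mu$. I would work with the approximate system
\begin{equation*}
\p_t v-\bp(\rr^2\bp\eta)+\pak q=0,\q \diva v=0\text{ in }\Omega,\q \ak^{3\alpha}q+\sigma\sqrt{\gk}\,\Delta_{\gk}\ek^\alpha=0\text{ on }\Gamma,
\end{equation*}
completed by $\p_t\eta=v$ in $\Omega$ and the usual slip conditions on $\Gamma_0$. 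The two factors of $\rr$ in the Lorentz term are \emph{essential}: they restore the cancellation $\langle\bp(\rr^2\bp\eta),v\rangle_{L^2(\Omega)}=-\tfrac12\tfrac{d}{dt}\|\rr\bp\eta\|_{L^2}^2$ that drives the magnetic energy identity of \cite{luozhangMHDST3.5}. A single-smoothing of $\bp^2\eta$ would break the $v$-$b$ symmetry; this is precisely the obstruction flagged in the abstract.

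For fixed $\kk>0$, existence on a short interval $[0,T_\kk]$ is obtained by a Picard iteration. Given $v^\mm$ in a high-order ball, I would define $\eta^\mmn$ by time-integration, form $\ek^\mmn$, $\ak^\mmn$, $\gk^\mmn$ by smoothing and algebra, obtain $q^\mmn$ by solving a linear elliptic Dirichlet problem whose trace on $\Gamma$ is read off from the surface-tension condition, and update $v^\mmn$ from the momentum equation. The horizontal smoothing supplies the compactness needed to close the fixed-point argument in $H^{4.5}(\Omega)$.

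The heart of the argument, and where I expect the main obstacle, is propagating the functional $E(t)$ of \eqref{lwpenergy} with bounds independent of $\kk$. Following \cite{luozhangMHDST3.5}, I would apply $\p_t^k\TP^{3-k}$ for $0\le k\le 4$ to the momentum equation and test against $\p_t^{k+1}v$: the Lorentz flux is handled by the symmetric identity above, while differentiating the surface-tension condition produces the coercive boundary term $\sigma|\TP(\Pi\TP^{3-k}\p_t^k v)|_0^2$ matching the last line of \eqref{lwpenergy}. The new commutators $[\rr,\p_t]$, $[\rr^2,\bp]$ and $[\rr,\TP]$ must be tracked carefully; since $\rr$ is a horizontal Fourier multiplier, the latter two either vanish or are lower order, and $[\rr,\p_t]$ reduces to expressions controllable by $\|\p_t\eta\|$-type norms. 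The fact that $\diva v=0$ is not preserved exactly under the smoothing forces a pressure correction controlled by an elliptic estimate on $\ak^{\mu\alpha}\p_\mu v_\alpha$. With these $\kk$-uniform bounds in hand, Aubin-Lions compactness supplies the strong convergence of $\p\eta^\kk$ required to pass limits through the nonlinearities, and uniqueness in this class follows from a direct $L^2$-energy estimate on the difference of two solutions, with surface tension again furnishing the boundary coercivity.

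The $H^5(\Gamma)$ regularity \eqref{vhigh} is not directly built into $E(t)$, which only controls $\TP(\Pi\TP^3 v)$ on $\Gamma$. To recover it, I would use the surface-tension boundary condition $a^{3\alpha}q=-\sigma\sqrt{g}\,\Delta_g\eta^\alpha$: applying $\TP^3$ and invoking elliptic regularity for the second-order tangential operator $\Delta_g$ yields $|\eta|_5\lesssim \sigma^{-1}|q|_3+\text{l.o.t.}$, with $|q|_3$ controlled by trace from $E(t)$; differentiating in time and using $v=\p_t\eta$ then gives the bound on $|v|_5$. Because the estimate depends on $\sigma^{-1}$, it is only valid on a possibly shorter interval $[0,T_1]\subseteq[0,T]$, consistent with the statement of the theorem.
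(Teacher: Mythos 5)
Your overall architecture (tangential smoothing, $\kk$-uniform estimates mirroring \cite{luozhangMHDST3.5}, passage to the limit) is the right family of ideas, but the proposal is missing the single ingredient the paper identifies as indispensable: the \emph{artificial viscosity} term $\kk\left((1-\TL)(v\cdot\nnk)\right)\nnk^\alpha$ added to the smoothed Young--Laplace condition. Your approximate boundary condition $\ak^{3\alpha}q+\sigma\sqrt{\gk}\Delta_{\gk}\ek^\alpha=0$ is purely the mollified surface-tension law, and with it the $\p_t^4$-tangential estimate cannot be closed: after four time derivatives, $\p_t^4 v$ and $\p_t^4\bp\eta$ live only in $L^2(\Omega)$, there is no trace, and no elliptic estimate for $\p_t^4 q$ is available; the delicate boundary cancellation that saves the original (unsmoothed) problem in \cite{DK2017,luozhangMHDST3.5} is destroyed precisely by the tangential mollification. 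The viscosity term generates the $\kk$-weighted coercive quantities $\int_0^T|\sqrt{\kk}\,\p_t^4 v\cdot\nnk|_1^2$ etc.\ (the energies $E_\kk^{(2)}$, $E_\kk^{(3)}$ of the paper), which are exactly what absorbs these otherwise uncontrollable boundary and interior error terms. Without it your step ``apply $\p_t^k\TP^{3-k}$ and test'' fails at $k=4$.

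Two further steps as written would not go through. First, defining $\ek:=\rr^2\eta$ by interior tangential convolution is incompatible with the magnetic derivative: one must control $\|[\lkk^2,\bp]\eta\|_{4.5}$, and $\bp=b_0^\mu\p_\mu$ contains the normal derivative $b_0^3\p_3$ in the interior, which does not commute acceptably with a purely horizontal mollifier. This is why the paper mollifies only on $\Gamma$ and extends harmonically (system \eqref{tangential smoothing0}); your double smoothing of the Lorentz term does not cure this, and is in fact unnecessary --- the paper keeps $\bp^2\eta$ unsmoothed and the cancellation $\int\p_t^4 v\cdot\p_t^4\bp^2\eta+\int\p_t^4\bp\eta\cdot\p_t^4\bp v=0$ holds exactly since $b_0\cdot N=0$ and $\dive b_0=0$. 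Second, recovering $q^{(m+1)}$ from a \emph{Dirichlet} problem with trace read off the surface-tension condition loses too many derivatives: the Dirichlet datum involves $\Delta_g\eta|_\Gamma\in H^{2}(\Gamma)$ at best, giving only $q\in H^{2.5}(\Omega)$ rather than $H^{4.5}(\Omega)$. The paper instead derives a Neumann problem for $q$ (at the cost of carrying all time derivatives of $v$ in the energy), and at the existence stage for the linearized system it cannot use a naive fixed point at all --- there is no good equation for $q$ there --- which is why it resorts to penalization ($q_\lambda=-\lambda^{-1}\divAr w_\lambda$) plus Galerkin, then passes $\lambda\to0$. Your sketch of the fixed point would need to be replaced by this (or an equivalent) construction. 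The final $H^5(\Gamma)$ recovery via the boundary elliptic equation does match the paper's argument.
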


\begin{rmk}[\textbf{Smoothing effect of $b_0\cdot \p$}] It can be seen that in \eqref{lwpenergy} $v$ and $\bp\eta$ are of the same interior regularity (i.e., $H^{4.5}(\Omega)$). This suggests that $\bp$ and $\p_t$ behave the same when falling on the flow map $\eta$.  This observation turns out to be very important when studying the energy of the approximate equations \eqref{MHDLkk0} defined below. 
\end{rmk}

\subsection{Strategy of the proof}
\subsubsection{Necessity of the tangential smoothing}\label{1smooth}
 In \cite{luozhangMHDST3.5}, the second and third authors proved the a priori estimates of \eqref{MHDL}. However, it is often highly nontrivial to prove the local well-posedness for a free-boundary problem of an inviscid fluid, especially when equipped with the Young-Laplace boundary condition, by a simple iteration scheme and fixed-point argument for the linearized equations. The reason is that the linearization breaks the subtle cancellation structure on the free surface and thus causes the loss of tangential derivatives of the flow map $\eta$, which also occurs for incompressible Euler equations with surface tension. 

In their remarkable work \cite{coutand2007LWP}, Coutand and Shkoller introduced an approximate system in the Lagrangian coordinates by smoothing the nonlinear coefficients in the tangential direction. This can be adapted to study the MHD equations and the tangential smoothing preserves the essential transport-type structure of the original equations. Specifically,  we define $\lkk$ to be the standard mollifier with parameter $\kk>0$ on $\R^2$ as in \eqref{lkk0}. Let $\ek:=\lkk^2 \eta$ and $\ak=[\p\ek]^{-1}$. Then we set the nonlinear $\kk$-approximation problem by replacing $a$ with $\ak$. However, such construction does not apply to MHD because we also need to control $\|[\lkk^2,\bp]\eta\|_{4.5}$ in which there is a normal derivative $b_0^3\p_3$ that is not compatible with the tangential mollification in the interior. Motivated by Gu-Wang \cite{gu2016construction}, we first mollify the flow map on the boundary, then extend it into the interior by the harmonic extension, i.e., 
\begin{equation} \label{tangential smoothing0}
\begin{cases}
-\Delta \ek=-\Delta\eta~~~&\text{ in }\Omega,\\
\ek=\lkk^2\eta~~~&\text{ on }\Gamma.
\end{cases}
\end{equation}

Define $\ak:=[\p\ek]^{-1},~\Jk:=\det[\p\ek]$ and $\Ak:=\Jk\ak$, then we have the Piola's identity $\p_\mu\Ak^{\mu\alpha}=0$. The nonlinear approximate system is defined to be
\begin{equation}\label{MHDLkk0}
\begin{cases}
\p_t\eta=v~~~& \text{in}~[0,T]\times\Omega;\\
\p_tv-\bp^2\eta+\pAk q=0~~~& \text{in}~[0,T]\times\Omega;\\
\diva v=0,&\text{in}~[0,T]\times\Omega;\\
\dive b_0=0~~~&\text{in}~\{t=0\}\times \Omega ;\\
v^3=b_0^3=0~~~&\text{on}~\Gamma_0;\\
\Ak^{3\alpha}q = -\sigma \sqrt{g}(\Delta_g \eta\cdot \nnk )\nnk^\alpha +\kk\left((1-\TL) (v\cdot \nnk)\right)\nnk^\alpha~~~&\text{on}~\Gamma;\\
b_0^3=0 ~~~&\text{on}~\Gamma,\\
(\eta,v)=(Id,v_0)~~~&\text{in}~\{t=0\}{\times}\overline{\Omega}.
\end{cases}
\end{equation}
In this paper, we will
(i). derive the uniform-in-$\kk$ a priori estimates of the system \eqref{MHDLkk0}, and then
 (ii). solve the nonlinear $\kk$-approximation system \eqref{MHDLkk0}.

\subsubsection{Necessity of the artificial viscosity}\label{2viscosity}

There is an artificial viscosity term $\kk\left((1-\TL) (v\cdot \nnk)\right)\nnk^\alpha$ in the smoothed surface tension equation on the boundary. This was first introduced by Coutand-Shkoller in \cite{coutand2007LWP} where the authors mentioned that the artificial viscosity term appears to be necessary to prove the existence of an inviscid fluid with non-trivial vorticity and surface tension. This term also appears in the subsequent work that studies the free-surface fluid with surface tension, e.g.,  Cheng-Coutand-Shkoller \cite{coutand2008VX} for the vortex sheets, Coutand-Hole-Shkoller \cite{coutand2013LWP} for the compressible Euler, and very recently Trakhinin-Wang \cite{trakhininMHD2021} for the compressible MHD.

\begin{rmk}
Very recently, the first author and Lei \cite{GuLeielastoST} proved the LWP of incompressible elastodynamics with surface tension by proving the inviscid limit of the visco-elastodynamics system in standard Sobolev spaces. We also note that the inviscid-non-resistive limit of free-boundary MHD (under $B|_{\p\DD_t}=\mathbf{0}$) was recently proved by Chen-Ding \cite{chendingMHDlimit} in co-normal Sobolev spaces. However, the analogous inviscid limit in standard Sobolev space does not apply to MHD due to the existence of MHD boundary layers. 
\end{rmk}

An essential reason for introducing such an artificial viscosity term is that the presence of surface tension forces us to control all of the time derivatives. In particular, the pressure $q$ satisfies an elliptic equation and it appears that one can only get control of it by considering the Neumann boundary condition instead of the Dirichlet boundary condition due to the presence of surface tension. The Neumann boundary condition contains the time derivative of $v$, and thus we have to include the time derivatives in our energy. 

However, the full-time derivatives of $v$ and $\bp\eta$ only have $L^2(\Omega)$ regularity which introduces two new difficulties. First, we cannot get estimates of the full-time derivatives of $q$ via the elliptic equation due to the low spatial regularity. Second, we do not have any control for the terms containing full-time derivatives on the boundary due to the failure of the Sobolev trace lemma. For the original system, one can use the subtle cancellation structure developed in \cite{DK2017,luozhangMHDST3.5} to resolve this difficulty. But such cancellation structure no longer holds for the nonlinear $\kk$-approximate problem due to the presence of tangential smoothing. Therefore, introducing the artificial viscosity term could produce $\kk$-weighted higher-order terms on the boundary, which enables us to finish the energy control.

\begin{rmk}
The Young-Laplace boundary condition only gives us the information in the Eulerian normal direction. Therefore, the artificial viscosity can only be imposed in the smoothed Eulerian normal direction $\kk\left((1-\TL)(v\cdot\nnk)\right)\nnk^{\alpha}$ instead of all the components, otherwise the system would be over-determined.
\end{rmk}

\subsubsection{Difference from the case without surface tension}\label{3GW}

The first author and Wang \cite{gu2016construction} proved the LWP of incompressible MHD without surface tension, in which the pressure $q$ can be controlled by the elliptic equation with Dirichlet (zero) boundary condition, and thus one can avoid the estimates of all time derivatives which turn out to be very complicated in the presenting manuscript. This tells an essential difference from the case without the surface tension. 

On the other hand, as mentioned in \cite{coutand2007LWP, DK2017,luozhangMHDST3.5}, surface tension has a stronger stabilization effect than the Rayleigh-Taylor sign condition in the case without surface tension. The presence of surface tension allows us to control the boundary norms of the normal component of $v$ and $\bp\eta$ by comparing with the corresponding Eulerian normal projections instead of using the normal trace theorem to reduce to interior tangential estimates. We refer Section \ref{sect bdy estimate 3.3} for details. This property allows us to gain extra $1/2$ derivatives in the interior, and there is no need to introduce the Alinhac good unknowns and correction terms as in \cite{gu2016construction}.

\subsubsection{Illustration on the energy functional}\label{4energy}
Let $\Pi$ be the canonical normal project defined on the tangent bundle of the moving interface and $\nnk$ be the (Eulerian) unit normal (We refer to Lemma \ref{geometric} for the precise definition). 
The energy functional of the nonlinear approximate problem \eqref{MHDLkk0} is defined to be
\[
E_\kk=E_{\kk}^{(1)}+E_{\kk}^{(2)}+E_{\kk}^{(3)}, 
\]
where
\begin{align*}
E_\kk^{(1)}:=&\left\|\eta(\kk)\right\|_{4.5}^2+\sum_{j=0}^{3}\left\|\left(\p_t^{j}v(\kk),\p_t^{j}\bp\eta(\kk)\right)\right\|_{4.5-j}^2+\left\|\left(\p_t^4 v(\kk),\p_t^4 \bp\eta(\kk)\right)\right\|_{0}^2\\
&+\sum_{j=0}^{3}\left|\TP\left(\Pi\TP^{3-j}\p_t^j v(\kk)\right)\right|_0^2+\left|\TP\big(\Pi\TP^3 \bp\eta(\kk)\big)\right|_0^2, \\
E_{\kk}^{(2)}:= &\sigma\int_0^T\bigg(\sum_{j=1}^{4}\left|\sqrt{\kk}\p_t^j v(\kk)\cdot\nnk(\kk)\right|_{5-j}^2+\left|\sqrt{\kk}\bp v(\kk) \cdot \nnk(\kk)\right|_4^2\bigg)\dt,\nonumber\\
E_{\kk}^{(3)}:=&\sum_{k=0}^4\int_0^T \Big (\big\|\sqrt{\kk} \p_t^{k}v(\kk)\big\|_{5.5-k}^2+\big\|\sqrt{\kk} \p_t^k \bp \eta(\kk)\big\|_{5.5-k}^2\Big)\dt.
\end{align*}

The energy constructed above looks much more complicated than \eqref{lwpenergy}, but it is quite natural. First, $E_\kk^{(1)}$ constitutes the non-weighted energies that are needed to close the a priori estimate for the MHD equations without the artificial viscosity (cf. Luo-Zhang \cite{luozhangMHDST3.5}). Then $E_{\kk}^{(2)}$ consists of the $\kk$-weighted higher-order energy terms produced by the artificial viscosity when dealing with the tangential estimates. 

Besides, extra error terms are generated when all the derivatives fall on the smoothed Eulerian normal $\nnk$ in the construction of $E_{\kk}^{(2)}$. Since $E_\kk^{(2)}$ only gives us higher-order control of the normal component instead of all components. Most of the top order error terms should be treated by moving them to the interior with the help of the Sobolev trace lemma, and we use $E_{\kk}^{(3)}$ to record all of them. Nevertheless, due to the strong coupling structure (see Subsection \ref{5CS} for more details) between the velocity and the magnetic field, the terms in $E_\kk^{(3)}$ must be controlled together via the Hodge-type div-curl estimate, and thus we have to include the associated magnetic terms in $E_\kk^{(3)}$ as well. 

When closing the energy estimates of $E_{\kk}^{(2)}$ and $E_{\kk}^{(3)}$, and $\TP^3\p_t$-tangential estimates, one needs the control of $\sqrt{\kk}$-weighted $H^5(\Gamma)$-norms of $\eta,v$ and $\bp\eta$ recorded in Lemma \ref{super control}. These $\sqrt{\kk}$-weighted bounds can be established by considering $\TP^4,\TP^4\p_t,\TP^4\bp$-differentiated smoothed Young-Laplace boundary condition. See also Coutand-Shkoller \cite[Lemma 12.6]{coutand2007LWP}.

\begin{rmk}
In the proof of Lemma \ref{super control}, the self-adjointness of $\lkk$ is used to keep the structure and close the energy estimates. This is the reason that we need to mollify $\eta$ twice in \eqref{tangential smoothing0}. 
\end{rmk}

\subsubsection{Difference between Euler equations and MHD with surface tension}\label{5CS}

As mentioned in \cite{luozhangMHD2.5,luozhangMHDST3.5}, \textit{the Cauchy invariance for the Euler equations no longer holds for MHD equations, which makes it impossible to get a higher regularity of the flow map $\eta$ than that of the velocity $v$.} Without such property, one cannot control the $\TP^4$-tangential energy estimate directly as what Coutand-Shkoller did in \cite{coutand2007LWP} for the incompressible Euler equations. 

 In addition to this, the strong coupling between $v$ and $b=\bp\eta$ yields that the Sobolev norms of their vorticities have to be controlled together (see equation \eqref{curlbeq}). As a consequence, 
the full Sobolev norms of $v$ and $b$ (and their time derivatives) have to be studied simultaneously. 

Finally, the $|\sqrt{\kk}\eta|_6$ regularity for 3D incompressible Euler equations in \cite{coutand2007LWP} cannot be achieved either. But this does not affect the proof for the MHD system unless one wants to get a $H^6(\Gamma)$-posteriori estimates for the flow map $\eta$.

\subsubsection{Penalization method to solve the linearized problem}\label{6penalize}

Finally, it remains to solve the nonlinear approximation problem. With the help of tangential smoothing, it is not difficult for us to finish the iteration from the linearized approximate problem to the nonlinear one. But it is still difficult to solve the linearized approximate problem by the fixed-point argument even if one can get the a priori estimates without the loss of regularity. The reason is that we do not have any suitable equation for $q$ and thus the structure of the linearized system is no longer preserved in the verification of the fixed-point argument. Motivated by \cite{coutand2007LWP}, we use the penalization method to solve the linearized system. We introduce a penalized pressure defined by $q_{\lambda}:=-\lambda^{-1}\divAr w_{\lambda}$ and prove the existence of $L^2$-weak solution to the penalized problem by Galerkin's method. Then we take the weak limit by passing $\lambda\to 0$ to get the weak solution of the linearized approximate problem. Finally, one can prove the weak solution is strong by $H^1$-estimates together with the inverse theorem of div-curl decomposition (cf. Lemma \ref{hodge} (2)).

\begin{rmk}
The penalization method is not needed in the compressible case because the free-boundary compressible MHD is a first-order symmetric hyperbolic system with characteristic boundary conditions and the corresponding linearized problem can be solved by the duality argument in Lax-Phillips \cite{lax60}. We refer to Trakhinin-Wang \cite{trakhininMHD2020,trakhininMHD2021} for details.
\end{rmk}

\begin{rmk}
We cannot directly prove the weak solution of the penalized problem is a strong solution as in \cite{coutand2007LWP} since the divergence part cannot be controlled because of the presence of the magnetic field. That is why we first take the weak limit and then verify the $H^1$-estimates for the linearized system.
\end{rmk}

\begin{rmk}
In the a priori estimates and iteration process of the linearized approximate problem, the energy control is much simpler than the uniform-in-$\kk$ estimates of the nonlinear approximate problem \eqref{MHDLkk} because we no longer require the energy is $\kk$-independent. Therefore, one can use the elliptic estimates for equations with merely BMO-coefficients proved by Dong-Kim \cite{DKell} (see also Disconzi-Kukavica \cite[Proposition 3.4]{DK2017}) to get the boundary control. See Section \ref{sect exist kk-prob} for details.
\end{rmk}

\subsection{Organization of the paper}

The presenting manuscript is organized as follows. In Section \ref{prelemma} we record the lemmas that are repeatedly used in the proof. Then we introduce the nonlinear $\kk$-approximation problem and do the div-curl-boundary estimates in Section \ref{sect nonlinearkk}. The non-weighted energy $E_\kk^{(1)}$ and $\sqrt{\kk}$-weighted boundary norms are treated in Section \ref{tgkk} and $\sqrt{\kk}$-weighted interior norms are treated in Section \ref{sect E3kk}. Then the uniform-in-$\kk$ estimates for the nonlinear $\kk$-approximate problem are closed in Section \ref{close}. In Section \ref{linearlwpkk} we solve the linearized approximate system by penalization method. In Section \ref{sect exist kk-prob} we use Picard iteration to solve the nonlinear $\kk$-approximate problem. Finally, the original system's local well-posedness and energy estimates are established in Section \ref{lwp1}.

\bigskip

The following notations will be frequently used in the rest of this manuscript. 
\\

\noindent\textbf{List of Notations: }
\begin{itemize}
\item $\Omega:=\T^2\times(0,1)$. $\Gamma:=\T^2\times\{1\}$ is the free boundary and $\Gamma_0:=\T^2\times\{0\}$ is the fixed bottom.
\item $\|\cdot\|_{s}$:  We denote $\|f\|_{s}: = \|f(t,\cdot)\|_{H^s(\Omega)}$ for any function $f(t,y)\text{ on }[0,T]\times\Omega$.
\item $\|\cdot\|_{L_t^2H_y^s}$: We denote $\|f\|_{L_t^2H_y^s}:=\|f(t,y)\|_{L^2(0,T;H^s(\Omega))}$ for any function $f(t,y)\text{ on }[0,t]\times\Omega$. 
\item $|\cdot|_{s}$:  We denote $|f|_{s}: = |f(t,\cdot)|_{H^s(\Gamma)}$ for any function $f(t,y)\text{ on }[0,T]\times\Gamma$.
\item $P(\cdot)$:  A generic non-decreasing continuous function in its arguments;
\item $\TP,\TL$: $\TP=\p_1,\p_2$ denotes the tangential derivative and $\TL:=\p_1^2+\p_2^2$ denotes the tangential Laplacian.
\item  For a smooth scalar function $f$, $\nabla^{\alpha}_a f:=a^{\mu\alpha}\p_{\mu} f$. Also, for a smooth vector field $X$,  $\dive_a X:=a^{\mu\alpha}\p_\mu X_{\alpha}$ and $(\curl_aX)_\lambda:=\epsilon_{\lambda\tau\alpha}a^{\mu\tau}\p_\mu X^{\alpha}$, where $\epsilon_{\lambda\tau\alpha}$ is the sign of the 3-permutation $(\lambda\tau\alpha)\in S_3.$
\item Let $X$ be given as above.  The flat divergence and curl are given as $\di X := \delta^{\mu\alpha} \p_\mu X_\alpha$, and $(\curl X)^\la:= \epsilon^{\lambda\tau\alpha}\p_\tau X_\alpha$. 
\end{itemize}

\section{Preliminary lemmas}\label{prelemma}

\subsection{Geometric identities}
The following geometric identities will be used repeatedly (and silently) throughout this manuscript. 

\begin{lem}\label{geometric}
Let $\nn$ be the unit outer normal to $\eta(\Gamma)$, namely $\Pi:=\nn\otimes\nn$, and $\mathcal{T},\mathcal{N}$ be the tangential and normal bundle of $\eta(\Gamma)$ respectively. Denote $\Pi:\mathcal{T}|_{\eta(\Gamma)}\to\mathcal{N}$ to be the canonical normal projection. Denote $\TP_A$ to be $\p_t$ or $\TP_1,\TP_2$. Then we have the identities
\begin{align}
\label{n} \nn := n\circ\eta=&\frac{a^T N}{|a^T N|},\\
\label{an}|a^T N|=&|(a^{31},a^{32},a^{33})|=\sqrt{g},\\
\label{Pi} \Pi^{\alpha}_{\lambda}=&\nn^{\alpha}\nn_{\lambda}=\delta^{\alpha}_{\lambda}-g^{kl}\TP_k\eta_\alpha\TP_l\eta_\lambda,\\
\label{Pipro} \Pi^{\alpha}_{\lambda}=&\Pi^{\alpha}_{\mu}\Pi^{\mu}_{\lambda},\\
\label{lapg0} -\Delta_g(\eta^{\alpha}|_{\Gamma})=&\mathcal{H}\circ\eta \nn^\alpha, \\
\label{lapg}\sqrt{g}\Delta_g\eta^{\alpha}=&\sqrt{g}g^{ij}\Pi^{\alpha}_{\lambda}\TP_i\TP_j\eta^{\lambda}=\sqrt{g}g^{ij}\TP_i\TP_j\eta^{\alpha}-\sqrt{g}g^{ij}g^{kl}\TP_k\eta^{\alpha}\TP_l\eta^{\mu}\TP_i\TP_j\eta_{\mu},\\
\label{tplapg}\TP_{A}(\sqrt{g}\Delta_g\eta^{\alpha})=&\TP_i\bigg(\sqrt{g}g^{ij}\Pi^{\alpha}_{\lambda}\TP_A\TP_j\eta^{\lambda}+\sqrt{g}(g^{ij}g^{kl}-g^{ik}g^{lj})\TP_j\eta^{\alpha}\TP_k\eta_{\lambda}\TP_A\TP_l\eta^{\lambda}\bigg),\\
\label{tpn} \TP_{A} \nn_{\mu} =& -g^{kl}\cp_k \TP_{A} \eta^\tau \nn_{\tau} \cp_l \eta_\mu,\\
\label{pt ggij} \p_t (\sqrt{g}g^{ij})=& \sqrt{g} (g^{ij}g^{kl}-2g^{lj}g^{ik})\TP_k v^\lambda \TP_l \eta_\lambda.
\end{align}
\end{lem}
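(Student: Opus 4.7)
The lemma collects standard identities for the embedded surface $\eta(\Gamma)$ with induced metric $g_{ij}=\TP_i\eta\cdot\TP_j\eta$, so the plan is simply to derive each one from its natural starting point; the only real obstacle is careful bookkeeping of indices together with the identification $a=[\p\eta]^{-1}$ and $J=1$ from \eqref{piola}.

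For the normal and its length, I would start from $a^{\mu\alpha}\p_\mu \eta^\beta = \delta^{\alpha\beta}$ together with $N=(0,0,1)$ on $\Gamma$: the vector $a^T N = (a^{3\alpha})_\alpha$ is orthogonal to the tangent plane spanned by $\TP_1\eta,\TP_2\eta$, hence a scalar multiple of $\nn$, and normalizing gives the stated formula for $\nn$. For the length, note that $(\TP_1\eta\times\TP_2\eta)^\alpha$ equals the cofactor of $\p_\alpha\eta^3$ in $[\p\eta]$, which is $Ja^{3\alpha}=a^{3\alpha}$ by \eqref{piola}; taking norms and using $\sqrt{g}=|\TP_1\eta\times\TP_2\eta|$ yields $|a^T N|=\sqrt{g}$.

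For the projection identities, observe that at each point of $\eta(\Gamma)$ the triple $\{\TP_1\eta,\TP_2\eta,\nn\}$ is a basis of $\R^3$, so the identity operator decomposes as $\delta^\alpha_\lambda = g^{kl}\TP_k\eta^\alpha\TP_l\eta_\lambda + \nn^\alpha\nn_\lambda$, which gives the stated formula for $\Pi$; the idempotence $\Pi^\alpha_\lambda=\Pi^\alpha_\mu\Pi^\mu_\lambda$ is then immediate from $\nn^\mu\nn_\mu=1$. The mean-curvature identity $-\Delta_g\eta^\alpha = (\mathcal{H}\circ\eta)\nn^\alpha$ is the classical statement that the intrinsic Laplacian of the position vector of an isometric embedding equals the mean-curvature vector. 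For the explicit expansion of $\sqrt{g}\Delta_g\eta^\alpha$, I would write $\sqrt{g}\Delta_g\eta^\alpha = \TP_i(\sqrt{g}\,g^{ij}\TP_j\eta^\alpha)$ and substitute $\delta^\alpha_\lambda = \Pi^\alpha_\lambda+g^{kl}\TP_k\eta^\alpha\TP_l\eta_\lambda$ in front of $\TP_j\eta^\lambda$; carrying the derivative out and simplifying produces the stated trilinear form.

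For the differentiated identities, I would apply $\TP_A$ to the previous line and use the product rule together with the formula for $\p_t(\sqrt{g}g^{ij})$ (applied to $\TP_A$) to reorganize the result; the antisymmetric combination $g^{ij}g^{kl}-g^{ik}g^{lj}$ arises from pairing the case where $\TP_A$ hits the metric factor against the case where it hits one of the $\TP\eta$ factors. For $\TP_A\nn_\mu$, I would differentiate $\nn\cdot\TP_i\eta=0$ to get $\TP_A\nn\cdot\TP_i\eta=-\nn\cdot\TP_A\TP_i\eta$, and use $|\nn|=1$ to conclude $\TP_A\nn$ is tangential; expanding in the tangent basis as $\TP_A\nn = g^{kl}(\TP_A\nn\cdot\TP_k\eta)\TP_l\eta$ and substituting yields the claim. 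Finally, $\p_t(\sqrt{g}g^{ij})$ follows from Jacobi's formula $\p_t\det(g_{ij})=\det(g_{ij})g^{ij}\p_t g_{ij}$ combined with $\p_t g_{ij}=\TP_i v\cdot\TP_j\eta+\TP_j v\cdot\TP_i\eta$ and $\p_t g^{ij}=-g^{ik}g^{jl}\p_t g_{kl}$; collecting terms produces the coefficient $(g^{ij}g^{kl}-2g^{lj}g^{ik})$.
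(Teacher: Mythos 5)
Your derivations are correct and essentially self-contained, which is more than the paper provides: its entire proof of this lemma is the citation ``See Lemma 2.5 in Disconzi--Kukavica,'' so you are in effect reconstructing the argument of the reference. The routes you choose are the standard ones and they all work: the adjugate/cross-product identification $Ja^{3\alpha}=(\TP_1\eta\times\TP_2\eta)^\alpha$ together with Lagrange's identity for \eqref{n}--\eqref{an}, the resolution of the identity $\delta^\alpha_\lambda=\nn^\alpha\nn_\lambda+g^{kl}\TP_k\eta^\alpha\TP_l\eta_\lambda$ for \eqref{Pi}--\eqref{Pipro}, differentiation of $\nn\cdot\TP_k\eta=0$ and $|\nn|=1$ for \eqref{tpn}, and Jacobi's formula for \eqref{pt ggij}. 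One step is stated imprecisely. For \eqref{lapg}, inserting $\delta^\alpha_\lambda=\Pi^\alpha_\lambda+g^{kl}\TP_k\eta^\alpha\TP_l\eta_\lambda$ \emph{in front of $\TP_j\eta^\lambda$ inside the divergence} is circular: $\Pi^\alpha_\lambda\TP_j\eta^\lambda=0$, while the tangential piece contracts as $g^{kl}\TP_k\eta^\alpha\TP_l\eta_\lambda\TP_j\eta^\lambda=g^{kl}g_{lj}\TP_k\eta^\alpha=\TP_j\eta^\alpha$, returning you to the starting expression. The correct mechanism is to expand $\TP_i(\sqrt{g}\,g^{ij}\TP_j\eta^\alpha)=\TP_i(\sqrt{g}\,g^{ij})\TP_j\eta^\alpha+\sqrt{g}\,g^{ij}\TP_i\TP_j\eta^\alpha$ and use \eqref{lapg0} (normality of $\Delta_g\eta$) to apply $\Pi^\alpha_\lambda$ to the whole left-hand side; since $\Pi$ annihilates the first, purely tangential, term one is left with $\sqrt{g}\,g^{ij}\Pi^{\alpha}_{\lambda}\TP_i\TP_j\eta^{\lambda}$, and the trilinear form then follows by expanding $\Pi$ in front of the \emph{second}-derivative factor $\TP_i\TP_j\eta^\lambda$. (A smaller slip: the cofactor relevant to \eqref{an} is the one attached to the entry $\p_3\eta^\alpha$ of $[\p\eta]$, not $\p_\alpha\eta^3$.) These are presentational rather than substantive defects; the lemma is classical and your overall plan is sound.
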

\begin{proof}
See Disconzi-Kukavica \cite[Lemma 2.5]{DK2017}.
\end{proof}

\begin{rmk}
Recall that $g_{ij}=\TP_i\eta_{\mu}\TP_j\eta^{\mu}$ and $g=\det[g_{ij}]$ and $[g^{ij}]=[g_{ij}]^{-1}$. This means that $g_{ij},~g$ and $g^{ij}$ are rational functions of $\TP\eta$ and so is $\Pi$. 
\end{rmk}

\begin{nota} \label{Q notation} We shall use the notation $Q(\p\eta)$ and $Q(\TP\eta)$ to denote the rational functions of $\p\eta$ and $\TP\eta$, respectively. This $Q$ notation allows us to record error terms in a concise way and so it will be used frequently throughout the rest of this paper. For example, for any tangential derivative $\TP_A$, we have $\TP_AQ(\TP\eta)=\tilde{Q}^i_{\alpha}(\TP\eta)\TP_A\TP_i\eta^{\alpha}$ where the term $\tilde{Q}^i_{\alpha}(\TP\eta)$ is also a rational function of $\TP\eta$. For more details of such notation, we refer readers to \cite[Sect. 11]{coutand2007LWP} and \cite[Remark 2.4]{DK2017}.
\end{nota}

\subsection{Sobolev inequalities}

First, we list the Kato-Ponce estimates which will be used in div-curl estimates.

\begin{lem}[\textbf{Kato-Ponce type inequalities}]\label{katoponce}  Let $J=(I-\Delta)^{1/2},~s\geq 0$. Then the following estimates hold:

(1) $\forall s\geq 0$, we have 
\begin{equation}\label{product}
\begin{aligned}
\|J^s(fg)\|_{L^2}&\lesssim \|f\|_{W^{s,p_1}}\|g\|_{L^{p_2}}+\|f\|_{L^{q_1}}\|g\|_{W^{s,q_2}},\\
\|\p^s(fg)\|_{L^2}&\lesssim \|f\|_{\dot{W}^{s,p_1}}\|g\|_{L^{p_2}}+\|f\|_{L^{q_1}}\|g\|_{\dot{W}^{s,q_2}},
\end{aligned}
\end{equation}with $1/2=1/p_1+1/p_2=1/q_1+1/q_2$ and $2\leq p_1,q_2<\infty$;

(2) $\forall s\geq 1$, we have
\begin{equation}\label{kato3}
\|J^s(fg)-(J^sf)g-f(J^sg)\|_{L^p}\lesssim\|f\|_{W^{1,p_1}}\|g\|_{W^{s-1,p_2}}+\|f\|_{W^{s-1,q_1}}\|g\|_{W^{1,q_2}}
\end{equation} for all the $1<p<p_1,p_2,q_1,q_2<\infty$ with $1/p_1+1/p_2=1/q_1+1/q_2=1/p$.
\end{lem}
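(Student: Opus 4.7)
The plan is to establish both estimates via the Littlewood-Paley decomposition and Bony's paraproduct calculus, which is the classical harmonic-analytic route for Kato-Ponce type inequalities. Let $\{P_j\}_{j\geq 0}$ denote the standard (inhomogeneous) Littlewood-Paley projectors and $S_j = \sum_{k< j} P_k$. First I would decompose any product as $fg = T_f g + T_g f + R(f,g)$, where $T_f g := \sum_j S_{j-3}f \cdot P_j g$ is the low-high paraproduct, $T_g f$ is the symmetric high-low piece, and $R(f,g) := \sum_{|j-k|\leq 2} P_j f \cdot P_k g$ is the resonant part.

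For part (1), the fractional Leibniz rule, I would estimate each of the three pieces separately. The operator $J^s$ acts on $P_j$ essentially like multiplication by the scalar $2^{js}$ (up to a bounded Fourier multiplier), so $\|J^s T_f g\|_{L^2}$ is controlled via the square function characterization of $L^2$ by
\begin{equation*}
\Bigl\|\Bigl(\sum_j |S_{j-3}f|^2\,|2^{js}P_j g|^2\Bigr)^{1/2}\Bigr\|_{L^2} \lesssim \|f\|_{L^{q_1}}\,\|g\|_{W^{s,q_2}},
\end{equation*}
using H\"older and the Fefferman-Stein vector-valued maximal inequality (this is where $2\leq q_2 <\infty$ is used). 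The high-low piece $T_g f$ is treated symmetrically and yields $\|f\|_{W^{s,p_1}}\|g\|_{L^{p_2}}$. For the resonant piece, $s\geq 0$ and frequency localization give $2^{js}\|P_j R(f,g)\|_{L^2}\lesssim \sum_{k\geq j-3} 2^{(j-k)s}\,2^{ks}\|P_k f\|\,\|P_k g\|$, which is $\ell^2$-summable in $j$ by Schur's test and may be absorbed into either norm combination. The homogeneous version is identical with homogeneous projectors.

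For part (2), the commutator-type estimate, the cancellation is the main content. The bilinear symbol of $J^s(fg) - (J^s f)g - f(J^s g)$ is
\begin{equation*}
m(\xi,\eta) = \langle\xi+\eta\rangle^s - \langle\xi\rangle^s - \langle\eta\rangle^s,
\end{equation*}
and a direct symbolic computation shows that $m$ is a Coifman-Meyer bilinear multiplier that gains exactly one derivative off the smaller-frequency factor in the non-resonant regimes and behaves like $\langle\xi\rangle^{s-2}$ on the diagonal $|\xi|\sim|\eta|$. The bilinear Coifman-Meyer theorem then produces the desired bound $\|f\|_{W^{1,p_1}}\|g\|_{W^{s-1,q_2}} + \|f\|_{W^{s-1,q_1}}\|g\|_{W^{1,q_2}}$, in which the $W^{1,\cdot}$ factor encodes the single derivative gained from cancellation and $W^{s-1,\cdot}$ absorbs the remaining $s-1$ derivatives; the hypothesis $s\geq 1$ is used precisely so that this redistribution is admissible in the resonant regime.

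The main subtlety throughout is the convergence of the Littlewood-Paley sums and the application of vector-valued maximal inequalities at the H\"older endpoints. However, the assumed ranges $2\leq p_1, q_2 <\infty$, $1<p_1,p_2,q_1,q_2<\infty$, and $1<p<\infty$ keep us safely inside the region where Calder\'on-Zygmund theory, the Fefferman-Stein maximal inequality, and Coifman-Meyer bilinear multiplier theory all apply without endpoint complications, so no additional work is required. Since both inequalities are classical results, I expect the cleanest exposition is to refer to the standard harmonic analysis literature for the detailed dyadic estimates rather than reproduce them here.
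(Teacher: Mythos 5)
The paper does not prove this lemma at all: its ``proof'' is a one-line citation to the original Kato--Ponce paper, since both estimates are classical. Your proposal instead reconstructs the standard modern argument via Littlewood--Paley theory and Bony's paraproduct decomposition, which is a legitimate and essentially complete route: the low-high and high-low paraproducts give the two terms on the right of \eqref{product} via H\"older, the vector-valued maximal inequality, and the square-function characterization of $W^{s,q}$ (valid for $1<q<\infty$), while the resonant piece is absorbed using $s\geq 0$; and for \eqref{kato3} the cancellation in the trilinear symbol $m(\xi,\eta)=\langle\xi+\eta\rangle^s-\langle\xi\rangle^s-\langle\eta\rangle^s$ does gain one derivative off the low-frequency factor in the off-diagonal regimes, which is exactly what the right-hand side of \eqref{kato3} encodes. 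One small inaccuracy: on the diagonal $|\xi|\sim|\eta|$ the symbol is generically of size $\langle\xi\rangle^{s}$ (take $\xi=\eta$, where $m\approx(2^{s}-2)\langle\xi\rangle^{s}$), not $\langle\xi\rangle^{s-2}$; there is no extra cancellation there. This does not damage the argument, since $\langle\xi\rangle^{s}\lesssim\langle\xi\rangle\,\langle\eta\rangle^{s-1}$ when $|\xi|\sim|\eta|$ and $s\geq 1$, so the resonant piece is still dominated by the claimed right-hand side; but the justification should be the trivial pointwise comparison rather than a purported gain of two derivatives. With that correction, your sketch is a faithful outline of the proof that the paper delegates to the literature, and either deferring to Kato--Ponce (as the paper does) or to the Coifman--Meyer/paraproduct machinery (as you do) is acceptable here.
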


\begin{proof}
See Kato-Ponce \cite{kato1988commutator}.
\end{proof}

\begin{lem}[\textbf{Trace lemma for harmonic function}]\label{harmonictrace}
Suppose that $s\geq 0.5$ and $u$ solves the boundary-valued problem
\[
\begin{cases}
\Delta u=0~~~&\text{ in }\Omega,\\
u=g~~~&\text{ on }\Gamma
\end{cases}
\] where $g\in H^{s}(\Gamma)$. Then it holds that 
\[
|g|_{s}\lesssim\|u\|_{s+0.5}\lesssim|g|_{s}
\]
\end{lem}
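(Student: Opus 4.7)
The plan is to prove the two inequalities separately. The lower bound $|g|_{s}\lesssim\|u\|_{s+1/2}$ is immediate from the standard trace theorem $H^{s+1/2}(\Omega)\to H^{s}(\p\Omega)$ applied on $\Gamma$; no use is made of the harmonicity of $u$ at this step. Only the upper bound requires the equation.

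For the upper bound $\|u\|_{s+1/2}\lesssim|g|_{s}$ I would exploit the product structure $\Omega=\T^2\times(0,1)$ via tangential Fourier analysis. Writing $u(y',y_3)=\sum_{k\in\Z^2}\hat u_k(y_3)e^{ik\cdot y'}$ and $g(y')=\sum_k\hat g_k e^{ik\cdot y'}$, the equation $\Delta u=0$ decouples into the family of ODEs $\hat u_k''(y_3)=|k|^2\hat u_k(y_3)$ on $(0,1)$, subject to $\hat u_k(1)=\hat g_k$ plus a benign auxiliary condition on $\Gamma_0$ (say zero Dirichlet, which is the convention implicit in the statement). For $k\neq 0$ this is solved explicitly by $\hat u_k(y_3)=\hat g_k\,\sinh(|k|y_3)/\sinh(|k|)$, and the $k=0$ mode is the linear function interpolating the prescribed values and is harmless. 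Using the Fourier-side characterization
\[
\|u\|_{H^{s+1/2}(\Omega)}^{2}\;\sim\;\sum_k(1+|k|^{2})^{s+1/2}\|\hat u_k\|_{L^{2}(0,1)}^{2}+\sum_k(1+|k|^{2})^{s-1/2}\|\hat u_k'\|_{L^{2}(0,1)}^{2},
\]
together with the elementary bounds $\int_0^1|\sinh(|k|y_3)|^{2}\,dy_3\lesssim\sinh^{2}(|k|)/|k|$ and the analogous one for $\hat u_k'$, both sums collapse to $\sum_k(1+|k|^{2})^{s}|\hat g_k|^{2}\sim|g|_{s}^{2}$, which is the claimed bound.

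A coordinate-free alternative is to lift and use elliptic regularity: choose $\tilde g\in H^{s+1/2}(\Omega)$ with $\tilde g|_\Gamma=g$ and $\|\tilde g\|_{s+1/2}\lesssim|g|_{s}$ (a standard extension result), set $v:=u-\tilde g$, and solve $\Delta v=-\Delta\tilde g\in H^{s-3/2}(\Omega)$ with zero Dirichlet data. For $s\geq 1/2$ the standard elliptic estimate $\|v\|_{s+1/2}\lesssim\|\Delta\tilde g\|_{s-3/2}\lesssim\|\tilde g\|_{s+1/2}$ finishes the proof via the triangle inequality.

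The main technical nuisance is that the lemma as stated does not specify a boundary condition on $\Gamma_0$; it should be read as applying to a canonical harmonic extension of $g$, and either approach above implicitly pins down the missing datum. Once this convention is agreed upon, what remains is routine: handling the $k=0$ mode separately in the Fourier approach, and in the second approach invoking a Bessel-potential characterization to justify the elliptic estimate at the non-integer order $s+1/2$. No further structural ingredient beyond what has already been recorded in the preliminary section is needed.
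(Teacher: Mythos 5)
Your proposal is correct and matches the paper's (very terse) proof: the first inequality is the standard trace theorem, and the second is exactly the property of the Poisson integral that the paper cites from Taylor's book \cite[Proposition 5.1.7]{taylorPDE1} — your tangential Fourier computation on $\T^2\times(0,1)$ is just a self-contained proof of that cited fact, and your remark about the unspecified datum on $\Gamma_0$ is a fair observation about how the lemma is actually invoked. The only bookkeeping point worth noting is that for $s+\tfrac12>1$ the Fourier-side characterization of $H^{s+1/2}(\Omega)$ also involves higher normal derivatives $\hat u_k^{(j)}$, but these are converted to tangential factors via $\hat u_k''=|k|^2\hat u_k$, so the argument closes as you describe.
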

\begin{proof}
The LHS follows from the standard Sobolev trace lemma, while the RHS is the property of Poisson integral, which can be found in \cite[Proposition 5.1.7]{taylorPDE1}.
\end{proof}

\subsection{Elliptic estimates}

First we illustrate the div-curl elliptic estimate.
\begin{lem} [\textbf{Hodge-type decomposition and the inverse theorem}]\label{hodge}
~

(1) Let $X$ be a smooth vector field and $s\geq 1$, then it holds that
\begin{equation}\label{divcurls}
\|X\|_s\lesssim\|X\|_0+\|\curl X\|_{s-1}+\|\dive X\|_{s-1}+|\TP X\cdot N|_{s-1.5}.
\end{equation}

(2) Let $\Omega\subseteq\R^3$ be a bounded $H^{k+1}$-domain with $k>1.5$. Given $\FF,G\in H^{l-1}(\Omega)$ with $\dive \FF=0$. Consider the equations
\begin{equation}\label{dc}
\curl X=\FF,~~\dive X=G ~~~\text{in }\Omega.
\end{equation} If $\FF$ satisfies $\int_{\gamma}\FF\cdot N\dS=0$ for each connected component $\gamma$ of $\p\Omega$ and $h\in H^{l-0.5}(\p\Omega)$ satisfies $\int_{\p\Omega}h\dS=\io G\dy$, then $\forall 1\leq l\leq k$, there exists a solution $X\in H^l(\Omega)$ to \eqref{dc} with boundary condition $X\cdot N|_{\p\Omega}=h$ such that
\begin{equation}\label{divcurli}
\|X\|_{H^l(\Omega)}\leq C(|\p\Omega|_{H^{k+0.5}})\left(\|\FF\|_{H^{l-1}(\Omega)}+\|G\|_{H^{l-1}(\Omega)}+|h|_{H^{l-0.5}(\p\Omega)}\right).
\end{equation}Such solution is unique if $\Omega$ is the disjoint union of simply connected open sets.
\end{lem}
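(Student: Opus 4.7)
\textbf{Proof plan for Lemma \ref{hodge}.} For part (1), the base case is $s=1$, which I would derive from the pointwise Bochner-type identity
\[
\p_j X_i\,\p_j X_i = (\curl X)_\lambda (\curl X)^\lambda + (\dive X)^2 + \p_j(X_i\p_i X_j) - \p_i(X_i\dive X).
\]
Integrating over $\Omega$, the two divergence terms on the right produce a single boundary integral on $\p\Omega$. Decomposing $X$ on $\p\Omega$ into its normal and tangential parts and integrating by parts tangentially moves one derivative onto $X\cdot N$, yielding a boundary contribution controlled by $|\TP(X\cdot N)|_{-0.5}\lesssim |\TP X\cdot N|_{-0.5}$, after which a small-constant Young inequality absorbs the trace $|X|_{0.5}^2$ into $\|X\|_1^2$ using the standard trace estimate and the $L^2(\Omega)$ bound. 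For $s\geq 2$ I would induct: apply purely tangential derivatives $\TP^\alpha$ with $|\alpha|\leq s-1$ to $X$, commute them with $\curl$ and $\dive$, and apply the base case to each $\TP^\alpha X$; the boundary is preserved so the boundary term is again $|\TP(\TP^\alpha X)\cdot N|_{-0.5}$, consistent with $|\TP X\cdot N|_{s-1.5}$. The remaining purely normal derivatives are recovered from the identity $-\Delta X = \curl\curl X - \nabla\dive X$ together with the observation that tangential derivatives of $X$ are already controlled.

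For part (2), I would construct $X$ via a Hodge decomposition $X = \nabla\phi + W$. First, solve the Neumann problem
\[
\Delta\phi = G\text{ in }\Omega,\qquad \p_N\phi = h \text{ on }\p\Omega,
\]
whose solvability is exactly the compatibility $\int_\Omega G\dy = \int_{\p\Omega} h\dS$; standard elliptic regularity on $H^{k+1}$-domains with $k>1.5$ yields $\phi\in H^{l+1}(\Omega)$ with $\|\phi\|_{l+1}\leq C(|\p\Omega|_{H^{k+0.5}})(\|G\|_{l-1}+|h|_{l-0.5})$. Setting $Z_1:=\nabla\phi$ gives $\dive Z_1 = G$, $\curl Z_1 = 0$, $Z_1\cdot N = h$, reducing the problem to producing $W\in H^l(\Omega)$ with $\curl W=\FF$, $\dive W=0$, and $W\cdot N=0$ on $\p\Omega$. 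I would introduce a vector potential $W=\curl\psi$ in Coulomb gauge $\dive\psi=0$; this turns the problem into $-\Delta\psi=\FF$ with appropriate tangential boundary conditions on $\psi$, for which the hypotheses $\dive\FF=0$ and $\int_\gamma \FF\cdot N\dS=0$ on each connected component $\gamma$ of $\p\Omega$ are precisely the cohomological compatibilities needed for solvability. The estimate \eqref{divcurli} then follows by combining the elliptic bounds on $\phi$ and $\psi$ with part (1) applied to $X$ itself.

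The principal obstacle is uniqueness in part (2). On a general domain, harmonic Neumann fields (divergence-free, curl-free, tangent to the boundary) form a nontrivial finite-dimensional kernel indexed by the first Betti number, and only when $\Omega$ is a disjoint union of simply connected open sets does this kernel vanish, so that the difference of any two solutions with the same $(\FF,G,h)$ data is forced to be zero. A secondary technical point is that the constant in \eqref{divcurli} depends on the boundary regularity $|\p\Omega|_{H^{k+0.5}}$ through the variable-coefficient elliptic estimates used in solving the Neumann problem for $\phi$ and the tangential system for $\psi$; the threshold $k>1.5$ is exactly what lets the Sobolev algebra handle the products of $\p\eta$-type coefficients that arise when straightening the boundary in a coordinate patch.
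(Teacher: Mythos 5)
Your proposal is correct and, for part (1), is essentially the paper's own argument: the paper derives \eqref{divcurls} from the identity $-\Delta X=\curl\curl X-\nabla\dive X$ and integration by parts, which is exactly the integrated (Bochner) form you write down for $s=1$ together with the tangential-induction and normal-derivative-recovery step for higher $s$. For part (2) the paper offers no argument at all — it cites Cheng--Shkoller \cite{hodgeinverse}, whose main theorem is precisely \eqref{divcurli} — so your construction $X=\nabla\phi+\curl\psi$, with a Neumann problem for $\phi$ and a gauged vector potential for $\psi$, is a reconstruction of the standard proof behind that citation rather than a genuinely different route; your compatibility bookkeeping (Neumann solvability from $\int_\Omega G\dy=\int_{\p\Omega}h\dS$, the flux conditions $\int_\gamma\FF\cdot N\dS=0$ for the vector potential, and uniqueness via the vanishing of harmonic Neumann fields on simply connected components) is all accurate. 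The one place where the cited reference does substantive work beyond your sketch is the low boundary regularity $H^{k+0.5}$ with $k>1.5$, where the coordinate-straightening products you flag at the end require genuine care; for the flat reference domain $\T^2\times(0,1)$ actually used in this paper that issue is vacuous.
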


\begin{proof}
(1) This follows from the well-known identity $-\Delta X=\curl\curl X-\nabla\dive X$ and integrating by parts. (2) This is the main result of Cheng-Shkoller \cite{hodgeinverse}.
\end{proof}

Next, the following $H^1$-elliptic estimates will be applied to control $\|\p_t^3 q\|_1$.
\begin{lem}[\textbf{Low regularity elliptic estimates}]\label{H1elliptic}
Assume $\BB^{\mu\nu}$ satisfies $\|\BB\|_{L^{\infty}}\leq K$ and the ellipticity $\BB^{\mu\nu}(x)\xi_{\mu}\xi_{\nu}\geq \frac{1}{K}|\xi|^2$ for all $x\in\Omega$ and $\xi\in\mathbb{R}^3$. Assume $W$ to be an $H^1$ solution to 
\begin{equation}\label{lowelliptic}
\begin{cases}
\p_{\nu}(\BB^{\mu\nu}\p_{\mu}W)=\dive \pi &\text{ in }\Omega \\
\BB^{\mu\nu}\p_{\nu}WN_{\mu}=h &\text{ on }\p\Omega,
\end{cases}
\end{equation} where $\pi,\dive \pi \in L^2(\Omega)$ and $h\in H^{-0.5}(\p\Omega)$ with the compatibility condition $$\int_{\p\Omega}(\pi\cdot N-h)dS=0.$$ If $\|\BB-I\|_{L^{\infty}}\leq\eps_0$ which is a sufficently small constant depending on $K$, then we have:
\begin{equation}
\|W-\overline{W}\|_{1}\lesssim\|\pi\|_{0}+|h-\pi\cdot N|_{-0.5},\text{ where }\overline{W}:=\frac{1}{|\Omega|}\int_{\Omega} W dy,
\end{equation}
\end{lem}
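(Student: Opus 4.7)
The plan is to treat this as a perturbation of the pure Neumann Laplace problem, exploiting the assumption that $\|\BB-I\|_{L^\infty}\leq\eps_0$ is small. First I would rewrite the equation by pulling $(\BB-I)\nabla W$ over to the source: setting $\tilde\pi^\nu := \pi^\nu - (\BB-I)^{\mu\nu}\p_\mu W$, the PDE becomes $\Delta W = \di\tilde\pi$ in $\Omega$, while the Neumann boundary condition becomes $\p_N W = \tilde h$ on $\p\Omega$ for some $\tilde h$ (using symmetry of $\BB$ to handle the index pattern) chosen so that the crucial difference $\tilde h - \tilde\pi\cdot N = h - \pi\cdot N$ is preserved. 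One checks that the compatibility condition $\int_{\p\Omega}(\tilde\pi\cdot N - \tilde h)\,dS = \int_{\p\Omega}(\pi\cdot N - h)\,dS = 0$ is inherited from the hypothesis, so the reformulated problem is well-posed in $H^1$ modulo constants.

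Next I would apply the standard $H^1$ estimate for the Neumann Laplacian, proved by testing the weak formulation against $\varphi = W - \overline{W}$. This yields
\begin{equation*}
\|\nabla W\|_0^2 = \int_\Omega \tilde\pi\cdot\nabla W\,dy - \int_{\p\Omega}(\tilde\pi\cdot N - \tilde h)(W-\overline{W})\,dS.
\end{equation*}
Cauchy--Schwarz on the interior, the $H^{-0.5}$--$H^{0.5}$ duality together with the Sobolev trace theorem on the boundary, and the Poincar\'e inequality $\|W - \overline{W}\|_1 \lesssim \|\nabla W\|_0$ give, after dividing by $\|\nabla W\|_0$,
\begin{equation*}
\|W - \overline{W}\|_1 \leq C_*\big(\|\tilde\pi\|_0 + |\tilde h - \tilde\pi\cdot N|_{-0.5}\big) = C_*\big(\|\tilde\pi\|_0 + |h - \pi\cdot N|_{-0.5}\big),
\end{equation*}
with a \emph{universal} constant $C_*$ coming from the Poincar\'e and trace inequalities on $\Omega$.

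Finally I would absorb the perturbation. From the definition of $\tilde\pi$,
\begin{equation*}
\|\tilde\pi\|_0 \leq \|\pi\|_0 + \|\BB - I\|_{L^\infty}\|\nabla W\|_0 \leq \|\pi\|_0 + \eps_0\|W-\overline{W}\|_1,
\end{equation*}
so combining with the previous inequality gives $\|W-\overline{W}\|_1 \leq C_*\|\pi\|_0 + C_*\eps_0\|W-\overline{W}\|_1 + C_*|h - \pi\cdot N|_{-0.5}$. Choosing $\eps_0$ small enough (depending only on $C_*$, hence on $K$ through the trace/Poincar\'e constants) so that $C_*\eps_0 \leq 1/2$, one absorbs the middle term into the left-hand side and obtains the desired bound $\|W-\overline{W}\|_1 \lesssim \|\pi\|_0 + |h - \pi\cdot N|_{-0.5}$.

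I do not expect any serious obstacle: the smallness assumption $\|\BB - I\|_{L^\infty}\leq\eps_0$ is designed precisely to permit this absorption step, and the strict ellipticity is used only implicitly through the fact that $\BB$ close to $I$ is automatically elliptic. The only care needed is bookkeeping---keeping track of the symmetry of $\BB$ when transferring indices between the PDE $\p_\nu(\BB^{\mu\nu}\p_\mu W)$ and the boundary condition $\BB^{\mu\nu}\p_\nu W N_\mu = h$, and verifying that the boundary correction introduced by the perturbation $\tilde\pi$ is exactly what is needed to preserve the compatibility condition and the combination $\tilde h - \tilde\pi\cdot N = h - \pi\cdot N$ that appears on the right-hand side of the final estimate.
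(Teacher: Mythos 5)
Your argument is correct. Note that the paper does not prove this lemma at all --- it simply cites Lemma 3.2 of Ignatova--Kukavica \cite{ignatova2016} --- so there is no in-text proof to compare against; what you have written is the standard variational energy estimate, and it holds up. Two small remarks. First, the detour through the Laplacian is not actually needed: testing the weak formulation of the original equation against $\varphi=W-\overline{W}$ gives
\begin{equation*}
\int_\Omega \BB^{\mu\nu}\p_\mu W\,\p_\nu W = \int_\Omega \pi\cdot\nabla W+\int_{\p\Omega}(h-\pi\cdot N)(W-\overline{W}),
\end{equation*}
and the ellipticity hypothesis bounds the left side below by $K^{-1}\|\nabla W\|_0^2$ directly, so the conclusion follows from Cauchy--Schwarz, the $H^{-0.5}$--$H^{0.5}$ duality with the trace theorem, and Poincar\'e, with no absorption and no use of $\|\BB-I\|_{L^\infty}\leq\eps_0$ at all; your perturbative route is a legitimate alternative that trades the ellipticity constant for the smallness hypothesis (and, as you note, then yields an $\eps_0$ depending only on the domain rather than on $K$). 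Second, you are right to be careful that $\tilde\pi\cdot N$ and $\tilde h$ are individually only formal objects ($\di\tilde\pi=\Delta W$ need not be in $L^2$, so the normal trace of $\tilde\pi$ is not defined by the usual trace theorem); the proof only ever uses the combination $\tilde h-\tilde\pi\cdot N=h-\pi\cdot N$, which lies in $H^{-0.5}(\p\Omega)$ by hypothesis together with the normal trace theorem applied to $\pi$, and your bookkeeping of the symmetry of $\BB$ (needed to match the conormal flux $\BB^{\mu\nu}\p_\mu W N_\nu$ of the divergence-form operator with the stated boundary condition $\BB^{\mu\nu}\p_\nu W N_\mu$, which is indeed satisfied in the application where $\BB^{\nu\mu}=\Ak^{\nu\alpha}\Ak^{\mu\alpha}$) is exactly the right point to flag.
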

\begin{proof} See \cite[Lemma 3.2]{ignatova2016}.
\end{proof}

\subsection{Properties of tangential mollification}

Let $\zeta=\zeta(y_1,y_2)\in C_c^{\infty}(\R^2)$ be a standard cut-off function such that $\text{Spt }\zeta=\overline{B(0,1)}\subseteq\R^2,~~0\leq\zeta\leq 1$ and $\int_{\R^2}\zeta=1$. The corresponding dilation is $$\zeta_{\kk}(y_1,y_2)=\frac{1}{\kk^2}\zeta\left(\frac{y_1}{\kk},\frac{y_2}{\kk}\right),~~\kk>0.$$ Now we define
\begin{equation}\label{lkk0}
\lkk f(y_1,y_2,y_3):=\int_{\R^2}\zeta_{\kk}(y_1-z_1,y_2-z_2)f(z_1,z_2, y_3)\dz_1\dz_2.
\end{equation}

The following lemma records the basic properties of tangential smoothing.
\begin{lem}[\textbf{Regularity and Commutator estimates}]\label{tgsmooth} Let $f$ be a smooth function. For $\kk>0$, we have: (1) The following regularity estimates:
\begin{align}
\label{lkk11} \|\lkk f\|_s&\lesssim \|f\|_s,~~\forall s\geq 0;\\
\label{lkk1} |\lkk f|_s&\lesssim |f|_s,~~\forall s\geq -0.5;\\ 
\label{lkk2} |\TP\lkk f|_0&\lesssim \kk^{-s}|f|_{1-s}, ~~\forall s\in [0,1];\\  
\label{lkk3} |f-\lkk f|_{L^{\infty}}&\lesssim \sqrt{\kk}|\TP f|_{0.5}\\
\label{lkk33} |f-\lkk f|_{L^p}&\lesssim \kk|\TP f|_{L^p},\\
\label{lkk333} |f-\lkk f|_{L^2}&\lesssim \sqrt{\kk}|\TP^{\frac12}f|_0.
\end{align}

(2) Commutator estimates: Define the commutator $[\lkk,f]g:=\lkk(fg)-f\lkk(g)$. Then it satisfies
\begin{align}
\label{lkk4} |[\lkk,f]g|_0 &\lesssim|f|_{L^{\infty}}|g|_0,\\ 
\label{lkk5} |[\lkk,f]\TP g|_0 &\lesssim |f|_{W^{1,\infty}}|g|_0, \\ 
\label{lkk6} |[\lkk,f]\TP g|_{0.5}&\lesssim |f|_{W^{1,\infty}}|g|_{0.5}.
\end{align}
\end{lem}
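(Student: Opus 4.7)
The plan is to treat all nine bounds as standard facts about the convolution operator $\lkk f = \zeta_\kk *_{y'} f$ acting only in the tangential variables $y'=(y_1,y_2)$, using the normalizations $\int_{\R^2}\zeta=1$, $\|\zeta_\kk\|_{L^1}=1$, and the Fourier representation $\widehat{\lkk f}(\xi)=\hat\zeta(\kk\xi)\hat f(\xi)$ (with $|\hat\zeta|_{L^\infty}\le 1$). Since $\lkk$ commutes with $\p_1,\p_2,\p_3$, the interior estimate \eqref{lkk11} for integer $s$ reduces to Young's convolution inequality applied slice-wise in $y_3$, and the fractional case follows by complex interpolation. The boundary bound \eqref{lkk1} for $s\ge 0$ follows directly from $|\hat\zeta|_{L^\infty}\le 1$, which makes $\lkk$ a contraction on every $H^s(\T^2)$; the extension to $s\in[-0.5,0)$ uses duality against $H^{-s}(\Gamma)$ together with the fact that the $L^2$-adjoint of $\lkk$ is convolution with the even reflection of $\zeta_\kk$, hence a mollifier of the same type.

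The smoothing estimate \eqref{lkk2} is proved by interpolating its endpoints: at $s=0$ it is trivial, and at $s=1$ we write $\TP\lkk f=(\TP\zeta_\kk)*f$ and apply Young with $\|\TP\zeta_\kk\|_{L^1}=\kk^{-1}\|\TP\zeta\|_{L^1}$. For the convergence rates \eqref{lkk3}--\eqref{lkk333} I would use the identity
\[
f(y')-\lkk f(y')=\int_{\R^2}\zeta(z)\bigl(f(y')-f(y'-\kk z)\bigr)\,dz
\]
together with $f(y')-f(y'-\kk z)=\kk\int_0^1 z\cdot\TP f(y'-s\kk z)\,ds$; Minkowski in $L^p$ gives \eqref{lkk33} at once, while \eqref{lkk333} comes from Plancherel and the symbol bound $|1-\hat\zeta(\kk\xi)|\lesssim(\kk|\xi|)^{1/2}$, valid because $\hat\zeta\in C^\infty$ with $\hat\zeta(0)=1$ yields $|1-\hat\zeta(\eta)|\lesssim\min(|\eta|,1)\le|\eta|^{1/2}$. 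The $L^\infty$ estimate \eqref{lkk3} then follows by applying the same symbol argument to $\TP^{1/2}(f-\lkk f)$ and invoking the embedding $H^{1/2+\eps}(\T^2)\hookrightarrow L^\infty$.

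For the commutator bounds \eqref{lkk4}--\eqref{lkk6}, write
\[
[\lkk,f]g(y')=\int_{\R^2}\zeta_\kk(y'-z')\bigl(f(z')-f(y')\bigr)g(z')\,dz',
\]
which gives \eqref{lkk4} from $|f(z')-f(y')|\le 2|f|_{L^\infty}$ combined with $\|\zeta_\kk\|_{L^1}=1$. For \eqref{lkk5} I would integrate by parts tangentially in $z'$: the term in which $\TP$ lands on $f$ is bounded by $|f|_{W^{1,\infty}}|g|_0$, and the term in which $\TP$ lands on $\zeta_\kk$ produces a kernel of size $\kk^{-1}$ that is absorbed by the pointwise bound $|f(z')-f(y')|\lesssim\kk|f|_{W^{1,\infty}}$ on the support of $\zeta_\kk(y'-\cdot\,)$. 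Then \eqref{lkk6} follows by interpolating \eqref{lkk5} with its obvious $H^1$ analogue, or equivalently by duality against $H^{-1/2}(\Gamma)$. The genuinely delicate step is tracking the exact $\kk$-powers in \eqref{lkk2}, \eqref{lkk3}, and \eqref{lkk333}, and extending \eqref{lkk1} to the negative index $s=-0.5$; once the Fourier representation and the symbol estimates $|1-\hat\zeta(\kk\xi)|\lesssim(\kk|\xi|)^\theta$ for $\theta\in[0,1]$ are in hand, however, everything reduces to elementary manipulations and no deeper tool is required.
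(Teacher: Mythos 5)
Your proposal takes a genuinely different and more self-contained route than the paper: the paper cites \cite{coutand2007LWP,gu2016construction,ZhangCRMHD2} for everything except \eqref{lkk333}, which it proves in physical space by a somewhat formal ``integration by parts of $\TP^{1/2}$'' through the difference quotient; your Plancherel argument for \eqref{lkk333}, resting on $|1-\hat\zeta(\eta)|\lesssim\min(|\eta|,1)\le|\eta|^{1/2}$, is cleaner and avoids a mean-value-type step that has no literal meaning for a fractional derivative. The Fourier-multiplier treatment of \eqref{lkk11}--\eqref{lkk2} and the kernel representation of the commutator for \eqref{lkk4}--\eqref{lkk6} (with operator interpolation for the $H^{1/2}$ bound) are standard and correct; note only that at the $H^1$ endpoint of \eqref{lkk6} the term $[\lkk,\TP f]\TP g$ arising from the Leibniz rule must be estimated by \eqref{lkk4} rather than \eqref{lkk5}, since $\TP f$ is not controlled in $W^{1,\infty}$.

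One step fails as written: for \eqref{lkk3} you invoke the embedding $H^{1/2+\eps}(\T^2)\hookrightarrow L^\infty$, which is false in two dimensions (the threshold is $s>d/2=1$). Moreover, routing \eqref{lkk3} through $|f-\lkk f|_{H^{1+\eps}}$ and the symbol bound $|1-\hat\zeta(\kk\xi)|\lesssim(\kk|\xi|)^{1/2}$ loses either $\eps$ derivatives or $\eps$ powers of $\kk$ relative to the stated estimate, so the Fourier route does not close on the nose. The correct (and shorter) argument uses the difference identity you already wrote together with the H\"older embedding $\dot H^{3/2}(\Gamma)\hookrightarrow \dot C^{0,1/2}(\Gamma)$: on the support of $\zeta$ one has $|f(y')-f(y'-\kk z)|\le(\kk|z|)^{1/2}[f]_{C^{0,1/2}}\lesssim\sqrt{\kk}\,|\TP f|_{0.5}$, and integrating against $\zeta$ gives \eqref{lkk3} directly. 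With that repair the proposal is complete.
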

\begin{proof}
We refer \cite{coutand2007LWP,gu2016construction,ZhangCRMHD2} for the proof except for \eqref{lkk333}. The inequality \eqref{lkk333} can be proved by integrating $\TP^{\frac12}$ by parts and then using Minkowski inequality
\begin{align*}
\left| f-\lkk f\right|_{0}=&\left|\int_{\R^2\cap B(0,\kk)}\zeta_{\kk}(z)\left(f(y-z)-f(y)\right)dz\right|_{L_y^2}\\
=&\kk \left|\int_{\R^2\cap B(0,\kk)}\TP^{\frac12}\zeta_{\kk}(z)\TP^{\frac12}f(y-\theta z)dz\right|_{L_y^2}\\
\lesssim&\kk \left|\TP^{\frac12} f\right|_{0}\left|\TP^{\frac12}\zeta_{\kk}\right|_{L^1(\R^2\cap B(0,\kk))}\lesssim \kk^2\left|\TP^{\frac12} f\right|_{0}\left|\TP^{\frac12}\zeta_{\kk}\right|_{L^2}.
\end{align*}
Then by interpolation, we have
\begin{align*}
\left|\TP^{\frac12}\zeta_{\kk}\right|_{L^2}\lesssim\left|\zeta_{\kk}\right|_{L^2}^{\frac12}\left|\TP\zeta_{\kk}\right|_{L^2}^{\frac12}\lesssim\left(\frac{1}{\kk}\left|\zeta\right|_{L^2}\right)^{\frac12}\left(\frac{1}{\kk^2}|\TP\zeta|_{L^2}\right)^{\frac12}\lesssim\kk^{-\frac32},
\end{align*}and thus
\begin{align*}
\left| f-\lkk f\right|_{0}\lesssim\sqrt{\kk}\left|\TP^{\frac12}f\right|_0.
\end{align*}
\end{proof}


\section{The nonlinear approximate system}\label{sect nonlinearkk}
For $\kk>0$, we denote $\lkk$ to be the standard mollifier on $\R^2$ defined as \eqref{lkk0}. Define $\ek$ to be the smoothed version of $\eta$ solved by the following elliptic system
\begin{equation}\label{ekk}
\begin{cases}
-\Delta\ek=-\Delta\eta,~~~&\text{in}\,\,\Omega,\\
\ek=\lkk^2\eta~~~&\text{ on }\p\Omega,
\end{cases}
\end{equation} and $\ak:=[\p\ek]^{-1}$, $\Jk:=\det[\p\ek]$, $\Ak:=\Jk\ak$ and $\nnk = n\circ \ek$. Now we introduce the nonlinear $\kk$-approximation system of \eqref{MHDL}.
\begin{equation}\label{MHDLkk}
\begin{cases}
\p_t\eta=v~~~& \text{in}~[0,T]\times\Omega;\\
\p_tv-\bp^2\eta+\pAk q=0~~~& \text{in}~[0,T]\times\Omega;\\
\diva v=0,&\text{in}~[0,T]\times\Omega;\\
\dive b_0=0~~~&\text{in}~\{t=0\}\times \Omega ;\\
v^3=b_0^3=0~~~&\text{on}~\Gamma_0;\\
\Ak^{3\alpha}q = -\sigma \sqrt{g}(\Delta_g \eta\cdot \nnk )\nnk^\alpha +\kk\left((1-\TL) (v\cdot \nnk)\right)\nnk^\alpha~~~&\text{on}~\Gamma;\\
b_0^3=0 ~~~&\text{on}~\Gamma,\\
(\eta,v)=(Id,v_0)~~~&\text{in}~\{t=0\}{\times}\overline{\Omega}.
\end{cases}
\end{equation}Here $\TL:=\TP_1^2+\TP_2^2$ is the (flat) tangential Laplacian. The re-formulated boundary condition on $\Gamma$ is used here since we find that it is more convenient to apply when studying \eqref{MHDLkk}. We remark here that in absence of $\kk\left((1-\TL) (v\cdot \nnk)\right)\nnk^\alpha$ the boundary condition is just a reformulation of 
\begin{align}\Ak^{3\alpha}q=-\sigma\sqrt{g}\Delta_g \eta^{\alpha}.
\label{original}
\end{align}
Invoking \eqref{n} and the identity $\Jk |\ak^T N|=\sqrt{\tilde{g}}$, where $\tilde{g} = g(\ek)$, we have
\begin{equation}
 \Ak^{3\alpha}/\sqrt{\tilde{g}} = \Jk\ak^{\mu\alpha}N_\mu / \Jk |\ak^T N|=\nnk^{\alpha}, \label{n tilde id}
 \end{equation}
 and so \eqref{original} becomes
\begin{align*}
q\nnk^\alpha = -\sigma\frac{\sqrt{g}}{\sqrt{\tilde{g}}}\Delta_g \eta^\alpha.
\end{align*}
Also, because $\nnk\cdot \nnk =1$ (Euclidean dot product), we obtain
\begin{equation*}
q\nnk^\alpha = q(\nnk\cdot\nnk)\nnk^\alpha = -\sigma\frac{\sqrt{g}}{\sqrt{\tilde{g}}}(\Delta_g \eta\cdot \nnk)\nnk^\alpha.
\end{equation*}
In view of \eqref{n tilde id}, this is equivalent to 
\begin{align*}
\Ak^{3\alpha}q = -\sigma \sqrt{g}(\Delta_g \eta\cdot \nnk )\nnk^\alpha 
\end{align*} 
By adding the artificial viscosity term $\kk\left((1-\TL) (v\cdot \nnk)\right)\nnk^\alpha$ on the RHS, the boundary condition of \eqref{MHDLkk} is then achieved:
\begin{equation}
\Ak^{3\alpha}q = -\sigma \sqrt{g}(\Delta_g \eta\cdot \nnk )\nnk^\alpha +\kk\left((1-\TL) (v\cdot \nnk)\right)\nnk^\alpha.
\label{BC}
\end{equation}
In addition, since $\Ak^{3\alpha} \nnk_\alpha=\sqrt{\tilde{g}}$, \eqref{BC} can be written as
\begin{align}
\sqrt{\tilde{g}}q = -\sigma \sqrt{g}(\Delta_g \eta\cdot \nnk ) +\kk(1-\TL) (v\cdot \nnk).
\label{BC1}
\end{align}
Despite being equivalent to each other, \eqref{BC} and \eqref{BC1} will be adapted to different scenarios. In fact, \eqref{BC} will be used  in Section \ref{tgkk} for the tangential energy estimate, whereas we find \eqref{BC1} more convenient when dealing with the boundary estimate in Section \ref{sect bdy estimate 3.3}.

 Let's state the main theorem.  Our goal is to derive the uniform-in-$\kk$ a priori estimates for the nonlinear approximation system \eqref{MHDLkk}. 
\begin{prop}\label{nonlinearkk}
Given the divergence-free vector fields $v_0\in H^{4.5}(\Omega)\cap H^{5}(\Gamma)$ and $b_0\in H^{4.5}(\Omega)$ satisfying $b_0^3=0$ on $\Gamma\cup \Gamma_0$, there exists some $T_1>0$ independent of $\kk>0$, such that the solution $(\eta(\kk),v(\kk),q(\kk))$ to \eqref{MHDLkk} satisfies the following uniform-in-$\kk$ estimates
\begin{equation}\label{energykk}
\sup_{0\leq t\leq T_1}E_\kk(t)\leq \mathcal{C},
\end{equation}
where $\mathcal{C}$ is a constant depends on $\|v_0\|_{4.5}, \|b_0\|_{4.5}, |v_0|_5$, provided the following a priori assumption hold for all $t\in[0,T_1]$
\begin{align}
\label{smallak} \|\Jk(t)-1\|_{3.5}+\|Id-\Ak(t)\|_{3.5}+\|Id-\Ak^T\Ak(t)\|_{3.5}\leq \eps.
\end{align} Here the energy functional $E_\kk$ of \eqref{MHDLkk} is defined to be
\begin{equation}\label{Ekk}
E_\kk=E_{\kk}^{(1)}+E_{\kk}^{(2)}+E_{\kk}^{(3)}, 
\end{equation}
where
\begin{equation}
\begin{aligned}
E_\kk^{(1)}:=&\left\|\eta(\kk)\right\|_{4.5}^2+\sum_{j=0}^{3}\left\|\left(\p_t^{j}v(\kk),\p_t^{j}\bp\eta(\kk)\right)\right\|_{4.5-j}^2+\left\|\left(\p_t^4 v(\kk),\p_t^4 \bp\eta(\kk)\right)\right\|_{0}^2\\
&+\sum_{j=0}^{3}\left|\TP\left(\Pi\TP^{3-j}\p_t^j v(\kk)\right)\right|_0^2+\left|\TP\big(\Pi\TP^3 \bp\eta(\kk)\big)\right|_0^2, 
\end{aligned}
\end{equation} and 
\begin{equation}
\begin{aligned}
E_{\kk}^{(2)}:= &\sigma\int_0^T\bigg(\sum_{j=1}^{4}\left|\sqrt{\kk}\p_t^j v(\kk)\cdot\nnk(\kk)\right|_{5-j}^2+\left|\sqrt{\kk}\bp v(\kk) \cdot \nnk(\kk)\right|_4^2\bigg)\dt,\\
E_{\kk}^{(3)}:=&\sum_{k=0}^4\int_0^T \Big (\big\|\sqrt{\kk} \p_t^{k}v(\kk)\big\|_{5.5-k}^2+\big\|\sqrt{\kk} \p_t^k \bp \eta(\kk)\big\|_{5.5-k}^2\Big)\dt.
\end{aligned}
\end{equation}
\end{prop}

The proof of this theorem is organized as follows: The rest of this section is devoted to the estimate of the full Sobolev of the pressure $q$, and the velocity field $v$ and the magnetic field $\bp\eta$ as well as their time derivatives. In Section \ref{tgkk} we study the tangential energy estimate of $v$ and $\bp\eta$, which ties to the control of the boundary Sobolev norms of the time derivatives of $v$ and $\bp\eta$ that arose from the div-curl estimate. The terms in the weighted boundary top order energy $E_{\kk}^{(2)}$ are created during this process owing to the artificial viscosity.    Lastly, we investigate the weighted top order energy functional $E_{\kk}^{(3)}$ in Section \ref{sect E3kk}. We need this energy to control the error terms generated by the artificial viscosity on the boundary when all derivatives land on the Eulerian normal $\nnk$. 

Let $T\leq T_\kk$, where $[0,T_\kk]$ is the interval of existence for the solution of the $\kk$-problem for some fixed $\kk$. The key step for showing \eqref{energykk} is to prove
\begin{align}
\sup_{0\leq t\leq T}E_\kk(t) \leq \PP_0 +C(\eps)\sup_{0\leq t\leq T}E_\kk(t)+(\sup_{0\leq t\leq T}\PP)\int_0^T \PP,
\label{gronwall}
\end{align}
holds true independent of $\kk$, 
where $$\PP = P(E_{\kk}(t)),$$
and  $$\PP_0=P(E_{\kk}(0), \|q(0)\|_{4.5}, \|q_t(0)\|_{3.5}, \|q_{tt}(0)\|_{2.5}),$$ 
with $P$ denoting a non-decreasing continuous function in its arguments, and $C(\eps)$ is a constant that is proportional to $\eps$ (and thus $C(\eps)\ll 1$ whenever $\eps\ll1$). In Section \ref{close}, we are going to show that $\PP_0$ can in fact be controlled by $\mathcal{C}$, i.e., the RHS of \eqref{energykk}.   For the simplicity of notations, we will omit the $\kk$ in $(\eta(\kk),v(\kk),q(\kk))$ in the rest of this paper. Also, we may assume that $\sup_{0\leq t\leq T}E_{\kk}(t)=E(T)$, and this allows us to drop $\sup_{0\leq t\leq T}$ in \eqref{gronwall}. In other words, we only need to show
\begin{align}
E_\kk(T) \leq \PP_0 +C(\eps)E_\kk(T)+\PP\int_0^T \PP,
\label{Gronwall}
\end{align}

 Before going to the proof, we need the following preliminary estimates for $\ek$ and its derivatives.
\begin{lem}\label{bpek}
We have
\begin{align}
\label{eta4} \|\ek\|_{4.5}\lesssim&\|\eta\|_{4.5}\\
\label{bk4} \|\bp\ek\|_{4.5}\lesssim& P(\|b_0\|_{4.5},\|\bp\eta\|_{4.5},\|\eta\|_{4.5}).
\end{align}
\end{lem}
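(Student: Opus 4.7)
The plan is to exploit elliptic regularity for the defining system of $\ek$ together with the mollifier bounds in Lemma \ref{tgsmooth}. Both estimates follow the same template: recast $\ek$ (respectively $\bp\ek$) as the solution of an elliptic boundary value problem whose data are already controlled by the norms appearing on the right-hand side.

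For \eqref{eta4}, I would first split $\ek=\eta+W$, where $W$ solves the harmonic Dirichlet problem
\[
-\Delta W=0 \text{ in }\Omega,\qquad W=\lkk^2\eta-\eta \text{ on }\p\Omega.
\]
Since $\lkk^2$ is uniformly bounded on $H^4(\p\Omega)$ by \eqref{lkk1}, the boundary datum sits in $H^4(\p\Omega)$ with norm $\lesssim |\eta|_4\lesssim\|\eta\|_{4.5}$ (trace). Lemma \ref{harmonictrace} then yields $\|W\|_{4.5}\lesssim \|\eta\|_{4.5}$, and \eqref{eta4} follows by the triangle inequality.

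For \eqref{bk4}, write $\bp\ek=\bp\eta+\bp W$; the first summand is directly in the target bound. To handle $\bp W$, apply $\bp$ to $\Delta W=0$, so that $\bp W$ satisfies
\[
-\Delta(\bp W)=[\bp,\Delta]W=-(\Delta b_0^\mu)\p_\mu W-2(\p_\nu b_0^\mu)\p_\nu\p_\mu W \text{ in }\Omega,
\]
together with the Dirichlet datum, which reduces to a purely tangential expression thanks to $b_0^3|_{\p\Omega}=0$:
\[
\bp W\big|_{\p\Omega}=b_0^i(\lkk^2-1)\p_i\eta=(\lkk^2-1)(\bp\eta)-[\lkk^2,b_0^i]\p_i\eta,\qquad i\in\{1,2\}.
\]
Standard elliptic regularity then gives
\[
\|\bp W\|_{4.5}\lesssim \|[\bp,\Delta]W\|_{2.5}+\bigl|\bp W\bigr|_{H^4(\p\Omega)}.
\]
The interior forcing is straightforward: since $H^{2.5}(\Omega)$ is a Banach algebra in three dimensions, each term in $[\bp,\Delta]W$ admits the product bound $\lesssim\|b_0\|_{4.5}\|W\|_{4.5}\lesssim\|b_0\|_{4.5}\|\eta\|_{4.5}$ by \eqref{eta4}. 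The first piece of the boundary datum is bounded by $\|\bp\eta\|_{4.5}$ via the uniform $\lkk^2$-bound on $H^4(\p\Omega)$ followed by the trace theorem.

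The main obstacle, and the only nontrivial step, is the commutator $|[\lkk^2,b_0^i]\p_i\eta|_{H^4(\p\Omega)}$: a naive product estimate would demand $\eta\in H^5(\p\Omega)$ or $b_0\in H^5(\p\Omega)$, neither of which is available from $H^{4.5}(\Omega)$ via trace. I would handle it by iterating the identity
\[
[\lkk^2,f]\p g=\p\bigl([\lkk^2,f]g\bigr)-[\lkk^2,\p f]g,
\]
which effectively transfers one derivative from $\eta$ onto $b_0$ (or vice versa) and produces a Friedrichs-type bound of the form $P(|b_0|_{H^4(\p\Omega)},|\eta|_{H^4(\p\Omega)})\lesssim P(\|b_0\|_{4.5},\|\eta\|_{4.5})$, consistent with the low-order commutator bounds \eqref{lkk4}--\eqref{lkk6}. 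Assembling all pieces yields \eqref{bk4}.
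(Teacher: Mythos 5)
Your proposal is correct and essentially reproduces the paper's argument: an elliptic estimate for the $\bp$-differentiated system \eqref{ekk} (your harmonic splitting $\ek=\eta+W$ is only a cosmetic variant of keeping $-\Delta\eta$ as interior source), with the single delicate point being the boundary commutator $\left|[\lkk^2,b_0^i]\TP_i\eta\right|_{4}$, which both you and the paper resolve by redistributing derivatives until the low-order commutator bounds \eqref{lkk5}--\eqref{lkk6} apply. One caution on that step: the identity $[\lkk^2,f]\p g=\p\bigl([\lkk^2,f]g\bigr)-[\lkk^2,\p f]g$ by itself only trades $|[\lkk^2,f]\p g|_4$ for $|[\lkk^2,f]g|_5$, which Lemma \ref{tgsmooth} does not cover directly; to close you still need to peel off the top-order tangential derivative and commute it past the mollifier commutator, i.e.\ the decomposition into $\left|[\lkk^2,b_0^l]\TP_l\TP^{7/2}\eta\right|_{1/2}+\left|\left[\TP^{7/2},[\lkk^2,b_0^l]\TP_l\right]\eta\right|_{1/2}$ that the paper uses, after which \eqref{lkk6} gives the bound by $P(\|b_0\|_{4.5},\|\eta\|_{4.5})$.
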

\begin{proof}
\eqref{eta4} follows from standard elliptic estimates and property of mollification. To prove \eqref{bk4}, we take $\bp$ in \eqref{ekk}
\begin{equation}\label{bkk}
\begin{cases}
-\Delta(\bp\ek)=-\Delta(\bp\eta)-\left[\bp,\Delta\right]\eta+[\bp,\Delta]\ek~~~&\text{ in }\Omega,\\
\ek=\lkk^2\eta~~~&\text{ on }\p\Omega,
\end{cases}
\end{equation} and standard elliptic estimates yields that
\begin{equation}
\begin{aligned}
\|\bp\ek\|_{4.5}\lesssim&\left\|-\Delta(\bp\eta)-\left[\bp,\Delta\right]\eta+[\bp,\Delta]\ek\right\|_{2.5}\\
&+\left|\lkk^2(\bp\eta)\right|_{4}+\left|\left[\bp,\lkk^2\right]\eta\right|_{4}\\
\lesssim&\|\bp\eta\|_{4.5}+\|b_0|_{4.5}\|\eta\|_{4.5}\\
&+\sum_{l=1}^2\left|\left[\lkk^2,b_0^l\right]\TP_l\eta\right|_{1/2}+\left|\left[\lkk^2,b_0^l\right]\TP_l\TP^{7/2}\eta\right|_{1/2}+\left|\left[\TP^{7/2},[\lkk^2,b_0^l]\TP_l\right]\eta\right|_{1/2}\\
\lesssim& P(\|b_0\|_{4.5},\|\bp\eta\|_{4.5},\|\eta\|_{4.5}),
\end{aligned}
\end{equation}
where commutator estimates in Lemma \ref{tgsmooth} is also used.
\end{proof}
The next lemma concerns some auxiliary results which come in handy when studying Proposition \ref{nonlinearkk}.

\begin{lem} \label{small quantities}
Assume that $\|\eta\|_{4.5},\|v\|_{4.5}\leq N_0$, where $N_0\geq 1$. If $T\leq \eps/P(N_0)$ for some fixed polynomial $P$ and $\eta, v$ is defined on $[0,T]$, then the following inequality holds for $t\in [0,T]$:
\begin{align} 
\|\Ak^{\mu\alpha}-\delta^{\mu\alpha}\|_{3.5}\lesssim \eps,\label{compars delta and A}\\
\|\ak^{\mu\alpha}-\delta^{\mu\alpha}\|_{3.5}\lesssim \eps,\quad \|a^{\mu\alpha}-\delta^{\mu\alpha}\|_{3.5}\lesssim \eps,\label{compars delta and a}\\
\forall 0\leq s\leq 1.5,~~|\cp^s(\nnk-N)|_{L^\infty(\Gamma)} \lesssim \eps,\quad |\cp^s(\nn-N)|_{L^\infty(\Gamma)} \lesssim \eps,
\label{compars N and nn}\\
|\nnk-N|_{3} \lesssim \eps,\quad |\nn-N|_{3}\lesssim \eps, 
\label{comars N and nn in H^3}\\
|\delta^{ij}-\sqrt{g}g^{ij}|_3 \leq \eps,
\label{compars delta and ggij}\\
|\cp\eta\cdot n|_3\leq \eps,\quad |\cp^2\eta|_2\leq\eps.
\label{compar cp eta normal}
\end{align}
\end{lem}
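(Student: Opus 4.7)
The unifying idea is that at $t=0$ we have $\eta(0)=\mathrm{id}$, which forces $\p\eta(0)=I$, and hence every derived quantity ($a, J, \ak, \Jk, \Ak, g_{ij}, \nn, \nnk$, etc.) equals its flat reference value at time zero. Since $\p_t\eta=v$ is bounded in $H^{4.5}$ and $T\leq \eps/P(N_0)$, the fundamental theorem of calculus yields
\begin{equation*}
\|\eta(t)-\mathrm{id}\|_{4.5}\;\leq\; \int_0^t \|v(s)\|_{4.5}\,ds \;\leq\; T N_0 \;\leq\; \eps,
\end{equation*}
and therefore $\|\p\eta-I\|_{3.5}\lesssim \eps$. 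This is the single smallness input that drives every bound in the lemma.

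Next I transfer the smallness to $\ek$. Subtracting $\mathrm{id}$ (which solves \eqref{ekk} with data $\mathrm{id}$) we see that $\ek-\mathrm{id}$ satisfies
\begin{equation*}
-\Delta(\ek-\mathrm{id}) = -\Delta(\eta-\mathrm{id}) \text{ in }\Omega,\qquad \ek-\mathrm{id}=\lkk^{2}(\eta-\mathrm{id}) \text{ on }\p\Omega.
\end{equation*}
Standard elliptic estimates together with the boundedness $|\lkk^{2}f|_{4}\lesssim |f|_{4}$ of Lemma~\ref{tgsmooth} give $\|\ek-\mathrm{id}\|_{4.5}\lesssim \|\eta-\mathrm{id}\|_{4.5}\lesssim\eps$, hence also $\|\p\ek-I\|_{3.5}\lesssim\eps$.

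With $\p\eta-I$ and $\p\ek-I$ small in $H^{3.5}(\Omega)$, I derive \eqref{compars delta and A}--\eqref{compars delta and a} from the Neumann series expansion
\begin{equation*}
\ak = [\p\ek]^{-1} = I - (\p\ek-I) + (\p\ek-I)^{2} - \cdots,\qquad \Jk = \det[\p\ek] = 1 + \mathcal{O}(\p\ek-I),
\end{equation*}
combined with the Sobolev algebra property of $H^{3.5}(\Omega)$ (valid because $3.5>3/2$). The same argument for $\p\eta$ in place of $\p\ek$ yields the unsmoothed estimate for $a$, and $\Ak=\Jk\ak$ follows by one more product estimate.

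Finally, for the boundary bounds I use the explicit formulas from Lemma~\ref{geometric}. Writing $\nnk=\ak^{T}N/|\ak^{T}N|$ and $\nn=a^{T}N/|a^{T}N|$, the already established $H^{3.5}(\Omega)$ smallness of $\ak-I$ and $a-I$ transfers to $H^{3}(\Gamma)$ smallness of $\ak^{T}N-N$ and $a^{T}N-N$ via the trace lemma; the ratio then stays small because the denominator is close to $1$, and the two-dimensional Sobolev embedding $H^{s+1/2}(\Gamma)\hookrightarrow L^{\infty}(\Gamma)$ for $s\leq 3/2$ gives \eqref{compars N and nn}. For \eqref{compars delta and ggij}, the identities $g_{ij}=\TP_{i}\eta\cdot \TP_{j}\eta$ and the smallness of $\TP\eta-\TP\,\mathrm{id}$ in $H^{3}(\Gamma)$ show $g_{ij}-\delta_{ij}$, $g-1$, and $g^{ij}-\delta^{ij}$ are all small in $H^{3}(\Gamma)$, and the product estimate closes the bound on $\sqrt{g}g^{ij}-\delta^{ij}$. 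For \eqref{compar cp eta normal}, note that at $t=0$ we have $\TP_{i}\eta^{\alpha}n_{\alpha}=\delta^{\alpha}_{i}N_{\alpha}=0$ and $\TP^{2}\eta^{\alpha}=0$, so writing $\TP\eta\cdot n=\TP(\eta-\mathrm{id})\cdot n+\TP\,\mathrm{id}\cdot(n-N)$ and expanding $\TP^{2}\eta=\TP^{2}(\eta-\mathrm{id})$ reduces everything to the previous smallness estimates.

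The only genuinely delicate point I expect is bookkeeping the loss of half a derivative from trace together with the algebra property of $H^{3.5}$, ensuring that each composite or rational expression in $\p\ek$ and $\p\eta$ lands cleanly in $H^{3.5}(\Omega)$ or $H^{3}(\Gamma)$; the rest is mechanical once the FTC+mollifier input is in hand.
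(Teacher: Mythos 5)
Your proposal is correct, but it takes a different route from the paper. The paper proves each estimate by applying the fundamental theorem of calculus \emph{to the derived quantity itself}: e.g.\ $\|\Ak-\delta\|_{3.5}\leq\int_0^t\|\p_t Q(\p\ek)\|_{3.5}$, $|\nnk-N|\leq\int_0^t|\cp^s\p_t\nnk|$, $|\delta^{ij}-\sqrt{g}g^{ij}|_3\leq\int_0^t|\p_t(\sqrt{g}g^{ij})|_3$, each integrand being bounded by $P(N_0)$ via Kato--Ponce, \eqref{lkk11}, the trace lemma, and the identities \eqref{Ak's component}, \eqref{tpn}, \eqref{pt ggij}; notably, for \eqref{compars delta and a} the paper invokes the a priori assumption $\|\Jk-1\|_{3.5}\lesssim\eps$ to handle the inverse. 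You instead apply the FTC once, to $\eta$ itself, obtaining the single input $\|\eta-\id\|_{4.5}\leq TN_0\leq\eps$, transfer it to $\ek$ by the elliptic estimate for \eqref{ekk}, and then propagate the smallness purely algebraically (Neumann series for $[\p\ek]^{-1}$ and $\det[\p\ek]$ in the Banach algebra $H^{3.5}(\Omega)$, trace to $H^3(\Gamma)$, quotient for $\nnk$). Your route is self-contained in a way the paper's is not: it derives $\ak-\delta$ without appealing to the a priori assumption on $\Jk$, which the remark following the lemma then declares to be a consequence of the lemma — your version makes that bootstrap cleaner. The only blemish is the embedding you cite for \eqref{compars N and nn}: ``$H^{s+1/2}(\Gamma)\hookrightarrow L^\infty(\Gamma)$ for $s\leq 3/2$'' is false at the low end (e.g.\ $H^{1/2}$ of a two-dimensional manifold is not in $L^\infty$). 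What you actually need, and what your $H^3(\Gamma)$ bound on $\nnk-N$ delivers, is $|\cp^s(\nnk-N)|_{L^\infty(\Gamma)}\lesssim|\nnk-N|_{H^{3}(\Gamma)}$ for $s\leq 1.5$, which follows from $H^{3-s}(\Gamma)\hookrightarrow L^\infty(\Gamma)$ since $3-s\geq 1.5>1$. With that index corrected the argument closes.
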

\begin{proof}
Since 
\begin{equation}
A^{1\alpha} =\epsilon^{\alpha\lambda \tau} \p_2\eta_\lambda\p_3 \eta_\tau,\quad A^{2\alpha} =-\epsilon^{\alpha\lambda \tau} \p_1\eta_\lambda\p_3 \eta_\tau,\quad A^{3\alpha}=\epsilon^{\alpha\lambda \tau} \p_1\eta_\lambda\p_2 \eta_\tau, 
\label{A's component}
\end{equation}
and thus
\begin{equation}
\Ak^{1\alpha} =\epsilon^{\alpha\lambda \tau} \p_2\ek_\lambda\p_3 \ek_\tau,\quad \Ak^{2\alpha} =-\epsilon^{\alpha\lambda \tau} \p_1\ek_\lambda\p_3 \ek_\tau,\quad \Ak^{3\alpha}=\epsilon^{\alpha\lambda \tau} \p_1\ek_\lambda\p_2 \ek_\tau, 
\label{Ak's component}
\end{equation}
where $\epsilon^{\alpha\lambda\tau}$ is the fully antisymmnetric symbol with $\epsilon^{123}=1$, we have $|\Ak-\delta| \leq \int_0^t|\p_t Q(\p \ek)|\leq \int_0^t |Q(\p \ek)\p \vk|$, where $Q$ is defined in Notation \ref{Q notation}.  Then \eqref{compars delta and A} follows from \eqref{lkk11} and the Kato-Ponce inequality (Lemma \ref{katoponce}). Invoking the a priori assumption $\|\tilde{J}-1\|_{3.5}\lesssim \eps$, both inequalities in \eqref{compars delta and a} are proved similarly. 

In addition, for \eqref{compars N and nn}, it suffices to prove the first inequality. Since $\nnk|_{t=0} = N$ and $\nnk=Q(\cp \ek)$, for each fixed $0\leq s\leq 1.5$, there holds
\begin{equation*}
|\cp^s(\nnk-N)|_{L^\infty(\Gamma)} \leq \int_0^t |\cp^s\p_t\nnk|_{L^\infty(\Gamma)} = \int_0^t |\cp^s(Q(\TP \ek)\TP\vk)|_{L^\infty(\Gamma)}\lesssim \int_0^t |Q(\cp \ek)\cp \vk|_{3},
\end{equation*}
by the Sobolev embedding. Now, the trace lemma and the Kato-Ponce inequality yield $\int_0^t |Q(\cp \ek)\cp \vk|_{3} \lesssim \int_0^t \|Q(\cp \ek)\cp \vk\|_{3.5}\leq \int_0^t P(N_0)$ and so \eqref{compars N and nn} follows. 

Moreover, we have 
\begin{align*}
|\nnk-N|_3 \leq \int_0^t |Q(\cp\ek)\p\vk|_3 \lesssim \int_0^t \|Q(\cp \ek)\cp \vk\|_{3.5},
\end{align*}
which verifies \eqref{comars N and nn in H^3}. 

In addition, owing to the fact that $(\delta^{ij}-\sqrt{g}g^{ij})|_{t=0}=0$ and the identity \eqref{pt ggij}, there holds
$$|\delta^{ij}-\sqrt{g}g^{ij}|_3\leq \int_0^t |\p_t (\sqrt{g}g^{ij})|_3=\int_0^T \|Q(\p \eta)\TP v\|_{3.5},$$
which yields \eqref{compars delta and ggij}. Finally, a similar proof yields \eqref{compar cp eta normal} since $\cp \eta\cdot \nnk |_{t=0} = \cp\eta^3|_{t=0}=0$ and $\cp^2\eta |_{t=0}=0$.
\end{proof}

\begin{rmk} The inequalities in Lemma \ref{small quantities} can be viewed as an extended list of the a priori assumptions. Moreover, \eqref{smallak} is in fact a direct consequence of \eqref{compars delta and a} and \eqref{compars delta and A}.
\end{rmk}

We also need the following corollary of Lemma \ref{tgsmooth} that ``extends" \eqref{lkk3} and \eqref{lkk333} to the interior of $\Omega$ when applied to $\eta$ and its time derivatives. 

\begin{lem} \label{ext Lemma 3.2 to the interior}
Let $k=0,\cdots, 4$. Then
\begin{align}
\|\p \p_t^k (\ek-\eta)\|_{0}  \lesssim \|\sqrt{\kk}\p_t^k \eta\|_{1.5}.
\label{lkk 333 int}
\end{align}
Further, for $\ell=0,1,2$, there holds
\begin{align}
\|\p \p_t^\ell (\ek-\eta)\|_{L^\infty} \lesssim \|\sqrt{\kk} \p_t^\ell \eta\|_{3.5}. 
\label{lkk 3 int}
\end{align}
\end{lem}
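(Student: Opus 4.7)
The plan is to exploit the fact that $w := \ek - \eta$ is harmonic and then combine the harmonic trace estimate (Lemma \ref{harmonictrace}) with an interpolation of the commutator estimates in Lemma \ref{tgsmooth}. Indeed, by \eqref{ekk} we have $-\Delta w = 0$ in $\Omega$ with $w|_{\p\Omega} = (\lkk^2 - \id)\eta$. Since $\lkk$ and $\Delta$ both commute with $\p_t$, each $\p_t^k w$ is also harmonic, with boundary value $(\lkk^2 - \id)\p_t^k\eta$. This structural observation reduces everything to a boundary interpolation estimate for $\lkk^2 - \id$ applied to $\p_t^k\eta$.

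For \eqref{lkk 333 int}, I would apply Lemma \ref{harmonictrace} at $s = 1/2$ to obtain
$$\|\p \p_t^k w\|_0 \leq \|\p_t^k w\|_1 \lesssim \bigl|(\lkk^2 - \id)\p_t^k \eta\bigr|_{1/2}.$$
The core step is the interpolation: applying \eqref{lkk33} with $p = 2$ together with the $L^2$-contraction of $\lkk$ gives $|(\lkk^2 - \id)f|_0 \lesssim \kk |\TP f|_0$, while \eqref{lkk1} gives the trivial bound $|(\lkk^2 - \id)f|_1 \lesssim |f|_1$. Interpolating these two yields $|(\lkk^2 - \id)f|_{1/2} \lesssim \sqrt{\kk}\,|f|_1$. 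Combining with the Sobolev trace inequality $|\p_t^k\eta|_1 \lesssim \|\p_t^k\eta\|_{3/2}$ finishes \eqref{lkk 333 int}.

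For \eqref{lkk 3 int}, the plan is identical but shifted by two derivatives. Using the Sobolev embedding $H^3(\Omega) \hookrightarrow W^{1,\infty}(\Omega)$ (valid since $\dim\Omega = 3$) together with Lemma \ref{harmonictrace} at $s = 5/2$, the problem reduces to bounding $|(\lkk^2 - \id)\p_t^\ell \eta|_{5/2}$. Since $\lkk$ commutes with $\TP$, applying \eqref{lkk33} to $\TP^2 f$ gives $|(\lkk^2 - \id)f|_2 \lesssim \kk |f|_3$; interpolating with the trivial $H^3$ bound $|(\lkk^2 - \id)f|_3 \lesssim |f|_3$ yields $|(\lkk^2 - \id)f|_{5/2} \lesssim \sqrt{\kk}\,|f|_3$, after which trace $|\p_t^\ell\eta|_3 \lesssim \|\p_t^\ell\eta\|_{7/2}$ closes the argument.

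The only step that requires any real thought is the interpolation that converts the one-derivative loss $\kk\mapsto\kk|f|_1$ in \eqref{lkk33} into a half-derivative, $\sqrt{\kk}$-gain bound at the fractional Sobolev level. Everything else is standard harmonic-extension theory and Sobolev trace, already recorded in Section \ref{prelemma}, so I expect no serious obstacles beyond bookkeeping.
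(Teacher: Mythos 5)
Your proposal is correct and follows essentially the same route as the paper: both exploit that $\p_t^k(\ek-\eta)$ is harmonic, reduce to a boundary estimate via Lemma \ref{harmonictrace}, obtain the half-derivative $\sqrt{\kk}$-gain for $\lkk^2-\id$ on $\Gamma$, and conclude \eqref{lkk 3 int} by Sobolev embedding. The only (immaterial) difference is that you derive the key bound $|(\lkk^2-\id)f|_{s+1/2}\lesssim\sqrt{\kk}|f|_{s+1}$ by interpolating \eqref{lkk33} against \eqref{lkk1}, whereas the paper invokes \eqref{lkk333} directly.
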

\begin{proof}
The definition of $\ek$  in \eqref{ekk} implies that $\ek-\eta$ together its time derivatives is a harmonic function in $\Omega$. So we invoke Lemma \ref{harmonictrace} to get
\begin{align*}
\|\p \p_t^k (\ek-\eta)\|_{0} \lesssim\|\p_t^k (\ek-\eta)\|_{1} \lesssim |\p_t^k (\lkk^2 \eta -\lkk\eta)|_{0.5}+|\p_t^k (\lkk \eta-\eta)|_{0.5} \lesssim |\p_t^k(\lkk \eta-\eta)|_{0.5}
\end{align*}
where $|\p_t^k (\lkk^2 \eta -\lkk\eta)|_{0.5}\lesssim |\p_t^k (\lkk \eta-\eta)|_{0.5}$ since $\p_t$ and $\lkk$ commute, and $$|\p_t^k(\lkk\eta-\eta)|_{0.5} \lesssim |\sqrt{\kk} \p_t^k \eta|_1$$
in light of \eqref{lkk333}. This, together with the trace lemma give \eqref{lkk 333 int}.  Moreover, \eqref{lkk 3 int} follow from \eqref{lkk 333 int} and the Sobolev embedding. 
\end{proof}

\begin{rmk} It is possible to prove an improved estimate for \eqref{lkk 3 int}, i.e., 
\begin{align}
\|\p \p_t^\ell (\ek-\eta)\|_{L^\infty} \lesssim \|\sqrt{\kk} \p_t^\ell \eta\|_{2.5}. 
\label{improve lkk 3 int}
\end{align}
This can be done by adapting the following
Schauder estimate for div-curl systems: Let $X$ be a smooth vector field on $\overline{\Omega}$. For fixed $0<\delta<\frac{1}{2}$, we have
\begin{equation}
\|\p X\|_{C^{0,\delta}(\Omega)} \lesssim \|\di X\|_{C^{0,\delta}(\Omega)}+\|\curl X\|_{C^{0,\delta}(\Omega)}+\|X\|_{C^{0.5,\delta}(\partial\Omega)}+\|X\|_2. 
\label{schauder div-curl}
\end{equation}
This inequality in fact reduces to the one in \cite[Lemma 8.2]{DLMS} in the absence of the boundary term.  Thus, in view of \eqref{ekk}, we have
\begin{equation*}
\|\p \p_t^\ell (\ek-\eta)\|_{L^\infty} \leq \|\p \p_t^\ell (\ek-\eta)\|_{C^{0,\delta}(\Omega)} \lesssim |\p_t^\ell (\lkk\eta-\eta)|_{C^{0.5,\delta}(\Gamma)}+\|\p_t^\ell (\ek-\eta)\|_2, 
\end{equation*}
where the last term on the RHS is $\lesssim \|\sqrt{\kk}\p_t^\ell\eta\|_{2.5}$.
In addition to this,  \eqref{lkk3} and the Sobolev embedding suggest that $|\p_t^\ell (\lkk\eta-\eta)|_{C^{0.5,\delta}(\Gamma)} \lesssim |\sqrt{\kk}\p_t^\ell \eta|_{2+\delta}$, and so \eqref{improve lkk 3 int} follows after using the trace lemma. 

Nevertheless, we mention here that \eqref{improve lkk 3 int} will not be applied in the rest of this manuscript. Despite not being sharp,  \eqref{lkk 3 int} turns out to be sufficient. 
\end{rmk}

Finally, we state the following two lemmas that concern the boundary elliptic estimates of $\sqrt{\kk}\ek$ and $\kk \bp \ek$. These lemmas will be adapted to control the boundary error terms generated when derivatives land on the Eulerian normal $\nnk$. 

\begin{lem}[Improved Boundary Regularity] \label{super control}
Let $\MM_0 = P( \|v_0\|_{4.5},  \sqrt{\kk} \|v_0\|_{8.5}, \sqrt{\kk} \|b_0\|_{8.5}, \sqrt{\kk}|v_0|_{10})$. Then
\begin{align}
|\sqrt{\kk} \eta|_5^2 \leq& \MM_0 +C(\eps)E_\kk(T)+\PP\int_0^T\PP,
\label{super eta}\\
\int_0^T|\sqrt{\kk}  v |_5^2 \leq&   \MM_0 +C(\eps)E_\kk(T)+\PP\int_0^T\PP,\label{super v}\\
\int_0^T |\sqrt{\kk} \bp\eta|_5^2\leq& \MM_0 +C(\eps)E_\kk(T)+\PP\int_0^T\PP. 
\label{super bp eta}
\end{align}
\end{lem}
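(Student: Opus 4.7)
The plan is to establish the three $\sqrt{\kk}$-weighted $H^5(\Gamma)$-bounds by tangentially differentiating the smoothed Young-Laplace boundary condition \eqref{BC1} and exploiting the coercivity supplied by the artificial viscosity $\kk(1-\TL)(v\cdot\nnk)$, in the spirit of Lemma 12.6 of Coutand-Shkoller \cite{coutand2007LWP}. The estimates for $\eta$, $v$, and $\bp\eta$ will correspond respectively to $\TP^4$-, $\TP^4\p_t$-, and $\TP^4\bp$-differentiation of \eqref{BC1}.

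For the $v$-estimate, I would apply $\TP^4$ to \eqref{BC1} and test the resulting identity on $\Gamma$ against $\TP^4(v\cdot\nnk)$. After one tangential integration by parts, the artificial viscosity term produces
\[
\kk\int_\Gamma\bigl(|\TP^4(v\cdot\nnk)|^2+|\TP^5(v\cdot\nnk)|^2\bigr)\, dS,
\]
whose time-integral delivers the $\sqrt{\kk}$-weighted $H^5(\Gamma)$ control of $v\cdot\nnk$; the tangential components at the $H^5$-level are then reconstructed from the interior $\sqrt{\kk}$-weighted bounds via Lemma \ref{ext Lemma 3.2 to the interior} and the trace lemma. Using $\p_t\eta=v$, the surface-tension contribution integrates into a boundary term $\sigma|\TP^5(\eta\cdot\nnk)|_0^2\big|_0^T$ that is absorbed into $E_\kk(T)$. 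The pressure contribution $\sqrt{\tilde g}\,\TP^4 q$ on the LHS is handled by tangential integration by parts, pairing it with $\TP^5(v\cdot\nnk)$ and absorbing via Young's inequality into the viscous coercive term, together with the interior elliptic control of $q$ derived earlier in this section. The $\eta$-estimate follows directly from $\eta(T)=\mathrm{id}+\int_0^T v\, dt$ and Cauchy-Schwarz, with $|\mathrm{id}|_{H^5(\Gamma)}$ absorbed into $\MM_0$. The $\bp\eta$-estimate is obtained by applying $\TP^4\bp$ to \eqref{BC1} and testing against $\TP^4(\bp\eta\cdot\nnk)$; since $b_0^3|_\Gamma=0$, the operator $\bp$ is tangential on $\Gamma$ so the structural analysis parallels the $v$-case, with $\p_t(\bp\eta)=\bp v$ used to replicate the surface-tension cancellation.

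The principal difficulty lies in handling the numerous commutators produced when $\TP^4$ (respectively $\TP^4\bp$) falls on the mollified coefficients $\sqrt{\tilde g}$, $\sqrt{g}g^{ij}$, and especially $\nnk$. At top order these generate terms of the form $|\TP^5\ek\cdot\nnk|_0$, which via the defining elliptic system \eqref{ekk} reduce to $|\TP^5\lkk^2\eta\cdot\nnk|_0$; these are then controlled by exploiting the self-adjointness of $\lkk$---precisely the reason \eqref{ekk} contains a double mollification---to transfer one $\lkk$ factor onto the test function, and subsequently bounded using the $E_\kk^{(1)}$-control of $|\TP(\Pi\TP^3 v)|_0$ and $|\TP(\Pi\TP^3\bp\eta)|_0$. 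The remaining lower-order commutators absorb into either $\PP\int_0^T\PP$ or $C(\eps)E_\kk(T)$ via the smallness estimates of Lemma \ref{small quantities}.
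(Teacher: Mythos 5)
Your overall strategy---tangentially differentiating \eqref{BC1}, testing on $\Gamma$, extracting coercivity from the artificial viscosity, and invoking the self-adjointness of $\lkk$ (hence the double mollification in \eqref{ekk}) for the worst commutator---is the same as the paper's. However, the two most delicate steps are not handled correctly. First, the recovery of the \emph{tangential} components of $v$, $\eta$, $\bp\eta$ at the $H^5(\Gamma)$-level cannot go through ``interior $\sqrt{\kk}$-weighted bounds via Lemma \ref{ext Lemma 3.2 to the interior} and the trace lemma'': the energy $E_\kk^{(3)}$ contains $\int_0^T\|\sqrt{\kk}\,\p_t v\|_{4.5}^2$ but no $\|\sqrt{\kk}\,v\|_{5.5}$-type quantity, so the trace lemma only reaches $|\sqrt{\kk}\,\p_t v|_{4}$, one derivative short of $|\sqrt{\kk}\,v|_5$; and Lemma \ref{ext Lemma 3.2 to the interior} estimates $\ek-\eta$, which is not what is needed here. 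Passing from the normal component to the full norm is a genuine step (it goes through identities of the type $\TP^5\eta\cdot\TP_l\eta=\TP^4 g_{\cdot l}+\text{l.o.t.}$ together with $g_{il}-\delta_{il}=\int_0^t\p_t g_{il}$, as in \cite{coutand2007LWP}), and your sketch does not supply it.

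Second, your identification and treatment of the top-order commutator is off by one derivative. The problematic term is not of the form $|\TP^5\ek\cdot\nnk|_0$ (that quantity is $\lesssim|\eta|_5$ by \eqref{lkk1}, hence absorbable after weighting by $\sqrt{\kk}$); it is the term carrying \emph{six} tangential derivatives on $\ek$, namely $\kk\int_0^T\ig(v^j\TP_j\TP^5\ek\cdot\nnk)(\TP^5\eta\cdot\nnk)$, produced when the extra derivative coming from $\TL$ together with all four $\TP$'s lands on $\nnk$. After transferring one $\lkk$ from $\ek$ to the other factor, this term is \emph{not} bounded by the $E_\kk^{(1)}$-quantities $|\TP(\Pi\TP^3 v)|_0$, $|\TP(\Pi\TP^3\bp\eta)|_0$ (these live at the $H^4(\Gamma)$-level); it is controlled only because both factors then become $\TP^5\lkk\eta\cdot\nnk$, so the integrand is $\tfrac12 v^j\TP_j|\TP^5\lkk\eta\cdot\nnk|^2$ and one integrates $\TP_j$ by parts to lower the order. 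This perfect-derivative structure is precisely the point the paper flags as the only place where the $\lkk$-transfer is needed, and it also dictates the choice of test function: pairing the $\TP^4$-differentiated condition with $\TP^4(v\cdot\nnk)$, as you propose for \eqref{super v}, puts $\TP^5\lkk\eta$ against $\TP^5\lkk v$ in the worst term, which is no longer a perfect derivative and cannot be closed by Cauchy--Schwarz without losing a factor $\kk^{-1}$. The paper instead obtains \eqref{super v} from the $\TP^4\p_t$-differentiated condition tested with $\Pi\TP^4 v$ (using $\|q_t\|_{3.5}$ from \eqref{ellq}), and \eqref{super bp eta} from the $\TP^4\bp$-differentiated condition tested with $\Pi\TP^4\bp\eta$ (using $\|\bp q\|_{3.5}\lesssim\|b_0\|_{3.5}\|q\|_{4.5}$). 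Also, the unweighted boundary term $\sigma|\TP^5(\eta\cdot\nnk)|_0^2$ you produce from the surface tension is not part of $E_\kk(T)$ and cannot be ``absorbed'' into it; it must instead be recognized as carrying a favorable sign.
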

\noindent\textbf{Discussion of the proof:} By Jensen's inequality and $\eta(T)=\text{Id}+\int_0^Tv(t)\dt$, we know \[
|\sqrt{\kk} \eta|_5^2 \lesssim \MM_0+T\int_0^T|\sqrt{\kk}  v(t)|_5^2\dt,
\]and thus it suffices to prove \eqref{super v} and \eqref{super bp eta}. Indeed, one has to establish the energy estimates for $\|\sqrt{\kk}v\|_{L_t^2H_y^{5.5}}$ and $\|\sqrt{\kk}\bp\eta\|_{L_t^2H_y^{5.5}}$ and then use the trace lemma to derive  \eqref{super v} and \eqref{super bp eta}. The reason is that the boundary condition \eqref{BC1} only gives us the information in the (smoothed) Eulerian normal direction, but \eqref{super v} and \eqref{super bp eta} requires the control of all components. In view of Lemma \ref{hodge}, we need the control of 
\begin{align*}
\|\sqrt{\kk} \di v\|_{L_t^2H^{4.5}},\quad \|\sqrt{\kk}\di \bp\eta\|_{L_t^2H^{4.5}},\\
\|\sqrt{\kk} \curl v\|_{L_t^2H^{4.5}},\quad \|\sqrt{\kk}\curl \bp\eta\|_{L_t^2H^{4.5}},\\
|\sqrt{\kk} v^3|_{L_t^2H^{5}}, \quad |\sqrt{\kk}\bp\eta^3|_{L_t^2H^{5}},
\end{align*}
corresponding to the divergence, curl and normal trace, respectively. The proof of the div-curl control is parallel to Section \ref{divcurlkk} and we postpone the details to Section \ref{E3kkk}. Here, we shall sketch the control the normal traces.  

To control $\int_0^T|\sqrt{\kk} v^3|_{5}^2$, it suffices to control its corresponding Eulerian normal trace $\int_0^T|\sqrt{\kk} v\cdot\nnk|_{5}^2$ by adopting the following perturbation arguments: Because the difference between $N=(0,0,1)$ and $\nnk$ is sufficiently small in $L^\infty$ thanks to \eqref{compars N and nn} in Lemma \ref{small quantities}, we have
\begin{equation}
\int_0^T|\sqrt{\kk}(\TP^5 v^3-\TP^5v\cdot\nnk)|_{0}^2\lesssim|N-\nnk|_{L^{\infty}}^2\int_0^T|\sqrt{\kk}  v |_5^2\lesssim\eps^2\int_0^T|\sqrt{\kk}  v |_5^2.
\end{equation}
To control the Eulerian normal trace, we need to prove
\begin{align}\label{normal trace v}
\int_0^T|\sqrt{\kk}  \TP^5 v\cdot\nnk |_0^2 \leq   \MM_0 +C(\eps)E_\kk(T)+\PP\int_0^T\PP.
\end{align}
This is studied \cite[Lemma 12.6]{coutand2007LWP}. The proof is extremely technical so we will not go into the details. But we remark here that, the conclusions of this lemma essentially come from the energy estimates of the viscous surface tension equation \eqref{BC1}. In fact, one can differentiate \eqref{BC1} with $\TP^4\p_t$ and test the equation by $\TP^4 v\cdot\nnk$. Standard energy estimates give us the bounds for $\sigma|\TP^5 v\cdot\nnk|_{L_t^{\infty}L_y^2}^2$ and $|\sqrt{\kk}\TP^5 v\cdot\nnk|_{L_t^2L_y^2}^2$. The proof of \eqref{super bp eta} can be done similarly thanks to the fact that $\p_t \eta$ and $\bp\eta$ are of the same regularity. Because of this, 
we study the $\TP^4\bp$-differentiated \eqref{BC1} tested with $(\TP^4 \bp \eta)\cdot\nn$ to obtain
\begin{align}\label{normal trace bp eta}
\int_0^T|\sqrt{\kk}  \TP^5 \bp\eta\cdot\nnk |_0^2 \leq   \MM_0 +C(\eps)E_\kk(T)+\PP\int_0^T\PP.
\end{align}

\begin{rmk}In the control of the normal trace of $v$, we need the control of $\|q\|_{4.5}^2$ (which is given by \eqref{ellq}) and $\|\eta\|_{4.5}^2$. Also, we mention here that the following highest order term will be generated during the testing process
$$
\kk\int_0^T\ig (v^j \TP^5\TP_j \ek\cdot \nnk)(\TP^5 \eta\cdot \nnk),
$$
which cannot be controlled directly. Instead, we need to commute one tangential smooth operator $\Lambda_\kk$ from $\ek$ to $\eta$ and hence create a positive term after pulling $\TP_j$ out. In fact, this is the \textit{only} place that this operation is required. For the normal trace of magnetic field, we need the control of $\|\sqrt{\kk}\eta\|_{5.5}, \|\bp \eta\|_{4.5}^2$ and $\|\bp q\|_{3.5}^2$, where $\|\bp q\|_{3.5}^2 \lesssim \|b_0\|_{3.5}^2 \|q\|_{4.5}^2$ in light of the Kato-Ponce inequality \eqref{product}. 
\end{rmk}



\subsection{Elliptic estimates of pressure}\label{ellkk}

We prove the following proposition in this section.
\begin{prop}\label{qelliptic}
The pressure $q$ in \eqref{MHDLkk} and its time derivatives satisfy the following estimates
\begin{equation}\label{ellq}
\|q\|_{4.5}+\|\p_tq\|_{3.5}+\|\p_t^2 q\|_{2.5}+\|\p_t^3q\|_1\lesssim\PP.
\end{equation}
\end{prop}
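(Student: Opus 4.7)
The plan is to derive an elliptic problem for $q$ by applying $\Ak^{\nu\alpha}\p_\nu$ to the momentum equation of \eqref{MHDLkk}. Using Piola's identity $\p_\nu\Ak^{\nu\alpha}=0$, commuting $\p_t$ past $\divA$, and using the constraint $\divA v=0$ (equivalent to $\diva v=0$ since $\Jk>0$), I obtain
\[
\p_\nu\bigl(\Ak^{\nu\alpha}\Ak^{\mu\alpha}\p_\mu q\bigr)=(\p_t\Ak^{\nu\alpha})\p_\nu v^\alpha+\Ak^{\nu\alpha}\p_\nu(\bp^2\eta^\alpha)=:F\quad\text{in }\Omega,
\]
whose coefficient matrix $\BB^{\nu\mu}:=\Ak^{\nu\alpha}\Ak^{\mu\alpha}$ is $\eps$-close to the identity in $H^{3.5}\cap L^\infty$ by \eqref{smallak} and Lemma \ref{small quantities}. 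On $\Gamma$, \eqref{BC1} supplies a Dirichlet datum for $q$. On $\Gamma_0$, restricting the $\alpha=3$ component of the momentum equation and using $v^3=0$ and $b_0^3=0$ (which force $\p_tv^3=0$ and $\bp^2\eta^3=0$ there) yields the conormal condition $\BB^{3\mu}\p_\mu q=\Ak^{3\alpha}(\bp^2\eta^\alpha-\p_tv^\alpha)$, whose right-hand side is controlled by $P(E_\kk)$ because $\Ak^{3i}$ for $i=1,2$ is $O(\eps)$ while $\Ak^{33}$ multiplies a quantity that vanishes on $\Gamma_0$.

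For $\|q\|_{4.5}$, I would then invoke standard mixed Dirichlet--Neumann elliptic regularity for the small-perturbation problem to obtain
\[
\|q\|_{4.5}\lesssim\|F\|_{2.5}+|q|_{H^4(\Gamma)}+\bigl|\BB^{3\mu}\p_\mu q\bigr|_{H^3(\Gamma_0)},
\]
where $\|F\|_{2.5}$ and the $\Gamma_0$-term are bounded by $P(E_\kk)$ via Kato--Ponce and Lemma \ref{bpek}. The delicate task is $|q|_{H^4(\Gamma)}$: the datum contains $\sqrt g\,\Delta_g\eta\cdot\nnk$, which naively would cost $|\eta\cdot\nnk|_{H^6(\Gamma)}$. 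The projection identity \eqref{lapg}, $\sqrt g\,\Delta_g\eta^\alpha=\sqrt g\,g^{ij}\Pi^\alpha_\lambda\TP_i\TP_j\eta^\lambda$, together with the Leibniz rule, lets me trade these two ``expensive'' tangential derivatives of $\eta$ for tangential derivatives of the scalar $\eta\cdot\nnk$, whose boundary regularity is supplied by the surface norm $|\TP(\Pi\TP^3 v)|_0\in E_\kk^{(1)}$ (time-integrated via $\eta=\id+\int_0^tv$) together with the $\sqrt\kk$-weighted bound $|\sqrt\kk\eta|_5\lesssim P(E_\kk)$ of Lemma \ref{super control}. The artificial-viscosity piece $\kk(1-\TL)(v\cdot\nnk)/\sqrt{\tilde g}$ is analogously handled using $|\sqrt\kk v|_5$ from Lemma \ref{super control}.

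For $\|\p_tq\|_{3.5}$ and $\|\p_t^2 q\|_{2.5}$, I would time-differentiate the elliptic equation and the boundary data and rerun the argument at two and four fewer derivatives, respectively; the Dirichlet data then involve $\p_t^k(\sqrt g\,\Delta_g\eta\cdot\nnk)$, placed at the right level by the time-differentiated tangential energies $|\TP(\Pi\TP^{3-k}\p_t^kv)|_0$ of $E_\kk^{(1)}$ plus Lemma \ref{super control}. All commutator terms created when derivatives land on $\Pi$, $g^{ij}$, $\sqrt g$, or $\nnk$ are strictly of lower top order and are absorbed into $\PP\int_0^T\PP$ via Kato--Ponce.

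The estimate $\|\p_t^3 q\|_1$ is the genuinely different case, because in the $\p_t^3$-differentiated equation the source is only $L^2$ (since $\p_t^4v\in L^2(\Omega)$), ruling out standard $H^2$ regularity. My plan here is to write the $\p_t^3$-differentiated equation in divergence form $\p_\nu(\BB^{\nu\mu}\p_\mu(\p_t^3 q))=\dive\pi$ for a suitable $\pi\in L^2(\Omega)$, with the $\p_t^3$-differentiated conormal datum playing the role of the Neumann datum $h$ in Lemma \ref{H1elliptic}; the smallness hypothesis $\|\BB-I\|_{L^\infty}\leq\eps$ is guaranteed by \eqref{compars delta and A}, and Lemma \ref{H1elliptic} then yields $\|\p_t^3 q-\overline{\p_t^3 q}\|_1\lesssim P(E_\kk)$. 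The mean $\overline{\p_t^3 q}$ is recovered from a low-norm trace of the Dirichlet datum on $\Gamma$, whose leading piece $\Delta_g\p_t^3\eta\cdot\nnk$ is controlled in $L^2(\Gamma)$ by $|\TP(\Pi\TP\p_t^2v)|_0\in E_\kk^{(1)}$. The recurring main obstacle across all four estimates is precisely the two extra tangential derivatives hidden in the mean-curvature term of the Dirichlet datum; the interplay of the projection identity \eqref{lapg} with the weighted boundary bounds of Lemma \ref{super control} is what keeps every estimate uniform in $\kk$.
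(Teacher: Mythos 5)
There is a genuine gap, and it sits exactly where you flag the ``delicate task'': your choice to close the top-order estimate through the Dirichlet datum on $\Gamma$ cannot work with the energy available. To get $\|q\|_{4.5}$ from a Dirichlet condition you need $|q|_{H^4(\Gamma)}$, hence $|\sqrt{g}\,\Delta_g\eta\cdot\nnk|_{H^4(\Gamma)}$, which by \eqref{lapg} amounts to $\Pi\TP^2\eta\in H^4(\Gamma)$, i.e.\ an $H^6(\Gamma)$-type bound on (the projected) $\eta$. The quantities you invoke do not supply this: $|\TP(\Pi\TP^3 v)|_0\in E_\kk^{(1)}$ only gives, after time integration, $\TP^4$ of the projected flow map in $L^2(\Gamma)$ — two full tangential derivatives short — and Lemma \ref{super control} only gives $|\sqrt{\kk}\,\eta|_5$, which is one derivative short \emph{and} carries a $\sqrt{\kk}$ weight that the Dirichlet datum does not provide. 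Indeed the paper states explicitly (Section 1.5.5) that the $|\sqrt{\kk}\,\eta|_6$ regularity available for Euler in \cite{coutand2007LWP} \emph{cannot} be achieved for MHD. The same problem hits the artificial-viscosity piece: $\kk|(1-\TL)(v\cdot\nnk)|_{H^4(\Gamma)}$ needs $\kk|v|_6$, whereas Lemma \ref{super control} only controls $\int_0^T|\sqrt{\kk}\,v|_5^2$ — again short by a derivative, with only half the needed power of $\kk$, and only in a time-integrated sense, so it cannot bound $\|q(t)\|_{4.5}$ pointwise in time.

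The paper avoids this entirely by taking the opposite route on $\Gamma$: it contracts the momentum equation with $\Ak^{3\alpha}$ to produce the \emph{Neumann} condition $\Ak^{3\alpha}\Ak^{\mu\alpha}\p_\mu q=\Ak^{3\alpha}(\bp^2\eta-\p_t v)_\alpha$, whose data need only $|\p_t v|_{H^3(\Gamma)}$ and $|\bp^2\eta|_{H^3(\Gamma)}$ — exactly what $\|\p_t v\|_{3.5}$ and $\|\bp\eta\|_{4.5}$ give via the trace lemma. The Dirichlet condition \eqref{BC2} is used only to pin down the low norm $|q|_0$ (since a pure Neumann problem determines $q$ up to a constant). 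This is precisely the point made in Section 1.4.2 of the paper: the presence of surface tension forces one to use the Neumann rather than the Dirichlet condition for the pressure, at the cost of putting time derivatives of $v$ into the energy. Your $\p_t^3 q$ argument (divergence form plus Lemma \ref{H1elliptic}) is essentially the paper's, modulo the recovery of the mean, but the $\|q\|_{4.5}$, $\|\p_t q\|_{3.5}$, $\|\p_t^2 q\|_{2.5}$ estimates as you propose them do not close.
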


First, we give control of the pressure $q$. Taking $\divA$ in the second equation of \eqref{MHDLkk} 
 we get the following elliptic system for $q$:
\begin{align*}
-\lapAk q:=&-\di_{\Ak} (\nab_{\Ak} q)=\left[\divA,\p_t\right]v+\left[\divA,\bp\right]\bp\eta+\bp\divA\left(\bp\eta\right)\\
=&-\p_t\Ak^{\mu\alpha}\p_{\mu}v_{\alpha}-(\bp \Ak^{\mu\alpha})\p_\mu \bp\eta_\alpha+\Ak^{\mu\alpha}(\p_\mu b_0\cdot \p)\bp\eta_\alpha\\
&+\bp\underbrace{\dive_a\left(\bp\eta\right)}_{=\dive b_0=0}+\bp\left((\Ak^{\mu\alpha}-a^{\mu\alpha})\p_{\mu}\bp\eta_{\alpha}\right),
\end{align*}
and thus
\begin{equation}\label{qeq}
\begin{aligned}
-\Delta q=:&-\di (\p q)=-\p_{\nu}\left((\delta^{\mu\nu}-\Ak^{\mu\alpha}\Ak^{\nu\alpha})\p_{\mu}q\right)-\p_t\Ak^{\mu\alpha}\p_{\mu}v_{\alpha}-(\bp \Ak^{\mu\alpha})\p_\mu \bp\eta_\alpha\\
&+\Ak^{\mu\alpha}(\p_\mu b_0\cdot \p)\bp\eta_\alpha+\bp\left((\Ak^{\mu\alpha}-a^{\mu\alpha})\p_{\mu}\bp\eta_{\alpha}\right).
\end{aligned}
\end{equation}
We impose Neumann boundary condition to \eqref{qeq} by contracting $\Ak^{\mu\alpha}N_{\mu}=\Ak^{3\alpha}$ with the second equation of \eqref{MHDLkk}
\begin{equation}\label{qbdry}
\frac{\p q}{\p N}=(\delta^{\mu 3}-\Ak^{\mu\alpha}\Ak^{3\alpha})\p_{\mu}q-\Ak^{3\alpha}\p_tv_{\alpha}+\Ak^{3\alpha}\bp^2\eta_{\alpha},\quad \text{on}\,\,\Gamma.
\end{equation}
Also, since $\Ak^{31}=\Ak^{32}=0$, $\Ak^{33}=1$, $v_3=0$, and $b_0^3=0$ implies $\bp \eta_3=b_0^j \TP_j \eta_3=0$  on $\Gamma_0$, \eqref{qbdry} yields
\begin{equation}
\frac{\p q}{\p N}=0,\quad \text{on}\,\,\,\Gamma_0. \label{qbdryGamma0}
\end{equation}
By the standard elliptic estimates, we have
\[
\|q\|_{4.5}\lesssim\|\text{RHS of }\eqref{qeq}\|_{2.5}+|\text{RHS of }\eqref{qbdry}|_{3}+|q|_0.
\]
Here, $|q|_0$ can be directly bounded by invoking the boundary condition of $q$, i.e., 
\begin{equation}
q = -\sigma \frac{\sqrt{g}}{\sqrt{\gk}}(\Delta_g \eta\cdot \nnk ) +\kk\frac{1}{\sqrt{\gk}}(1-\TL) (v\cdot \nnk), \label{BC2}
\end{equation}
and thus
\begin{equation}\label{q1}
|q|_0  \lesssim\PP.
\end{equation}
Since for smooth functions $f$ and $g$, \eqref{product} implies that
\begin{align*}
\|fg\|_{2.5} \lesssim \|f\|_{2.5}\|g\|_{2.5},
\end{align*}
invoking the a priori assumption \eqref{smallak}, we have
\begin{equation}\label{q2}
\begin{aligned}
\|\text{RHS of }\eqref{qeq}\|_{2.5}\lesssim&\eps\|q\|_{4.5}+P(\|\eta\|_{4.5},\|b_0\|_{3.5})(\|\p v\|_{2.5}^2+\|\p\bp\eta\|_{2.5})\\
&+\|\Ak-a\|_{3.5}\|b_0\|_{2.5}\|\bp\eta\|_{3.5}+\|\Ak-a\|_{2.5}\|b_0\|_{2.5}\|\bp\eta\|_{4.5}\\
\lesssim&\eps\|q\|_{4.5}+P(\|b_0\|_{4.5},\|\bp\eta\|_{4.5},\|\eta\|_{3.5},\|v\|_{3.5})\\
\lesssim&\eps\|q\|_{4.5}+\PP,
\end{aligned}
\end{equation}
and
\begin{equation}\label{q3}
|\text{RHS of }\eqref{qbdry}|_{3}\lesssim\eps\|q\|_{4.5}+P(\|\eta\|_{4.5})\left(\|\p_tv\|_{3.5}+\|b_0\|_{3.5}\|\bp\eta\|_{4.5}\right)\lesssim\eps\|q\|_{4.5}+\PP.
\end{equation}
Summing up \eqref{q1}-\eqref{q3} and choosing $\eps>0$ sufficiently small, we get the estimates of $q$
\begin{equation}\label{q4}
\|q\|_{4.5}\lesssim\PP.
\end{equation}

Next we take $\p_t$ in \eqref{qeq}-\eqref{qbdry} to get the equations of $\p_t q$:
\begin{equation}\label{qteq}
\begin{aligned}
-\Delta\p_tq=&-\p_{\nu}\left((\delta^{\mu\nu}-\Ak^{\mu\alpha}\Ak^{\nu\alpha})\p_{\mu}\p_tq\right)-\p_{\nu}\left((\delta^{\mu\nu}-\p_t(\Ak^{\mu\alpha}\Ak^{\nu\alpha}))\p_{\mu}q\right)\\
&-\p_t^2\Ak^{\mu\alpha}\p_{\mu}v_{\alpha}-\p_t\Ak^{\mu\alpha}\p_t\p_{\mu}v_{\alpha}+\p_t\left(\Ak^{\mu\alpha}(\p_\mu b_0\cdot \p)\bp\eta_\alpha-(\bp \Ak^{\mu\alpha})\p_\mu\bp\eta_\alpha\right)\\
&+\bp\left((\p_t\Ak-\p_ta)\p(\bp\eta)+(\Ak-a)\p(\bp v)\right),
\end{aligned}
\end{equation}with Neumann boundary condition
\begin{equation}\label{qtbdry}
\begin{aligned}
\frac{\p\p_tq}{\p N}=&~(\delta^{\mu3}-\Ak^{\mu\alpha}\Ak^{3\alpha})\p_{\mu}\p_t q-\p_t(\Ak^{\mu\alpha}\Ak^{3\alpha})\p_{\mu}Q\\
&-\Ak^{3\alpha}(\p_t^2v_{\alpha}-\bp^2v_{\alpha})-\p_t\Ak^{3\alpha}(\p_tv-\bp^2\eta)_{\alpha},\quad \text{on}\,\,\Gamma,
\end{aligned}
\end{equation}
and 
\begin{equation}
\frac{\p\p_tq}{\p N} = 0,\quad \text{on}\,\,\Gamma_0. 
\end{equation}
Invoking the standard elliptic equation again, we have 
\[
\|\p_t q\|_{3.5}\lesssim\|\text{RHS of }\eqref{qteq}\|_{1.5}+|\text{RHS of }\eqref{qtbdry}|_{2}+|\p_tq|_0.
\] 
The control of the first two terms follows similarly as above
\begin{equation}\label{qt1}
\|\text{RHS of }\eqref{qteq}\|_{1.5}+|\text{RHS of }\eqref{qtbdry}|_{2}\lesssim\PP,
\end{equation}
where we have used the inequality 
$$
\|fg\|_{1.5} \lesssim \|f\|_{1.5+\delta} \|g\|_{1.5}+\|f\|_{1.5}\|g\|_{1.5+\delta},
$$
which is a direct consequence of \eqref{product}. 

As for the boundary term, we take $\p_t$ in the surface tension equation to get
$$
\p_tq=-\sigma \frac{\sqrt{g}}{\sqrt{\gk}}(\Delta_g v\cdot \nnk ) +\kk\frac{1}{\sqrt{\gk}}(1-\TL) (\p_tv\cdot \nnk)+\text{lower-order temrs}
$$
and thus
\begin{equation}\label{qt2}
\|\p_tq\|_0\lesssim \PP.
\end{equation} Summing up \eqref{qt1}-\eqref{qt2} and choosing $\eps>0$ to be sufficiently small, we get
\begin{equation}\label{qt3}
\|\p_tq\|_{3.5}\lesssim\PP.
\end{equation}

Invoking the inequality 
\begin{align*}
\|fg\|_{0.5}\lesssim \|f\|_{0.5}\|g\|_{1.5+\delta},
\end{align*}
which follows from \eqref{product},
and time differentiating \eqref{qteq}-\eqref{qtbdry} and \eqref{BC2} again, we can silimarly get the estimates of $\|\p_t^2 q\|_{2.5}$: 
\begin{equation}\label{qtt2}
\|\p_t^2q\|_{2.5}\lesssim\PP.
\end{equation}
The treatment is similar to what has been done before and so we omit the details. 

However, we cannot use a similar method to control $\|\p_t^3 q\|_1$ because the standard elliptic estimates require at least $H^2$-regularity. Instead, we invoke Lemma \ref{H1elliptic} which allows us to perform the low regularity $H^1$-estimate for $\p_t^3$-differentiated elliptic system \eqref{qeq}-\eqref{qbdry}. To use this Lemma, we need first to rewrite the elliptic equations into the divergence form. Recall that the elliptic equation \eqref{qeq} is derived by taking smoothed Eulerian divergence $\divA$. This, together with Piola's identity $\p_\nu\Ak^{\nu\alpha}=0$ give that
\[
-\p_{\nu}(\Ak^{\nu\alpha}\Ak^{\mu\alpha}\p_{\mu}q)=\p_{\nu}\left(\Ak^{\nu\alpha}(\p_t v-\bp^2\eta)_{\alpha}\right),
\]
with the boundary condition
\[
\Ak^{3\alpha}\Ak^{\mu\alpha}\p_{\mu}q=\Ak^{3\alpha}(\p_t v-\bp^2\eta)_{\alpha} ,\quad \text{on}\,\,\Gamma,
\]
and $\frac{\p q}{\p N} =0$ on $\Gamma_0$. 
Taking $\p_t^3$ derivatives, we get
\begin{equation}\label{qttteq}
\begin{aligned}
\p_{\nu}(\Ak^{\nu\alpha}\Ak^{\mu\alpha}\p_t^3\p_{\mu}q)=&\p_{\nu}\left(\left[\Ak^{\nu\alpha}\Ak^{\mu\alpha},\p_t^3\right]\p_{\mu}q\right)+\p_{\nu}\p_t^3\left(\Ak^{\nu\alpha}(\p_tv-\bp^2\eta)_{\alpha}\right),
\end{aligned}
\end{equation}
with the boundary condition
\begin{equation}\label{qtttbdry}
\Ak^{3\alpha}\Ak^{\mu\alpha}\p_{\mu}\p_t^3 q=\left[\Ak^{3\alpha}\Ak^{\mu\alpha},\p_t^3\right]\p_{\mu}q+\p_t^3\left(\Ak^{3\alpha}(\p_t v-\bp^2\eta)_{\alpha}\right),\quad \text{on}\,\,\Gamma.
\end{equation}

Now if we set 
\[
\BB^{\nu\mu}:=\Ak^{\nu\alpha}\Ak^{\mu\alpha},\quad h:= \text{RHS of}\,\,\, \eqref{qtttbdry}
\]
and
\[
\pi^{\nu}:=\left[\Ak^{\nu\alpha}\Ak^{\mu\alpha},\p_t^3\right]\p_{\mu}q+\p_t^3\left(\Ak^{\nu\alpha}(\p_tv-\bp^2\eta)_{\alpha}\right)
\]
then the elliptic system \eqref{qttteq}-\eqref{qtttbdry} is exactly of the form \eqref{lowelliptic}. The a priori assumption \eqref{smallak} shows that $\|\BB-Id\|_{L^{\infty}}$ is sufficiently small. Now it is straightforward to see that $\pi, \dive \pi \in L^2$, i.e., 
\begin{equation}\label{piell}
\|\pi\|_0+\|\dive \pi\|_0\lesssim\PP.
\end{equation}Also, since 
\begin{align}
h-\pi\cdot N =0,
\end{align}then by Lemma \ref{H1elliptic} and invoking \eqref{q4}, \eqref{qt3}, \eqref{qtt2}, we have
\begin{align}\label{qttt11}
\left\|\p_t^3q-\overline{\p_t^3 q}\right\|_1\lesssim\|\pi\|_0\lesssim \PP.
\end{align}
 Lastly, we need to control the $H^1$-norm of $\overline{\p_t^3 q}$ by $\PP$. 
\begin{equation}
\begin{aligned}
&\overline{\p_t^3 q} = \frac{1}{\vol(\Omega)}\io \p_t^3 q\dy= \frac{1}{\vol(\Omega)}\io\p_t^3 q\TP_1 y_1\dy=- \frac{1}{\vol(\Omega)}\io y_1\TP_1\p_t^3q\\
\leq&~C(\vol(\Omega)) \|\TP\p_t^3 q\|_0\|y_1\|_0=C(\vol(\Omega))\left\|\TP(\p_t^3q-\overline{\p_t^3 q})\right\|_0\|y_1\|_0\\
\leq&~ C(\text{vol}(\Omega)) \left\|\p_t^3q-\overline{\p_t^3 q}\right\|_1.
\end{aligned}
\end{equation}
This concludes the control of $\|\p_t^3 q\|_1$, and we have
\begin{equation}\label{qttt1}
\|\p_t^3q\|_{1}\lesssim\PP.
\end{equation}

\subsection{The div-curl estimates}\label{divcurlkk}

Invoking Lemma \ref{hodge}, we have the following inequalities for $0\leq k\leq 3$
\begin{align}
\label{v40}\|v\|_{4.5}^2\lesssim&\|v\|_0^2+\|\dive v\|_{3.5}^2+\|\curl v\|_{3.5}^2+|\TP v^3|_{3}^2,\\
\label{b40}\|\bp\eta\|_{4.5}^2\lesssim&\|\bp\eta\|_0^2+\|\dive \bp\eta\|_{3.5}^2+\|\curl \bp\eta\|_{3.5}^2+|\TP \bp\eta^3|_{3}^2,\\
\label{vt0}\|\p_t^k v\|_{4.5-k}^2\lesssim&\|\p_t^kv\|_0^2+\|\dive \p_t^kv\|_{3.5-k}^2+\|\curl \p_t^kv\|_{3.5-k}^2+|\TP \p_t^kv^3|_{3-k}^2,\\
\label{bt0}\|\p_t^k\bp\eta\|_{4.5-k}^2\lesssim&\|\p_t^k\bp\eta\|_0^2+\|\dive\p_t^k \bp\eta\|_{3.5-k}^2\nonumber\\
&+\|\curl \p_t^k\bp\eta\|_{3.5-k}^2+|\TP \p_t^k\bp\eta^3|_{3-k}^2.
\end{align}
Here, notice that we do not pick up the terms on $\Gamma_0$ since $v^3=0$ and $\bp\eta^3= b_0^i \TP_i \eta^3=0$ there. 
Also,  the $L^2$-norms in \eqref{v40} and \eqref{b40} are controlled by energy conservation law. We will omit the control of $L^2$-norms appearing in the div-curl estimates in the rest of this manuscript.

\subsubsection*{Divergence estimates} 
For the velocity vector field, one has
\begin{equation}\label{divv0}
\dive v=\underbrace{\diva v}_{=0}+(\delta^{\mu\alpha}-\ak^{\mu\alpha})\p_\mu v_{\alpha}=\dive_{Id-\ak}v,
\end{equation}and thus
\begin{equation}\label{divv3}
\|\dive v\|_{3.5}\leq\|\diva v\|_{3.5}+\|(\delta^{\mu\alpha}-\ak^{\mu\alpha})\p_\mu v_{\alpha}\|_{3.5}\leq 0+\eps\|v\|_{4.5}.
\end{equation}

Time differentiating \eqref{divv0}, one has
\begin{equation}\label{divvt2}
\begin{aligned}
\|\dive \p_t v\|_{2.5}\lesssim&\|\dive_{Id-\ak}\p_t v\|_{2.5}+\|\dive_{\p_t \ak}v\|_{2.5}\\
\lesssim&\eps\|\p_t^2 v\|_{2.5}+\|\p_t\ak\|_{2.5}\|v\|_{3.5}\lesssim\eps\|\p_t^2 v\|_{2.5}+\|\p\ek\|_{2.5}\|v\|_{3.5}^2\\
\lesssim&\eps\|\p_t^2 v\|_{2.5}+P(\|v_0\|_{3.5})+\|\eta\|_{3.5}\int_0^TP(\|v\|_{4.5}),
\end{aligned}
\end{equation}where in the last step we write $\|v\|_{3.5}$ in terms of initial data plus time integral and use Young's inequality. The divergence estimates of $\|\p_t^k v\|_{3.5-k}$, $k=2,3$ are parallel and so we omit the details. 
\begin{equation}\label{divvtt1}
\|\dive\p_t^2 v\|_{1.5}+\|\dive\p_t^3 v\|_{0.5}\lesssim\eps(\|\p_t^2 v\|_{2.5}+\|\p_t^3 v\|_{1.5})+\PP_0+\PP\int_0^T\PP.
\end{equation}

As for $\bp\eta$, \textit{one no longer has $\diva(\bp\eta)=0$ due to the tangential mollification.} Instead, one can compute the evolution equation verified by $\diva(\bp\eta)$. Invoking $\diva v=0$ and $\p_t \eta=v$, we have
\begin{equation}\label{divbeq}
\p_t(\diva(\bp\eta))=[\diva,\bp]v+\dive_{\p_t\ak}\bp\eta.
\end{equation}
The commutator $[\diva,\bp]v$ only contains first order derivative of $v$ and $\bp\eta$. Using the identity
\begin{align}
\p \ak^{\mu\alpha}= - \ak^{\mu\gamma} \p_\beta \p \ek_\gamma\ak^{\beta\alpha}, 
\label{derivative a}
\end{align}
which follows from differentiating $\ak^{\mu\alpha}\p_\mu\ek_\beta=\delta^{\alpha}_\beta$, one has
\begin{align}
[\diva,\bp]v=&\ak^{\mu\alpha}\p_\mu b_0^{\nu}\p_{\nu}v_{\alpha}-b_0^{\nu}\p_{\nu}\ak^{\mu\alpha}\p_\mu v_{\alpha}\nonumber\\
=&\ak^{\mu\alpha}\p_\mu b_0^{\nu}\p_{\nu}v_{\alpha}+\p_\beta(\bp\ek_{\gamma}) \ak^{\mu\gamma}\ak^{\beta\alpha}\p_\mu v_{\alpha}-\p_{\beta}b_0^{\nu}\underbrace{\p_{\nu}\ek_{\gamma}\ak^{\mu\gamma}}_{\delta_{\nu}^{\mu}}\ak^{\beta\alpha}\p_{\mu}v_{\alpha}
\nonumber\\
=&\p_\beta(\bp\ek_{\gamma})\ak^{\mu\gamma}\ak^{\beta\alpha}\p_\mu v_{\alpha}.
\label{div1}
\end{align}
Moreover, 
\begin{align}
\dive_{\p_t\ak}\bp\eta = \p_t\ak^{\mu\alpha}\p_\mu \bp\eta_\alpha=-a^{\mu\gamma}\p_\beta \vk_\gamma \ak^{\beta\alpha}\p_\mu \bp\eta_\alpha. 
\label{div2}
\end{align}
Taking $\p^{3.5}$ in \eqref{divbeq} and testing it with $\p^{3.5}\di_{\ak}\bp\eta$, we get
\begin{equation}
\|\diva \bp\eta\|_{3.5}^2\leq\|\dive b_0\|_{3.5}^2+\int_0^T\|\diva \bp\eta\|_{3.5}\left(\|[\diva,\bp]v\|_{3.5}+\|\dive_{\p_t\ak}\bp\eta\|_{3.5}\right).
\end{equation}
This suggests that we need to control 
$
\int_0^T\|[\diva,\bp]v\|_{3.5}
$
and
$
\int_0^T\|\dive_{\p_t\ak}\bp\eta\|_{3.5}
$ on the right hand side. 
In light of \eqref{div1} and \eqref{div2}, we have
\[
\int_0^T\|[\diva,\bp]v\|_{3.5}+\|\dive_{\p_t\ak}\bp\eta\|_{3.5} \leq\int_0^T \PP. 
\]
Therefore, 
\begin{align}
\|\diva \bp\eta\|_{3.5}^2 \leq \|\di b_0\|_{3.5}^2 +\int_0^T\PP \leq \PP_0+ \int_0^T\PP,
\end{align}
which implies, after invoking \eqref{compars delta and a}, that
\begin{equation}\label{divb3}
\|\dive \bp\eta\|_{3.5}^2\lesssim\eps^2\|\bp\eta\|_{4.5}^2+\PP_0+\int_0^T\PP.
\end{equation}

Similarly, one can take $\p^{3.5-k}\p_t^k$ for $1\leq k\leq 3$ in \eqref{divbeq}, then compute the $L^2$ estimates to get
\begin{equation}\label{divbt}
\|\diva\p_t^k \bp\eta\|_{3.5-k}^2\lesssim\eps^2\|\p_t^k\bp\eta\|_{4.5-k}^2+\PP_0+\int_0^T\PP.
\end{equation}

\subsubsection*{Curl estimates}

The curl estimates can be derived by the evolution equation of $\curla v$. Taking $\curla$ in the second equation of \eqref{MHDLkk}, we get
\begin{equation}\label{curlbeq}
\p_t(\curlA v)-\bp\curlA(\bp\eta)=\curl_{\p_t\Ak}v+[\curlA,\bp]\bp\eta.
\end{equation}
Then we take $\p^{3.5}$, test it with $\p^{3.5} (\curlA v)$ and integrate $\bp$ by parts (recall that $b_0\cdot N|_{\p\Omega}=0$ and $\dive b_0=0$) to get
\begin{equation}
\begin{aligned}
&\frac12\frac{d}{dt}\io|\p^{3.5}\curlA v|^2+|\p^{3.5}\curlA \bp\eta|^2\dy\\
=&\io\left(\left[\p^{3.5}, \bp\right]\curlA\bp\eta+\p^{3.5}\left(\curl_{\p_t\Ak}v+[\curlA,\bp]\bp\eta\right)\right)(\p^{3.5}\curlA v)\dy\\
&+\io\p^{3.5}(\curlA\bp\eta)\cdot\left(\left[\p^{3.5}\curlA,\bp\right]v+\p^{3.5}(\curl_{\p_t\Ak}\bp\eta)\right)\dy\\
\lesssim& P(\|b_0\|_{4.5},\|\bp\eta\|_{4.5},\|v\|_{4.5},\|\eta\|_{4.5})\lesssim\PP,
\end{aligned}
\end{equation} and thus by the a priori assumption \eqref{smallak}, we have
\begin{equation}\label{curlvb3}
\|\curl v\|_{3.5}^2+\|\curl \bp\eta\|_{3.5}^2\lesssim\eps^2(\|v\|_{4.5}^2+\|\bp\eta\|_{4.5}^2)+\PP_0+\int_0^T\PP\dt.
\end{equation}

Similarly, replacing $\p^{3.5}$ by $\p^{3.5-k}\p_t^k$ for $1\leq k\leq 3$, we can similarly get the following curl estimates
\begin{equation}\label{curlvbt}
\|\curlA\p_t^k \bp\eta\|_{3.5-k}^2\lesssim\eps^2\|\p_t^k\bp\eta\|_{4.5-k}^2+\PP_0+\int_0^T\PP\dt.
\end{equation}

\subsection{Boundary estimates}
\label{sect bdy estimate 3.3}
We need to control the boundary term $|\TP\p_t^kv\cdot N|_{3-k}$ and $|\TP\p_t^k\bp\eta\cdot N|_{3-k}.$ In the case of zero surface tension, one can use the normal trace theorem to reduce $|\TP X\cdot N|_{s-1.5}$ to the interior tangential estimates $\|\TP^s X\|_0$. But the interior tangential estimates, especially in the full spatial derivative case, cannot be controlled due to the appearance of surface tension. 
\subsubsection{Control of $|\TP\p_t^kv\cdot N|_{3-k}$}
\begin{thm}
For $k=0,1,2,3$, one has
\begin{align} \label{boundary v}
|\TP\p_t^k v^3|_{3-k}^2\lesssim |\TP (\Pi \TP^{3-k}\p_t^k v)|_0^2+\PP\int_0^T\PP.
\end{align}
\end{thm}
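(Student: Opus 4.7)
The approach is to exploit the pointwise orthogonal decomposition
\[
w^3 = (\Pi w)^3 + ((I-\Pi)w)^3 = \nn^3(w\cdot\nn) + g^{ij}\TP_i\eta^3\,\TP_j\eta_\beta\,w^\beta,
\]
valid for any vector field $w$ on $\Gamma$ (the second equality uses the formula for $I-\Pi$ in \eqref{Pi}), applied to $w$ taken to be a high-order tangential/time derivative of $v$ \emph{after} performing the differentiations. This ordering is the key observation: it lets the small factors $\TP\Pi|_{t=0}=0$ and $\TP_i\eta^3|_{t=0}=0$ surface directly, both stemming from $\eta|_{t=0}=\id$ (so $\Pi|_{t=0}=N\otimes N$ is spatially constant on $\Gamma$, and $\eta^3(0,y)=y^3\equiv 1$ on $\Gamma$).

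Since $N=e_3$ is constant, $\TP^{4-k}\p_t^k v^3=(\TP^{4-k}\p_t^k v)^3$, and the leading contribution to $|\TP\p_t^k v^3|_{3-k}^2$ is $|\TP^{4-k}\p_t^k v^3|_0^2$. Applying the identity with $w=\TP^{4-k}\p_t^k v$ yields
\[
\TP^{4-k}\p_t^k v^3 = (\Pi\TP^{4-k}\p_t^k v)^3 + g^{ij}\TP_i\eta^3\,\TP_j\eta_\beta\,\TP^{4-k}\p_t^k v^\beta.
\]
For the normal piece I rewrite $\Pi\TP^{4-k}\p_t^k v=\TP(\Pi\TP^{3-k}\p_t^k v)-(\TP\Pi)\TP^{3-k}\p_t^k v$, producing exactly the energy term $|\TP(\Pi\TP^{3-k}\p_t^k v)|_0^2$ plus a commutator. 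Using $|\TP\Pi|_{L^\infty}\leq \int_0^T|\p_t\TP\Pi|_{L^\infty}\lesssim\int_0^T\PP$ together with the trace bound $|\TP^{3-k}\p_t^k v|_0\lesssim \|\p_t^k v\|_{3.5-k}\lesssim\PP$, the commutator is $\lesssim\PP\int_0^T\PP$. For the tangential piece, $|\TP\eta^3|_{L^\infty}\leq \int_0^T|\TP v^3|_{L^\infty}\lesssim\int_0^T\PP$ and $|\TP^{4-k}\p_t^k v|_0\lesssim\|\p_t^k v\|_{4.5-k}\lesssim\PP$, so this contribution is again $\lesssim\PP\int_0^T\PP$. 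The lower-order pieces $|\TP^{1+j}\p_t^k v^3|_0$ for $0\leq j<3-k$ arising from the $H^{3-k}(\Gamma)$-norm are handled either by the same decomposition with fewer derivatives, or by Sobolev interpolation between the top-order norm $\|\p_t^k v\|_{4.5-k}^2$ (whose $\eps$-fraction is absorbed into the LHS of the div-curl estimate \eqref{vt0}) and $\|\p_t^k v\|_0^2 \leq \PP_0+\int_0^T\PP$ (obtained via the fundamental theorem of calculus in time).

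The main technical subtlety is committing to the correct order of operations. Applying $\TP^{4-k}\p_t^k$ termwise to the scalar identity for $v$ would generate, via Leibniz, error terms such as $\p_t^{b_1}\nn^3\cdot\p_t^{b_2}\nn^\beta\cdot\TP^{4-k}\p_t^{b_3}v_\beta$ with $b_1+b_2\geq 1$ and no $\TP$-derivative falling on $\nn$; these lack any vanishing-at-$t=0$ factor and are merely $\lesssim\PP$, which is inadmissible for closing Gronwall. Decomposing pointwise \emph{after} differentiating isolates the single commutator $(\TP\Pi)\TP^{3-k}\p_t^k v$ and exposes the structurally small coefficient $\TP_i\eta^3$ in the tangential piece, both of which are time-integrable and deliver the required $\PP\int_0^T\PP$ closure.
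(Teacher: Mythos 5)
Your proof is correct and follows essentially the same route as the paper: the paper applies the identity \eqref{pi3} to $X=\TP^{3-k}\p_t^k v$ and then takes one $\TP$ of the resulting relation (see \eqref{pigap1}), which produces exactly your two error terms — the commutator carrying $\TP\Pi$ (equivalently $\TP^2\eta$) and the tangential piece carrying $\TP_i\eta^3$, both written as time integrals from zero data and closed with the trace bounds $\|\p_t^k v\|_{4.5-k}\lesssim\PP$. Your cautionary remark about Leibniz errors with no $\TP$ on $\nn$ concerns a naive differentiation of $v^3=\nn^3(v\cdot\nn)+\cdots$ that neither you nor the paper performs, so it is not a point of divergence; the only (harmless) addition on your side is the explicit interpolation treatment of the sub-top-order pieces of the $H^{3-k}(\Gamma)$ norm, which the paper leaves implicit.
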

First we study the case when $k=3$. Let us consider the projection of $\p_t^3 v$ to the Eulerian normal direction, i.e., $(\Pi \p_t^3 v)^3$ instead of Lagrangian normal direction. The reason is twofold.
\begin{enumerate}
\item Recall that \eqref{lapg} in Lemma \ref{geometric} gives that $$\sqrt{g}g^{ij}\Delta_g\eta^{\alpha}=\sigma\sqrt{g}g^{ij}\Pi^{\alpha}_{\lambda}\TP_{ij}^2\eta^{\lambda}.$$ So if we test $\p_t^4$-differentiated version of \eqref{lapg} with $\p_t^4 v$ and integrate by parts, then the term $|\TP(\Pi\p_t^3 v)|_0^2$ is  produced as part of energy term,i.e., 
\begin{equation}\label{lapge}
\ig\sigma\sqrt{g}g^{ij}\Pi^{\alpha}_{\lambda}\p_t^4\TP_{ij}^2\eta^{\lambda}\cdot\p_t^4 v_{\alpha}=-\frac{1}{2}\frac{d}{dt}\ig\left|\TP(\Pi\p_t^3 v)\right|^2\dS+\cdots
\end{equation}
\item The difference between $X^3$ and $(\Pi X)^3$ is expected to be small within a short period of time. 
\end{enumerate}

We will make the above assertions precise. For any vector field $X$,  the following identity holds:
\begin{equation}\label{pi3}
\begin{aligned}
X^3=\delta^3_{\lambda}X^{\lambda}=&(\delta^3_{\lambda}-g^{kl}\TP_k\eta^3\TP_l\eta_{\lambda})X^{\lambda}+g^{kl}\TP_k\eta^3\TP_l\eta_{\lambda}X^{\lambda}\\
=&\Pi^{3}_{\lambda}X^{\lambda}+g^{kl}\TP_k\eta^3\TP_l\eta_{\lambda}X^{\lambda}=(\Pi X)^3+g^{kl}\TP_k\eta^3\TP_l\eta_{\lambda}X^{\lambda}.
\end{aligned}
\end{equation} 
Using $\TP\eta^3=\int_0^T \TP v^3\dt$ (this is true since $\TP \eta^3=0$ initially), we can control the difference between $(\Pi X)^3$ and $X^3$ as
\begin{equation}\label{pigap1}
\begin{aligned}
\left|\TP\left((\Pi X)^3-X^3\right)\right|_0^2\lesssim&\left|g^{kl}\TP_k\eta^3\TP_l\eta_{\lambda}\TP X^{\lambda}\right|_0^2+\left|\TP(g^{kl}\TP_k\eta^3\TP_l\eta_{\lambda}) X^{\lambda}\right|_0^2\\
\lesssim& P(|\TP\eta|_{L^{\infty}})|\TP X|_0^2\int_0^T\left|\TP v^3\right|_{1.5}^2\dt+|X|_{L^4}^2|\TP(g^{kl}\TP_k\eta^3\TP_l\eta_{\lambda})|_{L^4}^2\\
\lesssim& \|X\|_{1.5}^2 P(|\TP\eta|_{L^{\infty}})\int_0^T\PP.
\end{aligned}
\end{equation}
Let $X=\p_t^3 v$. Since $\|\p_t^3 v\|_{1.5}^2$ is included in the energy $E_{\kk}^{(1)}$,  then \eqref{pigap1} implies
\begin{equation}
\left|\TP\left((\Pi \p_t^3 v)^3-\p_t^3 v^3\right)\right|_0^2\lesssim\PP\int_0^T\PP,
\end{equation}
and thus
\begin{align}\label{vttt3bdry}
\left| \TP\p_t^3 v^3 \right|_0^2 \lesssim \left| \TP (\Pi \p_t^3 v)\right|_0^2+ \PP\int_0^T\PP.
\end{align}
Finally, \eqref{boundary v} follows from a parallel argument by assigning $X= \TP\p_t^2 v, \TP^2\p_t v, \TP^3 v$, respectively.

\subsubsection{Control of $|\TP\p_t^k\bp\eta\cdot N|_{3-k}$}
First, when $k\geq 1$, the control of $|\TP\p_t^k\bp\eta\cdot N|_{3-k}$ requires to that of $|\TP \p_t^l v\cdot N|_{3-l}$ (modulo lower order terms generated when derivatives land on $b_0$) for $l=0,1,2$, which has been done in the previous subsection. 

Thus it suffices to study the control of $|\bp\eta^3|_4$. In Luo-Zhang \cite{luozhangMHDST3.5}, the boundary condition forms an elliptic equation $-\sigma\sqrt{g}\Delta_g\eta^{\alpha}=a^{3\alpha}q$ and thus one can take $\bp$ and then use elliptic estimates. However, the boundary condition now takes the form \eqref{BC1} in the smoothed approximate equations and it appears that there is no appropriate boundary $H^2$-control for $\kk \bp\TL (v\cdot \nnk)$. Specifically, it does not seem to be possible to control $|\kk \bp\TL (v\cdot \nnk)|_2$ by $\PP_0+C(\eps)E_\kk(T)+\int_0^T\PP$ due to the lack of time integrals. 

Our strategy here is to adapt the inequality \eqref{pigap1} with $X=\TP^3 \bp\eta$. In particular, we have
\begin{align}
\left|\TP\left((\Pi \TP^3 \bp\eta)^3-\TP^3 \bp\eta^3\right)\right|_0^2\lesssim
\|\TP^3 \bp \eta\|_{1.5} ^2P(|\TP\eta|_{L^{\infty}})\int_0^T\PP
\lesssim \PP \int_0^T\PP,
\end{align}
where the last inequality holds since $\|\bp\eta\|_{4.5}^2$ is included in $E_{\kk}^{(1)}$. Therefore, 
\begin{align}\label{b3bdry}
\left| \TP^4 \bp \eta^3 \right|_0^2 \lesssim \left| \TP (\Pi \TP^3 \bp \eta)\right|_0^2+ \PP\int_0^T\PP.
\end{align}
\begin{rmk} The term $\left| \TP (\Pi \TP^3 \bp \eta)\right|_0^2$ is part of the energy $E_\kk^{(1)}$ defined in \eqref{Ekk}, which is a positive term generated by the $\TP^3\bp$ tangential energy estimate (See Section \ref{tgkk}). There is no problem to study the $\TP^3\bp$-differentiated equations \eqref{MHDLkk} since it is analogous to the $\TP^3\p_t$-differentiated equations. Indeed, as mentioned in the remark right after \eqref{vhigh}  that $\bp \eta$ and $\p_t \eta$ (which is $v$) have the same space-time regularity. 
\end{rmk}
\section{Tangential energy estimates}\label{tgkk}
The purpose of this section is to investigate the a priori energy estimate for the tangentially differentiated approximate $\kk$-problem \eqref{MHDLkk}. In particular, we will study the energy estimate for
$$
\p_t^4, \TP\p_t^3, \cp^2\p_t^2, \cp^3 \p_t, \cp^3\bp
$$
differentiated $\kk$-problem, respectively. 
\subsection{Control of full time derivatives}\label{sect fulltime}
Now we compute the $L^2$-estimate of $\p_t^4 v$ and $\p_t^4\bp\eta$. This turns out to be the most difficult case compared to the cases with at least one tangential spatial derivative that will be treated in Section \ref{sect at least one TP}. This is because $\p_t^4 v$ can only be controlled in $L^2(\Omega)$ and so one has to control some higher-order interior terms instead. These interior terms will be treated by adapting the geometric cancellation scheme introduced in \cite{DK2017} together with an error term which can be controlled by terms in $E_{\kk}^{(3)}(t)$. 

For the sake of simplicity and clean arguments, we shall focus on treating the leading order terms. We henceforth adopt:

\begin{nota} \label{LL nota} We use $\LL$ to denote equality modulo error terms that are effective of lower order. For instance, $X\LL Y$ means that $X=Y+\RR$, where $\RR$ consists of lower order terms with respect to $Y$. 
\end{nota}

Invoking \eqref{MHDLkk} and integrating $\bp$ by parts, we get
\begin{equation}\label{tgt40}
\begin{aligned}
&\frac12\int_0^T \frac{d}{dt}\io|\p_t^4 v|^2+\left|\p_t^4\bp\eta\right|^2\dy\\
=&\int_0^T\io\p_t^4 v_{\alpha}\p_t^5 v^{\alpha}\dy\dt+\int_0^T\io\p_t^4\bp\eta_{\alpha}\p_t^4\bp v^{\alpha}\dy\dt\\
=&\int_0^T\io\p_t^4v_{\alpha}\p_t^4\bp^2\eta_{\alpha}\dy\dt-\int_0^T\io\p_t^4v_{\alpha}\p_t^4(\Ak^{\mu\alpha}\p_{\mu} q)\dy\dt\\
&+\int_0^T\io\p_t^4\bp\eta_{\alpha}\p_t^4\bp v^{\alpha}\dy\dt\\
=&-\int_0^T\io\p_t^4\bp v_{\alpha}\p_t^4\bp\eta_{\alpha}\dy\dt-\int_0^T\io\p_t^4v_{\alpha}\p_t^4(\Ak^{\mu\alpha}\p_{\mu} q)\dy\dt\\
&+\int_0^T\io\p_t^4\bp\eta_{\alpha}\p_t^4\bp v^{\alpha}\dy\dt\\
=&-\int_0^T\io\p_t^4v_{\alpha}\p_t^4(\Ak^{\mu\alpha}\p_{\mu} q)\dy\dt=:I.
\end{aligned}
\end{equation}
Then we integrate $\p_\mu$ by parts, $I$ becomes
\begin{align}
&\int_0^T \io \p_t^4 \p_\mu v_\alpha \p_t^4(\Ak^{\mu\alpha} q)-\underbrace{\int_0^T \ig \p_t^4 v_\alpha \p_t^4(\Ak^{3\alpha} q)}_{I_0}+\underbrace{\int_0^T \int_{\Gamma_0} \p_t^4 v_\alpha \p_t^4(\Ak^{3\alpha} q)}_{I_0'}\nonumber\\
=&\int_0^T \io \Ak^{\mu\alpha} \p_t^4 \p_\mu v_\alpha \p_t^4 q+\underbrace{\int_0^T \io \p_t^4 \p_\mu v_\alpha [\p_t^4,\Ak^{\mu\alpha}] q}_{I_1}+I_0\nonumber\\
=&\int_0^T \int_\Omega \underbrace{\p_t^4 \di_{\Ak} v}_{=0} \p_t^4 q-\underbrace{\int_0^T\int_\Omega [\p_t^4, \Ak^{\mu\alpha}] \p_\mu v_\alpha \p_t^4 q}_{L}+I_1+I_0+I_0'. 
\label{II} 
\end{align}
$I_0'=0$ since  on $\Gamma_0$, we have $\Ak^{31}=\Ak^{32}=0$, $\Ak^{33}=1$, and $v_3=0$. \\
$I_I$ yields a top order interior term when all $4$ time derivatives land on $\Ak^{\mu\alpha}$, i.e., 
\begin{equation}
I_{11}=\int_0^T \io \p_t^4 \p_\mu v_\alpha (\p_t^4\Ak^{\mu\alpha}) q.
\label{I11}
\end{equation}
If $\Ak^{\mu\alpha}$ were $A^{\mu\alpha}$ then this term could be controlled by adapting the cancellation scheme developed in \cite{DK2017}. This motivate us to consider 
\begin{equation}
\int_0^T \int_\Omega \p_t^4\p_\mu v_\alpha (\p_t^4 A^{\mu\alpha}) q+\int_0^T \int_\Omega \p_t^4\p_\mu v_\alpha \Big(\p_t^4 (\Ak^{\mu\alpha}-A^{\mu\alpha})\Big) q=I_{111}+I_{112}.
\end{equation}
Invoking \eqref{A's component}, we have $\p_t^4 \Ak = \sum_{i+j=3} b_{ij}(\p_t^i \p\ek) (\p \p_t^j \tilde{v})$ and $\p_t^4 A = \sum_{i+j=3} b_{ij}'(\p_t^i \p\eta) \p \p_t^j v$, where we denoted $A^{\mu\alpha}$ by $A$ and $\Ak^{\mu\alpha}$ by $\Ak$ by a slight abuse of notations. These imply that
\begin{align*}
\p_t^4 (\Ak - A) =\sum_{i+j=3} b_{ij}\p_t^i \p \ek\p\p_t^j(\tilde{v}-v)+\sum_{i+j=3} b_{ij}'\p_t^i (\p\ek-\p\eta)\p\p_t^j v,
\end{align*}
and so $
\|\p_t^4 (\Ak - A) \|_0 
$
consists the sum of $\|i_\ell\|_0$, $\ell=1,\cdots, 8$, where
\begin{align*}
i_1=(\p\p_t^2\tilde{v}) \p (\tilde{v} -v),\q i_2=(\p\p_t\tilde{v}) \p\p_t (\tilde{v} -v),\q i_3=(\p\tilde{v}) \p\p_t^2 (\tilde{v} -v),\q i_4=(\p \ek) \p\p_t^3 (\vk-v),\\
i_5=\p\p_t^2 (\vk-v)\p v,\q i_6=\p\p_t (\vk-v)\p\p_t v,\q i_7=\p (\vk-v)\p\p_t^2 v,\q i_8=\p(\ek-\eta)\p\p_t^3 v. 
\end{align*}
The $L^2$-norm of these quantities can be controlled by invoking Lemma \ref{ext Lemma 3.2 to the interior}. Specifically, 
\begin{align*}
\|i_1\|_0 \leq& \|\p (\vk-v)\|_{L^\infty}\|\p \p_t^2\vk\|_0 \lesssim \sqrt{\kk}\|v\|_{3.5} \|\p_t^2v\|_1,\\
\|i_2\|_0 \leq& \|\p\p_t (\vk-v)\|_{L^\infty}\|\p \p_t \vk\|_0 \lesssim \sqrt{\kk}\|\p_t v\|_{3.5} \|\p_t v\|_1,\\
\|i_3\|_0 \leq &\|\p\p_t^2 (\vk-v)\|_0\|\p \vk\|_{L^\infty} \lesssim \sqrt{\kk}\|\p_t^2 v\|_{1.5} \|v\|_3,\\
\|i_4\|_0 \leq& \|\p \ek\|_{L^\infty}\|\p\p_t^3 (\vk-v)\|_0\lesssim \sqrt{\kk}\|\eta\|_3 \|\p_t^3 v\|_{1.5},
\end{align*}
and 
\begin{align*}
\|i_5\|_0 \leq& \|\p \p_t^2 (\vk-v)\|_0 \|\p v\|_{L^\infty}\lesssim \sqrt{\kk} \|\p_t^2 v\|_{1.5}\|v\|_{3},\\
\|i_6\|_0 \leq&  \|\p \p_t (\vk-v)\|_{L^\infty} \|\p\p_t v\|_{0}\lesssim \sqrt{\kk} \|\p_t v\|_{3.5}\|\p_t v\|_1,\\
\|i_7\|_0\leq& \|\p  (\vk-v)\|_{L^\infty} \|\p \p_t^2 v\|_{0}\lesssim \sqrt{\kk} \|v\|_{3.5}\|\p_t^2v\|_1,\\
\|i_8\|_0 \leq &\|\p (\ek-\eta)\|_{L^\infty} \|\p \p_t^3 v\|_{L^2} \lesssim \sqrt{\kk} \|\eta\|_{3.5} \|\p_t^3 v\|_1. 
\end{align*}
Summing these up and moving $\sqrt{\kk}$ to $\|\p_t^4\p v\|_0$, we obtain
\begin{align}
I_{112}
\leq \int_0^T \|\p_t^4 \p v\|_0\|\p_t^4 (\Ak^{\mu\alpha}-A^{\mu\alpha})\|_0 \|q\|_{L^\infty}
 \leq \frac{\eps}{2}\int_0^T \|\sqrt{\kk} \p_t^4 \p v\|_0^2+ \frac{1}{2\eps}\int_0^T \PP,
\end{align}
where the first term on the RHS contributes to $\eps\PP$, and we bound $\|q\|_{L^\infty}$ by $\|q\|_2 \leq \PP$ through \eqref{ellq}. 

We next control $I_{111}$. \textit{The argument is largely similar to that used in Section 3.1.3 of \cite{DK2017} which relies on exploiting the geometric structure to create a remarkable cancellation scheme among the leading order terms.} Invoking \eqref{A's component} and then expanding the index $\mu$ in $I_{111}$, we have
\begin{align}
I_{111} = &\int_0^T \io q \epsilon^{\alpha\lambda\tau}  \TP_2\p_t^3v_\lambda \p_3 \eta_\tau \TP_1 \p_t^4 v_\alpha-\int_0^T \io q \epsilon^{\alpha\lambda\tau}  \TP_1\p_t^3v_\lambda \p_3 \eta_\tau \TP_2 \p_t^4 v_\alpha\nonumber\\
&+\int_0^T \io q \epsilon^{\alpha\lambda\tau}  \p_3\p_t^3v_\tau \TP_2 \eta_\lambda \TP_1 \p_t^4 v_\alpha-\int_0^T \io q \epsilon^{\alpha\lambda\tau}  \TP_1\p_t^3v_\tau \TP_2 \eta_\lambda \p_3 \p_t^4 v_\alpha\nonumber\\
&+\int_0^T\io q\epsilon^{\alpha\lambda\tau}\TP_2 \p_t^3 v_\tau \TP_1 \eta_\la \p_3 \p_t^4 v_\alpha-\int_0^T\io q\epsilon^{\alpha\lambda\tau} \p_3 \p_t^3 v_\tau \TP_1 \eta_\la \TP_2\p_t^4 v_\alpha + I_{low} \nonumber\\
=:&I_{1111}+I_{1112}+\cdots+I_{1116} + I_{low},
\label{cancellation 1}
\end{align}
where $I_{low}$ consists terms of the form $\int_0^T\io q \p\p_t^2 v \p v \p\p_t^3 v$. This term can be treated by integrating $\p_t$ by parts, 
\begin{align*}
\int_0^T\io q \p\p_t^2 v \p v \p\p_t^4 v = \io q \p\p_t^2 v \p v \p\p_t^3 v\Big|_{0}^T - \int_0^T\io\p_t (q \p\p_t^2 v \p v)\p \p_t^3 v,
\end{align*}
where the second term is controlled by $\int_0^T\PP$, whereas 
\begin{align*}
\left|\io q \p\p_t^2 v \p v \p\p_t^3 v\Big|_{0}^T\right| \lesssim \PP_0 + \eps\|\p_t^3 v\|_1^2 + \|q\|_{L^\infty}^2\|\p v\|_{L^\infty}^2\|\p\p_t^2 v\|_0^2 \leq \PP_0 +\eps ||\p_t^3 v||_1^2 + \int_0^T \PP.
\end{align*}
To control the leading terms in \eqref{cancellation 1}, we consider $I_{1111}+I_{1112}$, $I_{1113}+I_{1114}$, and $I_{1115}+I_{1116}$. For $I_{1111}+I_{1112}$, integrating $\p_t$ by parts in $I_{1112}$, we have
\begin{align}
I_{1111}+I_{1112} \leq& \underbrace{\int_0^T \io q \epsilon^{\alpha\lambda\tau}  \TP_2\p_t^3v_\lambda \p_3 \eta_\tau \TP_1 \p_t^4 v_\alpha-\int_0^T \io q \epsilon^{\alpha\lambda\tau}  \TP_1\p_t^4v_\lambda \p_3 \eta_\tau \TP_2 \p_t^3 v_\alpha}_{=0}\nonumber\\
&-\io q\epsilon^{\alpha\la\tau} \TP_1 \p_t^3 v_\la \p_3\eta_\tau \TP_2 \p_t^3 v_\alpha\Big|_0^T+I_{low}',
\label{I1111+I1112}
\end{align}
where $I_{low}'$ consists terms of the form $\int_0^T\io q\epsilon^{\alpha\la\tau} \p_t (q\p \eta) (\p \p_t^3 v)^2 $ which can be controlled by $\int_0^T\PP$. Next we treat the first term on the RHS of \eqref{I1111+I1112}. It suffices to consider $-\io q\epsilon^{\alpha\la\tau} \TP_1 \p_t^3 v_\la \p_3\eta_\tau \TP_2 \p_t^3 v_\alpha\Big|_{t=T}:=\TT$ as $$\io q\epsilon^{\alpha\la\tau} \TP_1 \p_t^3 v_\la \p_3\eta_\tau \TP_2 \p_t^3 v_\alpha\Big|_{t=0}\leq \PP_0.$$ 

We shall drop $\Big|_{t=T}$ in $\TT$ for the sake of clean notations. Expanding in $\tau$, we find
\begin{align}
\TT = -\io q\epsilon^{\alpha\la i} \TP_1 \p_t^3 v_\la \p_3\eta_i \TP_2 \p_t^3 v_\alpha -\io q\epsilon^{\alpha\la 3} \TP_1 \p_t^3 v_\la \p_3\eta_3 \TP_2 \p_t^3 v_\alpha.
\end{align}
Since $\p_3 \eta_i|_{t=0}=0$, we can write $\p_3 \eta_i = \int_0^T \p_3 v_i$, and so 
\begin{align}
-\io q\epsilon^{\alpha\la i} \TP_1 \p_t^3 v_\la \p_3\eta_i \TP_2 \p_t^3 v_\alpha \leq \PP\int_0^T\PP.
\end{align}
In addition to this, we have $\p_3 \eta_3 = 1 +\int_0^T \p_3 v_3$, and so
\begin{align}
-\io q\epsilon^{\alpha\la 3} \TP_1 \p_t^3 v_\la \p_3\eta_3 \TP_2 \p_t^3 v_\alpha \leq -\io q\epsilon^{\alpha\la 3} \TP_1 \p_t^3 v_\la \TP_2 \p_t^3 v_\alpha+ \PP\int_0^T\PP.
\end{align}
To treat the first term on the RHS, we expand $\epsilon^{\alpha \la 3}$ and get
\begin{align}
-\io q\epsilon^{\alpha\la 3} \TP_1 \p_t^3 v_\la  \TP_2 \p_t^3 v_\alpha=-\io q(\TP_1 \p_t^3 v_2\TP_2 \p_t^3 v_1-\TP_1\p_t^3 v_1\TP_2 \p_t^3 v_2).
\label{different step}
\end{align}
Integrating by parts $\TP_2$ in the first term and $\TP_1$ in the second term, we have
\begin{align*}
&-\io q(\TP_1 \p_t^3 v_2\TP_2 \p_t^3 v_1-\TP_1\p_t^3 v_1\TP_2 \p_t^3 v_2)\\
=&\underbrace{\io q\TP_1\TP_2 \p_t^3 v_2 \p_t^3 v_1-\io q\p_t^3 v_1\TP_1\TP_2 \p_t^3 v_2}_{=0}+ \io \TP_2 q \TP_1\p_t^3 v_2 \p_t^3 v_1-\io \TP_1 q \p_t^3 v_1 \TP_1\p_t^3 v_2.
\end{align*}
Here, 
\begin{align*}
 \Big|\io \TP_2 q \TP_1\p_t^3 v_2 \p_t^3 v_1-\io \TP_1 q \p_t^3 v_1 \TP_1\p_t^3 v_2\Big|\lesssim \epsilon ||\p_t^3 v||_1^2 + ||\p_t^3 v||_0^2||\p q||_{L^\infty}^2 \leq \eps ||\p_t^3 v||_1^2+\PP_0+\int_0^T\PP.
\end{align*}
Therefore, 
\begin{align}
I_{1111}+I_{1112} \leq \eps E(T) +\PP_0 +\PP\int_0^T\PP. 
\end{align}

On the other hand, $I_{1113}+I_{1114}$ and $I_{1115}+I_{1116}$ are treated similarly with only one exception.  Previously, we integrated $\TP_1$ and $\TP_2$ by parts in \eqref{different step} and so there were no boundary terms. However, when controlling $I_{1113}+I_{1114}$, we need to integrate $\TP_1$ and $\p_3$ by parts when treating \eqref{different step}, and thus the following boundary term will appear:
\begin{equation}
\ig q\p_t^3 v_1 \TP_1 \p_t^3 v_3.
\label{cancellation b}
\end{equation}
To control this term, we invoke the identity
\begin{equation}
\TP_1 \p_t^3 v^3 = \Pi_\la^3 \TP_1 \p_t^3 v^\la+ g^{kl} \TP_k\eta^3 \TP_l \eta_\la \TP_1\p_t^3v^\la = \Pi_\la^3 \TP_1 \p_t^3 v^\la+ g^{kl} \left(\int_0^T\TP_k v^3\right) \TP_l \eta_\la \TP_1\p_t^3v^\la,
\end{equation}
and thus \eqref{cancellation b} becomes
\begin{align*}
&\ig q \p_t^3 v_1 \Pi_\la^3 \TP_1 \p_t^3 v^\la+\ig q \p_t^3 v_1 g^{kl} \left(\int_0^T\TP_k v^3\right) \TP_l \eta_\la \TP_1\p_t^3v^\la\\
\lesssim &\eps |\Pi \TP \p_t^3 v|_0^2+|q|_{L^\infty}^2|\p_t^3 v|_0^2+\left|q \p_t^3 v_1 g^{kl} \left(\int_0^T\TP_k v^3\right) \TP_l \eta_\la\right|_{0.5}|\TP\p_t^3 v^\la|_{-0.5}\\
\lesssim& \eps |\Pi \TP \p_t^3 v|_0^2 + \PP_0+\PP\int_0^T\PP.
\end{align*}
The extra term generated when analyzing $I_{1115}+I_{1116}$ is of the same type integral and thus can be treated by the same method.
Therefore, 
\begin{align}
I_{111} \leq \eps E(T) +\PP_0+\PP\int_0^T\PP.
\end{align}

Next, we study
\begin{equation}
\begin{aligned}
I_1-I_{11} =&~ 4\int_0^T\io \p_t^4\p_\mu v_\alpha \p_t^3 \Ak^{\mu\alpha} \p_t q + 6 \int_0^T\io \p_t^4\p_\mu v_\alpha \p_t^2 \Ak^{\mu\alpha} \p_t^2 q \\
& + 4\int_0^T\io \p_t^4\p_\mu v_\alpha \p_t \Ak^{\mu\alpha} \p_t^3 q =I_{12}+I_{13}+I_{14}.
\end{aligned}
\end{equation}
For $I_{12}$, we integrating $\p_t$ by parts and obtain
\begin{equation*}
4\io \p_t^3\p_\mu v_\alpha \p_t^3 \Ak^{\mu\alpha} \p_t q-4\int_0^T\io \p_t^3\p_\mu v_\alpha \p_t(\p_t^3 \Ak^{\mu\alpha} \p_t q).
\end{equation*}
Here, the second term is $\leq \int_0^T\PP$, and since 
$$
\p_t^3 \Ak = Q(\p \ek) \p \p_t^2 \vk + \text{lower order terms}
$$
 then the first term is bounded by
\begin{align*}
\eps ||\p_t^3 v||_1^2 + \PP_0+\int_0^T\PP.
\end{align*}
$I_{13}$ is treated by adapting a similar method and so we omit the details. However, we can't integrate $\p_t$ by parts in order to control $I_{14}$ as we do not have a bound for $\p_t^4 q$. We integrate $\p_\mu$ by parts instead. 
\begin{align*}
I_{14} =4\int_0^T\ig \p_t^4 v_\alpha \p_t\Ak^{3\alpha} \p_t^3 q-4\int_0^T\io \p_t^4 v_\alpha \p_\mu (\p_t \Ak^{3\alpha} \p_t^3 q).
\end{align*}
There is no problem to control the second integral by $\int_0^T\PP$. For the first integral, invoking the boundary condition \eqref{BC1}, we obtain
\begin{align}
-4\sigma\int_0^T\ig \p_t^4 v_\alpha \p_t\Ak^{3\alpha} \p_t^3\Big( \frac{\sqrt{g}}{\sqrt{\gk}}\Delta_g \eta\cdot \nnk\Big) +4\int_0^T\ig\kk\p_t^4 v_\alpha \p_t\Ak^{3\alpha}\p_t^3\Big(\frac{1}{\sqrt{\gk}}(1-\TL) (v\cdot \nnk)\Big)=I_{141}+I_{142}.
\end{align}
Invoking \eqref{lapg}, $I_{141}$ becomes
\begin{align*}
I_{141}=&-4\sigma\int_0^T\ig \p_t^4 v_\alpha \p_t\Ak^{3\alpha}\p_t^3\Big(\frac{\sqrt{g}}{\sqrt{\gk}} g^{ij}\TP_i\TP_j\eta\cdot \nnk\Big)\\
&-4\sigma\int_0^T\ig \p_t^4 v_\alpha \p_t\Ak^{3\alpha}\p_t^3\Big(\frac{\sqrt{g}}{\sqrt{\gk}}g^{ij}g^{kl}\TP_l\eta^{\mu}\TP_i\TP_j\eta_\mu\TP_k\eta\cdot \nnk\Big).
\end{align*}
It suffices for us to consider the first integral only since the second integral is of the same type. Integrating by parts $\TP_j$ first and then $\p_t$, the first integral becomes
\begin{align*}
-4\sigma\int_0^T\ig \p_t^3\TP_i v_\alpha \p_t\Ak^{3\alpha}\Big(\frac{\sqrt{g}}{\sqrt{\gk}} g^{ij}\TP_j\p_t^3 v\cdot \nnk\Big)-4\sigma\ig \p_t^3\TP_i v_\alpha \p_t\Ak^{3\alpha}\Big(\frac{\sqrt{g}}{\sqrt{\gk}} g^{ij}\TP_j\p_t^2 v\cdot \nnk\Big)+\RR.
\end{align*}
Since $\|\p_t^3v\|_{3.5}$ is part of $E_{\kk}^{(1)}(t)$, the trace lemma implies that the first integral is bounded straightforwardly by $\int_0^T\PP$. Moreover, for the second integral, we have
\begin{equation}
\begin{aligned}
&4\sigma\ig \p_t^3\TP_i v_\alpha \p_t\Ak^{3\alpha}\Big(\frac{\sqrt{g}}{\sqrt{\gk}} g^{ij}\TP_j\p_t^2 v\cdot \nnk\Big)\\ 
\lesssim &\eps |\p_t^3 v|_1^2 + P(\|\p \eta\|_{L^\infty},\|\p v\|_{L^\infty}) |\p_t^2 v|_1^2 \leq \eps \|\p_t^3 v\|_{1.5}^2 +\PP_0+\int_0^T\PP,
\end{aligned}
\end{equation}
where we used the trace lemma in the last inequality.
In addition, 
$$
I_{142}\LL -4\int_0^T\ig(\sqrt{\kk}\p_t^4 v_\alpha) \p_t\Ak^{3\alpha}\Big(\frac{1}{\sqrt{\gk}}\TL (\sqrt{\kk}\p_t^3 v\cdot \nnk)\Big).
$$
Integrating $\TP$ by parts,then
\begin{align*}
I_{142}\LL & 4\int_0^T\ig(\sqrt{\kk}\TP\p_t^4 v_\alpha) \p_t\Ak^{3\alpha}\frac{1}{\sqrt{\gk}}(\sqrt{\kk}\TP\p_t^3 v\cdot \nnk)\\
\lesssim& 4\int_0^T \eps |\sqrt{\kk}\TP\p_t^4 v|_0^2 + P(\|\p \eta\|_{L^\infty},\|\p v\|_{L^\infty})|\sqrt{\kk} \TP \p_t^3 v|_0^2\dt\\
\lesssim& \eps \int_0^T \|\sqrt{\kk}\p_t^4 v\|_{1.5}^2 + \PP_0 +\int_0^T\PP.
\end{align*}

Now, we start to analyze the boundary integral $I_0$ in \eqref{II}. This is essentially identical to the case of the incompressible Euler equations, which have been treated in \cite{coutand2007LWP}, Sect.12. Indeed, as what appears in the previous paper \cite{luozhangMHDST3.5} concerning the a priori estimate, we found that the magnetic field plays no role in the estimate of $I_0$. But we shall provide the control of the top order terms for the sake of the completeness of our proof.

By plugging the boundary condition $$\Ak^{3\alpha}q = -\sigma \sqrt{g}(\Delta_g \eta\cdot \nnk )\nnk^\alpha +\kk\left((1-\TL) (v\cdot \nnk)\right)\nnk^\alpha$$ in $I_0$ we obtain
\begin{align} \label{boundary term p_t4}
\frac{1}{\sigma}I_0=\int_0^T\ig\p_t^4v_{\alpha}\p_t^4(\sqrt{g}\Delta_g\eta\cdot \nnk\nnk^\alpha)\dS\dt-\frac{\kk}{\sigma}\int_0^T\ig\p_t^4 v_\alpha \p_t^4 [(1-\TL)(v\cdot \nnk)\nnk^\alpha]\dS\dt,
\end{align} 
where, after integrating one tangential derivative by parts, the second term becomes
\begin{align} \label{I00 pre}
-\frac{\kk}{\sigma}\sum_{\ell=0,1} \bigg(\int_0^T\ig \cp^\ell \p_t^4 v_\alpha \p_t^4 [\cp^\ell (v\cdot\nnk)\nnk^\alpha]\dS\dt+\int_0^T\ig \p_t^4 v_\alpha \p_t^4 [\cp^\ell(v\cdot\nnk)\cp^\ell\nnk^\alpha]\dS\dt\bigg).  
\end{align}
The first term on the RHS contributes to the positive energy term (after moving to the LHS)
$$\frac{\kk}{\sigma}\int_0^T\ig\left|\p_t^4 v\cdot \nnk\right|_1^2\dS\dt$$ together with errors terms. The most difficult error term is
\begin{equation}\label{boundary error}
\kk \int_0^T\int_{\Gamma} (\TP \p_t^4 v\cdot \nnk)(v\cdot \p_t^4 \TP \nnk)\dS\dt,
\end{equation}
where the other errors are either with the same type of integrand or are effectively of lower order by one derivative with the case above. Since $\TP \nnk = Q(\TP \ek) \TP^2 \ek \cdot \nnk$, we have
\begin{align*}
&\frac{\kk}{\sigma} \int_0^T\int_{\Gamma} (\TP \p_t^4 v\cdot \nnk)(v\cdot \p_t^4 \TP \nnk)\dS\dt\\
 \LL & \frac{\kk}{\sigma} \int_0^T\int_{\Gamma} (\TP \p_t^4 v\cdot \nnk)(v\cdot \TP^2\p_t^3 \vk\cdot \nnk)\dS\dt\\
\leq &\int_0^T P(|\TP\ek|_{L^\infty{(\Gamma)}}, |v|_{L^\infty{(\Gamma)}})|\sqrt{\kk} \TP \p_t^4 v|_0|\sqrt{\kk}\TP^2\p_t^3 v\cdot \nnk|_0 \\
\lesssim &\int_0^T |\sqrt{\kk}\TP\p_t^4 v|_0^2 + \sup_t P(|\TP\ek|_{L^\infty{(\Gamma)}}, |v|_{L^\infty{(\Gamma)}}) +\Big(\int_0^T |\sqrt{\kk} \TP^2\p_t^3 v\cdot \nnk|_0^2\Big)^2\\
\lesssim& \int_0^T \|\sqrt{\kk}\p_t^4 v\|_{1.5}^2 + \Big(\int_0^T \|\sqrt{\kk} \p_t^3 v\|_{2.5}^2\Big)^2 + \sup_t P(|\TP\ek|_{L^\infty{(\Gamma)}}, |v|_{L^\infty{(\Gamma)}}) \\
\leq & E_\kk^{(3)}+ (E_{\kk}^{(3)})^2 + \sup_t P(|\TP\ek|_{L^\infty{(\Gamma)}}, |v|_{L^\infty{(\Gamma)}}) .
\end{align*}
Here, the last term can be controlled appropriately because
\begin{align*}
|\TP\ek|_{L^\infty(\Gamma)} \lesssim \|\eta\|_3 \leq \|\eta_0\|_3+\int_0^T \|v\|_3,\\
|v|_{L^\infty{(\Gamma)}} \lesssim \|v\|_2 \leq \|v_0\|_2+\int_0^T \|v_t\|_2,
\end{align*}
and so $\sup_t P(|\TP\ek|_{L^\infty{(\Gamma)}}, |v|_{L^\infty{(\Gamma)}})\leq \PP_0 + \PP\int_0^T\PP$. In addition, the second term on the RHS of \eqref{I00 pre} can be treated by the same argument.

Next we analyze the first term on the RHS of \eqref{boundary term p_t4}.
Since $\nn\cdot \nn=1$, invoking \eqref{lapg0} in Lemma \ref{geometric} and we obtain
\begin{equation}
\Delta_g \eta\cdot \nn\nn^\alpha= -\mathcal{H}\circ \eta\nn^\alpha = \Delta_g \eta^\alpha,
\end{equation}
and so we are able to rewrite
\begin{align}
\sqrt{g}\Delta_g \eta\cdot \nnk\nnk^\alpha=&\sqrt{g}\Delta_g \eta\cdot \nn\nn^\alpha+\sqrt{g}\Delta_g \eta\cdot \nnk (\nnk^\alpha-\nn^\alpha)+\sqrt{g}\Delta_g \eta\cdot (\nnk-\nn)\nn^\alpha\nonumber\\
=& \sqrt{g}\Delta_g \eta^\alpha +\sqrt{g}\Delta_g \eta\cdot \nnk (\nnk^\alpha-\nn^\alpha)+\sqrt{g}\Delta_g \eta\cdot (\nnk-\nn)\nn^\alpha.
\end{align}
In light of this, the first term on the RHS of \eqref{boundary term p_t4} becomes
\begin{align}
&\int_0^T\ig\p_t^4v_{\alpha}\p_t^4(\sqrt{g}\Delta_g \eta^\alpha)\dS\dt+\int_0^T\ig\p_t^4v_{\alpha}\p_t^4[\sqrt{g}\Delta_g \eta\cdot \nnk (\nnk^\alpha-\nn^\alpha)]\dS\dt\nonumber\\
&+\int_0^T \ig\p_t^4v_{\alpha}\p_t^4[\sqrt{g}\Delta_g \eta\cdot (\nnk-\nn)\nn^\alpha]\dS\dt.
\label{boundary 3}
\end{align}
We shall study the main term $I_{00}=\int_0^T\ig\p_t^4v_{\alpha}\p_t^4(\sqrt{g}\Delta_g \eta^\alpha)\dS\dt$. The error terms involving $\nnk-\nn$ are treated using \eqref{lkk33} and they are identical to the Euler case. We refer \cite[(12.16)-(12.19)] {coutand2007LWP} for the details. Invoking \eqref{lapg}-\eqref{tplapg}, we have
\begin{equation}\label{I000}
\begin{aligned}
&I_{00}
=\int_0^T\ig\p_t^4 v_{\alpha}\p_t^3\TP_i\left(\sqrt{g}g^{ij}\Pi^{\alpha}_{\lambda}\TP_j v^{\lambda}\right)\dS\dt\\
&+\int_0^T\ig\p_t^4 v_{\alpha}\p_t^3\TP_i\left(\sqrt{g}(g^{ij}g^{kl}-g^{lj}g^{ik})\TP_j\eta^{\alpha}\TP_k\eta_{\lambda}\TP_l v^{\lambda}\right).
\end{aligned}
\end{equation}
 Integrating $\TP_i$ by parts and expanding the parenthesis, we get
\begin{equation}\label{I0e}
\begin{aligned}
\eqref{I000}=&-\int_0^T\ig\sqrt{g}g^{ij}\Pi^{\alpha}_{\lambda}\p_t^3\TP_jv^{\lambda}\p_t^4\TP_iv_{\alpha}\\
&-\int_0^T\ig\sqrt{g}(g^{ij}g^{kl}-g^{lj}g^{ik})\TP_j\eta^{\alpha}\TP_k\eta_{\lambda}\p_t^3\TP_l v^{\lambda}\p_t^4\TP_i v_{\alpha}\\
&-3\int_0^T\ig\p_t(\sqrt{g}g^{ij}\Pi^{\alpha}_{\lambda})\p_t^2\TP_j v^{\lambda}\p_t^4\TP_iv_{\alpha}\dS\dt\\
&-3\int_0^T\ig\p_t\left(\sqrt{g}(g^{ij}g^{kl}-g^{lj}g^{ik})\TP_j\eta^{\alpha}\TP_k\eta_{\lambda}\right)\p_t^2\TP_l v^{\lambda}\p_t^4\TP_iv_{\alpha}\\
&-3\int_0^T\ig\p_t^2(\sqrt{g}g^{ij}\Pi^{\alpha}_{\lambda})\p_t\TP_j v^{\lambda}\p_t^4\TP_iv_{\alpha}\dS\dt\\
&-3\int_0^T\ig\p_t^2\left(\sqrt{g}(g^{ij}g^{kl}-g^{lj}g^{ik})\TP_j\eta^{\alpha}\TP_k\eta_{\lambda}\right)\p_t\TP_l v^{\lambda}\p_t^4\TP_iv_{\alpha}\\
&-\int_0^T\ig\p_t^3(\sqrt{g}g^{ij}\Pi^{\alpha}_{\lambda})\TP_j v^{\lambda}\p_t^4\TP_iv_{\alpha}\dS\dt\\
&-\int_0^T\ig\p_t^3\left(\sqrt{g}(g^{ij}g^{kl}-g^{lj}g^{ik})\TP_j\eta^{\alpha}\TP_k\eta_{\lambda}\right)\TP_l v^{\lambda}\p_t^4\TP_iv_{\alpha}\\
=:&I_{01}+\cdots+I_{08}.
\end{aligned}
\end{equation}

The main terms are $I_{01}$ and $I_{02}$ which produces the term $|\TP(\Pi\p_t^3 v)|_0^2$ as a part of energy functional, and the others can be controlled by estimating $I_{03}+I_{04},I_{05}+I_{06},I_{07}+I_{08}$ and integrating $\p_t$ by parts. In $I_{01}$, we integrate $\p_t$ by parts and use \eqref{Pipro} in Lemma \ref{geometric} to get
\begin{equation}\label{I010}
\begin{aligned}
I_{01}=&-\frac12\ig\sqrt{g}g^{ij}\Pi^{\alpha}_{\lambda}\p_t^3\TP_jv^{\lambda}\p_t^3\TP_iv_{\alpha}\bigg|^T_0+\frac12\int_0^T\ig\p_t(\sqrt{g}g^{ij}\Pi^{\alpha}_{\lambda})\p_t^3\TP_jv^{\lambda}\p_t^3\TP_iv_{\alpha}\dS\dt\\
=&\frac12\ig\sqrt{g}g^{ij}\TP_i(\Pi^{\alpha}_{\mu}\p_t^3v_{\alpha})\TP_j(\Pi^{\mu}_{\lambda}\p_t^3v^{\lambda})+\ig\sqrt{g}g^{ij}\TP\Pi^{\alpha}_{\mu}\p_t^3v_{\alpha}\TP_j(\Pi^{\mu}_{\lambda}\p_t^3v^{\lambda})\\
&-\frac12\ig\TP_i\Pi^{\alpha}_{\mu}\TP_j\Pi^{\mu}_{\lambda}\p_t^3v_{\alpha}\p_t^3v^{\lambda}+\frac12\int_0^T\ig\p_t(\sqrt{g}g^{ij}\Pi^{\alpha}_{\lambda})\p_t^3\TP_jv^{\lambda}\p_t^3\TP_iv_{\alpha}\dS\dt+I_{01}|_{t=0}\\
=:&I_{011}+I_{012}+I_{013}+I_{014}+I_{01}|_{t=0}.
\end{aligned}
\end{equation}
The term $I_{011}$ produces the energy term
\begin{equation}\label{I011}
\begin{aligned}
I_{011}=&-\frac12\ig\left|\TP(\Pi\p_t^3v)\right|^2\dS-\frac12\ig(\sqrt{g}g^{ij}-\delta^{ij})\TP_i(\Pi^{\alpha}_{\mu}\p_t^3v_{\alpha})\TP_j(\Pi^{\mu}_{\lambda}\p_t^3v^{\lambda})\dS\\
\lesssim&-\frac12\left|\TP(\Pi\p_t^3v)\right|_0^2+\left|\TP(\Pi\p_t^3v)\right|_0^2\left|\sqrt{g}g^{ij}-\delta^{ij}\right|_{1.5}\\
\lesssim&-\frac12\left|\TP(\Pi\p_t^3v)\right|_0^2+\left|\TP(\Pi\p_t^3v)\right|_0^2\int_0^T P(\|\p_t\TP\eta\|_{2},\|\TP\eta\|_2)\dt.
\end{aligned}
\end{equation}

The terms $I_{012},I_{013},I_{014}$ can all be directly controlled. Because $\TP^2\eta|_{t=0}=0$, then
\begin{equation}\label{I012}
\begin{aligned}
I_{012}\lesssim&\left|\sqrt{g}g^{-1}\right|_{L^{\infty}}|\TP\Pi|_{L^{\infty}}|\p_t^3 v|_0|\TP(\Pi\p_t^3 v)|_0\\
\lesssim& P(|\TP\eta|_{L^{\infty}},|\TP^2\eta|_{L^{\infty}})\|\p_t^3 v\|_{0.5}|\TP(\Pi\p_t^3 v)|_0\\
\lesssim&\eps\left|\TP(\Pi\p_t^3 v)\right|_0^2+P(\|\eta\|_4)\|\p_t^3 v\|_0\|\p_t^3 v\|_1\\
\lesssim&\eps\left(\left|\TP(\Pi\p_t^3 v)\right|_0^2+\|\p_t^3 v\|_{1.5}^2\right)+\PP_0+\int_0^T P(\|\eta\|_4,\|v\|_4,\|\p_t^4v\|_0)\dt,
\end{aligned}
\end{equation}
and
\begin{equation}\label{I013}
\begin{aligned}
I_{013}\lesssim&|\TP\Pi|_{L^4}^2|\p_t^3 v|_{L^4}^2\lesssim P(\|\TP\eta\|_2)\|\TP^2\eta\|_2\|\p_t^3 v\|_{0}\|\p_t^3 v\|_1\\
\lesssim&\eps\|\p_t^3 v\|_{1.5}^2+P(\|\TP\eta\|_2)\|\p_t^3 v\|_{0}\int_0^T\|\TP^2 v\|_2\dt,
\end{aligned}
\end{equation}
and
\begin{equation}\label{I014}
I_{014}\lesssim\int_0^T\left|\p_t^3\TP v\right|_0^2\left|\p_t(\sqrt{g}g^{ij}\Pi)\right|_{L^{\infty}}\lesssim \int_0^T P(\|\p_t^3 v\|_{1.5},\|v\|_3,\|\eta\|_3)\dt.
\end{equation}
Combining \eqref{I010} with \eqref{I011}-\eqref{I014}, we get the estimates of $I_{01}$ as follows
\begin{equation}\label{I01}
I_{01}\lesssim\eps\left(\left|\TP(\Pi\p_t^3 v)\right|_0^2+\|\p_t^3 v\|_{1.5}^2\right)+\PP_0+\PP\int_0^T\PP\dt.
\end{equation}

Next we control $I_{02}:=-\int_0^T\ig\sqrt{g}(g^{ij}g^{kl}-g^{lj}g^{ik})\TP_j\eta^{\alpha}\TP_k\eta_{\lambda}\p_t^3\TP_l v^{\lambda}\p_t^4\TP_i v_{\alpha}$. We expand the summation on $l,i$ and find that:
\begin{itemize}
\item When $l=i$, this integral is zero thanks to the symmetry.
\item When $l=1,i=2$, the integrand becomes $\sqrt{g}^{-1}(\TP_1\eta_{\lambda}\TP_2\eta_{\alpha}-\TP_1\eta_{\alpha}\TP_2\eta_{\lambda})\p_t^3\TP_1v^{\lambda}\p_t^4\TP_2v^{\alpha}$.
\item When $l=2,i=1$, the integrand becomes $-\sqrt{g}^{-1}(\TP_1\eta_{\lambda}\TP_2\eta_{\alpha}-\TP_1\eta_{\alpha}\TP_2\eta_{\lambda})\p_t^3\TP_2v^{\lambda}\p_t^4\TP_1v^{\alpha}$.
\end{itemize}
Here,  we use $g^{-1}$ to denote $\det[g^{-1}]=g^{11}g^{22}-g^{12}g^{21}$. Therefore, we have
\begin{equation}\label{I020}
\begin{aligned}
I_{02}=&-\int_0^T\ig\frac{1}{\sqrt{g}}\left(\TP_1\eta_{\lambda}\TP_2\eta_{\alpha}-\TP_1\eta_{\alpha}\TP_2\eta_{\lambda}\right)\left(\p_t^3\TP_1v^{\lambda}\p_t^4\TP_2v^{\alpha}+\p_t^3\TP_2v^{\lambda}\p_t^4\TP_1v^{\alpha}\right)\dS\dt\\
=&\int_0^T\ig\frac{1}{\sqrt{g}}\frac{d}{dt}\left(\det\underbrace{\begin{bmatrix}
\TP_1\eta_{\mu}\p_t^3\TP_1v^{\mu}& \TP_1\eta_{\mu}\p_t^3\TP_2v^{\mu}\\
\TP_2\eta_{\mu}\p_t^3\TP_1v^{\mu}& \TP_2\eta_{\mu}\p_t^3\TP_2v^{\mu}
\end{bmatrix}}_{=:\mathbf{A}}\right)+\text{ lower order terms}\\
\overset{\p_t}{=}&\ig\frac{1}{\sqrt{g}}\det\mathbf{A}\bigg|^T_0-\int_0^T\ig\p_t\left(\frac{1}{\sqrt{g}}\right)\det\mathbf{A}
\end{aligned}
\end{equation} 
The first term in the last line of \eqref{I020} can be expanded into two terms
\begin{equation}\label{I021}
\ig\frac{1}{\sqrt{g}}\det\mathbf{A}
=\ig\frac{1}{\sqrt{g}}\left(\TP_1\eta_{\mu}\TP_2\eta_{\lambda}\TP_1\p_t^3v^{\mu}\TP_2\p_t^3v^{\lambda}-\TP_1\eta_{\mu}\TP_2\eta_{\lambda}\TP_2\p_t^3v^{\mu}\TP_1\p_t^3v^{\lambda}\right).
\end{equation}
It can be seen that the top order terms cancel with each other if one integrates $\TP_1$ by parts in the first term and $\TP_2$ by parts in the second. The remaining terms are all of the form $-\ig Q_{\mu\lambda}(\TP\eta,\TP^2\eta)\p_t^3 v^{\mu}\TP\p_t^3v^{\lambda}$, which can be controlled as
\begin{align}
&-\ig Q_{\mu\lambda}(\TP\eta,\TP^2\eta)\p_t^3 v^{\mu}\TP\p_t^3v^{\lambda}\nonumber\\
\lesssim& P(|\TP^2\eta|_{L^{\infty}},|\TP\eta|_{L^{\infty}})|\p_t^3 v|_0|\TP\p_t^3 v|_0\nonumber\\
\lesssim&\eps\|\p_t^3 v\|_{1.5}^2+\frac{1}{4\eps}\|\p_t^3 v\|_{0.5}\int_0^TP(\|\TP^2 v\|_2)\dt.
\end{align}
The second term of \eqref{I020} can be directly controlled, i.e., 
\begin{equation}\label{I022}
\int_0^T\ig\p_t\left(\frac{1}{\sqrt{g}}\right)\det\mathbf{A}\lesssim|\p_t\TP\eta|_{L^{\infty}}|\TP\eta|_{L^{\infty}}^2|\TP\p_t^3 v|_0^2\dt\lesssim\int_0^T\PP.
\end{equation} 
Therefore, we get the estimates of $I_{02}$:
\begin{equation}\label{I02}
I_{02}\lesssim\eps\|\p_t^3 v\|_{1.5}^2+\PP_0+\PP\int_0^T\PP\dt,
\end{equation}

Next we control the remaining terms in $I_0$, i.e., $I_{03},\cdots, I_{08}$. The strategy here is to study $I_{03}+I_{04},I_{05}+I_{06},I_{07}+I_{08}$, where

\begin{equation}\label{I034}
\begin{aligned}
I_{03}+I_{04}=&-3\int_0^T\ig\p_t(Q(\TP\eta))\p_t^2\TP v\p_t^4\TP v\dS\dt\\
\overset{\p_t}{=}&3\int_0^T\ig \p_t^2(Q(\TP\eta))\p_t^2\TP v\p_t^3\TP v+3\int_0^T\ig \p_t(Q(\TP\eta))\p_t^3\TP v\p_t^3\TP v+3\ig \p_t(Q(\TP\eta))\p_t^2\TP v\p_t^3\TP v\bigg|_{0}^T\\
=&3\int_0^T\ig \left(\underbrace{Q(\TP\eta)\TP v}_{\p_t(Q(\TP\eta))}\TP v+Q(\TP\eta)\TP\p_t v\right)\p_t^2\TP v\p_t^3\TP v\\
&+3\int_0^T\ig Q(\TP\eta)\TP v\p_t^3\TP v\p_t^3\TP v+\ig Q(\TP\eta)\TP v\p_t^2\TP v\p_t^3\TP v\dS\bigg|_{0}^{T}\\
\lesssim&\PP_0+\int_0^T \PP+|Q(\TP\eta)\TP v|_{L^{\infty}}|\p_t^2 v|_1|\p_t^3 v|_1\\
\lesssim&\eps\|\p_t^3 v\|_{1.5}^2+\PP_0+\int_0^T\PP.
\end{aligned}
\end{equation}
Similarly, by plugging $\p_t^3(Q(\TP\eta))=Q(\TP\eta)(\TP\p_tv\TP v\TP v+\TP\p_t v\TP v+\TP\p_t^2 v)$ into $I_{05}+I_{06}$, we get
\begin{equation}\label{I056}
\begin{aligned}
I_{05}+I_{06}=&\int_0^T\ig\p_t^2(Q(\TP\eta))\p_t\TP v\p_t^4\TP v\dS\dt\\
\overset{\p_t}{=}&-\int_0^T\ig\p_t^3(Q(\TP\eta))\p_t\TP v\p_t^3\TP v\dS\dt-\int_0^T\ig \p_t^2(Q(\TP\eta))\p_t^2\TP v\p_t^3\TP v+\ig\p_t^2(Q(\TP\eta))\p_t\TP v\p_t^3\TP v\bigg|^T_0\\
\lesssim&\PP_0+\int_0^T \PP+|\TP v|_{L^{\infty}}^2|\p_t^3\TP v|_0|\TP \p_tv|_0\\
\lesssim&\PP_0+\int_0^T \PP+\eps\|\p_t^3 v\|_{1.5}^2+\|\p_t v\|_{1.5}^4+\|v\|_3^8\\
\lesssim&\PP_0+\int_0^T \PP+\eps\|\p_t^3 v\|_{1.5}^2.
\end{aligned}
\end{equation}

Following the same way as above, we can control $I_{07}+I_{08}$ by $\PP_0+\int_0^T \PP+\eps\|\p_t^3 v\|_{1.5}^2$ so we omit the details. Combining this with \eqref{I0e}, \eqref{I01}, \eqref{I02}-\eqref{I056}, we get the estimates of $I_0$ by 
\begin{equation}\label{I0}
I_0+\left|\TP\left(\Pi\p_t^3 v\right)\right|_0^2\lesssim \eps\|\p_t^3 v\|_{1.5}^2+\PP_0+\PP\int_0^T\PP.
\end{equation}

Now the only term left to control in \eqref{II} is $L$.  Expanding $[\p_t^4, \Ak^{\mu\alpha}]$,  we have
\begin{equation}\label{L20}
\begin{aligned}
L=&\int_0^T\io\p_t^4\Ak^{\mu\alpha}\p_{\mu}v_{\alpha}\p_t^4 q\dy\dt+4\int_0^T\io\p_t^3\Ak^{\mu\alpha}\p_t\p_{\mu}v_{\alpha}\p_t^4 q\dy\dt\\
&+6\int_0^T\io\p_t^2\Ak^{\mu\alpha}\p_t^2\p_{\mu}v_{\alpha}\p_t^4 q\dy\dt+4\int_0^T\io\p_t\Ak^{\mu\alpha}\p_t^3\p_{\mu}v_{\alpha}\p_t^4 q\dy\dt\\
=:&L_{21}+L_{22}+L_{23}+L_{24}.
\end{aligned}
\end{equation}
Despite having the right amount of derivatives, there is no direct control of $\|\p_t^4 q\|_0$ and so we have to make some extra efforts to control $L_{21},\cdots, L_{24}$. 

The hardest term to treat here is $L_{21}$. Since
\begin{equation}
\begin{aligned}
\p_t^4\Ak^{\mu\alpha}\p_\mu v_\alpha=\p_t^4 (\tilde{J} \ak^{\mu\alpha})\p_\mu v_\alpha=&\tilde{J}(\p_t^4 \ak^{\mu\alpha})\p_\mu v_\alpha + (\p_t^4 \tilde{J})\underbrace{\ak^{\mu\alpha}\p_\mu v_\alpha}_{=0}+\text{lower-order terms}\\
=&-\ak^{\mu\nu}\p_{\beta}\p_t^3\vk_{\nu}\Ak^{\beta\alpha}\p_\mu v_\alpha+ \text{lower-order terms}
\end{aligned}
\end{equation}
we have
\begin{align} 
L_{21}\LL\int_0^T\io\ak^{\mu\nu}\p_{\beta}\p_t^3\vk_{\nu}\Ak^{\beta\alpha}\p_{\mu}v_{\alpha}\p_t^4 q.
\label{L210'}
\end{align}
Since 
$$
\Ak^{\beta \alpha} \p_t^4 q = \p_t^4 (\Ak^{\beta\alpha} q)-(\p_t^4\Ak^{\beta\alpha}) q-4(\p_t^3 \Ak^{\beta\alpha}) \p_t q-6(\p_t^2 \Ak^{\beta\alpha}) \p_t^2 q - 4(\p_t \Ak^{\beta\alpha}) \p_t^3 q,
$$
and thus one can write the RHS of \eqref{L210'} as
\begin{align*}
&\int_0^T\io\ak^{\mu\nu}\p_{\beta}\p_t^3\vk_{\nu}\p_{\mu}v_{\alpha}\p_t^4(\Ak^{\beta\alpha} q) - 4\int_0^T\io\ak^{\mu\nu}\p_{\beta}\p_t^3\vk_{\nu}\p_{\mu}v_{\alpha}\p_t^3\Ak^{\beta\alpha} \p_t q\\
&-6\int_0^T\io\ak^{\mu\nu}\p_{\beta}\p_t^3\vk_{\nu}\p_{\mu}v_{\alpha}\p_t^2\Ak^{\beta\alpha} \p_t^2q-4\int_0^T\io\ak^{\mu\nu}\p_{\beta}\p_t^3\vk_{\nu}\p_{\mu}v_{\alpha}\p_t\Ak^{\beta\alpha} \p_t^3q\\
=:&L_{211}+L_{212}+L_{213}+L_{214}.
\end{align*}
It is not hard to see that $L_{212},L_{213},L_{214}$ can all be controlled directly by $\int_0^T\PP$ thanks to \eqref{ellq}. To treat $L_{211}$, we integrate $\p_\beta$ by parts and get
\begin{align*}
\int_0^T\ig\ak^{\mu\nu}\p_t^3\vk_{\nu}\p_{\mu}v_{\alpha}\p_t^4(\Ak^{3\alpha} q)- \int_0^T\io\p_t^3\vk_{\nu}\p_\beta\Big(\ak^{\mu\nu}\p_{\mu} v_{\alpha}\p_t^4(\Ak^{\beta\alpha} q)\Big)=L_{2111}+L_{2112}. 
\end{align*}
Since $L_{2112}\LL -\int_0^T\io\p_t^3\vk_{\nu}\ak^{\mu\nu}\p_{\mu} v_{\alpha}\p_t^4\p_\beta(\Ak^{\beta\alpha} q)$,
we integrate $\p_t$ by parts in the last term and get
\begin{align}
-\io\p_t^3\vk_{\nu}\ak^{\mu\nu}\p_{\mu} v_{\alpha}\p_t^3\p_\beta(\Ak^{\beta\alpha} q) \bigg|_0^T+\int_0^T\io \p_t\Big(\p_t^3\vk_{\nu}\ak^{\mu\nu}\p_{\mu} v_{\alpha}\Big)\p_t^3\p_\beta(\Ak^{\beta\alpha} q)=L_{21121}+L_{21122}.
\end{align}
Now, since $\p_\beta \Ak^{\beta\alpha}=0$, we can write 
\begin{equation}
\p_t^3 \p_\beta (\Ak^{\beta\alpha} q) = -\p_t^4 v^\alpha+\p_t^3\bp^2\eta^\alpha.
\end{equation}
 In light of this, we have
\begin{align}
L_{21122} \leq \int_0^T\PP. 
\end{align}
Also, 
\begin{align*}
L_{21121} =& -\io \p_t^3\vk_{\nu}\ak^{\mu\nu}\p_{\mu} v_{\alpha}(-\p_t^4 v^\alpha+\p_t^3 \bp^2\eta^\alpha)\bigg|^T_{0}\\
\lesssim& \PP_0+ \|\ak^{\mu\nu}\p_\mu v_\alpha\|_{L^\infty}\|\p_t^3 v\|_0(\|\p_t^4 v\|_0+\|b_0\|_{L^\infty}\|\p_t^3 \bp\eta\|_1) \\
\lesssim& \PP_0 + \eps(\|\p_t^4 v\|_{0}^2+\|\p_t^3 \bp\eta\|_1^2)+P(\|\p_t^3 v\|_0, \|\ak^{\mu\nu}\p_\mu v_\alpha\|_{2}) \\
\leq &\PP_0 +\eps(\|\p_t^4 v\|_{1}^2+\|\p_t^3 \bp\eta\|_1^2)+ \PP\int_0^T\PP.
\end{align*}

Moreover, by plugging the boundary condition \eqref{BC1} to $L_{2111}$ we obtain
\begin{align*}
-\sigma\int_0^T\ig\ak^{\mu\nu}\p_t^3\vk_{\nu}\p_{\mu}v_{\alpha}\p_t^4(\sqrt{g}\Delta_g \eta\cdot \nnk \nnk^\alpha)+\kk\int_0^T\ig\ak^{\mu\nu}\p_t^3\vk_{\nu}\p_{\mu}v_{\alpha}\p_t^4\Big(\left((1-\TL) (v\cdot \nnk)\right)\nnk^\alpha\Big)=L_{21111}+L_{21112}.
\end{align*}
Invoking \eqref{lapg}, we have
\begin{align*}
L_{21111}= &-\sigma\int_0^T\ig\ak^{\mu\nu}\p_t^3\vk_{\nu}\p_{\mu}v_{\alpha}\p_t^4(\sqrt{g}g^{ij}\TP_i\TP_j\eta\cdot \nnk \nnk^\alpha)\\
&+\sigma\int_0^T\ig\ak^{\mu\nu}\p_t^3\vk_{\nu}\p_{\mu}v_{\alpha}\p_t^4(\sqrt{g}g^{ij}g^{kl}\TP_l\eta^{\mu}\TP_i\TP_j\eta_{\mu}\TP_k\eta\cdot \nnk \nnk^\alpha).
\end{align*}
It suffices to control the first term only since the second term has the highest order contribution with the same type of integrand. Also, 
\begin{align*}
-\sigma\int_0^T\ig\ak^{\mu\nu}\p_t^3\vk_{\nu}\p_{\mu}v_{\alpha}\p_t^4(\sqrt{g}g^{ij}\TP_i\TP_j\eta\cdot \nnk \nnk^\alpha)\LL& -\sigma\int_0^T\ig\ak^{\mu\nu}\p_t^3\vk_{\nu}\p_{\mu}v_{\alpha}\sqrt{g}g^{ij}\TP_i\TP_j \p_t^3v\cdot \nnk \nnk^\alpha\\
&-\sigma\int_0^T\ig\ak^{\mu\nu}\p_t^3\vk_{\nu}\p_{\mu}v_{\alpha}\sqrt{g}g^{ij}\TP_i\TP_j\eta\cdot \nnk (\p_t^4 \nnk^\alpha).
\end{align*}
Now, since 
\begin{align}
\p_t^4 \nnk = Q(\TP\ek)\TP \p_t^3 v\cdot \nnk + \text{lower-order terms}, 
\label{4 time derivatives on nn}
\end{align}
and so we have, after using the Sobolev embedding and trace lemma, that
\begin{align}
\sigma\int_0^T\ig\left|\ak^{\mu\nu}\p_t^3\vk_{\nu}\p_{\mu}v_{\alpha}\sqrt{g}g^{ij}\TP_i\TP_j\eta\cdot \nnk (\p_t^4 \nnk^\alpha)\right| \leq \int_0^T\PP.
\end{align}
In addition, by integrating $\TP_i$ by parts and then using the trace lemma, we have
\begin{align}
\sigma\int_0^T\ig\left|\ak^{\mu\nu}\p_t^3\vk_{\nu}\p_{\mu}v_{\alpha}\sqrt{g}g^{ij}\TP_i\TP_j \p_t^3v\cdot \nnk \nnk^\alpha\right|\leq \int_0^T\PP. 
\end{align}
Moreover, we still need to control $L_{21112}$. In light of \eqref{4 time derivatives on nn}, we only need to study the case when all four time derivatives land on $\TL v$, i.e., 
\begin{align*}
-\kk\int_0^T\ig\ak^{\mu\nu}\p_t^3\vk_{\nu}\p_{\mu}v_{\alpha}\TL (\p_t^4v\cdot \nnk)\nnk^\alpha.
\end{align*}
Integrating $\TP$ by parts, this term has the contributes to 
\begin{align*}
\kk\int_0^T\ig\ak^{\mu\nu}\p_t^3\TP\vk_{\nu}\p_{\mu}v_{\alpha}\TP\p_t^4v\cdot \nnk\nnk^\alpha,
\end{align*}
up to terms with the same type integrand, whose analysis (and bound) is identical. To control the main term, one has
\begin{align*}
&\kk\int_0^T\ig\ak^{\mu\nu}\p_t^3\TP\vk_{\nu}\p_{\mu}v_{\alpha}\TP\p_t^4v\cdot \nnk\nnk^\alpha =\sqrt{\kk} \int_0^T\ig Q(\TP\ek, \p v)\TP\p_t^3 v \sqrt{\kk}\TP \p_t^4 v\\
\leq& \sqrt{\kk}\int_0^T Q(\|\TP\ek\|_{L^\infty}, \|\p v\|_{L^\infty}) \|\p_t^3 v\|_{1.5} \|\sqrt{\kk} \p_t^4 v\|_{1.5}\\
\leq& \frac{1}{2}\bigg(\sqrt{\kk}\int_0^TQ(\|\TP\ek\|_{L^\infty}, \|\p v\|_{L^\infty})\|\p_t^3v\|_{1.5}^2+\int_0^T \|\sqrt{\kk}\p_t^4v\|_{1.5}^2\bigg)\\
\leq& \sqrt{\kk} E_{\kk}^{(3)}+\int_0^T\PP .
\end{align*}

Finally, combining \eqref{tgt40} with the computations above, we finally get the control of full time derivatives
\begin{equation}\label{tgt4}
\left\|\p_t^4 v\right\|_0^2+\left\|\p_t^4\bp\eta\right\|_0^2+\left|\TP\left(\Pi\p_t^3 v\right)\right|_0^2\lesssim E_\kk^{(3)}+(E_{\kk}^{(3)})^2+ \PP_0+C(\eps)E_\kk(T)+\PP\int_0^T\PP.
\end{equation}

\subsection{Control of mixed space-time tangential derivatives}\label{sect at least one TP}

To finish the control of $E_\kk(T)$, it remains to study the tangential energies generated by the $\TP\p_t^3$, $\TP^2\p_t^2$, $\TP^3\p_t$ and $\TP^3 \bp$-differentiated $\kk$-problem. Generally speaking, the energy estimate becomes much simpler when the tangential spatial derivative(s) $\TP$ is taken into account. This is due to that we can avoid the higher-order terms in the interior, i.e., terms associated with $I_{11}$ in \eqref{I11}. This can be done by having all top-order terms on the boundary, and those terms can be controlled thanks to the extra $0.5$ interior regularity.

\noindent\textbf{The $\TP\p_t^3$-tangential energy:} Similar to \eqref{tgt40}, we have
\begin{equation}\label{tgt30}
\begin{aligned}
&\frac{1}{2}\int_0^T \frac{d}{dt}\io\left|\TP\p_t^3 v\right|_0^2+\left|\TP\p_t^3\bp\eta\right|^2\dy\dt\\
=&\underbrace{-\int_0^T\io\TP\p_t^3(\Ak^{\mu\alpha}\p_{\mu}q)\TP\p_t^3v_{\alpha}\dy\dt}_{I^*}\\
&+\int_0^T\io\TP\p_t^3\bp^2\eta_{\alpha}\TP\p_t^3v_{\alpha}\dy\dt+\int_0^T\io\TP\p_t^3\bp\eta\alpha\TP\p_t^3\bp v_{\alpha}\dy\dt.
\end{aligned}
\end{equation}
By integrating $\bp$ by parts in the second term, we can get the cancellation with the third term at the top order
\begin{equation}\label{tgb3}
\begin{aligned}
&\int_0^T\io\TP\p_t^3\bp^2\eta_{\alpha}\TP\p_t^3v_{\alpha}\dy\dt+\int_0^T\io\TP\p_t^3\bp\eta\alpha\TP\p_t^3\bp v_{\alpha}\dy\dt\\
=&-\int_0^T\io\TP\p_t^3\bp\eta_{\alpha}\TP\p_t^3\bp v_{\alpha}\dy\dt+\int_0^T\io\TP\p_t^3\bp\eta_\alpha\TP\p_t^3\bp v_{\alpha}\dy\dt\\
&+\int_0^T\io\left[\TP,\bp\right]\p_t^3\bp\eta^{\alpha}\cdot\TP\p_t^3v_{\alpha}-\TP\p_t^3\bp\eta^{\alpha}\cdot\left[\bp,\TP\right]\p_t^3v_{\alpha}\dy\dt\\
\lesssim&\int_0^TP(\|b_0\|_{3},\|\p_t^3v\|_1,\|\p_t^2 v\|_2)\dt
\end{aligned}
\end{equation}

The main term $I^*$ is treated a bit different compare to $I$ in \eqref{II}. Specifically, one commutes $\Ak^{\mu\alpha}$ with $\TP\p_t^3$ first and then integrates by parts. This allows us to avoid the appearance of the higher-order interior terms. 

\begin{equation}\label{IIs}
\begin{aligned}
I^*=&-\int_0^T\io\TP\p_t^3 v_{\alpha}\Ak^{\mu\alpha}\TP\p_t^3\p_{\mu}q\underbrace{-\int_0^T\io\TP\p_t^3 v_{\alpha}~\left[\TP\p_t^3,\Ak^{\mu\alpha}\right]\p_{\mu}q}_{L_1^*}\\
\overset{\p_\mu}{=}&\int_0^T\io\Ak^{\mu\alpha}\TP\p_t^3\p_{\mu}v_{\alpha}\TP\p_t^3q\underbrace{-\int_0^T\ig\TP\p_t^3v_{\alpha}\Ak^{3\alpha}\TP\p_t^3 q}_{I_B^*}\underbrace{+\int_0^T\int_{\Gamma_0}\TP\p_t^3v_{\alpha}\Ak^{3\alpha}\TP\p_t^3 q}_{I_B^{**}}+L_1^*\\
=&\int_0^T\io\underbrace{\TP\p_t^3(\divA v)}_{=0}\TP\p_t^3 q\underbrace{+\int_0^T\io\left[\Ak^{\mu\alpha},\TP\p_t^3\right]\p_{\mu}v_{\alpha}~\TP\p_t^3 q}_{L_2^*}+I_B^*+I_B^{**}+L_1^*.
\end{aligned}
\end{equation}
Here, $I_B^{**}=0$ because $\Ak^{13}=\Ak^{23}=0$, $\Ak^{33}=1$, and $v_3=0$ on $\Gamma_0$. Also,  $L_1^*$ and $L_2^*$ can be directly controlled. For simplicity we only list the computation of the highest order terms
\begin{equation}\label{L1s}
\begin{aligned}
L_1^*=&-\int_0^T\io\TP\p_t^3 v_{\alpha}~\left[\TP\p_t^3,\Ak^{\mu\alpha}\right]\p_{\mu}q\dy\dt\\
\overset{L}{=}&-\int_0^T\io\TP\p_t^3 v_{\alpha}~\TP\p_t^3\Ak^{\mu\alpha}\p_{\mu}q\dy\dt\lesssim \int_0^T\PP\dt.
\end{aligned}
\end{equation}
and
\begin{equation}\label{L2s}
\begin{aligned}
L_2^*=&\int_0^T\io\left[\Ak^{\mu\alpha},\TP\p_t^3\right]\p_{\mu}v_{\alpha}~\TP\p_t^3 q\dy\dt\\
\overset{L}{=}&\int_0^T\io\TP\p_t^3\Ak^{\mu\alpha}\p_{\mu}v_{\alpha}~\TP\p_t^3 q\dy\dt\lesssim \int_0^T\PP\dt.
\end{aligned}
\end{equation}

Next we analyze the boundary integral $I_B^*$.
\begin{equation}\label{IBs0}
\begin{aligned}
I_B^*=&-\int_0^T\ig\TP\p_t^3v_{\alpha}\TP\p_t^3(\Ak^{3\alpha}q)\dS\dt\\
&+\int_0^T\ig\TP\p_t^3v_{\alpha}\TP\p_t^3\Ak^{3\alpha}q\dS\dt+\int_0^T\ig\TP\p_t^3v_{\alpha}\p_t^3\Ak^{3\alpha}\TP q\dS\dt\\
&+3\int_0^T\ig\TP\p_t^3v_{\alpha}\TP\p_t^2\Ak^{3\alpha}\p_tq\dS\dt+3\int_0^T\ig\TP\p_t^3v_{\alpha}\p_t^2\Ak^{3\alpha}\TP\p_tq\dS\dt\\
&+3\int_0^T\ig\TP\p_t^3v_{\alpha}\TP\p_t\Ak^{3\alpha}\p_t^2q\dS\dt+3\int_0^T\ig\TP\p_t^3v_{\alpha}\p_t\Ak^{3\alpha}\TP\p_t^2q\dS\dt\\
&+\int_0^T\ig\TP\p_t^3v_{\alpha}\TP\Ak^{3\alpha}\p_t^3q\dS\dt\\
=:&J_0+J_1+\cdots+J_7.
\end{aligned}
\end{equation}
Since we have $H^{1.5}(\Omega)$ regularity for $\p_t^3 v$ and $H^1(\Omega)$ regularity for $\p_t^3 q$, the top order terms contributed by $J_1$ to $ J_7$ can all be directly controlled by the trace lemma. In the end, we have
\begin{equation}\label{J1}
J_1+\cdots+J_7\lesssim\int_0^T\PP.
\end{equation}
By plugging the boundary condition $$\Ak^{3\alpha}q = -\sigma \sqrt{g}(\Delta_g \eta\cdot \nnk )\nnk^\alpha +\kk\left((1-\TL) (v\cdot \nnk)\right)\nnk^\alpha$$ in $J_0$, we obtain
\begin{equation}\label{J0s}
\begin{aligned}
\frac{1}{\sigma} J_{0}=&\int_0^T\ig\TP\p_t^3(\sqrt{g}\Delta_g\eta\cdot \nnk\nnk^\alpha)\TP\p_t^3 v_{\alpha}\dS\dt-\frac{\kk}{\sigma}\int_0^T\ig\TP\p_t^3[\left((1-\TL) (v\cdot \nnk)\right)\nnk^\alpha]\TP\p_t^3v_{\alpha}\dS\dt
\end{aligned}
\end{equation}

For the second term, after integrating one $\TP$ by parts, it contributes to the positive energy term (after moving to the LHS)
\begin{equation}
\frac{\kk}{\sigma} \int_0^T \ig |\p_t^3 v\cdot \nnk|_2^2\dS\dt,
\end{equation}
and some error terms. Here, the most difficult error term reads
\begin{align}
\frac{\kk}{\sigma} \int_0^T\int_{\Gamma} (\TP^2 \p_t^3 v\cdot \nnk)(v\cdot \p_t^3 \TP^2 \nnk)\dS\dt 
\label{boundary error 2}
\end{align}
which can be treated as follows:
\begin{align*}
&\frac{\kk}{\sigma} \int_0^T\int_{\Gamma} (\TP^2 \p_t^3 v\cdot \nnk)(v\cdot \p_t^3 \TP^2 \nnk)\dS\dt \LL \frac{\kk}{\sigma} \int_0^T\int_{\Gamma} (\TP^2 \p_t^3 v\cdot \nnk)(v\cdot \TP^3\p_t^2 \vk\cdot \nnk)\dS\dt\\
\leq& \int_0^T P(|\TP\ek|_{L^\infty{(\Gamma)}}, |v|_{L^\infty{(\Gamma)}})|\sqrt{\kk} \TP^2 \p_t^3 v|_0|\sqrt{\kk}\TP^3\p_t^2 v\cdot \nnk|_0 \\
\lesssim& \int_0^T |\sqrt{\kk}\TP^2\p_t^3 v|_0^2 + \sup_t P(|\TP\ek|_{L^\infty{(\Gamma)}}, |v|_{L^\infty{(\Gamma)}}) +\Big(\int_0^T |\sqrt{\kk} \TP^3\p_t^2 v\cdot \nnk|_0^2\Big)^2\\
\lesssim& \int_0^T \|\sqrt{\kk}\p_t^3 v\|_{2.5}^2 + \Big(\int_0^T \|\sqrt{\kk} \p_t^2 v\|_{3.5}^2\Big)^2 + \sup_t P(|\TP\ek|_{L^\infty{(\Gamma)}}, |v|_{L^\infty{(\Gamma)}}) \\
\leq& E_\kk^{(3)}+ (E_{\kk}^{(3)})^2 + \PP_0+\PP\int_0^T\PP .
\end{align*}

The first term in \eqref{J0s} is treated analogous to the first term in \eqref{boundary term p_t4}. The main term we need to study in this case reads
\begin{align*}
&\int_0^T\ig (\TP\p_t^3 (\sqrt{g}\Delta_{g} \eta^\alpha)) (\TP\p_t^3 v)\dS\dt\\
=&\int_0^T\ig\TP\p_t^2\TP_i\left(\sqrt{g}g^{ij}\Pi^{\alpha}_{\lambda}\TP_jv^{\lambda}+\sqrt{g}(g^{ij}g^{kl}-g^{lj}g^{ik})\TP_j\eta^{\alpha}\TP_k\eta^{\lambda}\TP_lv_{\lambda}\right)\TP\p_t^3 v^{\lambda}\dS\dt
\end{align*}
Integrating $\TP_i$ by parts, we get
\begin{equation}\label{J00s}
\begin{aligned}
J_{00}\overset{\TP_i}{=}&-\int_0^T\ig\sqrt{g}g^{ij}\Pi^{\alpha}_{\lambda}\TP\p_t^2\TP_jv^{\lambda}\TP\p_t^3\TP_iv_{\alpha}\dS\dt\\
&-\int_0^T\ig \sqrt{g}(g^{ij}g^{kl}-g^{lj}g^{ik})\TP_j\eta^{\alpha}\TP_k\eta^{\lambda}\TP\TP_l\p_t^2v_{\lambda}\p_t^3\TP_i\TP v_{\alpha}\dS\dt+\RR_0\\
&=:J_{01}+J_{02}+\RR_0,
\end{aligned}
\end{equation}
where $\RR_0$ consists terms that can be treated in the same way as in $I_{03},\cdots, I_{08}$ in \eqref{I0e}. 

In $J_{01}$, we can integrate $\p_t$ by parts and mimic the proof of \eqref{I010} to get
\begin{equation}\label{J01}
J_{01}+\left|\TP^2(\Pi\p_t^2 v)\right|_0^2\lesssim\eps\left(\left|\TP(\Pi\TP\p_t^2 v)\right|_0^2+\|\p_t^2 v\|_{2.5}^2\right)+\PP_0+\PP\int_0^T\PP\dt
\end{equation}
$J_{02}$ can also be controlled similarly as $I_{02}$. We find that the integrand is zero if $l=i$. So it suffices to compute the case $(l,i)=(1,2)$ and $(2,1)$. Similarly we get
\begin{equation}\label{J021}
J_{02}=\ig\frac{1}{\sqrt{g}}\det
\begin{bmatrix}
\TP_1\eta_{\mu}\TP_1\p_t^2\TP v^{\mu}&\TP_1\eta_{\mu}\TP_2\p_t^2\TP v^{\mu}\\
\TP_2\eta_{\mu}\TP_1\p_t^2\TP v^{\mu}&\TP_2\eta_{\mu}\TP_2\p_t^2\TP v^{\mu}
\end{bmatrix}\dS\bigg|^T_0+\int_0^T\PP+\RR.
\end{equation}
The main term can be computed as follows
\begin{equation}\label{J022}
\begin{aligned}
&\ig\frac{1}{\sqrt{g}}\det
\begin{bmatrix}
\TP_1\eta_{\mu}\TP_1\p_t^2\TP v^{\mu}&\TP_1\eta_{\mu}\TP_2\p_t^2\TP v^{\mu}\\
\TP_2\eta_{\mu}\TP_1\p_t^2\TP v^{\mu}&\TP_2\eta_{\mu}\TP_2\p_t^2\TP v^{\mu}
\end{bmatrix}\dS\\
=&\ig\frac{1}{\sqrt{g}}\left(\TP_1\eta_{\mu}\TP_1\p_t^2\TP v^{\mu}\TP_2\eta_{\mu}\TP_2\p_t^2\TP v^{\mu}-\TP_1\eta_{\mu}\TP_2\p_t^2\TP v^{\mu}\TP_2\eta_{\mu}\TP_1\p_t^2\TP v^{\mu}\right)\\
\overset{\TP_1,\TP_2}{=}&\ig Q_{\mu\lambda}^i(\TP\eta,\TP^2\eta)\TP\p_t^2 v^{\mu}\TP_i\TP\p_t^2v^{\lambda}\dS\\
\lesssim&P\left(|\TP\eta|_{L^{\infty}},|\TP^2\eta|_{L^{\infty}}\right)|\TP\p_t^2 v|_0\TP^2\p_t^2v|_0\\
\lesssim&\eps\|\p_t^2 v\|_{2.5}^2+\PP_0+\int_0^T\PP,
\end{aligned}
\end{equation}and thus we get the control of $J_{02}$
\begin{equation}\label{J02}
J_{02}\lesssim\eps\left(\|\p_t^2 v\|_{2.5}^2\right)+\PP_0+\PP\int_0^T\PP.
\end{equation}

Combining \eqref{tgt30}-\eqref{J01} and \eqref{J02}, we get the $\TP\p_t^3$-tangential estimates as follows
\begin{equation}\label{tgt3}
\left\|\TP\p_t^3 v\right\|_0^2+\left\|\TP\p_t^3\bp\eta\right\|_0^2+\left|\TP\left(\Pi\TP\p_t^2 v\right)\right|_0^2\lesssim E_\kk^{(3)}+ (E_{\kk}^{(3)})^2+\eps\|\p_t^2 v\|_{2.5}^2+\PP_0+\PP\int_0^T\PP.
\end{equation}
\noindent\textbf{The $\TP^2\p_t^2$, $\TP^3\p_t$ and $\TP^3 \bp$-tangential energies:} The control of the other tangential energies that involving at least one $\TP$ is follows from the arguments above by replacing $\TP\p_t^3$ to the corresponding derivatives. Hence, we shall omit the details and only illustrate the major differences.

First, we mention that the derivatives $\TP^3\p_t$ and $\TP^3\bp$ behave the same since both $v$ and $\bp\eta$ are of the same interior regularity.

Second, one needs to pay attention to the terms that analogous to the error term generated by \eqref{J0s} during the construction of the energy term. In particular, we need to study the top order error term analogous to \eqref{boundary error 2}. Setting $\dd=\p_t, \TP$ or $\bp$, and so $\TP^2\p_t^2, \TP^3\p_t, \TP^3\bp$ can be denoted systematically by $\TP^2\dd^2$.  Now we consider 
\begin{equation}\label{boundary error 3}
\frac{\kk}{\sigma} \int_0^T\int_{\Gamma} (\TP^3 \dd^2 v\cdot \nnk)(v\cdot  \TP^3 \dd^2 \nnk)\dS\dt.
\end{equation}
When $\dd^2=\p_t^2$ then \eqref{boundary error 3} is treated similar to \eqref{boundary error 2}. This is due to that 
$$
\TP^3 \p_t^2\nnk = Q(\TP\ek)\TP^4 \p_t \vk\cdot \nnk+\text{lower-order terms},
$$
and $\int_0^T|\p_t^4 v|_{1.5}^2$ is included in $E_{\kk}^{(3)}$.
In the end, we obtain
\begin{align*}
\frac{\kk}{\sigma} \int_0^T\int_{\Gamma} (\TP^3 \p_t^2 v\cdot \nnk)(v\cdot  \TP^3 \p_t^2 \nnk)\dS\dt\leq  E_\kk^{(3)}+ (E_{\kk}^{(3)})^2+\PP_0+\PP\int_0^T\PP.
\end{align*}

On the other hand, when $\dd^2=\TP\p_t, \TP\bp$, then using the fact that $\dd \nnk = Q(\TP\ek) \dd\TP \ek\cdot \nnk$, we have
\begin{align}
\frac{\kk}{\sigma} \int_0^T\int_{\Gamma} (\TP^4\p_t v\cdot \nnk)(v\cdot  \TP^4\p_t \nnk)\dS\dt \LL\frac{\kk}{\sigma} \int_0^T\int_{\Gamma} (\TP^4\p_t v\cdot \nnk)(v\cdot  \TP^5 v\cdot \nnk)\dS\dt,\\
\frac{\kk}{\sigma} \int_0^T\int_{\Gamma} (\TP^4\bp v\cdot \nnk)(v\cdot  \TP^4\bp \nnk)\dS\dt \LL\frac{\kk}{\sigma} \int_0^T\int_{\Gamma} (\TP^4\bp v\cdot \nnk)(v\cdot  \TP^5\bp \ek\cdot \nnk)\dS\dt.
\end{align}
The terms on the RHS requires $\int_0^T |\sqrt{\kk} v|_5^2$ and $\int_0^T |\sqrt{\kk} \bp\eta|_5^2$, respectively, to control. However, owing to \eqref{super eta} and \eqref{super bp eta}, both of them can be controlled by
$
\MM_0+C(\eps)E_\kk(T)+\PP\int_0^T\PP. 
$
Hence, 
\begin{align}
\left\|\TP^2\p_t^2 v\right\|_0^2+\left\|\TP^2\p_t^2\bp\eta\right\|_0^2+\left|\TP\left(\Pi\TP^2\p_t v\right)\right|_0^2\lesssim E_\kk^{(3)}+ (E_{\kk}^{(3)})^2+\eps E_{\kk}(T)+\PP_0+\PP\int_0^T\PP,\label{tgt5}\\
\left\|\TP^3\p_t v\right\|_0^2+\left\|\TP^3\p_t\bp\eta\right\|_0^2+\left|\TP\left(\Pi\TP^3 v\right)\right|_0^2\lesssim E_\kk^{(3)}+ (E_{\kk}^{(3)})^2+\eps E_{\kk}(T)+\MM_0+\PP\int_0^T\PP,\label{tgt6}\\
\left\|\TP^3\bp v\right\|_0^2+\left\|\TP^3\bp^2\eta\right\|_0^2+\left|\TP\left(\Pi\TP^3 \bp \eta\right)\right|_0^2\lesssim E_\kk^{(3)}+ (E_{\kk}^{(3)})^2+\eps E_{\kk}(T)+\MM_0+\PP\int_0^T\PP.\label{tgt7}
\end{align}
Notice that the RHS of \eqref{tgt6} and \eqref{tgt7} rely on $\MM_0$, which is given in Lemma \ref{super control}. In Section \ref{close}, in fact, we are able to control $\MM_0$ by $\mathcal{C}(\|v_0\|_{4.5},\|b_0\|_{4.5},|v_0|_5)$. 

\section{Estimates for the higher order weighted interior norms}
\label{sect E3kk}
It remains to control $E_\kk^{(3)}(T)$ in order to complete the proof of Proposition \ref{nonlinearkk}. 
\subsection{Full time derivatives}\label{sect E3kk1}
We shall first study the first two terms, i.e., 
$$
\int_0^T \Big (\big\|\sqrt{\kk} \p_t^4 v\big\|_{1.5}^2+\big\|\sqrt{\kk} \p_t^4 \bp \eta\big\|_{1.5}^2\Big)\dt=K_1+K_2.
$$
These terms appear to be the most difficult ones to control. In particular, they yield error terms that contribute to the top order and can only be controlled in $L^2(0, T)$ instead of $L^{\infty}(0, T)$. In other words, we cannot use the time integral to create terms that can be controlled by $\PP\int_0^T\PP$. 

The goal is to show:
\begin{align}
K_1+K_2 \leq \PP_0 +C(\eps)E_\kk(T)+\PP\int_0^T \PP.
\label{Gronwall''}
\end{align}
The control of $K_1, K_2$ relies on the div-curl estimate and so the $H^1$-norms of $\p_t^4 v$ and $\p_t^4 \bp \eta$ have to be studied together owing to the strong coupling structure of the MHD equations. In particular, 
\begin{align}
K_1\leq &\int_0^T \left(\big\|\sqrt{\kk}\di \p_t^4 v\big\|_{0.5}^2+\big\|\sqrt{\kk}\curl \p_t^4 v\big\|_{0.5}^2+ \big|\sqrt{\kk}\p_t^4 v^3\big|_{1}^2\right)\dt=:K_{11}+K_{12}+K_{13},\\
K_2 \leq &\int_0^T\left(\big\|\sqrt{\kk}\di \p_t^4 \bp\eta\big\|_{0.5}^2+\big\|\sqrt{\kk}\curl \p_t^4 \bp\eta\big\|_{0.5}^2+\big|\sqrt{\kk}\p_t^4 \bp \eta^3\big|_{1}^2\right)\dt=:K_{21}+K_{22}+K_{23}. 
\end{align} 

\noindent\textbf{Bound for $K_{13}$ and $K_{23}$:} 

For $K_{13}$, there holds
\begin{align*}
K_{13} \leq \underbrace{\int_0^T |\sqrt{\kk} \p_t^4 v\cdot \nnk|_{1}^2}_{\leq E_{\kk}^{(2)}} +\int_0^T |\sqrt{\kk} \p_t^4 v\cdot (N-\nnk)|_{1}^2,
\end{align*}
and for the error term, we have
\begin{align*}
\int_0^T |\sqrt{\kk} \p_t^4 v\cdot (N-\nnk)|_{1}^2 \lesssim \int_0^T \|\sqrt{\kk}\p_t^4 v\|_{1.5}^2 \cdot |N-\nnk|_{1+}^2 \lesssim \eps^2\PP,
\end{align*}
where \eqref{comars N and nn in H^3} is used in the last inequality. To control $K_{23}$, since $\p_t\eta =v$ we have 
$
\int_0^T\big|\sqrt{\kk}\p_t^3 \bp v^3\big|_{1}^2
$
and so it suffices to control 
$
\int_0^T\big|\sqrt{\kk}\p_t^3 v^3\big|_{2}^2.
$
This term can then be treated similarly to $K_{13}$. 

\noindent\textbf{Bound for $K_{11}$ and $K_{21}$:} First we state the following application of the Kato-Ponce inequality which shall be used frequently. Let $f\in H^{0.5}(\Omega)$ and $g$ be a smooth function. Then
\begin{align}
\|fg\|_{0.5} \lesssim \|f\|_{0.5} \|g\|_{1.5+}.
\label{strategy normal}
\end{align}
For $K_{11}$, we have
\begin{align}
K_{11}\leq\int_0^T(\|\sqrt{\kk}\di_{\ak} \p_t^4 v\|_{0.5}^2+\|\sqrt{\kk}\di_{a-\ak}\p_t^4 v\|_{0.5}^2).
\end{align} 
Since $\|a-\ak\|_{1.5+}^2\leq \kk P(\|\eta\|_{3.5})$ thanks to \eqref{lkk 333 int}, the error term can be controlled as
\begin{align}
\int_0^T \|\sqrt{\kk}\di_{a-\ak}\p_t^4 v\|_{0.5}^2\lesssim \int_0^T \|a-\ak\|_{1.5+}^2\|\sqrt{\kk}\p \p_t^4 v\|_{0.5}^2,
\label{id a-ak}
\end{align}
which can be controlled by the RHS of \eqref{Gronwall''} when $\kk$ is small. 
 For the first term, since $\diva v=0$ 
 we have
\begin{align}
\int_0^T\|\sqrt{\kk}\di_{\ak} \p_t^4 v\|_{0.5}^2 =\int_0^T \|\sqrt{\kk}[\p_t^4, \ak] \p v\|_{0.5}^2 \LL \int_0^T \|\sqrt{\kk}(\p_t\ak^{\mu\alpha}) \p_\mu\p_t^3v_\alpha\|_{0.5}^2+\|\sqrt{\kk}(\p_t^4 \ak^{\mu\alpha}) \p_\mu v_\alpha\|_{0.5}^2.
\label{div v high order}
\end{align}
It is not hard to see that that $\int_0^T \|\sqrt{\kk}\p_t\ak \p\p_t^3v\|_{0.5}^2 \leq \int_0^T\PP$ as $\p_t^3 v\in H^{1.5}(\Omega)$ a priori. In addition, since
$
\p_t \ak^{\mu\alpha}=Q(\p \ek),
$ 
we obtain
\begin{equation}
\int_0^T \|\sqrt{\kk}\p_t^4 \ak \p v\|_{0.5}^2 \LL \int_0^T \|\sqrt{\kk}Q(\p \ek)\p\p_t^3 v\p v\|_{0.5}^2 \leq \int_0^T \PP,
\end{equation}

The control of $K_{21}$ is a bit more involved. We cannot commute $\p_t^4$ to \eqref{divbeq} as this would yield $\dive_{\p_t^5\ak}\bp\eta$ on the RHS which cannot be controlled. However, by writing $\di \p_t^4\bp\eta= \di \p_t^3 \bp v$ and then we have
\begin{align}
\int_0^T\|\sqrt{\kk}\di \p_t^3 \bp v\|_{0.5}^2 \leq\int_0^T\|\sqrt{\kk}\diva \p_t^3 \bp v\|_{0.5}^2+\int_0^T\|\sqrt{\kk}\di_{\ak-a} \p_t^3 \bp v\|_{0.5}^2.  
\end{align}
The second term on the RHS is again easy to control similar to \eqref{id a-ak}. For the first term, because
\begin{align*}
\diva \p_t^3 \bp v =& \p_t^3 \diva (\bp v)-[\p_t^3, \diva] \bp v\\
=&\p_t^3 ([\diva, \bp] v)-[\p_t^3, \diva] \bp v\\
= &\sum_{0\leq i\leq 3}(\p_t^i \ak^{\mu\alpha})(\p_\mu b_0^\nu)(\p_t^{3-i}\p_\nu v_\alpha)-\sum_{0\leq i\leq 3}\p_t^i \left(b_0^\nu \p_\nu\ak^{\mu\alpha}\right)(\p_t^{3-i}\p_\mu v_\alpha)\\
&-\sum_{1\leq j\leq 3}(\p_t^j \ak^{\mu\alpha})\p_\mu(\bp \p_t^{3-j}v_\alpha) ,
\end{align*}
then it can be seen, after counting the derivatives that 
\begin{align*}
\sum_{0\leq i\leq 3}\int_0^T\|\sqrt{\kk}(\p_t^i \ak^{\mu\alpha})(\p_\mu b_0^\nu)(\p_t^{3-i}\p_\nu v_\alpha)\|_{0.5}^2,\quad \sum_{0\leq i\leq 3}\|\sqrt{\kk}\p_t^i \left(b_0^\nu \p_\nu\ak^{\mu\alpha}\right)(\p_t^{3-i}\p_\mu v_\alpha)\|_{0.5}^2,\\
 \sum_{1\leq j\leq 3}\int_0^T\|\sqrt{\kk}(\p_t^j \ak^{\mu\alpha})\p_\mu(\bp \p_t^{3-j}v_\alpha)\|_{0.5}^2
\end{align*}
 can be controlled by $\int_0^T\PP$ owing to the fact that $\p_t^k v\in H^{4.5-k}(\Omega)$, $k=2,3$. 
 
 \noindent\textbf{Bound for $K_{12}$ and $K_{22}$:}  We would like to state the following strategy that will come in handy when dealing with the leading order terms in $K_{12}$ and $K_{22}$. Let $X$ be the term such that $\int_0^T \|\sqrt{\kk}X\|_{0.5}^2$ is part of $E_\kk^{(3)}$ and $Y$ be a lower order term such that $\|Y\|_{1.5+}^2$ is controlled by $E_{\kk}^{(1)}$. Then
\begin{align}
\int_0^T \int_0^t \|\sqrt{\kk}XY\|_{0.5}^2 \dt \leq& T\int_0^T  \|\sqrt{\kk}XY\|_{0.5}^2   
\lesssim T\sup_t \|Y\|_{1.5+}^2 \int_0^T \|\sqrt{\kk}X\|_{0.5}^2\nonumber\\
\leq& \frac{\eps}{2}\Big(\int_0^T \|\sqrt{\kk}X\|_{0.5}^2\Big)^2+\frac{T^2}{2\eps} \sup_t \|Y\|_{1.5+}^4,\label{strategy}
\end{align}
which is bounded by the RHS of \eqref{Gronwall} if $T$ is sufficiently small. 

$K_{12}$ and $K_{22}$ will be considered together via studying the evolution equation verified by $\curl \p_t^4 v$ and $\curl \p_t^4 \bp \eta$. But this evolution equation cannot be derived by taking $\p_t^4$ to \eqref{curlbeq} as this would yield $\curl_{\p_t^5 \Ak} v$ in the source term which cannot be controlled. Instead, we commute $\p_t^4\curlA$ to the equation 
$
\p_t v +\bp^2\eta = \nab_{\Ak} q 
$
and get
\begin{align*}
\p_t^4 \curlA \p_t v + \p_t^4 \curlA (\bp^2\eta) = 0.
\end{align*}
This yields the following evolution equation by commuting three time derivatives through $\curlA$ in the first term on the LHS:
\begin{align}
\p_t \curlA \p_t^4 v +\curlA (\bp^2 \p_t^4 \eta) = -\p_t([\p_t^3,\curlA] \p_t v)- [\p_t^4,\curlA] \bp^2\eta :=f,
\label{time diff curl}
\end{align}
and, after expansion, the source term $f$ becomes:
\begin{align}
f= \p_t\bigg( \sum_{1\leq j\leq 3} \epsilon_{\alpha\beta\gamma}(\p_t^j \ak^{\mu\beta})\p_\mu \p_t^{4-j} v^\gamma\bigg) + \sum_{1\leq j\leq 4} \epsilon_{\alpha\beta\gamma} (\p_t^j \ak^{\mu\beta})\p_\mu \bp^2 \p_t^{4-j} \eta^\gamma.
\label{time diff curl RHS}
\end{align}
By multiplying $\kk \TP(\curl_{\ak} \p_t^4 v)$ to the evolution equation \eqref{time diff curl} and then integrating in space, we have
\begin{align}
\io \kk \p_t (\curlA \p_t^4 v)\TP (\curlA \p_t^4 v)+\io\kk  \Big(\curlA(\bp^2 \p_t^4\eta\Big)\TP(\curlA \p_t^4 v)= \io \kk f \TP(\curlA \p_t^4 v), \label{3131}
\end{align}
where the first term contributes to $\frac{1}{2}\frac{d}{dt} \|\curlA \p_t^4 v\|_{0.5}^2$ after integrating $\TP^{\frac{1}{2}}$ by parts. 

Next, if we integrate $\bp$ by parts in $\io\kk  \Big(\curlA(\bp^2 \p_t^4\eta\Big)\TP(\curlA \p_t^4 v)$ and then integrate $\TP^{\frac{1}{2}}$ by parts,  we obtain $\frac{1}{2}\frac{d}{dt} \|\curlA \p_t^4\bp\eta\|_{0.5}^2$ up to terms involving commutators (which will be recorded below). In particular,  the following energy inequality is achieved:
\begin{align}
&\frac{1}{2}\|\sqrt{\kk}\curl_{\Ak} \p_t^4 v\|_{0.5}^2+\frac{1}{2}\|\sqrt{\kk} \curl_{\Ak} \p_t^4\bp\eta\|_{0.5}^2 \nonumber\\
 \lesssim& \PP_0+ \int_0^T \|\sqrt{\kk}f\|_{0.5}\|\sqrt{\kk}\curl_{\Ak}\p_t^4 v\|_{0.5}\dt\\
&+\int_0^T\|\sqrt{\kk}[\curl_{\Ak}, \bp]\bp\p_t^4 \eta\|_{0.5}\|\sqrt{\kk}\curl_{\Ak}\p_t^4 v\|_{0.5}\dt\nonumber\\
 &+ \int_0^T \|\sqrt{\kk}[\curl_{\Ak}, \bp]\p_t^4 v\|_{0.5}\|\sqrt{\kk}\curl_{\Ak} \bp \p_t^4 \eta\|_{0.5}\dt\nonumber\\
 &+\int_0^T\|\sqrt{\kk}\curl_{\p_t\Ak}\p_t^4\bp \eta\|_{0.5}\|\sqrt{\kk}\curl_{\Ak} \bp \p_t^4 \eta\|_{0.5}\dt.
\end{align}

Hence, by integrating in time one more time, we get
\begin{align}
&\frac{1}{2}\int_0^T \|\sqrt{\kk}\curl_{\Ak} \p_t^4 v\|_{0.5}^2+\frac{1}{2}\int_0^T\|\sqrt{\kk} \curl_{\Ak} \p_t^4\bp\eta\|_{0.5}^2 \nonumber\\
 \lesssim& \int_0^T \PP_0+ \int_0^T\int_0^t \|\sqrt{\kk}f\|_{0.5}\|\sqrt{\kk}\curl_{\Ak}\p_t^4 v\|_{0.5}\dt\nonumber\\
 &+\int_0^T\int_0^t \|\sqrt{\kk}[\curl_{\Ak}, \bp]\bp\p_t^4 \eta\|_{0.5}\|\sqrt{\kk}\curl_{\Ak}\p_t^4 v\|_{0.5}\dt\nonumber\\
 &+ \int_0^T\int_0^t \|\sqrt{\kk}[\curl_{\Ak}, \bp]\p_t^4 v\|_{0.5}\|\sqrt{\kk}\curl_{\Ak} \bp \p_t^4 \eta\|_{0.5}\dt\nonumber\\
&+\int_0^T\int_0^t \|\sqrt{\kk}\curl_{\p_t\Ak}\p_t^4\bp \eta\|_{0.5}\|\sqrt{\kk}\curl_{\Ak} \bp \p_t^4 \eta\|_{0.5}\dt,
 \label{time diff curl energy id}
\end{align}
where we have dropped one $\dt$ for the sake of concise notations. 
This suggests that we should control 
\begin{align*}
\int_0^T\int_0^t \|\sqrt{\kk} f\|_{0.5}^2\dt,\quad \int_0^T\int_0^t \|\sqrt{\kk}[\curl_{\Ak}, \bp]\bp\p_t^4 \eta\|_{0.5}^2\dt, \\
  \int_0^T\int_0^t \|\sqrt{\kk}[\curl_{\Ak}, \bp]\p_t^4 v\|_{0.5}^2\dt,\quad \int_0^T\int_0^t \|\sqrt{\kk}\curl_{\p_t\Ak}\p_t^4\bp \eta\|_{0.5}^2\dt.
\end{align*}

For the second term, we have
\begin{align*}
\int_0^T\int_0^t\|\sqrt{\kk}[\curl_{\Ak}, \bp]\bp\p_t^4 \eta\|_{0.5}^2\dt \lesssim \int_0^T\int_0^t \|\sqrt{\kk} \epsilon_{\alpha\beta\gamma}\Ak^{\mu\beta} (\p_\mu b_0^\nu)(\p_\nu\bp \p_t^4 \eta^\gamma\|_{0.5}^2\dt\\
+\int_0^T\int_0^t \|\sqrt{\kk} \epsilon_{\alpha\beta\gamma}(b_0^\nu\p_\nu\Ak^{\mu\beta} )(\p_\mu\bp \p_t^4 \eta^\gamma\|_{0.5}^2\dt, 
\end{align*}
which can be controlled by the RHS of \eqref{Gronwall''} by adapting \eqref{strategy}. 
The third and forth term are treated analogously. For $\int_0^T\int_0^t  \|\sqrt{\kk} f\|_{0.5}^2\dt$, invoking \eqref{time diff curl RHS}, we need to consider
\begin{align}
i=&\sum_{1\leq j\leq 3}\itt\|\sqrt{\kk}\p_t\big(  \epsilon_{\alpha\beta\gamma}(\p_t^j \Ak^{\mu\beta})\p_\mu \p_t^{4-j} v^\gamma\big)\|_{0.5}^2\dt,\\
 ii=&\sum_{1\leq j\leq 4}\itt \| \sqrt{\kk}\epsilon_{\alpha\beta\gamma} (\p_t^j \Ak^{\mu\beta})\p_\mu \bp^2 \p_t^{4-j} \eta^\gamma\|_{0.5}^2\dt. \label{ii}
\end{align}
Here, $i\LL \itt \|\sqrt{\kk}(\p_t \Ak)(\p\p_t^4 v)\|_{0.5}^2$, which controlled appropriately by adapting \eqref{strategy}.  Moreover, 
\begin{align}
ii\LL \itt \| \sqrt{\kk}\epsilon_{\alpha\beta\gamma} (\p_t \Ak^{\mu\beta})\p_\mu \bp^2 \p_t^{3} \eta^\gamma\|_{0.5}^2
\lesssim \int_0^T\int_0^t \|\sqrt{\kk} (\p_t \Ak)\p\p_t^3[\bp^2\eta]\|_{0.5}^2\dt\leq \int_0^T\PP,
\label{alternativ}
\end{align}
since $\int_0^T \|\sqrt{\kk} \p_t^3\bp \eta\|_{2.5}^2$ is included in $E_{\kk}^{(3)}$, and this concludes the control of $K_1+K_2$. 

\begin{rmk} There is an alternative way to control the last integral in \eqref{alternativ}. We may use the equation to replace $\bp^2 \eta$ by $\p_t v+\nab_{\Ak}q$, and this allow us to control this integral without using $\int_0^T \|\sqrt{\kk} \p_t^3\bp \eta\|_{2.5}^2$. In fact, one can show
$$
\itt\|\sqrt{\kk}\p_t^3 q\|_{2.5}^2\dt\leq \PP
$$
by employing the elliptic estimate we used in Section \ref{ellkk} (similar to the control of \eqref{II q}),
and so 
$$
\itt\|\sqrt{\kk}(\p_t\Ak) \p\p_t^3 [\bp^2 \eta]\|_{0.5}^2\dt\leq \itt \|\sqrt{\kk}(\p_t \Ak) \p \p_t^4 v\|^2_{0.5}+ \|\sqrt{\kk}(\p_t \Ak) \p\p_t^3 \nab_{\Ak} q\|_{0.5}^2\dt \leq\int_0^T\PP,
$$
because $\int_0^T \|\sqrt{\kk} \p_t^4 v\|_{1.5}^2$ is part of $E_{\kk}^{(3)}$.  
\end{rmk}

\subsection{Mixed space-time derivatives}\label{sect E3kk2}
The treatment for the remaining terms of $E_\kk^{(3)}$ is parallel and so we shall only sketch the details. We shall consider 
$$
\int_0^T \Big (\big\|\sqrt{\kk} \p_t^k v\big\|_{5.5-k}^2+\big\|\sqrt{\kk} \p_t^k \bp \eta\big\|_{5.5-k}^2\Big)\dt,\quad k=1,2,3.
$$
First, the boundary terms contributed by the time derivative(s) of $\bp \eta$, i.e., terms analogous to $K_{23}$, reads
$$
\int_0^T |\sqrt{\kk}\p_t^k \bp \eta^3|_{5-k}^2,\quad k=1,2,3.
$$
Generally speaking, for each fixed $k=1,2,3$, the control of the above term requires that of $\int_0^T |\sqrt{\kk} \p_t^{k-1} v^3|_{6-k}^2$, and this process stops when $k=1$. In particular, 
for each fixed $k=2,3$, we write $\int_0^T|\sqrt{\kk}\p_t^k\bp\eta^3|_{5-k}^2$ as $\int_0^T|\sqrt{\kk}\p_t^{k-1} \bp v^3|_{5-k}^2$, which can then be controlled together with $\int_0^T |\sqrt{\kk}\p_t^j v^3|_{5-j}^2$ with $j=1,2$. On the other hand, when $k=1$, the control of $\int_0^T|\sqrt{\kk} \p_t\bp \eta^3|_4^2$ requires that of 
$$
\int_0^T |\sqrt{\kk}\bp v^3|_4^2\lesssim P(\|b_0\|_{4.5})\int_0^T|\sqrt{\kk}v^3|_5^2,
$$
where, in view of \eqref{super v}, we have $\int_0^T|\sqrt{\kk}v|_5^2 \leq \MM_0+C(\eps)E_\kk(T)+\PP\int_0^T\PP$.

Second, the control of the analogous terms of $ii$ (defined in \eqref{ii}) for $k=1,2,3$ requires a similar analysis as above. For each fixed $k$, we need to investigate
\begin{align}
 ii'=\sum_{1\leq j\leq k}\itt \| \sqrt{\kk}\epsilon_{\alpha\beta\gamma} (\p_t^j \Ak^{\mu\beta})\p_\mu \bp^2 \p_t^{k-j} \eta^\gamma\|_{4.5-k}^2\dt. 
\end{align}
Again, it suffices to consider the most difficult term contributed by setting $j=1$, i.e., 
\begin{align}
 ii'=&\itt \| \sqrt{\kk}\epsilon_{\alpha\beta\gamma} (\p_t \Ak^{\mu\beta})\p_\mu \bp^2 \p_t^{k-1} \eta^\gamma\|_{4.5-k}^2\dt\label{II1}\\
  \lesssim &\itt P(\|v\|_{4.5}, \|b_0\|_{4.5}, \|\eta\|_{4.5})\|\sqrt{\kk} \p^3 \p_t^{k-1} \eta\|_{4.5-k}^2\dt.  
  \label{II2}
\end{align}
In \eqref{II2}, it can be seen that when $k=2,3$, $\itt \|\sqrt{\kk} \p^3 \p_t^{k-1} \eta\|_{4.5-k}\dt$ is bounded by $\itt\|\sqrt{\kk} \p^3 v\|_{2.5}^2\dt$ and $\itt\|\sqrt{\kk} \p^3 \p_t v\|_{1.5}^2\dt$, respectively. Moreover, when $k=1$, we need to consider \eqref{II1} instead. The strategy here is to replace $\bp^2 \eta$ by $\p_t v+\nab_{\Ak} q$, and so 
\begin{align}
ii'=\itt \| \sqrt{\kk}\epsilon_{\alpha\beta\gamma} (\p_t \Ak^{\mu\beta})\p_\mu \p_t v^\gamma\|_{3.5}^2\dt+\itt \| \sqrt{\kk}\epsilon_{\alpha\beta\gamma} (\p_t \Ak^{\mu\beta})\p_\mu \nab_{\Ak}^\gamma q\|_{3.5}^2\dt, 
\label{3133}
\end{align}
where the first term is bounded by the RHS of \eqref{Gronwall''} owing to \eqref{strategy}. For the second term, since $v\in H^{4.5}(\Omega)$, so it suffices to consider the case when all derivatives land on $\nab_{\Ak} q$, whose control requires that of 
\begin{align}\itt\|\sqrt{\kk} \nab_{\Ak} q\|_{4.5}^2\dt \label{II q}
\end{align}
after adapting \eqref{strategy}. Actually, we are able to prove a slightly stronger bound by removing one time integral, i.e., we want to bound $\int_0^T \|\sqrt{\kk}\nab_{\Ak} q\|_{4.5}^2$. By the div-curl estimate, one has 
\begin{align*}
\int_0^T\|\sqrt{\kk}\nab_{\Ak} q\|_{4.5}^2\lesssim \int_0^T \Big(\|\sqrt{\kk}\di \nab_{\Ak} q\|_{3.5}^2+\|\sqrt{\kk}\curl \nab_{\Ak} q\|_{3.5}^2+|\sqrt{\kk}N\cdot \nab_{\Ak}q\big|_3^2 + |\sqrt{\kk}q|_0^2\Big).
\end{align*}
Here, 
\begin{align}
\int_0^T \|\sqrt{\kk}\di \nab_{\Ak} q\|_{3.5}^2 \lesssim \int_0^T \|\sqrt{\kk} \Delta_{\Ak} q\|_{3.5}^2+\int_0^T \|\sqrt{\kk} \di_{\Ak-\delta} \nab_{\Ak} q\|_{3.5}^2,
\label{Neumann ell}
\end{align}
and by invoking \eqref{compars delta and a}, \eqref{ellq}, \eqref{strategy}, and since $v\in H^{4.5}(\Omega)$ a priori, we have 
\begin{align*}
\int_0^T \|\sqrt{\kk} \di_{\Ak-\delta} \nab_{\Ak} q\|_{3.5}^2 \lesssim \eps \int_0^T\|\sqrt{\kk}\nab_{\Ak} q\|_{4.5}^2 +\int_0^T\PP.
\end{align*}
Similarly, because $\curl_{\Ak} \nab_{\Ak} q=0$, we have
\begin{align*}
\int_0^T \|\sqrt{\kk}\curl \nab_{\Ak} q\|_{3.5}^2 \lesssim \eps\int_0^T\|\sqrt{\kk}\nab_{\Ak} q\|_{4.5}^2+\int_0^T\PP.
\end{align*}
Moreover, invoking \eqref{comars N and nn in H^3}, \eqref{strategy} and the trace lemma, then
\begin{align*}
\int_0^T |\sqrt{\kk}N\cdot \nab_{\Ak} q|_3^2 \lesssim& \int_0^T |\sqrt{\kk}\nnk\cdot \nab_{\Ak} q|_3^2+ \int_0^T |\sqrt{\kk}(N-\nnk)\cdot \nab_{\Ak} q|_3^2\\
\lesssim &\int_0^T |\sqrt{\kk}\nnk\cdot \nab_{\Ak} q|_3^2+ \eps \int_0^T\|\sqrt{\kk}\nab_{\Ak} q\|_{4.5}^2+\int_0^T\PP.
\end{align*}
As a consequence, \eqref{Neumann ell} becomes
\begin{align}
\int_0^T\|\sqrt{\kk}\nab_{\Ak} q\|_{4.5}^2 \lesssim \int_0^T \bigg(\|\sqrt{\kk} \Delta_{\Ak} q\|_{3.5}^2+|\sqrt{\kk}\nnk\cdot \nab_{\Ak} q\big|_3^2 + |\sqrt{\kk}q|_0^2\bigg).
\label{Neumann ell'}
\end{align}

To control the RHS, we recall that $q$ verifies 
\begin{equation}\label{higher order q elliptic int}
-\Delta_{\Ak} q=-\p_t\Ak^{\mu\alpha}\p_{\mu}v_{\alpha}+\p_{\beta}(\bp\ek_{\nu})\p_{\nu}\Ak^{\mu\nu}\Ak^{\beta\alpha}\p_{\mu}\bp\eta_{\alpha}
\end{equation}
with the Dirichlet and Neumann boundary conditions
\begin{align}
\sqrt{\tilde{g}}q =& -\sigma \sqrt{g}(\Delta_g \eta\cdot \nnk ) +\kk(1-\TL) (v\cdot \nnk),\\
\nnk \cdot \nab_{\Ak} q=&-\p_tv \cdot \nnk+\bp^2\eta\cdot \nnk. \label{higher order q elliptic bdy}
\end{align}
Now, 
\begin{align}
\int_0^T \|\sqrt{\kk} \Delta_{\Ak} q\|_{3.5}^2 \leq \int_0^T \kk\|\p_t \Ak \p v\|_{3.5}^2+ \int_0^T \kk\|\p (\bp\eta) (\p_t a) (a\p \bp\eta)\|_{3.5}^2,
\end{align}
and because $v,\bp\eta \in H^{4.5}(\Omega)$ a priori, the RHS is bounded by $\int_0^T \PP$. Also, it is not hard to see, via the trace lemma and the Dirichlet boundary condition,  that 
$$
\int_0^T |\sqrt{\kk} q|_0^2 \leq \int_0^T\PP. 
$$

Next, we control $\int_0^T|\sqrt{\kk}\nnk\cdot\nab_{\Ak}q|_3^2$. In view of the Neumann boundary condition \eqref{higher order q elliptic bdy}, it contributes to 
$$
\int_0^T \kk |\p_t v \cdot \nnk|_3^2,\quad \int_0^T |\sqrt{\kk}\bp^2 \eta\cdot \nnk|_{3}^2.
$$
 For the first term, since $\p_t v\in H^{3.5}(\Omega)$ and $\eta \in H^{4.5}(\Omega)$ a priori, as well as $\cp \nnk=Q(\cp \eta) \cp^2\eta$, we have, after employing the trace theorem, that
\begin{align*}
\int_0^T \kk|\p_t v\cdot \nnk|_3^2 \leq \int_0^T\PP.
\end{align*}
 Also, for the second term, 
 $$
 \int_0^T |\sqrt{\kk}\bp^2 \eta\cdot \nnk|_{3}^2\LL \int_0^T |\sqrt{\kk}\bp^2 \p^3 \eta\cdot \nnk|_0^2 \leq \int_0^T P(\|b_0\|_{4.5}, ||\eta||_{4.5}) |\sqrt{\kk} \TP^5\eta|_0^2,
 $$ 
 which can be controlled by $\MM_0+C(\eps)E_\kk(T)+\PP\int_0^T\PP$ owing to \eqref{super eta}.   
\subsection{Full spatial derivatives}\label{E3kkk}
It remains for us to bound
\begin{equation*}
\int_0^T \Big (\big\|\sqrt{\kk}  v\big\|_{5.5}^2+\big\|\sqrt{\kk}  \bp \eta\big\|_{5.5}^2\Big)\dt
\end{equation*}
in order to conclude the control of $E_\kk^{(3)}(T)$. This will be treated via the div-curl decomposition, and
since there is no time derivative, the arguments are similar to those in Section 3.2.
First, we assume that the quantity
\begin{equation}\label{supereta}
\|\sqrt{\kk}\eta\|_{5.5}
\end{equation}
is bounded a priori. 
Second, for the divergence part of velocity, the perturbation argument yields
  \begin{align}\label{perturb 5.5}
  \int_0^T\|\sqrt{\kk}\di v\|_{4.5}^2 =& \int_0^T \|\sqrt{\kk}(\ak^{\mu\alpha}-\delta^{\mu\alpha})\p_\mu v_\alpha\|_{4.5}^2\nonumber\\
 &\leq \int_0^T \|\sqrt{\kk}(\int_0^t \p_t \ak^{\mu\alpha}\,ds)\p_\mu v_\alpha\|_{4.5}^2\leq P\left(\|\sqrt{\kk} \eta\|_{5.5}, \|\sqrt{\kk}v\|_{L_t^2H_y^{5.5}}\right)\int_0^T\PP.
  \end{align}
 Also, for the divergence part of magnetic field,  it suffices to study $\int_0^T \|\sqrt{\kk} \di_{\ak} \bp\eta\|_{4.5}^2$ by invoking a perturbation argument similar to \eqref{perturb 5.5}. We differentiate \eqref{divbeq} with $\sqrt{\kk}\p^{4.5}$ and then integrate in time to get
\[
\sqrt{\kk}\p^{4.5}(\diva\bp\eta)=\int_0^T\sqrt{\kk}\p^{4.5}(\p_{\beta}\bp\eta_{\gamma}\ak^{\mu\gamma}\ak^{\beta\alpha}\p_{\mu}v_{\alpha}-\ak^{\mu\gamma}\p_\beta\tilde{v}_{\gamma}\ak^{\beta\alpha}\p_{\mu}\bp\eta_{\alpha}),
\] and taking $L_t^2L_y^2$ norm yields that
\begin{equation}\label{superdiv}
\|\sqrt{\kk}\p^{4.5}(\di_{\ak}\bp\eta)\|_{L_t^2L_y^2}^2\lesssim T\left(\|\sqrt{\kk} v\|_{L_t^2H_y^{5.5}}^2+\|\sqrt{\kk} \bp\eta\|_{L_t^2H_y^{5.5}}^2+\|\sqrt{\kk} \eta\|_{L_t^2H_y^{5.5}}^2\right)\int_0^T \PP.
\end{equation}
  Third, for the curl part of both velocity and magnetic field, it suffices to study $\int_0^T \|\sqrt{\kk}\curl_{\Ak} v\|_{4.5}^2+ \|\sqrt{\kk} \curlA\bp\eta\|_{4.5}^2$ by differentiating \eqref{curlbeq} with $\p^{4.5}$ and test it with $\kk\p^{4.5}\curl_{\Ak}v$ in $L_t^2L_y^2$. We can show, parallel to \eqref{curlvb3}, that
\begin{equation}\label{supercurl}
\begin{aligned}
&\frac12\frac{d}{dt}\left(\|\sqrt{\kk} \p^{4.5}\curlA v\|_{L_t^2L_y^2}^2+\|\sqrt{\kk} \p^{4.5}\curlA \bp\eta\|_{L_t^2L_y^2}^2\right)\\
\lesssim &~P(\|\sqrt{\kk} b_0\|_{5.5},\|\sqrt{\kk}\bp\eta\|_{L_t^2H_y^{5.5}},\|\sqrt{\kk}v\|_{L_t^2H_y^{5.5}},\|\sqrt{\kk}\eta\|_{5.5}), 
\end{aligned}
\end{equation}
Since $\|\sqrt{\kk}\eta\|_{5.5} \leq \|\sqrt{\kk}\eta_0\|_{5.5}+\int_0^T\|\sqrt{\kk} v\|_{5.5}$, by the above estimates together with \eqref{normal trace v} and \eqref{normal trace bp eta} obtained in the discussion of Lemma \ref{super control}, we get 
\begin{equation}\label{superdivcurl}
\|\sqrt{\kk}v\|_{L_t^2H_y^{5.5}}^2+\|\sqrt{\kk}\bp\eta\|_{L_t^2H_y^{5.5}}^2+\|\sqrt{\kk}\eta\|_{5.5}^2\lesssim\mathcal{M}_0+C(\eps)E_\kk(T)+\PP\int_0^T\PP.
\end{equation}

In summary, we have
 \begin{align}
 E_{\kk}^{(3)} \leq \MM_0+C(\eps)E_\kk(T)+\PP\int_0^T\PP. 
 \label{estimate Ekk^(3)}
 \end{align}
 
 \begin{rmk}
 The estimate \eqref{superdivcurl}, together with the trace lemma concludes the proof of Lemma \ref{super control}.
 \end{rmk}

 \section{Closing the nonlinear energy estimate}\label{close}
 In this section we conclude the proof of Proposition \ref{nonlinearkk}. 
 \subsection{Regularity of initial data}\label{datakk}
Our first task is to remove the extra regularity assumptions on the initial data. These additional regularities are introduced in $\MM_0$ (defined in Lemma \ref{super control}). In addition to this, one has to control $\|q(0)\|_{4.5}, \|q_t(0)\|_{3.5}, \|q_{tt}(0)\|_{2.5}$ in terms of $v_0$ and $b_0$ by the elliptic estimate, and extra regularity on $v_0$ and $b_0$ shall appear due to the viscosity term.

Note that $q_0$ verifies the elliptic equation
\begin{equation}
\begin{cases}
-\Delta q_0=(\p v_0)(\p v_0)-(\p b_0)(\p b_0)~~~&\text{ in }\Omega\\
q_0=\kk(1-\TL)v^3~~~&\text{ on }\Gamma\\
\frac{\p q_0}{\p N}=0~~~&\text{ on }\Gamma_0\\
\end{cases}
\end{equation}
by standard elliptic estimates, we get
\[
\|q_0\|_{4.5}\lesssim\|\p v_0\|_{2.5}^2+\|\p b_0\|_{2.5}^2+\kk\|v_0^3\|_{4.5}+\kk|v_0^3|_6.
\]

Moreover, note that the energy functional contains time derivatives of $v$ and $\bp\eta$, so we need to express their initial data in terms of $v_0$ and $b_0$ as well. We invoke $\p_t v(0)-\bp b_0=-\p q_0$ to get
\[
\|\p_t v(0)\|_{3.5}\lesssim\|b_0\|_{3.5}\|b_0\|_{4.5}+\|q_0\|_{4.5},
\]and 
\[
\|\p_t \bp\eta(0)\|_{3.5}\lesssim\|b_0\|_{3.5}\|v_0\|_{4.5}.
\]
Similarly, we consider the $\p_t$-differentiated elliptic equation of $q$ to get
\[
\|\p_t q(0)\|_{3.5}\lesssim P(\|v_0\|_{4.5},\|b_0\|_{4.5})(|v_0^3|_5+\kk|\p_t v(0)|_5),
\]and further
\[
\|\p_t^2 q(0)\|_{2.5}+\|\p_t^3 q(0)\|_1\lesssim P(\|v_0\|_{4.5},\|b_0\|_{4.5},|v_0|_5)(1+\kk|\TL\p_t^2 v(0)|_2).
\]
By Sobolev trace lemma, we need to bound $\kk\|\p_t^2v(0)\|_{4.5}$ which requires the control of $\kk(\|v_0\|_{6.5}+\|b_0\|_{5.5}+\|\p_tq(0)\|_{5.5})$. We replace 3.5 by 5.5 in the estimates of $\p_t q(0)$, and thus we need to control $$\kk^2|\p_tv(0)|_7\lesssim\kk^2(\|b_0\|_{7.5}\|b_0\|_{8.5}+\|q_0\|_{8.5}).$$ Finally, replacing 4.5 by 8.5 in the estimates of $q_0$, we need to control $$\kk^2(\|v_0\|_{7.5}^2+\|b_0\|_{7.5}^2)+\kk^3(|v_0|_{8}+|v_0|_{10}).$$

In view of the above analysis and the definition of $\MM_0$, we need to control $\kk$-weighted norms of $\|v_0\|_{8.5},\|b_0\|_{8.5}$ and $|v_0|_{10}$. However, our given initial data is $v_0\in H^{4.5}(\Omega)\cap H^5(\Gamma)$ and $b_0\in H^{4.5}$ and so we have to remove the additional regularity assumptions on the initial data. This can be done by adapting a similar argument in Section 12 of Coutand-Shkoller \cite{coutand2007LWP}. We define $\Omega_{\kk}$ to be the regularized version of $\Omega$ tangentially mollified by $\zeta_{\exp^{-\kk}}$ and define $E_{\Omega_\kk}$ to be the extension operator from $\Omega$ to $\Omega_\kk$. Next we set $$\vvv:=\zeta_{\exp^{-\kk}}*E_{\Omega_\kk}(v_0),~\bbb:=\zeta_{\exp^{-\kk}}*E_{\Omega_\kk}(b_0),~\qqq:=\zeta_{\exp^{-\kk}}*E_{\Omega_\kk}(q_0).$$ Therefore, integrating by parts repeatedly to transfer derivatives to the mollifier $\zeta_{\exp^{-\kk}}$, we get
\begin{equation}
\|\kk \vvv\|_{8.5}+\|\kk\bbb\|_{8.5}+\|\kk\qqq\|_{8.5}+|\kk\vvv|_{10}\lesssim \|v_0\|_{4.5}+\|b_0\|_{4.5}+\|q_0\|_{4.5}+|v_0|_5\leq \mathcal{C}, 
\label{removing extra regularity}
\end{equation}
where $\mathcal{C}$ is the constant that appears in \eqref{energykk}.

\subsection{Nonlinear a priori estimates}\label{nkk}

Now we summarize the a priori estimates of the nonlinear $\kk$-approximation system \eqref{MHDLkk}. 
\begin{enumerate}
\item  \eqref{ellq} gives the elliptic estimates of $q$ and its time derivatives. 
\item \eqref{divv3}-\eqref{divvtt1} and \eqref{divb3}, \eqref{divbt} give the divergence estimate and \eqref{curlvb3}-\eqref{curlvbt} give the curl estimate.
\item \eqref{boundary v} and \eqref{b3bdry} control the boundary part of $v,\bp\eta$ and its time derivative. 
\item \eqref{tgt4}, \eqref{tgt3}, \eqref{tgt5}-\eqref{tgt7} provide control of the mixed tangential derivatives of $v$ and $\bp\eta$ and the Eulerian normal projections of $v$. Note that these estimate depends on $E_{\kk}^{(3)}$ on the RHS. 
\item Finally, \eqref{estimate Ekk^(3)} provides the estimate for $E_{\kk}^{(3)}$.
\end{enumerate}
Thus, by combining these estimates and then invoking \eqref{removing extra regularity},  we obtain a Gronwall-type inequality:
\begin{equation}
E_\kk(T)-E_\kk(0)\lesssim C(\eps)E_{\kk}(T)+\mathcal{C}(\|v_0\|_{4.5}, \|b_0\|_{4.5})+P(E_\kk(T))\int_0^TE_\kk(t)\dt.
\end{equation} We pick $\eps>0$ suitably small such that the $\eps$-terms can be absorbed to LHS. Therefore, by the nonlinear Gronwall inequality in Chapter 2 of Tao \cite{tao2006nonlinear}, we know there exists some time $T>0$ independent of $\kk$, such that
\begin{equation}
\sup_{0\leq t\leq T}E_{\kk}(t)\leq \mathcal{C}.
\end{equation}
This concludes the proof for Proposition \ref{nonlinearkk}.


\section{Existence and uniqueness for the linearized approximate system}\label{linearlwpkk}
Since we have obtained a uniform-in-$\kk$ a priori energy estimate for the approximate $\kk$-problem \eqref{MHDLkk}, our next goal is to construct a solution for this system for each fixed $\kk>0$ that is sufficiently small. We shall assume that $0<\kk\ll1$ \textit{is fixed} throughout the rest of this manuscript. 

Let $T>0$. We define
\begin{align}
\XX = \{ \mathbf{u} \in L^\infty(0,T;H^{4.5}{(\Omega)}): \sup_{[0,T]} \|\mathbf{u}\|_{4.5} \leq 2(\|v_0\|_{4.5}+\|b_0\|_{4.5})+1\},
\end{align}
which is a closed subset of the space $L^\infty(0,T;H^{4.5}(\Omega))$.  In order to solve the approximate $\kk$-problem \eqref{MHDLkk} for each fixed $\kk>0$, we study the following linearized problem whose fixed-point shall provide the desired solutions. Fix an arbitrary function $\er=\er(t,y)\in \XX$ whose time derivative $\er_t\in \XX$, we denote by  $\ar$, $\gr$, $\Jr$ and $\Ar$ the associated quantities in Lagrangian coordinates. Also, we define $\erk$ by
\begin{equation*}
\begin{cases}
-\Delta\erk=-\Delta\er,~~~&\text{in}\,\,\Omega,\\
\erk=\lkk^2\er~~~&\text{ on }\p\Omega,
\end{cases}
\end{equation*}
and let $\ark:=[\p\erk]^{-1},~\Jrk:=\det[\p\erk],~\Ark:=\Jrk\ark$ and $\nnr$ to be the associated smoothed quantities.

We aim to construct $\eta$ and $v$ that solve
\begin{equation}
\begin{cases}\label{linearized}
\p_t\eta=v~~~& \text{in}~[0,T]\times\Omega;\\
\p_tv-\bp^2\eta+\nab_{\Ark} q=0~~~& \text{in}~[0,T]\times\Omega;\\
\di_{\Ark} v=0,\quad \dive b_0=0 &\text{in}~[0,T]\times\Omega;\\
v^3=b_0^3=0~~~&\text{on}~\Gamma_0;\\
\Ark^{3\alpha}q = -\sigma \sqrt{\gr}(\Delta_{\gr} \er\cdot \nnr )\nnr^\alpha +\kk(1-\TL) (v\cdot \nnr)\nnr^\alpha~~~&\text{on}~\Gamma;\\
(\eta,v)=(Id,v_0)~~~&\text{on}~\{t=0\}{\times}\overline{\Omega}.
\end{cases}
\end{equation}
The rest of this section is devoted to showing the existence of $(\eta, v)$ by first establishing the existence of the weak solution and then boosting up their regularity. The construction of the solution for the nonlinear $\kk$-problem will be postponed until the next section.

We will adapt the method developed in Coutand-Shkoller \cite{coutand2007LWP} to construct the weak solution for \eqref{linearized}. In particular, we study the penalized $\la$-problem \eqref{penalized} whose solution $(\xi_\la, w_\la, q_\la)$ can be obtained by the Galerkin's approximation. Also, we show that this solution converges to that of \eqref{linearized} when $\la\to 0$. 
We mention here that in \cite{coutand2007LWP}, the authors were able to prove $w_\la \in L^2(0, T; H^1(\Omega))$, which allowed them to obtain a strong solution of the incompressible Euler equations after taking the limit. However, the coupling between the velocity and magnetic field prevents us from employing this technique, as we cannot boost the regularity of $w_\la$ alone without considering $\bp\xi_\la$. 
Because of this, it appears that we have to first construct the weak solution of \eqref{linearized} in $L^2(0,T; H^{-1}(\Omega))$ (Section \ref{section la problem}), and then prove that this solution in fact has $L^2(0,T;H^1(\Omega))$ regularity by employing a bootstrap argument (Subsection \ref{sect boundary term}). 

\begin{rmk}
There is another reason that prevents us from improving the regularity of $(w_\la, \bp\xi_\la)$. Unlike $w_\la$, the energy estimate \eqref{ODE L^2 energy eps} fails to give any control of the normal component of $\bp\xi_\la$ on the boundary. This is, however, required for estimating $\bp\xi_\la$ in $H^1(\Omega)$. 
\end{rmk}

\subsection{The penalized problem}\label{section la problem}
The goal of this subsection is to study the penalized version (of the divergence-free condition on the velocity) of the linearized $\kk$-problem \eqref{linearized}. In particular, for $0<\la \ll 1$, let $w_\la, \xi_{\la}$ be the solutions for \eqref{linearized} with
\begin{align}
\di_{\Ark} w_\la = -\la q_{\la}, 
\end{align} 
where $q_\la$ is defined to be the penalized pressure. In this case, \eqref{linearized} becomes
\begin{equation}
\begin{cases}\label{penalized}
\p_t\xi_\la=w_\la~~~& \text{in}~[0,T]\times\Omega;\\
\p_tw_\la-\bp^2\xi_\la+\nab_{\Ark} q_\la=0~~~& \text{in}~[0,T]\times\Omega;\\
\di_{\Ark}w_\la=-\la q_{\la},\quad \dive b_0=0 &\text{in}~[0,T]\times\Omega;\\
w_\la^3=b_0^3=0~~~&\text{on}~\Gamma_0;\\
\Ark^{3\alpha}q_\la = -\sigma \sqrt{\gr}(\Delta_{\gr} \er\cdot \nnr )\nnr^\alpha +\kk(1-\TL) (w_\lambda\cdot \nnr)\nnr^\alpha~~~&\text{on}~\Gamma;\\
(\xi_\la,w_\la)=(Id,v_0)~~~&\text{on}~\{t=0\}{\times}\overline{\Omega}.
\end{cases}
\end{equation}
Since each penalized problem is indexed by $\la$ (recall $\kk$ is fixed), we shall denote them by ``$\la$-problem" throughout the rest of this section. 

\subsubsection{Weak solution for the $\la$-problem via Galerkin's approximation}
First of all, for each fixed $\la$, we will solve the $\la$-problem by the Galerkin approximation and obtain a weak solution. By introducing a basis $(e_k)_{k=1}^{\infty}$ of $L^2(\Omega)\cap H^1(\Omega)$, and considering the approximation 
\begin{align}
\xi_m(t,y) =& \sum_{j=1}^m Z_j(t)e_j(y),\quad m\geq 2, \label{p_t xi}\\
w_m (t,y) =& \p_t \xi_m(t,y) =\sum_{k=1}^m Z_j' (t) e_j(y),
\end{align}
one can form a system of ODE by multiplying a test vector field $\phi\in\text{span}(e_1,\cdots,e_m)$ to the $\la$-problem. Specifically, we have
\begin{align}
\io (w_m^\alpha)_t \phi_\alpha -\io [\bp^2 \xi_m^\alpha] \phi_\alpha + \io [\Ark^{\mu\alpha} \p_\mu q_m] \phi_\alpha = 0.
\end{align}

We recall that $\bp|_{\Gamma}$ is tangential to $\Gamma$. Owing to this and the boundary condition of $q_m$, i.e.,
\begin{equation}
\Ark^{3\alpha}q_m = -\sigma \sqrt{\gr}\left(\Delta_{\gr} \er_m\cdot \nnr \right)\nnr^\alpha +\kk(1-\TL) (w_m\cdot \nnr)\nnr^\alpha,\quad \text{on}\,\,\, \Gamma,
\end{equation}
where $\er_m$ denotes the projection of $\er$ onto $\text{span}(e_1,\cdots,e_m)$, 
we obtain, after integration by parts, that
\begin{align}
&\io (w_m^\alpha)_t \phi_\alpha +\io [\bp\xi_m^\alpha] [\bp \phi_\alpha] +\kk\sum_{l=0,1}\ig \TP^l (w_m\cdot\nnr)\TP^l(\phi\cdot \nnr)\nonumber\\
 & -\io  q_m[\Ark^{\mu\alpha} \p_\mu \phi_\alpha] = \sigma \ig (\sqrt{\gr}\Delta_{\gr} \er_m \cdot \nnr)  (\phi\cdot \nnr).\label{ODE p}
\end{align}
Also, invoking the identity holds for the penalized pressure
\begin{align}
\di_{\Ark} w_m = -\la q_m, \label{penalized q}
\end{align}
we obtain 
\begin{align}\label{ODE pre}
&\io (w_m^\alpha)_t \phi_\alpha +\io [\bp\xi_m^\alpha] [\bp \phi_\alpha] +\kk\sum_{l=0,1}\ig \TP^l (w_m\cdot\nnr)\TP^l(\phi\cdot \nnr)\nonumber\\
 & +\frac{1}{\la}\io  (\divAr w_m)(\divAr \phi) = \sigma \ig (\sqrt{\gr}\Delta_{\gr} \er_m \cdot \nnr)  (\phi\cdot \nnr).
\end{align}

Let $\phi = e_k$, for each fixed $k=1,2,\cdots, m$.  Then \eqref{ODE pre} yields a system of ODE:
\begin{align}
&Z_k''(t)+\sum_{j=1}^m\left( \int_\Omega [\bp e_j^\alpha][\bp (e_k)_\alpha]\right) Z_j(t)+\kk\sum_{l=0,1}\sum_{j=1}^m\left(\int_\Gamma \TP^l (e_j\cdot \nnr)\TP^l (e_k\cdot \nnr)\right) Z_j'(t)\nonumber\\
&+\frac{1}{\la}\sum_{j=1}^m\left(\int_\Omega (\divAr e_j)(\divAr e_k)\right)Z_j'(t)=\sigma \ig (\sqrt{\gr}\Delta_{\gr} \er_m \cdot \nnr)  (e_k\cdot \nnr), \label{ODE}
\end{align}
equipped with initial data
\begin{align}
Z_k(0)=\langle Id, e_k\rangle, \quad Z_k'(0) = \langle v_0, e_k\rangle.\label{ODE data}
\end{align}
The standard ODE theory gives the the existence and uniqueness of $\xi_m$ and $w_m$ in $[0,T_\la]$ for some $T_\la>0$.  
We remark here that it is important to introduce the penalized pressure \eqref{penalized q} because we are not able to formulate a system of ODE directly from \eqref{ODE p}. This is because $\phi$ is not divergence-free in general. 

Setting $\phi = w_m$ in \eqref{ODE pre}, and since $\sigma|\sqrt{\gr}\Delta_{\gr} \er_m\cdot \nnk|_0\leq \Nf$, where $\Nf$ denotes a generic polynomial function such that $$\Nf=P(\|v_0\|_{4.5}, \|b_0\|_{4.5}),$$  then
\begin{align}
\|w_m\|_0^2 + \|\bp \xi_m\|_0^2 +\frac{1}{\la} \int_0^{T_\la} \|\divAr w_m\|_0^2+\kk \int_0^{T_\la} |w_m\cdot \nnr|_1^2 \leq  \mathcal{N}_0.
\label{ODE L^2 energy}
\end{align}
Because $\mathcal{N}_0$ is independent of $m$, 
there is a subsequence, which is still indexed by $m$, satisfying
\begin{align}
w_m\rightharpoonup  w_\la, \quad \bp \xi_m\rightharpoonup \bp \xi_\la,\quad &\text{in}\,\,L^{\infty}(0,T_\la; L^2(\Omega)),\\
\divAr w_m \rightharpoonup \divAr w_\la,\quad &\text{in}\,\,L^2(0,T_\la; L^2(\Omega)), \label{div w_m cov}\\
w_m \cdot \nnr\rightharpoonup w_\la\cdot \nnr,\quad &\text{in}\,\, L^2(0,T_\la; H^1(\Gamma)), 
\end{align}
as $m\to \infty$, 
and the following estimate holds:
\begin{align}
\|w_\la\|_0^2 + \|\bp \xi_\la\|_0^2 +\frac{1}{\la} \int_0^{T_\la} \|\divAr w_\la\|_0^2+\kk \int_0^{T_\la} |w_\la \cdot \nnr|_1^2 \leq  \mathcal{N}_0. 
\label{ODE L^2 energy eps}
\end{align}
In addition, since $q_m = -\frac{1}{\la} \divAr w_m$ and $q_\la = -\frac{1}{\la} \divAr w_\la$, \eqref{div w_m cov} implies that
\begin{equation}
q_m \rightharpoonup q_\la,\quad \text{in}\,\,L^2(0,T_\la; L^2(\Omega)).
\end{equation}

Let $Y$ be a Banach space, and we denote its dual by $Y'$. For $\Psi\in H^s(\Omega)'$ and $\Phi \in H^s(\Omega)$, the pairing between $\Psi$ and $\Phi$ is denoted by $\langle \Psi, \Phi\rangle_s$.  It follows from the ODE defining $w_m$, that $\p_t w_\la\in L^2(0,T_\la; H^{\frac{3}{2}+}(\Omega)')$,  where $H^{s+} := H^{s+\delta}$ for some $0<\delta\ll 1$. 
Let $\varphi\in L^2(0,T_\la; H^{\frac{3}{2}+}(\Omega))$ with $\int_0^{T_\la} \|\varphi\|_{\frac{3}{2}+}^2= 1$ be a test vector field. Then there holds the identity:
\begin{equation}\label{variational eq}
\begin{aligned}
\int_0^{T_\la} \langle \p_t w_\la, \varphi\rangle_{\frac{3}{2}+}
- \int_0^{T_\la} \langle q_\la, \divAr \varphi\rangle_{\frac{1}{2}+} 
&= -\int_0^{T_\la} \langle \bp^2\xi_\la, \varphi\rangle_{\frac{3}{2}+}\\
&+\sigma \int_0^{T_\la} \ig (\sqrt{\gr}\lap_{\gr} \er\cdot \nnr) (\varphi\cdot\nnr)-\kk \sum_{l=0,1} \int_0^{T_\la} \ig \cp^l (w_\la\cdot \nnr)\cp^l (\varphi\cdot \nnr).
\end{aligned}
\end{equation}
Thanks to \eqref{ODE L^2 energy eps}, we have
\begin{align}
-\int_0^{T_\la} \langle \bp^2\xi_\la, \varphi\rangle_{\frac{3}{2}+} = \int_0^{T_\la} \langle \bp\xi_\la, \bp\varphi\rangle_{\frac{1}{2}+}\leq \Nf.
\end{align}
Also, invoking $\sigma|\sqrt{\gr}\Delta_{\gr} \er\cdot \nnk|_0\leq \Nf$ and $\kk\int_0^{T_\la} |w_\la \cdot \nnr|_1^2\leq \Nf$ (inferred from \eqref{ODE L^2 energy eps}), we obtain
\begin{equation}
\text{The second line of the RHS of \eqref{variational eq}} \leq \sigma \Nf \int_0^{T_\la} \|\varphi\|_{\frac{1}{2}+}^2+\kk \Nf \int_0^{T_\la} \|\varphi\|_{\frac{3}{2}+}^2\leq (\sigma+\kk)\Nf \leq \Nf.
\end{equation}
and so 
\begin{equation}\label{RHS bound}
\text{RHS of \eqref{variational eq}} \leq \Nf. 
\end{equation}
Since $\p_t w_\la\in L^2(0,T_\la; H^{\frac{3}{2}+}(\Omega)')$, \eqref{variational eq} and \eqref{RHS bound} indicate
\begin{equation}
q_\la \in L^2(0,T_{\la}; H^{\frac{1}{2}+}(\Omega)'),\quad \text{and}\,\,\nab_{\Ark} q_\la \in L^2(0,T_{\la}; H^{-1}(\Omega)). \label{q_la space}
\end{equation}
Therefore, we have that
\begin{align}
\p_t w_{\la} - \bp^2 \xi_\la + \nab_{\Ark} q_\la = 0, \quad\text{in}\,\,\, L^2(0, T_\la; H^{-1}(\Omega)). 
\label{equation epsilon}
\end{align}


\subsubsection{The limit as $\la\rightarrow 0$}
Because $\mathcal{N}_0$ is independent of $\lambda$, the energy estimate \eqref{ODE L^2 energy eps} indicates that 
\begin{equation*}
\|w_\la\|_0^2 + \|\bp \xi_\la\|_0^2 \leq \Nf.
\end{equation*}
Thus, there exists a \textit{$\la$-independent} $T>0$, such that the sequences $\{w_\la\}$ and $\{\bp \xi_\la\}$ admit converging subsequences (still indexed by $\la$) as $\la \to 0$, i.e.,
\begin{equation}
w_\la \rightharpoonup v,\quad \bp \xi_\la \rightharpoonup \bp\eta,\quad  \text{in}\,\,L^{\infty}(0,T; L^2(\Omega)).\label{int limit}
\end{equation}
Furthermore, we must have
\begin{align*}
\frac{1}{\la} \int_0^{T} \|\divAr w_\la\|_0^2+\kk \int_0^{T} |w_\la \cdot \nnr|_1^2 \leq  \mathcal{N}_0,
\end{align*}
and thus
\begin{align}
\divAr w_\la \rightharpoonup \divAr v=0,\quad &\text{in}\,\, L^2(0,T; L^2(\Omega)),\\
w_\la\cdot\nnr \rightharpoonup v\cdot\nnr\quad  &\text{in}\,\,L^2(0,T; H^1(\Gamma)). \label{bdy limit}
\end{align}
Moreover, the following energy estimate holds:
\begin{align}
\|v\|_0^2 + \|\bp \eta\|_0^2 +\kk \int_0^T |v \cdot \nnr|_1^2 \leq  \mathcal{N}_0. \label{L^2 estimate lambda independent}
\end{align}

Our next goal is to show that $(\eta, v)$ is indeed a weak solution for \eqref{linearized}. To achieve this, we need to show $\p_t w_\la$ converges to $\p_t v$, and $\bp^2\xi_\la$ converges to $\bp^2\eta$ in $L^2(0, T; H^{\frac{3}{2}+} (\Omega)')$. In addition, we need to find $q$, in terms of the pressure function, that belongs to $L^2(0,T; H^{\frac{1}{2}+}(\Omega)')$.
First, analogous to \eqref{variational eq}, we have
\begin{equation}\label{variational eq T}
\begin{aligned}
&\int_0^{T} \langle \p_t w_\la, \varphi\rangle_{\frac{3}{2}+}
+ \int_0^{T} \frac{1}{\la}\langle \divAr w_\la, \divAr \varphi\rangle_{\frac{1}{2}+}\\
&= -\int_0^{T} \langle \bp^2\xi_\la, \varphi\rangle_{\frac{3}{2}+}\\
&+\sigma \int_0^{T} \ig (\sqrt{\gr}\lap_{\gr} \er\cdot \nnr) (\varphi\cdot\nnr)-\kk \sum_{l=0,1} \int_0^{T} \ig \cp^l (w_\la\cdot \nnr)\cp^l (\varphi\cdot \nnr).
\end{aligned}
\end{equation}
Second, let 
\begin{equation}
q:= \lim_{\la\to 0} q_\la = -\lim_{\la\to 0} \frac{1}{\la}\divAr w_\la, \label{q def}
\end{equation}
where the limit $$-\lim_{\la\to 0} \frac{1}{\la}\divAr w_\la$$ exists in $L^2(0,T; H^{\frac{1}{2}+}(\Omega)')$ owing to  $\p_t w_\la \in L^2(0,T;H^{\frac{3}{2}+}(\Omega)')$, and the RHS of \eqref{variational eq T} $\leq  \Nf$, which is independent of $\la$. We then have $q \in L^2(0,T; H^{\frac{1}{2}+}(\Omega)')$ and $\nab_{\Ark} q \in L^2(0,T; H^{-1}(\Omega))$, analogous to \eqref{q_la space}. Invoking \eqref{int limit}, this yields
\begin{equation}
\p_t w_\la \rightharpoonup \p_t v,\quad \text{in}\,\,L^2(0,T;H^{\frac{3}{2}+}(\Omega)'),
\end{equation}
as well as
\begin{equation}
\lim_{\la\to 0} \int_0^T \|\p_t w_{\la}\|_{H^{\frac{3}{2}+}(\Omega)'}^2\leq \NN_0.
\label{v_t -1/2+} 
\end{equation}
On the other hand, by employing similar arguments on
\begin{equation}\label{variational eq T'}
\begin{aligned}
\int_0^{T} \langle \bp^2\xi_\la, \varphi\rangle_{\frac{3}{2}+}=-\int_0^{T} \langle \p_t w_\la, \varphi\rangle_{\frac{3}{2}+}
 &-\int_0^{T} \frac{1}{\la}\langle \divAr w_\la, \divAr \varphi\rangle_{\frac{1}{2}+}\\
&+\sigma \int_0^{T} \ig (\sqrt{\gr}\lap_{\gr} \er\cdot \nnr) (\varphi\cdot\nnr)-\kk \sum_{l=0,1} \int_0^{T} \ig \cp^l (w_\la\cdot \nnr)\cp^l (\varphi\cdot \nnr), 
\end{aligned}
\end{equation}
we have
\begin{equation}
\bp^2\xi_\la\rightharpoonup \bp^2 \eta, \quad \text{in}\,\,L^2(0,T;H^{\frac{3}{2}+}(\Omega)'),
\end{equation}
and
\begin{equation}\label{7.36}
\lim_{\la\to 0} \int_0^T \|\bp^2 \xi_{\la}\|_{H^{\frac{3}{2}+}(\Omega)'}^2\leq \NN_0.
\end{equation}

Lastly, we infer from \eqref{variational eq T}, and the convergence of $\p_t w_\la$, $\bp^2 \xi_\la$, and $q_\la$, that
 \begin{equation}\label{var}
\begin{aligned}
\int_0^{T} \langle \p_t v, \varphi\rangle_{\frac{3}{2}+}+\int_0^{T} \langle \bp^2\eta, \varphi\rangle_{\frac{3}{2}+}
- \int_0^{T} \langle q, \divAr \varphi\rangle_{\frac{1}{2}+}\\
= \sigma \int_0^{T} \ig (\sqrt{\gr}\lap_{\gr} \er\cdot \nnr) (\varphi\cdot\nnr)-\kk \sum_{l=0,1} \int_0^{T} \ig \cp^l (v\cdot \nnr)\cp^l (\varphi\cdot \nnr).
\end{aligned}
\end{equation}
This implies that $(\eta, v, q)$ verifies
\begin{align}
\p_tv-\bp^2\eta+\pArk q=0,\quad\text{and}\,\,\, \divAr v=0,\quad \text{in}\,\,\,L^2(0,T;H^{-1}(\Omega)),
\end{align}
and so we've shown that $\eta, v$ is indeed a weak solution for \eqref{linearized}. 
\begin{rmk}
We find that it is easier to construct $q$ directly by \eqref{q def} instead of employing the representation argument used in Section 8 of \cite{coutand2007LWP}. 
\end{rmk}

Finally, we consider the difference between \eqref{var} with $v$ and $v'$, respectively, i.e., 
\begin{align}
\begin{aligned}
\int_0^T \langle \p_t (v-v'), \varphi\rangle_{\frac{3}{2}+}+\int_0^T \langle \bp^2(\eta-\eta'), \varphi\rangle_{\frac{3}{2}+}
+\kk \sum_{l=0,1} \int_0^T \ig \cp^l ((v-v')\cdot\nnr)\cp^l( \varphi\cdot \nnr) \\
- \int_0^T \langle (q-q'), \Ark^{\mu\alpha} \p_\mu \varphi\rangle_{\frac{1}{2}+} 
= 0. 
\end{aligned}
\end{align} 
where $(\eta', v',q' )$ is assumed to be another solution with the initial data. The uniqueness of the weak solution follows from setting $\varphi = v-v'$. 

\subsection{$H^1$-Regularity estimates of $v$, $\bp \eta$ and $q$ }
We shall show that $v$, $\bp \eta$ and $q$ are in fact $L^2(0,T;H^1(\Omega))$. 
Let 
\begin{equation}
e(t):= \int_0^t \|\eta\|_1^2+\|v\|_1^2+\|\bp \eta\|_1^2\dt,\quad t\in [0,T].
\end{equation}
Our goal is to show 
\begin{equation}
e(T) \leq \frac{1}{\kk}\Nf,  \label{estimate e(t)}
\end{equation}
for some $T=T(\Nf, \kk)$. 
It suffices to consider $\int_0^T \|v\|_1^2$ and $\int_0^T\|\bp\eta\|_1^2$ only 
since 
\begin{equation*}
\int_0^T \|\eta\|_1^2 \leq \int_0^T\Big( \|\eta_0\|_1^2 + \int_0^t \|v\|_1^2\dt\Big)\dt. 
\end{equation*}
Thanks to Lemma \ref{hodge}(2), it suffices for us to control 
$$\int_0^T \|\di v \|_0^2,\quad \int_0^T \|\curl v \|_0^2, \quad \int_0^T|v^3|_{0.5}^2,$$
 as well as 
 $$\int_0^T \|\di \bp \eta \|_0^2,\quad \int_0^T\|\curl \bp \eta \|_0^2,\quad \int_0^T|\bp \eta^3|_{0.5}^2,$$
 in order to control  $\int_0^T \|v\|_1^2$ and $\int_0^T\|\bp\eta\|_1^2$.
 \subsubsection{Control of the divergence and curl}
The estimates we need here are essentially the same as those in Section \ref{divcurlkk} but without considering the time differentiated quantities. First, since \eqref{compars delta and A} in Lemma \ref{small quantities} remains true with $\Ak$ replaced by $\Ark$, then
\begin{align}
\int_0^T \|\di v\|_0^2 \leq \int_0^T\|(\Ark^{\mu\alpha}-\delta^{\mu\alpha}) \p_\mu v_\alpha\|_0^2\leq \eps\int_0^T \|\p v\|_0^2 \leq \eps e(T). 
\end{align}
Second, because $\divAr\bp \eta$ verifies the evolution equation
\begin{align}
\p_t \divAr (\bp\eta) = [\divAr,\bp] v +(\p_t \Ark^{\mu\alpha})\p_\mu (\bp \eta_\alpha).
\label{div b eq}
\end{align} 
So, one needs to bound 
$$
\itt \|\text{RHS of}\,\, \eqref{div b eq}\|_0^2\dt
$$
 in order to control $\int_0^T\| \divAr (\bp\eta) \|_0^2$. 
 We have
\begin{align}
\itt \|\p_t \Ark^{\mu\alpha} \p_\mu (\bp \eta)\|_0^2\dt \leq& \itt\|\p_t \Ark \|_{L^\infty}^2 \|\p (\bp \eta)\|_0^2\dt \\
\leq& \itt \NN_0\|\p (\bp \eta)\|_0^2\dt \leq T\NN_0e(T). 
\end{align} 
Moreover, by writing $[\divAr,\bp] v  = \Ark^{\mu\alpha} ((\p_\mu b_0) \cdot \p)v_\alpha-(\bp \Ark^{\mu\alpha})\p_\mu v_\alpha$, one gets
\begin{align}
\itt \|[\divAr\bp] v\|_0^2 \leq \itt \NN_0 \|\p v\|_0^2\dt \leq T\NN_0e(T). 
\end{align}
Thus, 
\begin{align}
\int_0^T \|\divAr (\bp\eta)\|_0^2 \leq T\NN_0 e(T).
\end{align}
In addition, since 
\begin{align*}
\|\di \bp\eta\|_0^2 \leq \|\divAr \bp\eta\|_0^2 +\|\Ark-\delta\|_{L^\infty}^2\|\p\bp\eta\|_0^2,
 \end{align*}
invoking \eqref{compars delta and A}, we conclude that 
\begin{align}
 \int_0^T \|\di \bp\eta\|_0^2 \leq \eps e(T)+T\NN_0e(T).
\end{align}

Third, the evolution equation satisfied by $\curlAr v$ and $\curlAr\bp \eta$ reads
\begin{align}
\p_t (\curlAr v)_\alpha - \bp \curlAr(\bp \eta)_\alpha = [\curlAr, \bp] (\bp \eta)_\alpha+\curl_{\p_t\Ark} v_\alpha,
\end{align}
and this yields the following $L^2(0,T;L^2(\Omega))$ energy identity after testing with $\curlAr v$ and integrating in space and time:
\begin{align*}
 \|\curlAr v\|_0^2+\|\curlAr \bp \eta\|_0^2   \lesssim& \Nf+\int_0^t\|[\bp, \curlAr] \bp \eta\|_{0}^2+\int_0^t \|\curl_{\p_t\Ark}  v\|_0^2 \nonumber\\ 
 &+  \int_0^t\|[\bp, \curlAr] \bp v\|_{0}^2+\int_0^t \| \curl_{\p_t\Ark} \bp \eta\|_0^2.
\end{align*}
Integrating in time one more time, we achieve
\begin{align*}
 \int_0^T\Big(\|\curlAr v\|_0^2+\|\curlAr \bp \eta\|_0^2 \Big) \lesssim& T\Nf+ \itt \Big(\|[\bp, \curlAr] \bp \eta\|_{0}^2+ \|\curl_{\p_t\Ark}  v\|_0^2 \Big)\dt\nonumber\\ 
 &+  \itt\Big(\|[\bp, \curlAr] \bp v\|_{0}^2+\| \curl_{\p_t\Ark} \bp \eta\|_0^2\Big)\dt.
\end{align*}
It suffices to control the first two terms on the RHS since the third and fourth terms can then be controlled by an analogous method with the same bound.

For the first term on the RHS, since one can express
$$
[\bp, \curlAr] \bp \eta_\alpha =  \epsilon_{\alpha \beta \gamma}(\bp\Ark^{\nu\beta})\p_\nu \eta^\gamma- \epsilon_{\alpha \beta \gamma}\Ark^{\nu\beta} (\p_\nu b_0\cdot \p) \eta^\gamma
$$
and so 
\begin{equation}
\itt\|[\bp, \curlAr] \bp \eta_\alpha\|_0\dt \lesssim T\NN_0e(T).
\end{equation}
 Similarly, 
\begin{equation}
\|[\bp, \curlAr] \bp v_\alpha\|_0 \leq T\NN_0e(T).
\end{equation} 
 In addition, for the second term, writing $\curl_{\p_t\Ar} v=\epsilon_{\alpha \beta \gamma}(\p_t\Ar^{\nu\beta})\p_\nu v^\gamma$, one obtains
\begin{equation}
\itt \|\curl_{\p_t\Ark} v\|_0\dt \leq  T\NN_0e(T).
\end{equation}
Summing these up, we obtain
\begin{align}
\itt\Big(\|\curlAr v\|_0^2 + \|\curlAr \bp \eta\|_0^2\Big)\dt \leq  T\NN_0e(T). 
\end{align}

  \subsubsection{Control of the boundary terms}
  \label{sect boundary term}
First we state some supplementary results which will come in handy when treating the boundary estimates. The following inequality is a direct consequence of \eqref{product}. Let $f\in H^{0.5}(\p\Omega)$ and $g$ be a smooth function. Then
\begin{align}
|fg|_{0.5} \lesssim |f|_{0.5} |g|_{1+}.
\label{strategy normal'}
\end{align}  
Also, we remark here that \eqref{compars N and nn}, \eqref{comars N and nn in H^3} remain true by replacing $\nnk$ by $\nnr$. 
  
  \noindent\textbf{Control of $\int_0^T |v^3|_{0.5}^2$:}
 It suffices to control $\int_0^T |v\cdot \nnr|_{0.5}^2$ since
 \begin{equation}
 \int_0^T |v^3|_{0.5}^2 \leq \int_0^T |v\cdot \nnr|_{0.5}^2+\int_0^T |v\cdot (\nnr-N)|_{0.5}^2,
 \end{equation}
 where, after invoking \eqref{strategy normal'} and the trace lemma, we have
 \begin{align}
 \int_0^T |v\cdot (\nnr-N)|_{0.5}^2 \leq  \int_0^T |v|_{0.5}^2 |\nnr-N|_{1+}^2 \lesssim \eps e(T).  
 \end{align}
 Moreover, the control of $\int_0^T |v\cdot \nnr|_{0.5}^2$ is a direct consequence of \eqref{L^2 estimate lambda independent},
\begin{align}
\int_0^T |v\cdot \nnr|_{0.5}^2 \lesssim \frac{1}{\kk} \int_0^T|v\cdot\nnr|_1^2 \leq \frac{\NN_0}{\kk}. 
\label{v^3 0.5}
\end{align}
 \noindent\textbf{Control of $\int_0^T |\bp \eta^3|_{0.5}^2$:} 
 
 Similar to the control $\int_0^T |v^3|_{0.5}^2$, it suffices to bound $\int_0^T |\bp (\eta\cdot \nnr)|_{0.5}^2$ only. 
 
 Since $\bp|_{\Gamma}=b_0\cdot \TP$ and $\TP (\eta\cdot \nnr)|_{t=0}=\TP \eta^3|_{t=0}=0$, we have 
 \begin{equation}
\bp (\eta\cdot \nnr) = \int_0^T \p_t\bp (\eta\cdot\nnr)\dt.
 \end{equation} 
 Hence,
 \begin{align*}
 \int_0^T |\bp (\eta\cdot \nnr)|_{0.5}^2 \leq \int_0^T \left| \int_0^t \p_t\bp (\eta\cdot\nnr)\right|^2_{0.5}\dt\lesssim \itt |\p_t\bp (\eta\cdot \nnr)|_{0.5}^2\dt,
 \end{align*}
 by Jensen's inequality. Here, the term on the RHS is equal to
 \begin{align*}
 \itt |\bp (v \cdot\nnr)|_{0.5}^2\dt + \itt |\bp (\eta\cdot \p_t\nnr)|_{0.5}^2=I+II.  
 \end{align*}
 Since $\p_t \nnr = Q(\TP \erk)\TP \mathring{\vk}\cdot \nnr$, invoking \eqref{strategy normal'} and the trace lemma, we have $II\leq T\NN_0e(T)$. 
Next, invoking \eqref{strategy normal'}, we have
\begin{align*}
I \lesssim ||b_0||_{0.5} \itt |v\cdot \nnr|_2^2\dt. 
\end{align*}
By employing the boundary condition we obtain the following elliptic equation verified by $v\cdot \nnr$ on $\Gamma$:
\begin{align}
\TL (v\cdot \nnr) = \frac{1}{\kk}\Big((v\cdot \nnr) +\sqrt{\grk} q +\sigma\sqrt{\gr}\Delta_{\gr} \er\cdot \nnr\Big). 
\label{v^3 ell eq}
\end{align}
 By the virtual of the elliptic estimate, we have
\begin{align}
\itt |v\cdot \nnr |_{2+}^2\dt \leq \frac{C}{\kk^2}\itt \Big(|v\cdot \nnr|_{0+}^2+|\sqrt{\grk} q|_{0+}^2 +\sigma|\sqrt{\gr}\Delta_{\gr} \er^{3}|_{0+}^2\Big)\dt,
\label{bdy ell for v}
\end{align} 
where $C$ denotes a generic constant. 
 It is clear that the third term can be controlled by $\frac{1}{\kk^2} T\NN_0$, and the first term is bounded by $\frac{1}{\kk^3}T\NN_0$ in view of \eqref{v^3 0.5}. Therefore, 
\begin{equation}
\itt |v\cdot \nnr|_{2+}^2 \leq \frac{1}{\kk^2} T\NN_0 +\frac{1}{\kk^3} T\NN_0 + \frac{1}{\kk}\itt \NN_0 |q|_{0+}^2\dt.
\label{control of v^3 in 2+}
\end{equation}
Now, we impose the bootstrap assumption
\begin{align}
\int_0^T \|q\|_{H^{\frac{1}{2}+}(\Omega)}^2 \leq \frac{\mathcal{N}_0}{\kk}.
\label{q 1/2+}
\end{align}
In light of this and the trace lemma, the second term on the RHS of \eqref{control of v^3 in 2+} is bounded by $\frac{1}{\kk^2}T\Nf$. Hence,
\begin{align}
\itt |v\cdot \nnr|_{2+}^2 \leq \left(\frac{1}{\kk^2}+\frac{1}{\kk^3}\right)T\Nf.
\label{control of v^3 in 2+ done}
\end{align}

In summary, we have
 \begin{equation}
 e(T) \leq \frac{1}{\kk}\Nf+\left(\frac{1}{\kk^2}+\frac{1}{\kk^3}\right)T\Nf+ \eps e(T) + T\NN_0 e(T),
 \end{equation}
 and this implies \eqref{estimate e(t)} if $T$ is chosen sufficiently small, say $T=\min(\frac{\eps}{\NN_0}, \kk^3)$. 

\noindent\textbf{Closing the bootstrap argument:} We test \eqref{linearized}  with $\varphi\in L^\infty(0,T; H^{\frac{1}{2}+}(\Omega))$, $\|\varphi\|_{\frac{1}{2}+}=1$, and integrate in time, to get 
  \begin{equation}\label{var 1/2+}
\begin{aligned}
\int_0^{T} \langle \p_t v, \varphi\rangle_{\frac{1}{2}+}+\int_0^{T} \langle \bp^2\eta, \varphi\rangle_{\frac{1}{2}+}
- \int_0^{T} \langle q, \divAr \varphi\rangle_{\frac{1}{2}+}\\
= \sigma \int_0^{T} \ig (\sqrt{\gr}\lap_{\gr} \er\cdot \nnr) (\varphi\cdot\nnr)-\kk \sum_{l=0,1} \int_0^{T} \ig \cp^l (v\cdot \nnr)\cp^l (\varphi\cdot \nnr).
\end{aligned}
\end{equation}
 The first term on the RHS is $\leq \sigma \Nf\leq \Nf$. For the second term on the RHS,
 we integrate $\TP$ by parts and obtain 
 \begin{align*}
 -\kk \sum_{l=0,1} \int_0^{T} \ig \cp^l (v\cdot \nnr)\cp^l (\varphi\cdot \nnr)=\kk \sum_{l=0,1,2} \int_0^{T} \ig \cp^l (v\cdot \nnr)(\varphi\cdot \nnr)\\
 \leq \kk \Nf \int_0^T |v\cdot\nnr|_2
 \end{align*}
It suffices to control $\kk\int_0^T |v\cdot\nnr|_{2+}$. Invoking \eqref{v^3 ell eq} and we have, by the elliptic estimate, that
\begin{equation}
\kk\int_0^T |v\cdot \nnr |_{2+} \leq \int_0^T |v\cdot \nnr|_{0+}+\int_0^T |\sqrt{\grk} q|_{0+} +\int_0^T \sigma|\sqrt{\gr}\Delta_{\gr} \er^{3}|_{0+}.
\end{equation}
The third term is bounded by $T\Nf$. We square the second term and then apply Jensen's inequality to obtain
\begin{equation}
\left(\int_0^T |\sqrt{\grk} q|_{0+} \right)^2 \leq T\Nf \int_0^T \|q\|_{\frac{1}{2}+}^2\leq \Nf^2,
\end{equation}
where the last inequality follows from $T\leq \kk^3\leq \kk$, and so
\begin{equation}
\int_0^T |\sqrt{\grk} q|_{0+}  \leq \Nf. 
\end{equation} 
Similarly, 
\begin{equation}
\left(\int_0^T |v\cdot\nnr|_{0+}\right)^2 \leq T\int_0^T |v\cdot \nnr|_{0.5+}^2 \leq \frac{T}{\kk} \Nf, 
\end{equation}
after invoking \eqref{v^3 0.5}, and so
\begin{equation}
\int_0^T |v\cdot\nnr|_{0+} \leq \Nf.
\end{equation}
Summing up, the RHS of \eqref{var 1/2+} is bounded by $\Nf$. 
Therefore, by employing arguments parallel to \eqref{variational eq T}-\eqref{7.36}, we have
\begin{equation}\label{v_t H1/2 dual}
\lim_{\la\to 0} \left(\int_0^T \|\p_t v\|_{H^{\frac{1}{2}+}(\Omega)'}^2+\int_0^T \|\bp^2\eta \|_{H^{\frac{1}{2}+}(\Omega)'}^2 \right)\leq \NN_0.
\end{equation}
Moreover, by employing the Lagrange multiplier lemma (i.e., \cite[Lemma 7.4]{coutand2007LWP}), 
\eqref{var 1/2+} implies that
\begin{align}
\int_0^T \|q\|_{H^{\frac{1}{2}+}(\Omega)}^2 \leq \mathcal{N}_0,
\end{align}
which improves the bootstrap assumption \eqref{q 1/2+} whenever $\kk$ is fixed to be sufficiently small. 
\begin{rmk}
We need to modify the Lagrange multiplier lemma slightly. Since we need our $q\in H^{\frac{1}{2}+
}(\Omega)$, we need to consider the linear functional 
$
\langle \divAr \varphi, q\rangle_{\frac{1}{2}+}
$
defined on $X(t)$, 
where $X(t)=\{\varphi: \divAr \varphi \in H^{\frac{1}{2}+}(\Omega)'\}$. 
\end{rmk}
\subsubsection{The strong solution for the linearized equations}
Since $v,\bp \eta \in L^2(0,T;H^1(\Omega))$ and so $\p_t v,\bp^2\eta \in L^2(0,T; L^2(\Omega))$, we can now adapt Lemma \ref{H1elliptic} to bound $q$ in $L^2(0,T;H^1(\Omega))$. 

Therefore, we have obtained a strong solution for the linearized $\kk$-problem \eqref{linearized}. This allows us to further boost the regularity of the linearized solution to $H^{4.5}(\Omega)$ via classical methods in the upcoming section. Then we can achieve a solution for the nonlinear $\kk$-problem by approximating it by a sequence of linearized solutions.

\section{Existence for the nonlinear approximate $\kk$-problem}\label{sect exist kk-prob}

 We aim to construct a solution for the nonlinear $\kk$-problem for fixed $0< \kk\ll 1$.  Let $(\eta_{(0)}, v_{(0)})=(Id, 0)$. For each $m\geq 0$, Let $(\eta_{(m)}, v_{(m)}, q_{(m)})$ be the solution for 
\begin{equation}\label{linear kk with m}
\begin{cases}
\p_t\eta_{(m)}=v_{(m)}~~~& \text{in}~[0,T]\times\Omega;\\
\p_tv_{(m)}-\bp^2\eta_{(m)}+\nab_{\Ak_{(m-1)}} q_{(m)}=0~~~& \text{in}~[0,T]\times\Omega;\\
\di_{\Ak_{(m-1)}} v_{(m)}=0,\quad \dive b_0=0 &\text{in}~[0,T]\times\Omega;\\
v_{(m)}^3=b_0^3=0~~~&\text{on}~\Gamma_0;\\
\Ak_{(m-1)}^{3\alpha}q_{(m)} = -\sigma \sqrt{g_{(m-1)}}(\Delta_{g_{(m-1)}} \eta_{(m-1)}\cdot \nnk_{(m-1)} )\nnk_{(m-1)}^\alpha +\kk(1-\TL) (v_{(m)}\cdot \nnk_{(m-1)})\nnk_{(m-1)}^\alpha~~~&\text{on}~\Gamma;\\
(\eta_{(m)},v_{(m)})=(Id,v_0)~~~&\text{on}~\{t=0\}{\times}\overline{\Omega}.
\end{cases}
\end{equation}  
Here, the (linearized) coefficients, e.g.,  $\Ak_{(m-1)}, g_{(m-1)}, \nnk_{(m-1)}$ are determined by $(\eta_{(m-1)}, v_{(m-1)})$. 

The goal is to prove that the sequence $\{(\eta_{\mm}, v_{\mm})\}_{m\geq 0}$ strongly converges (and so does $q_{(m)}$), and the limit verifies the nonlinear approximate $\kk$-problem. This is done by applying Picard's iteration. 

\subsection{A priori estimate of the linearized approximate problem}
We first establish the $H^{4.5}$-energy estimate for $(\eta_{\mm}, v_{\mm})$, and then this estimate can be carried over to the difference between two successive systems \eqref{linear kk with m} which yields the convergence of $(\eta_{\mm}, v_{\mm})$ in $H^{3.5}$ as $m\to \infty$.  Let $m\geq 1$ be fixed and assume the solutions $(\eta_{(l)}, v_{(l)})$ are known for all $l\leq m$. For the sake of clean notations, we will denote $(\eta_{(m)}, v_{(m)})$ by $(\eta, v)$ and $(\eta_{(m-1)}, v_{(m-1)})$ by $(\er, \mathring{v})$ if no confusion is raised.  Let $\Ak_{(m-1)}:=\Ark, g_{(m-1)}:=\gr, \nnk_{(m-1)}:=\nnr$.  Then  \eqref{linear kk with m} reduces exactly to the linearized system \eqref{linearized}.

\begin{prop}\label{linearkk}
For each fixed $\kk>0$,
there exists some $T_\kk>0$ such that the solution $(\eta,v)$ for \eqref{linearized} satisfies
\begin{equation}\label{energylkk}
\sup_{0\leq t\leq T_\kk}\Er(t)\leq \mathcal{C},
\end{equation}
where $\mathcal{C}$ is a constant depends on $\|v_0\|_{4.5}$, $\|b_0\|_{4.5}, |v_0|_5$, provided that
\begin{align}
\label{smallark} \|\Jrk(t)-1\|_{3.5}+\|Id-\Ark(t)\|_{3.5}+\|Id-\Ark^T\Ark(t)\|_{3.5}\leq \eps.
\end{align} 
holds for all $t\in[0,T_\kk]$. 
Here the energy functional $\Er$ of \eqref{linearized} is defined to be
\begin{equation}\label{Ekk linear}
\Er(t)=\Er^{(1)}(t)+\Er^{(2)}(t),
\end{equation}
where
\begin{equation}
\begin{aligned}
\Er^{(1)}(t)&:=\left\|\eta\right\|_{4.5}^2+\left\|v\right\|_{4.5}^2+\left\|\p_t v\right\|_{3.5}^2+\left\|\p_t^2 v\right\|_{2.5}^2+\left\|\p_t^3 v\right\|_{1.5}^2+\left\|\p_t^4 v\right\|_{0}^2\\
&+\left\|\bp\eta\right\|_{4.5}^2+\left\|\p_t \bp\eta\right\|_{3.5}^2+\left\|\p_t^2 \bp\eta\right\|_{2.5}^2+\left\|\p_t^3 \bp\eta\right\|_{1.5}^2+\left\|\p_t^4 \bp\eta\right\|_{0}^2\\
\Er^{(2)}(t)&:=\frac{\kk}{\sigma}\int_0^T\left|\p_t^4 v\cdot\nnr\right|_1^2\dt+\kk\bigg(\int_0^T \|\p_t^4 v\|_{1.5}^2+\int_0^T \|\p_t^4 \bp \eta\|_{1.5}^2\bigg).\nonumber
\end{aligned}
\end{equation}
\end{prop}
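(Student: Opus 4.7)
The plan is to adapt the strategy used to prove Proposition~\ref{nonlinearkk} to the linearized setting, taking advantage of the fact that here no $\kk$-uniform bounds are required, so the artificial viscosity in \eqref{linearized} can be treated as a genuine regularizer rather than a perturbation to be carefully balanced. I will proceed through the same four stages: (i) elliptic bounds on the pressure $q$ and its first three time derivatives; (ii) div-curl estimates for $v$ and $\bp\eta$ and their time derivatives of order $\le 3$; (iii) boundary control of the normal components via Eulerian projection; and (iv) the top-order $L^2$ energy estimate for $\p_t^4 v$ and $\p_t^4 \bp\eta$, which simultaneously yields both the boundary and the interior pieces of $\Er^{(2)}$. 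A key structural simplification is that $\Ark$, $\gr$, $\nnr$, and $\sqrt{\gr}\Delta_{\gr}\er$ are all determined by the \emph{given} iterate $\er$, so their time derivatives of any order are immediately available from the inductive hypothesis, eliminating the need for the delicate cancellation arguments that dominated Sections~\ref{tgkk}--\ref{sect E3kk}.

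Stages (i)--(iii) follow essentially verbatim from the corresponding parts of Section~\ref{sect nonlinearkk}. The elliptic equation for $q$ derived by applying $\divAr$ to the momentum equation has the same Neumann structure as in \eqref{qeq}--\eqref{qbdry}, and a cascade of time-differentiations combined with Lemma~\ref{H1elliptic} produces $\|q\|_{4.5}+\|q_t\|_{3.5}+\|q_{tt}\|_{2.5}+\|q_{ttt}\|_1\lesssim P(\Er)$; here the $\kk$-dependence carried by the Dirichlet datum $\Ark^{3\alpha}q=\cdots+\kk(1-\TL)(v\cdot\nnr)\nnr^\alpha$ is harmless since $\kk$ is fixed. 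For (ii), the transport-type evolution equations $\p_t\divAr(\bp\eta)=[\divAr,\bp]v+\dive_{\p_t\ark}\bp\eta$ and $\p_t\curlAr v-\bp\curlAr(\bp\eta)=\curl_{\p_t\Ark}v+[\curlAr,\bp]\bp\eta$ yield the div-curl bounds in the exact same way as in Section~\ref{divcurlkk}. For (iii), the normal-trace control of $\TP\p_t^k v\cdot N$ and $\TP\p_t^k \bp\eta\cdot N$ follows from the identity \eqref{pi3} comparing $X^3$ with $(\Pi X)^3$, with the positive tangential boundary terms $|\TP(\Pi\p_t^k v)|_0^2$ being recovered from the tangential energy estimates in stage (iv) below.

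The core of the proof is stage (iv). Testing the $\p_t^4$-differentiated momentum equation against $\p_t^4 v$ and integrating $\bp$ by parts produces $\tfrac{1}{2}\tfrac{d}{dt}\{\|\p_t^4 v\|_0^2+\|\p_t^4 \bp\eta\|_0^2\}$ plus a pressure term. Integrating $\p_\mu$ by parts in the pressure term and exploiting $\p_t^4\divAr v=0$ reduces its interior contribution to a commutator involving at most $\p_t^3 q$ paired with $\p_t \Ark$, which is controlled by stage (i). Substitution of the boundary condition \eqref{BC1} in the surface integral produces: (a) surface-tension terms driven by the given $\er$, bounded directly by $\PP_0+\int_0^T\PP$; and (b) the viscous contribution $-\kk\int_\Gamma \p_t^4 v_\alpha\,\p_t^4[(1-\TL)(v\cdot\nnr)\nnr^\alpha]$. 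After one tangential integration by parts, the principal part of (b) becomes the positive energy $\kk\tfrac{d}{dt}|\p_t^4 v\cdot\nnr|_1^2$ whose time integral is exactly the boundary piece of $\Er^{(2)}$. The interior weighted $H^{1.5}$-piece of $\Er^{(2)}$ is then recovered by applying the Hodge decomposition (Lemma~\ref{hodge} (1)) to $\p_t^4 v$ and $\p_t^4\bp\eta$: the normal trace $|\TP\p_t^4 v\cdot N|_0$ is dominated, up to $\eps$-small errors involving $N-\nnr$, by the boundary bound just obtained, while the divergence and curl of $\p_t^4 v$ and $\p_t^4 \bp\eta$ are handled analogously to stage (ii) through their evolution equations.

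The principal obstacle will be the boundary error terms in (b) in which tangential derivatives land on $\nnr$, typified by $\kk\int_\Gamma (\TP\p_t^4 v\cdot\nnr)(v\cdot \p_t^4 \TP\nnr)$. Since $\er$ enjoys only the regularity inherited from the previous Picard iterate, a naive Cauchy--Schwarz would lose derivatives; however, as $\kk$ is fixed and $\kk$-dependent constants are permitted, such terms can be absorbed into $\kk\int_0^T\|\p_t^4 v\|_{1.5}^2$ with prefactor $\eps$ plus a time-integrated polynomial in the iterate's norms via Sobolev embedding and Lemma~\ref{tgsmooth}. Assembling all contributions produces a Gronwall-type inequality of the form $\Er(T)\le \mathcal{C}+C(\eps)\Er(T)+\int_0^T P(\Er(t))\,dt$, which, after choosing $\eps$ small and applying the nonlinear Gronwall lemma, yields \eqref{energylkk} on a time interval $[0,T_\kk]$ whose length depends on $\kk$ and the initial data.
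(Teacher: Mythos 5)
Your stages (i), (ii) and (iv) coincide with the paper's argument (Sections 7.1.1 and 7.1.3): the pressure elliptic estimates, the div-curl reduction, and the $\p_t^4$-tangential estimate producing $\int_0^T|\p_t^4 v\cdot\nnr|_1^2$ are all carried out exactly as you describe. The problem is stage (iii). You propose to control $|\TP\p_t^k v\cdot N|_{3-k}$ and $|\TP\p_t^k\bp\eta\cdot N|_{3-k}$ via the projection identity \eqref{pi3}, ``with the positive tangential boundary terms $|\TP(\Pi\p_t^k v)|_0^2$ being recovered from the tangential energy estimates in stage (iv)''. But stage (iv) consists only of the $\p_t^4$-differentiated estimate, which produces $|\TP(\Pi\p_t^3 v)|_0^2$ and nothing else. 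The quantities $|\TP(\Pi\TP\p_t^2 v)|_0^2$, $|\TP(\Pi\TP^2\p_t v)|_0^2$, $|\TP(\Pi\TP^3 v)|_0^2$ and $|\TP(\Pi\TP^3\bp\eta)|_0^2$, which the projection route needs for $k=0,1,2$ and for the magnetic field, come only from the $\TP\p_t^3$, $\TP^2\p_t^2$, $\TP^3\p_t$ and $\TP^3\bp$ estimates (cf.\ \eqref{tgt3}, \eqref{tgt5}--\eqref{tgt7}), which your plan does not perform. Worse, closing those mixed tangential estimates requires the $\sqrt{\kk}$-weighted $H^5(\Gamma)$ bounds of Lemma \ref{super control} (and hence the extra mollified-data regularity of $\MM_0$), none of which you have set up for the linearized system. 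As written, the boundary estimates for $k\le 2$ are simply not closed, and note that $\Er^{(1)}$ in the statement deliberately contains no such boundary terms, so they cannot be absorbed into the energy either.

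The paper closes this gap by a different and much cheaper mechanism, available precisely because $\kk^{-1}$-dependent constants are allowed: the boundary condition is read as an elliptic equation on $\Gamma$, $\TL(v\cdot\nnr)=v\cdot\nnr-\kk^{-1}\bigl(\sqrt{\grk}\,q+\sigma\sqrt{\gr}\Delta_{\gr}\er\cdot\nnr\bigr)$ (equation \eqref{ellll}), so that $|v\cdot\nnr|_4$, $|\p_t(v\cdot\nnr)|_3$, $|\p_t^2(v\cdot\nnr)|_2$ follow directly from the boundary elliptic estimate, the trace lemma, and \eqref{ellq}; the magnetic terms reduce to these after writing $\bp(\eta\cdot\nnr)=\int_0^T\p_t\bp(\eta\cdot\nnr)$ and using \eqref{lkk2} to absorb extra tangential derivatives on mollified quantities; and only $|\p_t^3 v^3|_1$ is handled through $\Er^{(2)}$. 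You should replace your stage (iii) with this boundary-elliptic argument (or, alternatively, add all four mixed tangential estimates together with a linearized analogue of Lemma \ref{super control}, which would make the proof substantially longer than necessary).
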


Thanks to the Gronwall's inequality, \eqref{energylkk} is a direct consequence of 
\begin{align}
\sup_{0\leq t\leq T_\kk}\Er(t) \lesssim_{\kk^{-1}} \mathcal{C}(\|v_0\|_{4.5}, \|b_0\|_{4.5}, |v_0|_5) +C(\eps)\sup_{0\leq t\leq T_\kk}\Er(t)+(\sup_{0\leq t\leq T_\kk}\PP)\int_0^{T_{\kk}} \PP,
\label{ronwall'}
\end{align}
where $\PP = P(\Er(t), \|\vr\|_{4.5}, \|\bp \er\|_{4.5} )$ (after a slight abuse of notations). Also, since $\kk$ is fixed, we will drop the subscript $\kk$ and denote $T_\kk=T$ for the sake of clean notations. Similar to \eqref{Gronwall} we shall assume that $\sup_{0\leq t\leq T}\Er(t) =\Er(T)$, and this allows us to drop $\sup\sup_{0\leq t\leq T_\kk}$ in \eqref{ronwall'}. In other words, we only need to show
\begin{align}
\Er(T) \lesssim_{\kk^{-1}} \PP_0+C(\eps)\Er(T)+\PP\int_0^T \PP,
\label{Gronwall'}
\end{align}
where $\PP_0 = \PP(\Er(0), \|q(0)\|_{4.5}, \|q_t(0)\|_{3.5}, \|q_{tt}(0)\|_{2.5})$. Also, it suffices to put $\PP_0$ on the RHS of \eqref{Gronwall'} since \eqref{removing extra regularity} allows us to control $\PP_0$ by $\mathcal{C}$. 

\begin{rmk}It can be seen that $\Er(t)$ constructed above is significantly simpler than $E_\kk(t)$ given in \eqref{Ekk}. The main reason is that the energy estimate for \eqref{Ekk linear} does not have to be uniform in $\kk$. Instead, the right side of \eqref{Gronwall'} may depend on $\kk^{-1}$. Besides, the surface tension term $-\sigma \sqrt{\gr}(\Delta_{\gr} \er\cdot \nnr )\nnr^\alpha$ now serves as a given source term in the boundary condition. Based on these two facts, we can significantly simplify the following steps, compared with the nonlinear estimates in Section \ref{tgkk} -- Section \ref{close}. 
\begin{enumerate}
\item \textbf{Elliptic estimates for the boundary normal traces.}  Unlike section \ref{sect bdy estimate 3.3}, we now regard the boundary condition of \eqref{linearized} as an elliptic equation of $v\cdot \nnr$ 
\[
\TL (v\cdot \nnr)=v\cdot \nnr-\kk^{-1}\left(\sqrt{\grk}q+\sigma\sqrt{\gr}\Delta_{\gr}\er\cdot\nnr\right).
\] Since the surface tension term is a given fixed-term here, one can directly control $v\cdot\nn$ (and its time derivatives) via elliptic estimates. Therefore, we now only need to perform the tangential energy estimate of full-time derivatives. We refer to Subsection \ref{sect boundary est simplified} for the details.
\item \textbf{Removing extra (tangential) spatial derivatives via the mollifier properties.} In light of \eqref{lkk2}, we can absorb additional tangential spatial derivatives when necessary. This will allow us to greatly simplify most of the estimates including the surface tension terms, artificial viscosity terms, and the error terms involving $N-\nnr$ on the boundary.  Compared to $E_\kk^{(2)}, E_{\kk}^{(3)}$ in \eqref{Ekk}, much fewer $\kk$-weighed higher-order terms are needed here in $\mathcal{E}^{(2)}(t)$ to close the energy estimates. In addition to that, we do not need to establish the improved boundary regularity as in Lemma \ref{super control}.
\end{enumerate}
We will mainly focus on the simplified parts as mentioned above in the following calculations. The div-curl estimates and the elliptic estimates for $q$ are essentially identical to Section \ref{nonlinearkk}, so we will only outline them in Subsection \ref{Subsect. 8.1.3}  but without repeating the detailed proof. 
\end{rmk}

\subsubsection{Boundary estimates} \label{sect boundary est simplified}
This subsection is devoted to the control of boundary normal traces $|\p_t^k v^3|_{4-k}$ and $|\p_t^k \bp \eta^3|_{4-k}$ for $k=0,1,2,3$. 
\begin{lem}\label{lemma 8.2}
For $k=0,1,2,3$, we have
\begin{align}
|\p_t^k v^3|_{4-k}^2 \lesssim_{\kk^{-1}} \PP_0 + C(\eps)\Er(T)+\PP\int_0^T\PP,\\
|\p_t^k \bp\eta^3|_{4-k}^2 \lesssim_{\kk^{-1}} \PP_0 + C(\eps)\Er(T)+\PP\int_0^T\PP.
\end{align}
\end{lem}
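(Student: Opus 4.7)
The key identity is the boundary elliptic equation on $\Gamma$, obtained by contracting the modified Young--Laplace condition in \eqref{linearized} with $\nnr$ and using $|\nnr|^2=1$ together with $\Ark^{3\alpha}\nnr_\alpha=\sqrt{\grk}$:
\begin{equation*}
\kk\TL(v\cdot\nnr) = \kk(v\cdot\nnr) - \sqrt{\grk}\,q - \sigma\sqrt{\gr}(\Delta_{\gr}\er\cdot\nnr)\quad\text{on }\Gamma.
\end{equation*}
The plan is to differentiate this identity $k$ times in $t$ and invoke the standard elliptic estimate on $\T^2=\Gamma$, namely $|f|_{s+2}\lesssim|\TL f|_s+|f|_0$ for $s\geq 0$, to control $|\p_t^k(v\cdot\nnr)|_{4-k}$ by the RHS for $k=0,1,2$; for $k=3$ the elliptic estimate must be replaced by its duality version $|f|_1\lesssim|\TL f|_{-1}+|f|_0$. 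In each case the RHS involves $\p_t^k q$ (controlled by the linearized analogue of \eqref{ellq} and then transferred to $\Gamma$ by the trace lemma), the geometric coefficients $\gr,\grk,\nnr$ (smooth functions of $\er,\erk,\vr,\ldots$ from the previous iterate and bounded by $\PP$), and lower-order versions of $\p_t^k(v\cdot\nnr)$ itself, so that we arrive at $|\p_t^k(v\cdot\nnr)|_{4-k}^2\lesssim_{\kk^{-1}}\PP_0+C(\eps)\Er(T)+\PP\int_0^T\PP$. The desired estimate for $|\p_t^k v^3|_{4-k}$ then follows from the decomposition $v^3=v\cdot\nnr+v\cdot(N-\nnr)$: by the Kato--Ponce inequality \eqref{product} together with $|N-\nnr|_{L^\infty(\Gamma)}\lesssim\eps$ from \eqref{compars N and nn} (which produces the $C(\eps)\Er(T)$ term) and the higher-order bound $|N-\nnr|_s\leq\int_0^t|\p_t\nnr|_s\lesssim\kk^{-1}\int_0^t\PP$ (obtained from the smoothing estimate \eqref{lkk2} applied to $\erk=\lkk\er$), the conversion error is absorbed into the RHS of the claim.

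For the magnetic estimate $|\p_t^k\bp\eta^3|_{4-k}$ I reduce to the velocity case. When $k\geq 1$, the identity $\p_t\eta=v$ and the commutativity of $\bp$ and $\p_t$ give $\p_t^k\bp\eta=\bp\p_t^{k-1}v$; moreover $b_0^3|_\Gamma=0$ means that $\bp|_\Gamma=b_0^i\TP_i$ is purely tangential, so the Kato--Ponce inequality yields
\begin{equation*}
|\bp\p_t^{k-1}v^3|_{4-k}\lesssim |b_0|_{L^\infty(\Gamma)}|\p_t^{k-1}v^3|_{5-k}+|b_0|_{4-k}|\p_t^{k-1}v^3|_{L^\infty(\Gamma)},
\end{equation*}
and the velocity bound at index $k-1$ (just established) closes this case. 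For $k=0$, I use $b_0^3|_\Gamma=0$ and $\p_t\eta=v$ to write $\bp\eta^3|_\Gamma=\int_0^t\bp v^3|_\Gamma$, so by Cauchy--Schwarz $|\bp\eta^3|_4^2\leq T\int_0^T|\bp v^3|_4^2$. The pointwise-in-time bound on $|\bp v^3|_4$ is obtained by applying $\bp$ directly to the boundary elliptic identity: the commutator $[\TL,\bp]=[\TL,b_0^i\TP_i]$ is genuinely lower order (an operator of the form $(\TL b_0^i)\TP_i+2(\TP b_0^i)\TP\TP_i$), so a second application of the elliptic estimate yields $|\bp(v\cdot\nnr)|_4\lesssim_{\kk^{-1}}\PP$, and converting back to $\bp v^3$ via the same $N-\nnr$ argument completes the proof.

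I anticipate two main obstacles. First, in the $k=0$ magnetic case, the critical term $|\bp(\sqrt{\gr}\Delta_{\gr}\er\cdot\nnr)|_2$ on the RHS of the $\bp$-differentiated elliptic equation naively requires $H^5$-regularity of $\er$; however, after commuting $\bp$ past the $\TP^2$ in $\Delta_{\gr}\er$, the top-order piece reduces to $|\TP^2\bp\er|_2\lesssim\|\bp\er\|_{4.5}$, which is part of $\PP$. Second, the case $k=3$ requires the duality estimate $|f|_1\lesssim|\TL f|_{-1}+|f|_0$ in place of the standard elliptic estimate; the resulting $H^{-1}$ norm of the RHS is dominated by $|\p_t^3 q|_0\leq\|\p_t^3 q\|_1\lesssim\PP$ from \eqref{ellq}, and the lower-order term $|\p_t^3 v|_0$ is absorbed by interpolation with $\|\p_t^3 v\|_{1.5}$ plus an $\eps$-factor. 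Throughout, the tolerance of $\kk^{-1}$-factors on the RHS---precisely what distinguishes the linearized estimate from the uniform-in-$\kk$ nonlinear one---is what makes the entire scheme close without needing to track the $\sqrt{\kk}$-weighted boundary norms that are essential in Section \ref{tgkk}.
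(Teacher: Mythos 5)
Your proposal is correct and, for $k=0,1,2$ and for all the magnetic-field estimates, it follows essentially the same route as the paper: convert the artificial-viscosity boundary condition into the elliptic identity $\kk\TL(v\cdot\nnr)=\kk(v\cdot\nnr)-\sqrt{\grk}q-\sigma\sqrt{\gr}\Delta_{\gr}\er\cdot\nnr$ on $\Gamma$, apply the standard elliptic estimate to the $\p_t^k$- (and $\bp$-) differentiated versions, control $q$ and its time derivatives by the interior elliptic estimates plus the trace lemma, reduce $\bp\eta^3$ to $v^3$ via $\bp\eta^3=\int_0^t\bp v^3$ and the tangentiality of $\bp|_\Gamma$, and convert between $v^3$ and $v\cdot\nnr$ using $|N-\nnr|$ smallness together with \eqref{lkk2} to absorb the extra tangential derivatives on smoothed quantities. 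The one place where you genuinely diverge is $k=3$: the paper does \emph{not} use a negative-order elliptic estimate there; it instead writes $|\p_t^3 v^3|_1\leq \PP_0+\int_0^T|\p_t^4 v^3|_1$ and absorbs the resulting time integral into the $\kk$-weighted energy $\Er^{(2)}$ (splitting $\p_t^4 v^3$ into $\p_t^4 v\cdot\nnr$, controlled by $\int_0^T|\p_t^4 v\cdot\nnr|_1^2$, and $\p_t^4 v\cdot(N-\nnr)$, controlled by $C(\eps)\int_0^T\|\p_t^4 v\|_{1.5}^2$), which is precisely where the $C(\eps)\Er(T)$ term in the statement originates. Your alternative — the duality estimate $|f|_1\lesssim|\TL f|_{-1}+|f|_0$ applied to the $\p_t^3$-differentiated boundary identity — also closes, since after rewriting $\sqrt{\gr}\Delta_{\gr}\er\cdot\nnr$ in divergence form the worst source term $\TP_i(\sqrt{\gr}\gr^{ij}\TP_j\p_t^2\vr)\cdot\nnr$ lands in $H^{-1}(\Gamma)$ with norm $\lesssim\PP$ and $|\p_t^3 q|_0\lesssim\|\p_t^3 q\|_1\lesssim\PP$ by the trace theorem; it buys you independence from $\Er^{(2)}$ for this term, at the cost of a slightly more delicate $H^{-1}$ bookkeeping for the products with $\nnr$ and the geometric coefficients. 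Both arguments are legitimate here because the bound is allowed to degenerate as $\kk\to0$.
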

Note that we no longer require the energy bound to be $\kk$-independent. Hence, \textit{we are able to use \eqref{lkk2} to absorb extra tangential spatial derivatives on the smoothed variables, i.e., variables with $\sim$ on top.} We can absorb at most two tangential spatial derivatives since $\tilde{\cdot}=\lkk^2\cdot$ on the boundary.  Recall that the boundary condition in the linearized equations reads
\begin{equation}
\sqrt{\grk}q = -\sigma \sqrt{\gr}\Delta_{\gr} \er\cdot \nnr +\kk(1-\TL) (v\cdot \nnr).
\label{BC linear}
\end{equation}
This can be converted to an elliptic equation satisfied by $v\cdot \nnr$, i.e., 
\begin{equation}\label{ellll}
\TL (v\cdot \nnr)=v\cdot \nnr-\kk^{-1}\left(\sqrt{\grk}q+\sigma\sqrt{\gr}\Delta_{\gr}\er\cdot\nnr\right).
\end{equation}
Now, invoking the standard elliptic estimate and \eqref{lapg},  we get
\begin{equation}\label{v3lbdry}
\begin{aligned}
|v\cdot \nnr|_4^2\lesssim&|v\cdot \nnr|_2^2+\kk^{-1}\left(\left|\sqrt{\grk} q\right|_2^2+\sigma P(|\TP \er|_{L^\infty}, |\TP^2 \er|_{L^\infty})|\er|_4^2\right)\\
\lesssim_{\kk^{-1}}& \PP_0+\int_0^T\PP,
\end{aligned}
\end{equation}
where the used the trace lemma and \eqref{ellq} in the second inequality. 

For the magnetic field, since $\bp=b_0^j \TP_j$ on $\Gamma$ and hence $\bp (\eta\cdot \nnr) |_{t=0}=0$. Thus, 
\begin{equation}
\bp (\eta\cdot \nnr) = \int_0^T \p_t \Big(\bp (\eta\cdot \nnr)\Big)=\int_0^T \bp (v\cdot\nnr) + \int_0^T\bp (\eta \cdot \p_t \nnr).
\end{equation}
Since $\p_t \nnr = -\grk^{kl} \TP_k \vrk\cdot \nnr \TP_l \erk=Q(\TP \erk)\TP \vrk\cdot \nnr$, and invoking \eqref{lkk2} and the Jensen's inequality,  we have
\begin{align}
\left| \int_0^T\bp (\eta \cdot \p_t \nnr) \right|_4^2 \lesssim T\int_0^T |\bp (\eta\cdot \p_t\nnr)|_4^2 
\lesssim_{\kk^{-1}} \int_0^T\PP
\end{align}
Here, we need \eqref{lkk2} in order to control the leading order term generated when $\TP^4\bp$ fall on $\TP \vrk$ (which is part of $\p_t \nnr$), i.e., 
$$
\int_0^T Q(|\erk|_{L^\infty}, |\TP\erk|_{L^\infty}) |\bp \TP \vrk|_4^2 \leq \int_0^T |b_0|_4^2 Q(|\erk|_{L^\infty}, |\TP\erk|_{L^\infty})|\TP^2 \vrk|_4^2\lesssim_{\kk^{-1}} \int_0^T |b_0|_4^2 Q(|\erk|_{L^\infty}, |\TP\erk|_{L^\infty})|\vr|_4^2. 
$$
In addition, 
\begin{align}
\left|\int_0^T \bp (v\cdot\nnr) \right|_4^2 \lesssim T\int_0^T |\bp (v\cdot\nnr)|_4^2, 
\end{align}
and the RHS can be controlled by studying the elliptic equation satisfied by $\bp (v\cdot \nnr)$. Taking $\bp$ on \eqref{ellll} and we get
\begin{equation}\label{ell bp v}
\TL \bp (v\cdot \nnr)=[\TL, \bp] (v\cdot \nnr)+\bp(v\cdot \nnr)-\kk^{-1}\left(\bp(\sqrt{\grk}q)+\sigma\bp(\sqrt{\gr}\Delta_{\gr}\er\cdot\nnr)\right),
\end{equation}
then the elliptic estimate implies
\begin{align}
\int_0^T |\bp (v\cdot\nnr)|_4^2 &\lesssim_{\kk^{-1}} P(|b_0|_4)\int_0^T \left(|v\cdot \nnr|_4^2 + \left|\sqrt{\grk} q\right|_3^2+\sigma P(|\er|_{H^4})|\bp \er|_4^2\right)\lesssim_{\kk^{-1}} \int_0^T\PP. 
\end{align}
Thus, 
\begin{equation}\label{b3lbdry}
|\bp (\eta\cdot \nnr)|_4^2 \lesssim_{\kk} \int_0^T\PP. 
\end{equation}
We can obtain the bounds for $|v^3|_4^2$ and $|\bp \eta^3|_4^2$ from \eqref{v3lbdry} and \eqref{b3lbdry}, respectively. Indeed, we have
\begin{align}
\label{from1}|v^3|_4^2 \leq&|v\cdot \nnr|_4^2 + |v\cdot (N-\nnr)|_4^2,\\
\label{from2}|\bp\eta^3|_4^2 \leq& |\bp(\eta\cdot\nnr)|_4^2 + |\bp (\eta\cdot (N-\nnr))|_4^2. 
\end{align}
Since 
$$
N-\nnr = -\int_0^T \p_t\nnr = \int_0^T Q(\TP \erk)\TP \vrk\cdot \nnr, 
$$
invoking \eqref{lkk2} and the proof of \eqref{comars N and nn in H^3} , we have
\begin{align} \label{N-nnr}
|N-\nnr|_5\lesssim_{\kk^{-1}} \int_0^T\PP. 
\end{align}
Therefore,
\begin{align} \label{to}
 |v\cdot (N-\nnr)|_4^2 +  |\bp (\eta\cdot (N-\nnr))|_4^2 \lesssim_{\kk^{-1}} \PP\int_0^T\PP.
\end{align}

Now, we can take time derivative $\p_t$ in \eqref{ellll} to get the elliptic equation of $\p_t (v\cdot\nnr)$ on the boundary, i.e.,
\begin{align}
\label{ellll1} \TL \p_t(v\cdot \nnr)=&\p_t(v\cdot \nnr)-\kk^{-1}\left(\p_t(\sqrt{\grk}q)+\sigma\p_t(\sqrt{\gr}\Delta_{\gr}\er\cdot \nnr)\right).
\end{align}
Then standard elliptic estimate gives
\begin{equation}\label{vt3lbdry}
\begin{aligned}
|\p_t (v\cdot \nnr)|_3^2\lesssim_{\kk^{-1}}&~|\p_t(v\cdot\nnr)|_1^2+\left|\p_t\left(\Ark^{33} q\right)\right|_1^2+\sigma P(|\TP \er|_{L^\infty}, |\TP^2 \er|_{L^\infty}, |\TP \vr|_{L^\infty})|\vr|_3^2\\
\lesssim_{\kk^{-1}}&~ \PP_0+\int_0^T\PP.
\end{aligned}
\end{equation}
This estimate implies the estimate for $|\p_t v^3|_3^2$ by writing $|\p_t v^3|_3^2 \leq |\p_t (v\cdot \nnr)|_3^2+|\p_t(v\cdot (N-\nnr))|_3^2$ and then adapting the arguments from \eqref{from1}-\eqref{to}. 
Moreover, in light of the estimate for $|v^3|_4^2$, we have
\begin{equation}\label{bt3lbdry}
|\p_t\bp \eta^3|_3= |\bp v^3|_3^2 \leq P(|b_0|_3) |v^3|_4^2 \lesssim_{\kk^{-1}} \PP_0 +\PP\int_0^T\PP.
\end{equation}

Similarly, by taking two time derivatives to \eqref{ellll}, we can control $|\p_t^2 (v\cdot \nnr)|_2$ by the standard elliptic estimate, i.e.,  
\begin{equation} 
|\p_t^2 (v\cdot \nnr)|_2^2
\lesssim_{\kk^{-1}} \PP_0+\int_0^T\PP,
\end{equation}
and this yields 
\begin{equation} \label{vtt3lbdry}
|\p_t^2 v^3|_2^2\lesssim_{\kk^{-1}} \PP_0 +\PP\int_0^T\PP. 
\end{equation}
In addition to this,   $|\p_t^2 \bp \eta^3|_2^2$ reduces to $|\p_t v^3|_3^2$, whose bound is given above. Also, in the case when there are three time derivatives, $|\p_t^3\bp \eta^3|_1^2$ reduces to $|\p_t^2 v^3|_2^2$, which is just \eqref{vtt3lbdry}.

Finally, $|\p_t^3 v^3|_1^2$ can be controlled with the help $\Er^{(2)}$. We can make use of the $\kk$-weighted higher-order terms to directly control the time-integrated terms on the boundary. Specifically, by writing
$
|\p_t^3 v^3|_1 \leq \PP_0 + \int_0^T |\p_t^4 v^3|_1,
$
we have
\begin{align}
|\p_t^3 v^3|_1^2 \lesssim \PP_0 + \left(\int_0^T |\p_t^4 v^3|_1\right)^2 \lesssim \PP_0 + T\int_0^T |\p_t^4 v^3|_1^2,
\end{align}
where 
\begin{align}
T\int_0^T |\p_t^4 v^3|_1^2 \leq T \int_0^T |\p_t^4 v\cdot \nnr|_1^2+ T\int_0^T |\p_t^4 v\cdot (N-\nnr)|_1^2.
\end{align}
Here, the second term on the RHS is $\lesssim\kk^{-1} TC(\eps)\int_0^T \|\sqrt{\kk}\p_t^4 v\|_{1.5}^2$ whereas the first term is $\lesssim\kk^{-1}T\int_0^T|\sqrt{\kk}\p_t^4v \cdot \nnr |_{1}^2$. Therefore, by choosing $T$ sufficiently small, we have
\begin{equation}
|\p_t^3 v^3|_1^2 \lesssim_{\kk^{-1}} \PP_0 +C(\eps) \Er(T) + \PP\int_0^T\PP. 
\label{vttt3lbdry}
\end{equation}

\subsubsection{Tangential estimate with four time derivatives}\label{lineartg}
We still need to control 
$$
\|\p_t^4 v\|_0^2,\quad \|\p_t^4 \bp \eta\|_0^2
$$ 
in order to finish the control of $\Er$. In fact, we only need to control $\|\p_t^4 v\|_0^2$ since $\|\p_t^4 \bp \eta\|_0^2$ reduces to $\|\p_t^3 v\|_1^2$ which has been done previously.

Now we compute the $L^2$-estimate of $\p_t^4 v$ and $\p_t^4\bp\eta$. Invoking \eqref{linearized} and integrating $\bp$ by parts, we get
\begin{equation}\label{tgtl40}
\begin{aligned}
&\frac12\int_0^T\frac{d}{dt}\io|\p_t^4 v|^2+\left|\p_t^4\bp\eta\right|^2\dy\dt=-\int_0^T\io\p_t^4v_{\alpha}\p_t^4(\Ark^{\mu\alpha}\p_{\mu} q)\dy\dt\\
=&-\int_0^T\io\p_t^4 v_{\alpha}\Ark^{\mu\alpha}\p_t^4\p_{\mu}q\dy\dt\underbrace{-\int_0^T\io\p_t^4 v_{\alpha}~\left[\p_t^4,\Ark^{\mu\alpha}\right]\p_{\mu}q\dy\dt}_{\Ir_1}\\
=&\int_0^T\io\Ark^{\mu\alpha}\p_t^4\p_{\mu}v_{\alpha}\p_t^4q\dy\dt\underbrace{-\int_0^T\ig\p_t^4v_{\alpha}\Ark^{3\alpha}\p_t^4 q\dS\dt}_{\Ir_B}\underbrace{+\int_0^T\int_{\Gamma_0}\p_t^4v_{\alpha}\Ark^{3\alpha}\p_t^4 q\dS\dt}_{=0}+\Ir_1\\
=&\int_0^T\io\underbrace{\p_t^4(\divAr v)}_{=0}\p_t^4 q\dy\dt\underbrace{+\int_0^T\io\left[\Ark^{\mu\alpha},\p_t^4\right]\p_{\mu}v_{\alpha}~\p_t^4 q\dy\dt}_{\Ir_2}+\Ir_B+\Ir_1.
\end{aligned}
\end{equation}
Here, $\Ir_1$ and $\Ir_2$ can be straightforwardly controlled by $\int_0^T\PP$.  We start to analyze the boundary integral $I_B$. 
\begin{equation}\label{IrB0}
\begin{aligned}
\Ir_B=&-\int_0^T\ig\p_t^4v_{\alpha}\Ark^{3\alpha}\p_t^4 q=-\int_0^T\ig\sqrt{\grk}(\p_t^4v\cdot \nnr)(\p_t^4 q)\\
=&\sigma\int_0^T\ig\sqrt{\grk}(\p_t^4v\cdot \nnr)\p_t^4(\sqrt{\gr}\grk^{-\frac{1}{2}}\Delta_{\gr}\er\cdot \nnr)\\
-&\kk\int_0^T\ig\sqrt{\grk}(\p_t^4v\cdot \nnr)\p_t^4\left(\grk^{-\frac{1}{2}}(1-\TL)(v\cdot \nnr)\right)
:=\Ir_{B1}+\Ir_{B2}.
\end{aligned}
\end{equation}
Invoking the identity \eqref{lapg}, we have
\begin{equation}
\begin{aligned}
\Ir_{B1} =& \sigma\int_0^T\ig\sqrt{\grk}(\p_t^4v\cdot \nnr)\p_t^4(\sqrt{\gr}\grk^{-\frac{1}{2}}\gr^{ij}\TP_i\TP_j\er\cdot \nnr)\\
&-\sigma\int_0^T\ig\sqrt{\grk}(\p_t^4v\cdot \nnr)\p_t^4(\sqrt{\gr}\grk^{-\frac{1}{2}}\gr^{ij}\gr^{kl}\TP_l\er^\mu\TP_i\TP_j\er_{\mu}\TP_k\er\cdot \nnr)\\
=&\Ir_{B11}+\Ir_{B12}.
\end{aligned}
\end{equation}
Since
$$
\Ir_{B11} \LL\sigma\int_0^T\ig(\p_t^4v\cdot \nnr)(\sqrt{\gr}\gr^{ij}\TP_i\TP_j\p_t^3 \vr\cdot \nnr)
$$
 we integrate $\TP_i$ by parts and get
\begin{align*}
\Ir_{B11} \LL& -\sigma\int_0^T\ig(\TP_i\p_t^4v\cdot \nnr)(\sqrt{\gr}\gr^{ij}\TP_j\p_t^3 \vr\cdot \nnr)\\
\lesssim_{\kk^{-1}}& \eps \int_0^T \|\sqrt{\kk}\p_t^4v\|_{1.5}^2 + \int_0^T\PP \leq \eps \Er(T)+\int_0^T\PP,
\end{align*}
and $\Ir_{B12}$ can be treated in the same fashion. 

Next we study $\Ir_{B2}$. We have
\begin{align}
\Ir_{B2}\LL \kk\int_0^T\ig(\p_t^4v\cdot \nnr)\TL(\p_t^4 v\cdot \nnr)+\kk\int_0^T\ig(\p_t^4v\cdot \nnr)\TL( v\cdot \p_t^4\nnr):=\Ir_{B21}+\Ir_{B22},
\end{align}
where $\Ir_{B21}$ contributes to the positive energy term $\int_0^T|\p_t^4 v\cdot \nnr|_1^2$ after integrating $\TP$ by parts and moving the resulting term to the LHS. In addition, since $\p_t^4\nnr= Q(\TP \erk) \TP \p_t^3 \vrk\cdot \nnr+$ lower-order terms, 
\begin{align}
\Ir_{B22} \LL & -\kk\int_0^T\ig(\TP\p_t^4v\cdot \nnr)( v\cdot  Q(\TP \erk) \TP^2 \p_t^3 \vrk\cdot \nnr)\nonumber\\
\lesssim& \eps \int_0^T \|\p_t^4 v\|_{1.5}^2 + \int_0^T |v|_{L^\infty}^2 Q(|\TP\erk|_{L^\infty})|\TP^2\p_t^3\vrk|_0^2\nonumber\\
\lesssim_{\kk^{-1}}& \eps \Er(T) + \int_0^T |v|_{L^\infty}^2 Q(|\TP\erk|_{L^\infty})|\TP\p_t^3\vr|_0^2 \leq \eps\Er(T)+\int_0^T\PP,
\end{align}
where we used \eqref{lkk2} in the second to the last inequality to control $|\TP^2\p_t^3 \vrk|_0^2\lesssim \kk^{-1}|\TP \p_t^3 \vr|_0^2$. 

\subsubsection{Interior estimates}\label{Subsect. 8.1.3}
To control $\|\p_t^k v\|_{4.5-k}^2$ and $\|\p_t^k \bp \eta\|_{4.5-k}^2$,  $k=0,1,2,3,$ we only need to apply the div-curl estimate:
\begin{align}
\|\p_t^k v\|_{4.5-k}^2 \lesssim& \|\p_t^k\di v\|_{3.5-k}^2+\|\p_t^k \curl v\|_{3.5-k}^2+ |\p_t^k v^3|_{4-k}^2,\\
\|\p_t^k \bp\eta\|_{4.5-k}^2 \lesssim& \|\p_t^k\di \bp \eta\|_{3.5-k}^2+\|\p_t^k \curl \bp\eta\|_{3.5-k}^2+ |\p_t^k \bp \eta^3|_{4-k}^2.
\end{align}
The boundary terms $|\p_t^k v^3|_{4-k}^2$ and $|\p_t^k \bp \eta^3|_{4-k}^2$ are treated in Lemma \ref{lemma 8.2}.
The estimates for the divergence and curl of $v$ and $\bp\eta$, together with their time derivatives are identical to those in Section \ref{divcurlkk}, and so we shall not repeat the proofs. Furthermore, the estimate for the top order interior term in $\Er^{(2)}$, i.e.,   
\begin{align}
\kk\Big(\int_0^T \|\p_t^4 v\|_{1.5}^2+\int_0^T \|\p_t^4 \bp \eta\|_{1.5}^2\Big) \leq \PP_0+C(\eps)\Er(T)+\PP\int_0^T\PP
\end{align}
is identical to what has been done in Section \ref{sect E3kk}. 

We also need the estimates for the interior Sobolev norms of the pressure $q$. $q$ satisfies the following elliptic system
\begin{equation}
\begin{cases}
-\lapArk q=\p_t\Ark^{\mu\alpha}\p_\mu v_{\alpha}+\divAr\bp^2\eta &\text { in }\Omega,\\
\frac{\p q}{\p N}=(\delta^{\mu 3}-\Ark^{\mu\alpha}\Ark^{3\alpha})\p_{\mu} q-\Ark^{3\alpha}\p_tv_{\alpha}+\Ark^{3\alpha}\bp^2\eta_{\alpha} &\text { on }\Gamma,\\
\frac{\p q}{\p N}=0 &\text { on }\Gamma_0.
\end{cases}
\end{equation} The elliptic estimates are identical to \eqref{ellq} in Section \ref{ellkk}, so we omit the details here. This concludes the proof of Proposition \ref{linearkk}.

\subsection{The Picard iteration}
We now prove that the sequence $\{(\eta_{\mm},v_\mm,q_\mm)\}_{m\in\N^*}$ has a strongly convergent subsequence. We define $[f]_{\mm}:=f_{\mmn}-f_{\mm}$ for any function $f$ and then $([\eta]_\mm,[v]_\mm,[q]_\mm)$ satisfies the following system
\begin{equation}
\begin{cases}
\p_t[\eta]_\mm=[v]_\mm ~~~&\text{ in }\Omega,\\
\p_t[v]_\mm-\bp^2[\eta]_{\mm}+\nabla_{\Ak_{\mm}}[q]_{\mm}=-\nabla_{[\Ak]_{\mml}}q_\mm ~~~&\text{ in }\Omega,\\
\dive_{\Ak_{\mm}}[v]_\mm=-\dive_{[\Ak]_{\mml}}v_\mm~~~&\text{ in }\Omega,\\
[q]_\mm=\kk(1-\TL)([v]_\mm\cdot\nnk_\mm)+h_{\mm}~~~&\text{ on }\Gamma,\\
([\eta]_\mm,[v]_{\mm})|_{t=0}=(\mathbf{0},\mathbf{0}).
\end{cases}
\end{equation}where
\begin{align*}
h_\mm=&\kk(1-\TL)(v_\mm\cdot[\nnk]_{\mml})\\
&-\sigma\left(\sqrt{\tilde{g}_{\mm}}g_{\mm}^{ij}\Pi_{\mm\alpha}^{\lambda}\TP_i\TP_j\eta_{\mm\lambda}\nnk_{\mm}^{\alpha}-\sqrt{\tilde{g}_{\mml}}g_{\mml}^{ij}\Pi_{\mml\alpha}^{\lambda}\TP_i\TP_j\eta_{\mml\lambda}\nnk_{\mml}^{\alpha}\right)
\end{align*} and $\Pi_{(m)}:=\nn_{(m)}\otimes\nn_{(m)}$.

We also define the energy functional of $([\eta]_\mm,[v]_\mm,[q]_\mm)$ to be
\begin{equation}\label{gapEr}
[\Er]_{\mm}:=[\Er]_{\mm}^{(1)}+[\Er]_{\mm}^{(2)},
\end{equation}where
\begin{equation}
\begin{aligned}
[\Er]_{\mm}^{(1)}(T)&:=\left\|[\eta]_\mm\right\|_{3.5}^2+\left\|[v]_\mm\right\|_{3.5}^2+\left\|\p_t [v]_\mm\right\|_{2.5}^2+\left\|\p_t^2 [v]_\mm\right\|_{1.5}^2+\left\|\p_t^3 [v]_\mm\right\|_{0}^2\\
&+\left\|\bp[\eta]_\mm\right\|_{3.5}^2+\left\|\p_t \bp[\eta]_\mm\right\|_{2.5}^2+\left\|\p_t^2 \bp[\eta]_\mm\right\|_{1.5}^2+\left\|\p_t^3 \bp[\eta]_\mm\right\|_{0}^2\\
[\Er]_{\mm}^{(2)}(T)&:=\frac{\kk}{\sigma}\int_0^T\left|\p_t^3 [v]_\mm\cdot\nnk_\mm\right|_1^2\dt+\kk\bigg(\int_0^T \|\p_t^3 [v]_{\mm}\|_{1.5}^2+\int_0^T \|\p_t^3 \bp [\eta]_{\mm}\|_{1.5}^2\bigg).
\end{aligned}
\end{equation}

\subsubsection{The div-curl estimates}

 For $k=0,1,2$
\begin{align}
\label{gapdivcurl}\|\p_t^{k}[v]_\mm\|_{3.5-k}\lesssim&\|\p_t^k[v]_\mm\|_0+\|\dive\p_t^k[v]_\mm\|_{2.5-k}+\|\curl\p_t^k[v]_\mm\|_{2.5-k}+|\TP\p_t^k[v]_\mm\cdot N|_{2-k},\\
\|\p_t^{k}\bp[\eta]_\mm\|_{3.5-k}\lesssim&\|\p_t^k\bp[\eta]_\mm\|_0+\|\dive\p_t^k\bp[\eta]_\mm\|_{2.5-k}\\
&+\|\curl\p_t^k\bp[\eta]_\mm\|_{2.5-k}+|\TP\p_t^k\bp[\eta]_\mm\cdot N|_{2-k}.\nonumber
\end{align}
Again, each part in the div-curl estimates should follow in the same way as in Section \ref{divcurlkk} so we omit the proof. For example, one can take $\curl_{\Ak_{\mm}}$ in the equation of $[v]_\mm$ to get the evolution equation of the curl:
\begin{align*}
\p_t(\curl_{\Ak_{\mm}} [v]_{\mm})&-\bp\left(\curl_{\Ak_{\mm}}\bp[\eta]_\mm\right)\\
=&-\curl_{\Ak_{\mm}}(\nabla_{[\Ak]_{\mml}}q_\mm)-\epsilon_{\alpha\beta\gamma}\p_t\Ak_{\mm}^{\mu\beta}\p_{\mu}[v]_{\mm\gamma}-[\bp,\curl_{\Ak_{\mm}}](\bp[\eta]_{\mm}).
\end{align*} Then standard energy estimates give the control of curl part. One can control the divergence of $[v]$ via $\dive_{[\Ak]_{\mml}}v_\mm$, and control the divergence of $\bp[\eta]$ in the same way as in Section \ref{divcurlkk}. To control the interior terms in $[\Er]_\mm^{(2)}$, we also need a similar div-curl decomposition for the $\kk$-weighted terms and follow the method in Section \ref{sect E3kk}. Then we have
\begin{align}
&\kk\bigg(\int_0^T \|\p_t^3 [v]_{\mm}\|_{1.5}^2+\int_0^T \|\p_t^3 \bp [\eta]_{\mm}\|_{1.5}^2\bigg)\nonumber\\ 
\label{gapEr2i}\lesssim& \PP_0+\eps[\Er]_{\mm}(T)+P([\Er]_{\mm}(T),\Er_{\mm,\mml}(T))\int_0^T P([\Er]_{\mm,\mml}(t),\Er_{\mm,\mml}(t))\dt. 
\end{align}

\subsubsection{Elliptic estimates of pressure}
 The quantity $[q]_\mm$ verifies the elliptic equation equipped with the Neumann boundary condition:
\begin{align}
\Delta_{\Ak_{\mm}}[q]_\mm=-\dive_{\Ak_{\mm}}(\nabla_{[\Ak]_{\mml}}q_\mm-\bp^2[\eta]_\mm)+\p_t\Ak^{\mu\alpha}_\mm \p_\mu[v]_{\mm\alpha}-\p_t(\dive_{[\Ak]_\mml}v_\mm), &\text{ in }\Omega,\label{ell diff}\\
\Ak^{3\alpha}_{\mm}\Ak^{\mu}_{\mm\alpha}\p_\mu[q]_m=\Ak^{3\alpha}_{\mm}(-\nabla_{[\Ak]_{\mml}}q_\mm-\p_t[v]_\mm+\bp^2[\eta]_{\mm})_{\alpha},&\text{ on }\Gamma.\label{ell bdy}
\end{align}
Compared to the estimates in Section \ref{ellkk}, we need to control the contribution of $\di_{[\Ak]_{\mm}}(\nabla_{[\Ak]_{\mml}}q_\mm)$ on the RHS of \eqref{ell diff}, as well as the contribution of $\Ak^{3\alpha}_{\mm}\nabla_{[\Ak]_{\mml}}q_\mm$ on the RHS of \eqref{ell diff}, when estimating $\|[q]_\mm\|_{3.5}$, but this is straightforward. 
\begin{align}
\|\dive_{\Ak_{\mm}}(\nabla_{[\Ak]_{\mml}}q_\mm)\|_{1.5}\lesssim P(\|[\Ak]_{\mml}\|_{2.5},\|q_\mm\|_{3.5},\|\Ak_\mm\|_{2.5}),
\end{align}
In addition to this, for the boundary contribution, we have
\begin{align}
|\Ak_{\mm} N\cdot \nabla_{[\Ak]_{\mml}}q_\mm|_{2}\lesssim P(\|[\Ak]_{\mml}\|_{2.5},\|q_\mm\|_{3.5},\|\Ak_\mm\|_{2.5}).
\end{align}
The quantities $\|[\p_t q]_{\mm}\|_{2.5}$, $\|[\p_t^2 q]_{\mm}\|_{1.5}$ and $\|[\p_t^3 q]_{\mm}\|_{0}$ are treated analogously by invoking the arguments in Section \ref{ellkk}.

\subsubsection{Boundary estimates}
The boundary estimates are parallel to those in Section \ref{sect boundary est simplified}. Analogous to \eqref{ellll}, we have
\begin{equation}\label{elllll}
\kk\TL([v]_{\mm}\cdot\nnk_{\mm})=\kk([v]_{\mm}\cdot\nnk_{\mm})+h_{\mm}-[q]_{(m)}.
\end{equation}
Then using the boundary elliptic estimates, we get
\begin{equation}\label{gapvbdry}
\begin{aligned}
|[v]_{\mm}\cdot\nnk_{\mm}|_3\lesssim_{\kk^{-1}}&|[v]_{\mm}\cdot\nnk_{\mm}|_1+|h_{\mm}|_1+\|[q]_\mm\|_{1.5}\\
\lesssim&|[v]_{\mm}\cdot\nnk_{\mm}|_1+\|[q]_\mm\|_{1.5}+ |v_{(m)}|_3P(|\TP[\ek]_{(m-1)},\TP^2[\eta]_\mml|_1,|\TP\eta_{\mml}|_{2})\\
\lesssim& \PP_0+P(\Er_{\mm,\mml}(T))\int_0^T P([\Er]_{\mm,\mml}(t),\Er_{\mm,\mml}(t))\dt.
\end{aligned}
\end{equation}
As for the magnetic field, we use the fact that $\bp=b_0^j\TP_j$ on $\Gamma$ to get
\[
\bp[\eta]_{\mm}\cdot\nnk_{\mm}=0+\int_0^T\bp [v]_{\mm}\cdot\nnk_\mm+\bp[\eta]_\mm\cdot\p_t\nnk_\mm.
\]
Similarly as in Section \ref{sect boundary est simplified}, one can directly control the $H^3(\Gamma)$-norm of the second term. Then the first term can be controlled by using elliptic estimates in $\bp$-differentiated elliptic equation \eqref{elllll}. We omit the detailed proof because there is no essential difference from the arguments in Section \ref{sect boundary est simplified}.
\begin{equation}\label{gapbbdry}
|\bp[\eta]_{\mm}\cdot\nnk_{\mm}|_3\lesssim_{\kk^{-1}} \int_0^T P([\Er]_{\mm}(t),\Er_{\mm}(t))\dt.
\end{equation}
Taking one time derivative, we can similarly control the boundary norm of $\p_t[v]_\mm$ and $\p_t\bp[\eta]_\mm$. We skip the details.
\begin{equation}\label{gaptbdry}
\left|\p_t[v]_{\mm}\cdot\nnk_{\mm},\p_t\bp[\eta]_{\mm}\right|_2\lesssim_{\kk^{-1}} \PP_0+P(\Er_{\mm,\mml}(T))\int_0^T P([\Er]_{\mm,\mml}(t),\Er_{\mm,\mml}(t))\dt.
\end{equation}
For the $H^1(\Gamma)$-norm of $\p_t^2[v]_\mm$ and $\p_t^2\bp[\eta]_\mm$, one can use the $\kk$-weighted interior terms in $[\Er]_{\mm}^{(2)}$ and Sobolev trace lemma to get the control
\begin{equation}\label{gapttbdry}
\begin{aligned}
&\left|\p_t^2[v]_{\mm}^3,\p_t^2\bp[\eta]_{\mm}^3\right|_1\lesssim\left\|\p_t^2[v]_{\mm},\p_t^2\bp[\eta]_{\mm}\right\|_{1.5}\\
\lesssim& \PP_0+\int_0^T\left\|\p_t^3[v]_{\mm}\cdot\nnk_{\mm},\p_t^3\bp[\eta]_{\mm}\right\|_{1.5}\dt\\
\lesssim& \PP_0+\sqrt{\frac{T}{\kk}}\left\|\sqrt{\kk}\p_t^3[v]_{\mm},\sqrt{\kk}\p_t^3\bp[\eta]_{\mm}\right\|_{L_t^2H_y^{1.5}}\\
\lesssim_{\kk^{-1}}&\PP_0+\sqrt{T}P([\Er]_{\mm}^{(2)}(T)).
\end{aligned}
\end{equation}
Finally, we need to control the difference between $X\cdot N$ and $X\cdot\nnk_\mm$, which should be done in the same way as \eqref{from1}-\eqref{N-nnr}, so we do not repeat the calculations. For $k=0,1$, we have for $X=[v]_\mm,\bp[\eta]_\mm$
\begin{equation}\label{gapnormal}
|\p_t^k X^3-\p_t^k(X\cdot\nnk_\mm)|_{3-k}\lesssim_{\kk^{-1}}\int_0^TP([\Er]_{\mm},\Er_{\mm,\mml}(t))\dt.
\end{equation}

Combining \eqref{gapvbdry}-\eqref{gapnormal}, we get the boundary estimates as
\begin{equation}\label{gapbdry}
\sum_{k=0}^2\left|\p_t^k([v]_\mm^3,\bp[\eta]_{\mm}^3)\right|_{3-k}\lesssim_{\kk^{-1}}\PP_0+P(\Er_{\mm,\mml}(T))\int_0^T P([\Er]_{\mm,\mml}(t),\Er_{\mm,\mml}(t))\dt.
\end{equation}

\subsubsection{Estimates of full-time derivatives}
Now it remains to control the $L^2$-norm of full time derivatives. By replacing $\p_t^4$ in Section \ref{lineartg} by $\p_t^3$, we can do analogous computation to control $\|\p_t^3 [v]_\mm\|_0$ and $\|\p_t^3\bp[\eta]_\mm\|_0$. The $\kk$-weighted boundary terms in $[\Er]_{\mm}^{(2)}$ are produced in the analogues of \eqref{IrB0}. The only difference is that we should control the extra contribution (under time integral) of $\nabla_{[\Ak]_{\mml}}q_\mm$ in the interior  and the $\sigma$-coefficient part in the term $h_{(m)}$ on the boundary. These quantities can all be directly controlled
\[
\|\p_t^3 \nabla_{[\Ak]_{\mml}}q_\mm\|_0\lesssim P\left(\|[v]_\mml,\p_t[v]_\mml, \p_t^2[v]_\mml\|_2,\|\p_t^3 q_\mm\|_1,\|\p_t^2 q_\mm,\p_t q_\mm,q_\mm\|_2\right).
\]
\[
|\p_t^3h_{(m),\sigma}|_0\lesssim P\left(|\p_t^2v_{\mm,\mml}|_2,|\TP\eta_{\mm,\mml}, \TP v_{\mm,\mml},\TP \p_tv_{\mm,\mml}|_{L^{\infty}}\right).
\]
Therefore, one can get
\begin{equation}\label{gapttt}
\begin{aligned}
&\|\p_t^3[v]_\mm\|_0^2+\|\p_t^3\bp[\eta]_\mm\|_0^2+\frac{\kk}{\sigma}\int_0^T\left|\p_t^3 [v]_\mm\cdot\nnk_\mm\right|_1^2\dt\\
\lesssim&\PP_0+\int_0^T P([\Er]_\mm(t),\Er_{\mm,\mml}(t))\dt
\end{aligned}
\end{equation}

\subsection{Well-posedness of the nonlinear approximate problem}
 We conclude this section with the following proposition.

\begin{prop}[\textbf{Local well-posedness of the nonlinear $\kk$-approximation problem}]\label{nonlinearkksol}
For each fixed $\kk>0$, there exists $T_\kk'>0$ such that the nonlinear $\kk$-approximation problem \eqref{MHDLkk} has a unique strong solution  $(\eta(\kk),v(\kk),q(\kk))$ in $[0,T_\kk']$ that satisfies
\begin{equation}
\sup_{0\leq t\leq T_\kk'}\Er'(t)\leq\mathcal{C}
\end{equation}
where $\Er'(t)=\Er^{(1)'}(t)+\Er^{(2)'}(t)$
\begin{equation}
\begin{aligned}
\Er^{(1)'}(t)&:=\left\|\eta\right\|_{4.5}^2+\left\|v\right\|_{4.5}^2+\left\|\p_t v\right\|_{3.5}^2+\left\|\p_t^2 v\right\|_{2.5}^2+\left\|\p_t^3 v\right\|_{1.5}^2+\left\|\p_t^4 v\right\|_{0}^2\\
&+\left\|\bp\eta\right\|_{4.5}^2+\left\|\p_t \bp\eta\right\|_{3.5}^2+\left\|\p_t^2 \bp\eta\right\|_{2.5}^2+\left\|\p_t^3 \bp\eta\right\|_{1.5}^2+\left\|\p_t^4 \bp\eta\right\|_{0}^2\\
\Er^{(2)'}(t)&:=\frac{\kk}{\sigma}\int_0^T\left|\p_t^4 v\cdot\nnk\right|_1^2\dt+\kk\bigg(\int_0^T \|\p_t^4 v\|_{1.5}^2+\int_0^T \|\p_t^4 \bp \eta\|_{1.5}^2\bigg).
\end{aligned}
\end{equation}
\end{prop}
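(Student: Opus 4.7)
The plan is to combine the uniform-in-$m$ a priori bound for the linearized iterates provided by Proposition \ref{linearkk} with the contraction estimates on the differences $([\eta]_{(m)},[v]_{(m)},[q]_{(m)})$ that were set up in the preceding subsection. Starting from $(\eta_{(0)},v_{(0)},q_{(0)})=(\id,0,0)$, Proposition \ref{linearkk} produces, for each $m\geq 0$, a strong solution $(\eta_{(m+1)},v_{(m+1)},q_{(m+1)})$ to the linearized system \eqref{linearized} with coefficients frozen at $(\eta_{(m)},v_{(m)},q_{(m)})$. A standard induction on $m$, using the Gronwall-type bound \eqref{energylkk} (with constant depending on $\kk$, $\|v_0\|_{4.5}$, $\|b_0\|_{4.5}$ and $|v_0|_5$), yields a common existence time $T_\kk^\star>0$ and a constant $\mathcal{C}_\kk$ so that $\sup_{[0,T_\kk^\star]}\Er_{(m)}(t)\le \mathcal{C}_\kk$ for every $m$, where $\Er_{(m)}$ is the functional $\Er$ of \eqref{Ekk linear} applied to $(\eta_{(m)},v_{(m)},q_{(m)})$. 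This verifies the a priori assumption \eqref{smallark} along the sequence after possibly shrinking $T_\kk^\star$, so that the iteration is well-defined.

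The next step is to show that the sequence is Cauchy in a weaker norm. Combining the div-curl estimate \eqref{gapdivcurl}, the elliptic estimate for $[q]_{(m)}$, the boundary bounds \eqref{gapbdry} and the tangential estimate \eqref{gapttt}, together with the interior bound \eqref{gapEr2i} for $[\Er]_{(m)}^{(2)}$, one obtains an inequality of the form
\begin{equation*}
[\Er]_{(m)}(T)\le C(\kk^{-1})\Big(\PP_0+\eps\,[\Er]_{(m)}(T)+\int_0^T P(\Er_{(m)},\Er_{(m-1)})\,[\Er]_{(m-1)}(t)\,dt\Big),
\end{equation*}
where $\PP_0$ vanishes because all differences have zero initial data. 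Absorbing the $\eps$ term and using the uniform bound on $\Er_{(m)}$, this reduces, after possibly shrinking $T_\kk'\le T_\kk^\star$, to the contraction estimate
\begin{equation*}
\sup_{[0,T_\kk']}[\Er]_{(m)}(t)\le \tfrac12 \sup_{[0,T_\kk']}[\Er]_{(m-1)}(t).
\end{equation*}
Hence $\{(\eta_{(m)},v_{(m)},q_{(m)})\}$ is Cauchy in the space associated with $[\Er]$, namely one order of spatial derivative and one time derivative below the space of $\Er$. Denote the limit by $(\eta,v,q)$; weak-$\ast$ compactness together with the uniform-in-$m$ bound in $\Er$ upgrades the convergence and gives $\sup_{[0,T_\kk']}\Er'(t)\le \mathcal{C}$, which is the desired energy bound for $\Er'=\Er$.

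It remains to check that $(\eta,v,q)$ solves the nonlinear $\kk$-approximate system \eqref{MHDLkk}. Since the strong convergence in the $[\Er]$-space is sufficient to pass to the limit in each term of \eqref{linearized} (the coefficients $\Ark_{(m)}$, $\sqrt{\grk_{(m)}}\Delta_{\gr_{(m)}}\er_{(m)}\cdot\nnr_{(m)}$ depend continuously on $\eta_{(m)}$ in $H^{3.5}$, which the limit controls), and because $\er_{(m)}\to\eta$, each equation in \eqref{linearized} converges to the corresponding equation in \eqref{MHDLkk}. Uniqueness follows by applying the same contraction argument to the difference of two putative solutions and using Gronwall. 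The hard part that required the most care earlier is already packaged in Proposition \ref{linearkk}; within the present step, the main obstacle is simply to verify that the iteration sequence genuinely contracts in a norm that is one derivative weaker than the space in which the a priori estimate is closed, which requires matching the regularities in $[\Er]_{(m)}$ and $\Er_{(m)}$ carefully so that the products $(\p_t^j \Ark_{(m-1)} -\p_t^j\Ark_{(m-2)})\cdot(\cdots)$ arising from $[\Ak]_{(m-1)}$ can be estimated using the uniformly bounded factors from $\Er_{(m-1)}, \Er_{(m-2)}$ and a single $[\Er]_{(m-1)}$ factor. This is exactly the mechanism behind \eqref{gapvbdry}--\eqref{gapttt}, and so the argument closes.
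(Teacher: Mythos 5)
Your proposal is correct and follows essentially the same route as the paper: Picard iteration with uniform-in-$m$ bounds from Proposition \ref{linearkk}, a contraction estimate for the differences $[\Er]_{\mm}$ in a norm one derivative weaker than $\Er$ (obtained by summing \eqref{gapdivcurl}--\eqref{gapEr2i} and \eqref{gapbdry}--\eqref{gapttt} and applying the Gronwall-type inequality), and passage to the strong limit which inherits the energy bound. The only cosmetic difference is that you observe the difference data vanish so the contraction factor can be taken as $\tfrac12$ directly, whereas the paper writes $[\Er]_{\mm}(t)\leq \tfrac14 [\Er]_{\mml}(t)$ and concludes $[\Er]_{\mm}(t)\leq 4^{-m}\PP_0$; both yield the same conclusion.
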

\begin{proof}
Summarizing \eqref{gapdivcurl}-\eqref{gapEr2i}, \eqref{gapbdry}-\eqref{gapttt}, we can get the following inequality
\[
\begin{aligned}
[\Er]_{\mm}(T)\lesssim_{\kk^{-1}}&~ \PP_0+\eps[\Er](T)+TP([\Er]_{\mm}(T))\\
&+P([\Er]_{\mm}(T),\Er_{\mm,\mml}(T))\int_0^TP([\Er]_{\mm,\mml}(t),\Er_{\mm,\mml}(t)).
\end{aligned}
\]
By Gronwall-type inequality in Tao \cite{tao2006nonlinear} and the conclusion of Proposition \ref{linearkk}, there exists some $T_{\kk}'>0$, such that $\forall t\in[0,T_\kk']$
\[
[\Er]_{\mm}(t)\leq \frac{1}{4}[\Er]_{\mml}(t),
\]which implies $[\Er]_{\mm}(t)\leq 4^{-m} \PP_0$. Let $m\to\infty$, we know the sequence $\{(\eta_\mm,v_\mm,q_\mm)\}$ must strongly converge. The strong limit is denoted by $(\eta(\kk),v(\kk),q(\kk))$ which exactly solves the nonlinear $\kk$-approximation problem \eqref{MHDLkk}. By taking $m\to\infty$ in the energy of linearized equation \eqref{linearized}, one can also get the energy estimates.
\end{proof}

\section{Local well-posedness}\label{lwp1}
\subsection{Uniqueness and well-posedness}\label{unique}

Combining the conclusions of Proposition \ref{nonlinearkk} and Propostion \ref{nonlinearkksol} and letting $\kk\to 0_+$, we actually prove that there exists some time $T'>0$ (only depends on the initial data), such that the original system \eqref{MHDL} has solution $(\eta,v,q)$ satisfying the energy estimates
\[
\sup_{0\leq t\leq T} E(t)\leq \mathcal{C},
\]where $\mathcal{C}=\mathcal{C}(\|v_0\|_{4.5}, \|b_0\|_{4.5}, |v_0|_5)$, and the energy functional $E$ is defined to be
\begin{equation}\label{MHDLenergy}
\begin{aligned}
E(t):=&\left\|\eta\right\|_{4.5}^2+\sum_{j=0}^3\left\|\left(\p_t^jv,\p_t^j \bp\eta\right)\right\|_{4.5-j}^2+\left\|\left(\p_t^4 v,\p_t^4 \bp\eta\right)\right\|_{0}^2\\
&+\sum_{j=0}^3\left|\TP\left(\Pi\TP^{3-j}\p_t^j v\right)\right|_0^2+\left|\TP\big(\Pi\TP^3 \bp\eta\big)\right|_0^2, \\
\end{aligned}
\end{equation}

To establish the local well-posedness, it remains to prove the uniqueness. Let $\{(\eta_{\mm},v_\mm,q_\mm)\}_{m=1,2}$ be two solutions of \eqref{MHDL} satisfying the energy estimates. Then we define
\[
[\eta]:=\eta_{(1)}-\eta_{(2)},~[v]:=v_{(1)}-v_{(2)},~[q]:=q_{(1)}-q_{(2)},~[a]:=a_{(1)}-a_{(2)}.
\] Then $([\eta],[v],[q])$ satisfies the following system
\begin{equation}
\begin{cases}
\p_t[\eta]=[v]~~~& \text{in}~[0,T]\times\Omega;\\
\p_t[v]-\bp^2[\eta]+\nabla_{a_{(1)}} [q]=-\nabla_{[a]}q_{(2)}~~~& \text{in}~[0,T]\times\Omega;\\
\dive_{a_{(1)}} [v]=-\dive_{[a]}v_{(2)},&\text{in}~[0,T]\times\Omega;\\
\dive b_0=0~~~&\text{in}~[0,T]\times \Omega ;\\
[v^3]=b_0^3=0~~~&\text{on}~\Gamma_0;\\
[q]n_{(1)}=-\sigma g_{(1)}^{ij}\Pi_{(1)}\TP^2_{ij}[\eta]-\sigma\sqrt{g_{(1)}}\Delta_{[g]}\eta_{(2)} ~~~&\text{on}~\Gamma;\\
b_0^3=0 ~~~&\text{on}~\Gamma,\\
([\eta],[v])=(\mathbf{0},\mathbf{0})~~~&\text{on}~\{t=0\}{\times}\overline{\Omega}.
\end{cases}
\end{equation}

Define
\begin{equation}\label{MHDLgapenergy}
\begin{aligned}
[E](t):=&\left\|[\eta]\right\|_{3.5}^2+\left\|[v]\right\|_{3.5}^2+\left\|\p_t [v]\right\|_{2.5}^2+\left\|\p_t^2 [v]\right\|_{1.5}^2+\left\|\p_t^3 [v]\right\|_{0}^2\\
&+\left\|\bp[\eta]\right\|_{3.5}^2+\left\|\p_t \bp[\eta]\right\|_{2.5}^2+\left\|\p_t^2 \bp[\eta]\right\|_{1.5}^2+\left\|\p_t^3 \bp[\eta]\right\|_{0}^2\\
&+\left|\TP\left(\Pi_{(1)}\p_t^2[v]\right)\right|_0^2+\left|\TP\left(\Pi_{(1)}\TP\p_t [v]\right)\right|_0^2+\left|\TP\left(\Pi_{(1)}\TP^2 v\right)\right|_0^2+\left|\TP\big(\Pi_{(1)}\TP^2 \bp\eta\big)\right|_0^2. \\
\end{aligned}
\end{equation}

Then we can mimic the proof in Section \ref{sect nonlinearkk} to get the energy estimates of $[E]$
\[
[E](T)\lesssim P([E](T),E(T))\int_0^TP([E](t),E(t))\dt,
\]which together with Gronwall-type inequality yields
\[
\exists T\in[0,T'],~~[E](t)=0~~\forall t\in[0,T]
\]which establishes the local well-posedness of \eqref{MHDL} in $[0,T]$.

\subsection{Regularity of initial data and free surface}\label{data}

Finally, we need to prove that the norms of time derivatives can be controlled by $\|v_0\|_{4.5},\|b_0\|_{4.5}$ and $|v_0|_5$. This part is exactly the same as in Section \ref{datakk} or \cite[Section 7.1]{luozhangMHDST3.5}. Finally, the boundary condition of \eqref{MHDL} gives us an elliptic equation of $\eta$ on $\Gamma$
\[
-\sigma\sqrt{g}\Delta_g\eta^\alpha=a^{3\alpha}q.
\]By using elliptic estimates in Dong-Kim \cite{DKell} (see also \cite[Proposition 3.4]{DK2017}), one has
\[
|\eta|_5\lesssim |a^{3\alpha} q|_3\lesssim|(\TP\eta\times\TP\eta)q|_3\lesssim P(\|\eta\|_{4.5})\|q\|_{3.5}.
\]
Similarly, taking a time derivative gives us the elliptic equation of $v^\alpha$
\begin{align*}
\sqrt{g}g^{ij}\TP_{ij}^2v^\alpha=&\sqrt{g}g^{ij}\Gamma_{ij}^k\TP_k v^\alpha-\p_t(\sqrt{g}g^{ij})\TP_{ij}^2\eta^\alpha-\p_t(\sqrt{g}g^{ij}\Gamma_{ij}^k)\TP_k\eta^\alpha\\
&-\sigma^{-1}(\p_ta^{3\alpha}q+a^{3\alpha}\p_t q),
\end{align*}and thus by the similar argument in \cite[Section 5.1]{luozhangMHDST3.5} we get
\[
|v(t)|_5\lesssim P(E(t))~~~~\text{in }[0,T].
\]
This concludes the proof of Theorem \ref{lwp}.

\section*{Acknowledgment} We thank the anonymous referees for their helpful comments on improving the exposition of this manuscript. X. Gu was supported by the National Natural Science Foundation of China (12031006). C. Luo was supported by the Hong Kong RGC grant CUHK-24304621 and a direct grant of research 4053457.

{\small

}
\end{document}